\titleformat{\subsection}[runin]{}{}{}{}[]
\newtheorem{thm}{Theorem}[section]
\newtheorem{lemma}[thm]{Lemma}
\newtheorem{cor}[thm]{Corollary}
\newtheorem{prop}[thm]{Proposition}
\newtheorem*{main}{Main Theorem}{}
{}
\theoremstyle{remark}
\newtheorem{definition}[thm]{Definition}
\newtheorem{remark}[thm]{Remark}
\newtheorem*{definition*}{Definition}
\newtheorem*{remark*}{Remark}
\newtheorem{convention}[thm]{Convention}
\newtheorem{notation}[thm]{Notation}
\def\R{{\mathbb R}}
\def\FF{{\mathbb F}}
\def\ff{\dot} 
\def\o{\omega}
\def\0{0} 
\def\L{L}
\def\Z{{\mathbb Z}}
\def\F{{\mathcal F}}
\def\X{{CV}} 
\def\hX{{cv}}	
\def\rank{\operatorname{rank}}
\def\ill{illegality}
\def\Ill{Illegality}
\def\m{m}   
\def\hm{\check m}   
\def\k{k}   
\def\bG{G} 
\def\hG{\hat G} 
\def\G{\bar G} 
\def\gG{{G}} 
\def\C{{\cal C}}
\def\E{{\mathcal E}}
\def\V{{\mathcal V}}
\def\T{\bar{\mathcal T}} 
\def\W{WH} 
\def\SW{SW} 
\def\CV{\V_c} 
\DeclareMathOperator*{\Lt}{Left}
\DeclareMathOperator*{\Rt}{Right}
\DeclareMathOperator*{\lt}{left}
\DeclareMathOperator*{\rt}{right}
\title{Hyperbolicity of the complex of free factors}
\author{Mladen Bestvina and Mark Feighn\thanks{Both
    authors gratefully acknowledge the support by the National
    Science Foundation.}
}
\date{January 21, 2014}
\begin{document}

\maketitle

\begin{abstract}
  We develop the geometry of folding paths in Outer space and, as an
  application, prove that the complex of free factors of a free group
  of finite rank is hyperbolic.
\end{abstract}
\tableofcontents
\section{Introduction}

The {\it complex of free factors} of a free group $\FF$ of rank $n$ is
the simplicial complex $\F$ whose vertices are conjugacy classes of
proper free factors $A$ of $\FF$, and simplices are determined by
chains $A_1<A_2<\cdots<A_k$. The outer automorphism group $Out(\FF)$
acts naturally on $\F$, which can be thought of as an analog of the
Bruhat-Tits building associated with $GL_n(\Z)$. This complex was
introduced by Hatcher and Vogtmann in \cite{HV} where it is shown that it
has the homotopy type of the wedge of spheres of dimension $n-2$. They
defined this complex in terms of sphere systems in $\#_1^n S^1\times
S^2$ and used variants in their work on homological stability
\cite{HV2,HV3,HV4}.

There is a very useful analogy between $\F$ and the curve complex
$\mathcal C$ associated with a compact surface (with punctures)
$\Sigma$. The vertices of $\mathcal C$ are isotopy classes of
essential simple closed curves in $\Sigma$, and simplices are
determined by pairwise disjoint curves. The curve complex was
introduced by Harvey \cite{harvey} and was classically used by Harer
in his work on duality and homological stability of mapping class
groups \cite{harer1, harer2}. The key result here is that the curve
complex is homotopy equivalent to the wedge of spheres.

More recently, the curve complex has been used in the study of the
geometry of mapping class groups and of ends of hyperbolic
3-manifolds. The fundamental result on which this work is based is the
theorem of Masur and Minsky \cite{MM} that the curve complex is
hyperbolic. In the low complexity cases when $\mathcal C$ is
a discrete set one modifies the definition of $\C$ by adding an edge
when the two curves intersect minimally. In the same way, we modify
the definition of $\F$ when the rank is $n=2$ by adding an edge when
the two free factors (necessarily of rank 1) are determined by a basis
of $\FF$, i.e.\ whenever $\FF=\langle a,b\rangle$, then $\langle
a\rangle$ and $\langle b\rangle$ span an edge. In this way $\F$
becomes the standard Farey graph. The main result in this paper is:

\begin{main} The complex $\F$ of free factors is hyperbolic.
\end{main}

The statement simply means that when the 1-skeleton of $\F$ is
equipped with the path metric in which every edge has length 1, the
resulting graph is hyperbolic.

There are variants of the definition that give rise to quasi-isometric
complexes. For example, one can take the complex of partial bases,
where vertices are conjugacy classes of elements that are part of a
basis, and simplices correspond to compatibility, i.e.\ subsets
of a basis. The $Aut(\FF)$-version of this complex was used in
\cite{day-putman} to study the Torelli group. As another example, $\F$
is quasi-isometric to the nerve of the cover $\{U(A)\}$ of the thin
part of Outer space, where for a conjugacy class of proper free
factors $A$, the set $U(A)$ consists of those marked graphs whose
$A$-cover has core of volume $<\epsilon$, for a fixed small
$\epsilon>0$. 

Our proof is very much inspired by the Masur-Minsky argument, which
uses Teichm\"uller theory. Bowditch \cite{bo:hyp} gave a somewhat
simpler argument. In the remainder of the introduction, we give an
outline of the hyperbolicity of the curve complex which follows
\cite{MM} and \cite{bo:hyp}, and where we take a certain poetic
license. 

The proof starts by defining a coarse projection $\pi:\mathcal
T\to\mathcal C$ from Teichm\"uller space. To a marked Riemann surface
$X$ one associates a curve with smallest extremal length. To see that
this is well defined one must argue that short curves intersect a
bounded number of times (in fact, at most once), and then one uses the
inequality 
$$d_{\mathcal C}(\alpha,\beta)\leq i(\alpha,\beta)+1$$ where $i$
denotes the intersection number. Interestingly, the entire argument
uses only this inequality to estimate distances in $\mathcal C$ (and
only for bounded intersection numbers).

Teichm\"uller space carries the Teichm\"uller metric, and any two
points are joined by a unique Teichm\"uller geodesic. If $t\mapsto
X_t$ is a Teichm\"uller geodesic, consider the (coarse) path
$\pi(X_t)$ in $\mathcal C$. One observes:
\begin{enumerate}[(i)]
\item The collection of paths $\pi(X_t)$ is (coarsely) transitive,
  i.e.\ for any two curves $\alpha,\beta$ there is a path $\pi(X_t)$
  that connects $\alpha$ to $\beta$ (to within a bounded distance).
\end{enumerate}

Next, for any Teichm\"uller geodesic $\{X_t\}$ one defines a projection
$\mathcal C\to \{X_t\}$. Essentially, for a curve
$\alpha$ the projection assigns the Riemann surface $X_t$ on the path
in which $\alpha$ has the smallest length. The key lemma is the
following (see
\cite[Lemma 5.8]{MM}), proved using the intersection number estimate
above:

\begin{enumerate}[(ii)]
\item If $\alpha,\beta$ are adjacent in $\mathcal C$ and
  $X_\alpha,X_\beta$ are their projections to $X_t$, then
  $\pi(X_\alpha)$ and $\pi(X_\beta)$ are at uniformly bounded distance
  in $\mathcal C$.
\end{enumerate}

Consequently, one has a (coarse) Lipschitz retraction $\mathcal
C\to\pi(X_t)$ for every Teichm\"uller geodesic $X_t$. It quickly
follows that the paths $\pi(X_t)$ are reparametrized quasi-geodesics
(this means that they could spend a long time in a bounded set, but
after removing the corresponding subintervals the resulting coarse
path is a quasi-geodesic with uniform constants, after possibly
reparametrizing).

The final step is:

\begin{enumerate}[(iii)]
\item Triangles formed by three projected Teichm\"uller geodesics are
  uniformly thin. 
\end{enumerate}

Hyperbolicity of $\mathcal C$ now follows by an argument involving an
isoperimetric inequality (see \cite[Proposition 3.1]{bo:hyp} and our
Proposition \ref{hyperbolicity}). 

Our argument follows the same outline. In place of Teichm\"uller metric
and Teichm\"uller geodesics we use the Lipschitz metric on Outer space
and folding paths. There are technical complications arising from the
non-symmetry of the Lipschitz metric and the non-uniqueness of folding
paths between a pair of points in Outer space. Our projection from
$\F$ to a folding path comes in two flavors, left and right, and we
have to work to show that the two are at bounded distance from each other
when projected to $\F$. Similarly, we have to prove directly that
projections of folding paths fellow travel, even when the two have
opposite orientations. The role of simple closed curves is played by
{\it simple conjugacy classes} in $\FF$, i.e.\ nontrivial conjugacy
classes contained in some proper free factor.

The first two hints that Outer space has some hyperbolic features was
provided by Yael Algom-Kfir's thesis \cite{yael} and by
\cite{BF2}. Algom-Kfir showed that axes
of fully irreducible automorphisms are strongly contracting. In the
course of our proof we will generalize this result (see Proposition
\ref{contraction}) which states that all folding paths are contracting
provided their projections to $\F$ travel with definite speed. 
In \cite{BF2}
a certain non-canonical hyperbolic $Out(\FF)$ complex was constructed.
It is also known that fully irreducible automorphisms
act on $\F$ with positive translation length \cite{ilya-lustig,BF2}. 

Below is a partial dictionary between Teichm\"uller
space and Outer space relevant to this work.

\vskip .5cm
\begin{tabular}{|r|l|}
\hline
Surfaces & Free groups\\
\hline\hline
curve complex $\mathcal C$&complex of free factors $\F$\\
\hline
simple closed curve	&free factor, \\
&or a simple conjugacy class \\
\hline
intersection number &number of times\\
$i(\alpha,\beta)$	
 &a loop in a graph crosses an edge\\
\hline
Teichm\"uller space	&Outer space\\
\hline
Teichm\"uller distance	&Lipschitz distance\\
\hline
Teichm\"uller geodesic 	&folding path\\
\hline
Teichm\"uller map	&optimal map\\
\hline
quadratic differential	&train track structure on the tension graph\\
\hline
horizontal curve	&legal loop\\
\hline
vertical curve	&illegal loop\\
\hline
\end{tabular}
\vskip 1cm

The paper is organized as follows. In Section~\ref{s:review} we review
the basic notions about Outer space, including the Lipschitz metric,
train tracks, and folding paths. Section~\ref{s:whitehead} proves the
analog of the inequality $d_{\mathcal C}(\alpha,\beta)\leq
i(\alpha,\beta)+1$, using the Whitehead algorithm.
Sections~\ref{s:folding} and \ref{s:gafa} contain some additional
material on folding paths, including the formula for the derivative of
a length function, as well as the key fact that a simple class which
is largely illegal must lose a fraction of its illegal turns after a
definite progress in $\F$. In Section~\ref{s:projection} we define the
(left and right) projections of a free factor to a folding path and
establish that images in $\F$ of folding paths are reparametrized
quasi-geodesics. The key technical lemma in that section is
Proposition~\ref{left-right}(legal and illegal) establishing that the
two projections are at bounded distance when measured in $\F$. We end
this section with a very useful method of estimating where the
projections lie in Lemma~\ref{criterion}. In
Section~\ref{s:hyperbolicity} we recall the argument that for
hyperbolicity it suffices to establish the Thin Triangles condition,
and we also derive the contraction property of folding paths, measured
in $\F$. In Section~\ref{s:FT} we prove the Fellow Travelers property
(which of course follows from the Thin Triangles property), in both
parallel and anti-parallel setting.  Finally, in Section \ref{s:thin}
we establish the Thin Triangles property.

The proofs of the three main technical statements in the paper, namely
Proposition~\ref{gafa}(surviving illegal turns),
Proposition~\ref{left-right}(legal and illegal), and
Proposition~\ref{general}(closing up to a simple class), should be
omitted on the first reading.

\vskip 0.5cm
\noindent {\bf Acknowledgments.} We thank the American Institute of
Mathematics and the organizers and participants of the Workshop on
Outer space in October 2010 for a fruitful exchange of ideas. We
particularly thank Michael Handel for telling us a proof of Lemma
\ref{michael2}.  We thank Saul Schleimer for an inspiring
conversation and Yael Algom-Kfir for her comments on an earlier
version of this paper. We heartily thank the anonymous referee
for a very careful reading and many suggestions we feel greatly
improved the exposition. We also thank the referee for pointing out an
error in one of two arguments we gave to finish the proof of
Proposition~\ref{gafa}.

Since the first version of this paper appeared on the arXiv, there
have been some very interesting developments. 
Lee Mosher and Michael Handel
\cite{hm:hyperbolicity} have proved that the free splitting complex
$\mathcal S$ for $\FF$ is hyperbolic. Ilya Kapovich and Kasra Rafi
\cite{kr:hyperbolicity} have shown that the hyperbolicity of the
complex of free factors follows from the hyperbolicity of the free
splitting complex. Arnaud Hilion and Camille Horbez
\cite{hh:hyperbolicity} have proved that $\mathcal S$ is hyperbolic
from the point of view of the sphere complex. Brian Mann
\cite{bm:cyclic} showed that another $Out(\FF)$-complex called the
{\it cyclic splitting complex} is hyperbolic.

\section{Review}\label{s:review}
In this section we review some definitions and collect standard facts
about Outer space, Lipschitz metric, train tracks, and folding paths.
\vskip 0.5cm
\noindent
{\bf 
Outer space.}
A {\it graph} is a cell complex $\gG$ of dimension $\leq 1$. The {\it
  rose} $R_n$ of rank $n$ is the graph with one 0-cell (vertex) and
$n$ 1-cells (edges). A vertex of $\gG$ not of valence 2 is {\it topological}.  The closure in $\gG$ of a component
of the complement of the set of topological vertices is a {\it
  topological edge}. In particular, a topological edge may be a
circle. A {\it marking} of
a graph $\gG$ is a homotopy equivalence $g:R_n\to \gG$. A {\it metric}
on $\gG$ is a function $\ell$ that to each edge $e$ assigns a positive
number $\ell(e)$. We often view the graph $\gG$ as the path metric
space in which each edge $e$ has length $\ell(e)$.  The
Unprojectivized Outer space $\hX$ is the space of equivalence classes
of triples $(\gG,g,\ell)$ where $\gG$ is a finite graph with no
vertices of valence $\leq 2$, $g$ is a marking of $\gG$, and $\ell$ is
a metric on $\gG$. Two triples $(\gG,g,\ell)$ and $(\gG',g',\ell')$
are equivalent if there is a homeomorphism $h:\gG\to\gG'$ that
preserves edge-lengths and commutes with the markings up to homotopy.
Outer space $\X$ is the space of projective classes of such
(equivalence classes of) triples, i.e.\ modulo scaling the metric.
Equivalently, $\X$ is the space of triples as above where the metric
is normalized so that the volume $vol(\gG):=\sum\ell(e)=1$. Assigning
length 0 to an edge is interpreted as a metric on the graph with that
edge collapsed, and in this way $\X$ becomes a complex of simplices
with missing faces (the missing faces correspond to collapsing
nontrivial loops), which then induces the simplicial topology on $\X$.
Outer space was introduced by Culler and Vogtmann \cite{CV}, who
showed that $\X$ is contractible. We will usually suppress markings
and metrics and talk about $\gG\in\X$.  It is sometimes convenient to
pass to the universal cover and regard $\gG\in\hX$ as an action of
$\FF$ on the tree $\tilde\bG$.

We find the following notation useful. If $z$ is a nontrivial
conjugacy class, it can be viewed as a loop in the rose $R_n$, and via
the marking can be transported to a unique immersed loop in any marked
graph $\gG$. This loop will be denoted by $z|\gG$. The length of
this loop, i.e.\ the sum of the lengths of edges crossed by the loop,
counting multiplicities, is denoted by $\ell(z|\gG)$. Note that if
$z$ is not simple, then $\ell(z|\gG)\geq 2~vol(\gG)$ and in fact $z|\gG$ must
cross every edge at least twice.
\vskip 0.5cm
\noindent
{\bf Morphisms between trees and train tracks.}\label{morphisms0}
Recall that a {\it morphism} between two $\R$-trees $S,T$ is a map
$\tilde\phi:S\to T$ such that every segment $[x,y]\subset S$ can be
partitioned into finitely many subintervals on which $\tilde\phi$ is
an isometric embedding. For simplicity, in this paper we work only
with simplicial metric trees. A {\it direction} at $x\in S$ is a germ
of nondegenerate segments $[x,y]$ with $y\neq x$. The set $D_x$ of
directions at $x$ can be thought of as the unit tangent space; a
morphism $\tilde\phi:S\to T$ determines a map $D\tilde\phi_x:D_x\to
D_{\tilde\phi(x)}$, thought of as the derivative.  A {\it turn} at $x$
is an unordered pair of distinct directions at $x$. A turn $\{d,d'\}$
at $x$ is {\it illegal} (with respect to $\tilde\phi$) if
$D\tilde\phi_x(d)=D\tilde\phi_x(d')$. Otherwise the turn is {\it
  legal}. There is an equivalence relation on $D_x$ where $d\sim d'$
if and only if $d=d'$ or $\{d,d'\}$ is illegal. The equivalence
classes are {\it gates}. The collection of equivalence classes at each
$x$ is called the {\it illegal turn structure on $S$ induced by
  $\tilde\phi$.} If at each $x\in S$ there are at least two gates, the
illegal turn structure is called a {\it train track structure on $S$}.
This is equivalent to the requirement that $\tilde\phi$ embeds each
edge of $S$ and has at least two gates at every vertex. A path in $S$
is {\it legal} if it makes only legal turns.

If $S\overset{\tilde\phi}{\to}T{\to}U$ is a composition of morphisms
then there are two illegal turn structures of interest on $S$: one
induced by $\tilde\phi$ and the other induced by the composition. In
this situation, we will sometimes refer to the second of these as the
{\it pullback illegal turn structure on $S$ via $\tilde\phi$}. Note
that an illegal turn in the first structure is also illegal in the
second. In particular, if the second structure is a train track then
so is the first.

If $S$ and $T$ are equipped with abstract train track structures
(equivalence relation on $D_x$ for every vertex $x$ with at least two
gates), we say that a morphism $\tilde\phi:S\to T$ is a {\it train track
  map} if on each edge $\tilde\phi$ is an embedding and legal turns are sent
to legal turns\footnote{The standard usage of the term {\it train
    track map} is to self-maps of a graph, but this natural extension
  of the terminology should not cause any confusion.}. In particular,
legal paths map to legal paths.

We also extend this terminology to maps between graphs. If
$\phi:\Delta\to\Sigma$ is a map between connected metric graphs such
that the lift $\tilde\phi:\tilde\Delta\to\tilde\Sigma$ is a morphism
of trees, then we also say that $\phi$ is a morphism and we can define
the notion of legal and illegal turns on $\tilde\Delta$, which
descends to $\Delta$. If there are at least two gates at each point,
we have a train track structure on $\Delta$. If $\Delta$ and $\Sigma$
are equipped with abstract train track structures, the map $\phi$ is a
train track map if it sends edges to legal paths and legal turns to
legal turns. When the graphs $\Delta$ or $\Sigma$ are not connected we
work with components separately.  

\vskip 0.5cm
\noindent
{\bf Lipschitz metric and optimal maps.}  Let $\gG$ and $\gG'$ be two
points in $\hX$. The homotopy class of maps $h:\gG\to\gG'$ such that
$hg\simeq g'$ (with $g,g'$ markings for $\gG,\gG'$) is called the {\it
  difference of markings}. If $\gG$ and $\gG'$ are in $\X$, i.e.\ if
they have volume 1, the {\it Lipschitz distance} $d_\X(\gG,\gG')$
between $\gG$ and $\gG'$ is the log of the minimal Lipschitz constant
over all difference of markings maps $\gG\to\gG'$. The Lipschitz
distance is an asymmetric metric on $\X$ inducing the correct
topology. For more information,
see \cite{francaviglia-martino, ab, b:bers}. The basic fact, that
plays the role of Teichm\"uller's theorem in Teichm\"uller theory, is
the following statement, due to Tad White. For a proof see
\cite{francaviglia-martino} or \cite{b:bers}.

\begin{prop}\label{greengraph}
Let $\gG,\gG'\in\hX$. There is a difference of markings map
$\phi:\gG\to\gG'$ with the following properties.
\begin{itemize}

\item
$\phi$ sends each edge of $\gG$ to an immersed path (or a point) with
  constant speed (called the {\it slope} of $\phi$ on that edge).

\item The union of all edges of $\gG$ on which $\phi$ has the maximal
  slope $\lambda=\lambda(\phi)$, is a subgraph of $\gG$ with no
  vertices of valence 1. This subgraph is called the {\em tension
    graph}, denoted $\Delta=\Delta(\phi)$.

\item 
$\phi$ induces a train track structure on $\Delta$.
\end{itemize}
\end{prop}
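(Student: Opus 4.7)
The plan is to establish existence by minimizing the Lipschitz constant within a suitable family of ``edge-linear'' maps, and then to derive the three structural conclusions by showing that if any of them failed one could locally perturb $\phi$ to strictly decrease its Lipschitz constant.

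First I would restrict attention to maps $\phi\colon \gG\to\gG'$ in the given difference-of-markings class that send each edge either to a point or to an immersed path traversed with constant speed. Any Lipschitz representative can be replaced by such a map without increasing its Lipschitz constant, by tightening each edge to a geodesic representative and reparameterizing with constant speed. Among such maps I seek to minimize the Lipschitz constant $\lambda(\phi)$. Once the vertex images in $\gG'$ are fixed, the slope on each edge is an affine function of the length of the chosen combinatorial type of edge-path, and for each combinatorial choice the Lipschitz constant is a convex, piecewise-linear function of the vertex positions. Reduced edge-paths that can arise in a minimizer have lengths a priori bounded in terms of any fixed comparison map, so one reduces to minimizing a proper continuous function on a finite-dimensional space; this yields a minimizer $\phi$ with slope $\lambda := \lambda(\phi)$. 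The details would follow the treatment in \cite{francaviglia-martino} or \cite{b:bers}.

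Let $\Delta$ be the subgraph of edges on which $\phi$ attains slope $\lambda$. If some vertex $v$ had valence one in $\Delta$, with single incident $\Delta$-edge $e$, I would slide $\phi(v)$ a short distance along the initial direction of $\phi(e)$ from $\phi(v)$. This shortens $\phi(e)$ and so decreases its slope, while every non-$\Delta$ edge at $v$ has slope strictly below $\lambda$ and hence stays subcritical under a small enough perturbation, contradicting minimality. In particular $\Delta$ itself has no valence-one vertex. Similarly, to establish the train track property, suppose all directions of $\Delta$ at some vertex $v$ were sent by $D\phi_v$ to a single direction $d$ at $\phi(v)$, i.e.\ that $\phi$ produced only one gate among $\Delta$-directions at $v$. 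Then moving $\phi(v)$ a short distance in the direction opposite $d$ simultaneously shortens every $\Delta$-edge at $v$, decreasing each of their slopes, while the other edges at $v$ remain below $\lambda$; again a contradiction to minimality.

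The main obstacle is the existence step. The space of linear representatives is not obviously compact, since a priori infinitely many homotopy classes of edge-paths are available as the image of each edge. The substantive point is to bound these combinatorial choices uniformly, using that overly long edge-paths force the slope above that of any fixed reference map, and thereby to reduce to a genuine finite-dimensional minimization problem. Once existence is in hand, the perturbation arguments giving the two properties of $\Delta$ are routine local calculations at a single vertex.
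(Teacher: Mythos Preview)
The paper does not actually prove this proposition; it attributes it to Tad White and refers to \cite{francaviglia-martino} and \cite{b:bers} for a proof. Your outline is essentially the standard argument from those references, so in spirit you are doing exactly what the paper does.

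That said, your sketch has a genuine gap in the perturbation step. You minimize only $\lambda(\phi)$, and then argue that a valence-$1$ vertex $v$ of $\Delta$ (or a single-gate vertex) contradicts this minimality. But the perturbation you describe only lowers the slope on the $\Delta$-edges incident to $v$; any $\Delta$-edge \emph{not} incident to $v$ still has slope exactly $\lambda$, so $\lambda(\phi)$ is unchanged. There is no contradiction to minimality of $\lambda$ from a single such move. The standard repair is a secondary minimization: among all edge-linear maps realizing the minimal $\lambda$, choose one with $\Delta$ of minimal volume (or minimal number of edges, or minimize the multiset of slopes lexicographically). Your perturbation then strictly shrinks $\Delta$ while keeping $\lambda$ fixed, contradicting \emph{that} minimality. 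Equivalently, one may iterate the perturbation until $\Delta$ has the required structure; the point is that each step strictly reduces $\Delta$, not $\lambda$.

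There is also a sign slip in the single-gate argument: if every $\Delta$-direction at $v$ maps to $d$, you should slide $\phi(v)$ \emph{in} the direction $d$ (into the common initial germ of the $\phi$-images), not opposite to it; moving opposite to $d$ lengthens each $\phi(e)$ after tightening. Your valence-$1$ case has the direction right, so this is presumably just a typo.
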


The last bullet says that the map $\lambda\Delta\to \gG'$ induced by
$\phi$ is a morphism immersing edges and each vertex has at least two
gates. Note that the Lipschitz constant of $\phi$ is $\lambda$ and so,
in the case that $\gG, \gG'\in\X$, we have that $d_\X(\gG,\gG')\leq
\log\lambda$. In fact equality holds since the presence of at least
two gates at each vertex of $\Delta$ guarantees that it contains legal
loops, whose length gets stretched by precisely $\lambda$ so there can
be no better map homotopic to $\phi$. We call any $\phi$ satisfying
Proposition~\ref{greengraph} an {\it optimal map}. A morphism that
induces a train track structure is an example of an optimal map.
Unfortunately, unlike Teichm\"uller maps, optimal maps are not
uniquely determined by $\gG,\gG'$.

A legal loop can be constructed in $\Delta$ by starting with an edge
and extending it inductively to longer legal edge paths until some
oriented edge is repeated. In fact, this guarantees the existence of a
``short'' legal path. We will say a loop in $\Delta$ is a {\it
  candidate} if it is either embedded, or it forms the figure 8, or it
forms a ``dumbbell''.  Every candidate determines a conjugacy class
that generates a free factor of rank 1. Thus graph $\Delta$ admits a
legal candidate. See \cite{francaviglia-martino}.
\vskip 0.5cm
\noindent
{\bf Folding at speed 1.}\label{morphisms}\label{p:folding}
Now assume that $S,T$ represent points of Unprojectivized Outer space
$\hX$ (i.e.\ they are universal covers of marked metric graphs), and
$\tilde\phi:S\to T$ is an equivariant morphism. Equivariantly
subdivide $S$ so that $\tilde\phi$ embeds each edge and the inverse
image under $\tilde\phi$ of a vertex of $T$ is a vertex of $S$. Choose
some $\epsilon>0$ smaller than half of the length of any edge in $S$.
Then for $t\in [0,\epsilon]$ define the tree $S_t$ as the quotient of
$S$ by the equivalence relation: $u\sim_t v$ if and only if there is a
vertex $x$ with $d(x,u)=d(x,v)\leq t$ and
$\tilde\phi(u)=\tilde\phi(v)$.  Then $S_t$ represents a point in $\hX$
and $\tilde\phi$ factors as $S\overset{\tilde\phi_{0t}}\to
S_t\overset{\tilde\phi_{t\infty}}\to T$ for some equivariant morphism
$\tilde\phi_{t\infty}$ and the quotient map $\tilde\phi_{0t}:S\to
S_t$, which is also an equivariant morphism. The trees $S_t$, $t\in
[0,\epsilon]$ form a path in $\hX$, and $S_0=S$. We say that $S_t$ is {\it the path obtained from $S$ by folding all illegal turns at speed 1 with
  respect to $\tilde\phi$}.

If $\tilde\phi$ induces only one gate at some vertex $v\in S$, then
$S_t$ will have a valence 1 vertex for $t>0$. In that case we always
pass to the minimal subtree of $S_t$. When $\tilde\phi$ induces a
train track structure on $S$, $S_t$ is automatically minimal (if $S$
is). For simplicity we state Proposition~\ref{slope 1 folding} in the train
track situation only.

\begin{prop}\label{slope 1 folding}
Let $\tilde\phi:S\to T$ be an equivariant morphism between two trees in $\hX$
inducing a train track structure on $S$. There is a (continuous) path $S_t$
in $\hX$,
$t\in [0,\infty)$, and there are equivariant morphisms $\tilde\phi_{st}:S_s\to S_t$ for
  $s\leq t$ so that the following holds:
\begin{enumerate}[(1)]
\item $S_0=S$, $S_t=T$ for $t$ large;
\item $\tilde\phi_{tt}=Id$, $\tilde\phi_{su}=\tilde\phi_{tu}\tilde\phi_{st}$ for $s\leq t\leq
  u$;
\item $\tilde\phi_{0t}=\tilde\phi$ and $\tilde\phi_{tt'}=Id$ for large
  $t<t'$;
\item each $\tilde\phi_{st}$ isometrically embeds edges and induces at least two
  gates at every vertex of $S_s$;
\item for $s<t,t'$ the illegal turns at vertices of $S_s$ with respect
  to $\tilde\phi_{st}$ coincide with those with respect to $\tilde\phi_{st'}$, so
  $S_s$ has a well-defined train track structure; and
\item for every $s<t$ there is $\epsilon>0$ so that $S_{s+\tau}$,
  $\tau\in [0,\epsilon]$ is obtained from $S_s$ by folding all illegal
  turns at speed 1 with respect to $\tilde\phi_{st}$.
\end{enumerate}
Moreover, this path is unique.
\end{prop}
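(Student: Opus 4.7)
The plan is to iterate the local speed-1 folding construction stated in the paragraph preceding the proposition. I will build the path $S_t$ and the morphisms $\tilde\phi_{st}$ on successive intervals $[t_i, t_{i+1}]$, at each step using the factorization $\tilde\phi_{t_i\infty}: S_{t_i}\to T$ (with $\tilde\phi_{t_0\infty}=\tilde\phi$) as input to the local folding, which produces $S_t$ for $t\in[t_i,t_i+\epsilon_i]$ together with morphisms $\tilde\phi_{t_i t}$ and a factor $\tilde\phi_{t\infty}:S_t\to T$. Throughout, $\tilde\phi_{st}$ for $s<t$ in the same or distinct folding intervals is defined by composition, which forces (2) and the equality $\tilde\phi = \tilde\phi_{t\infty}\tilde\phi_{0t}$.

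The first main step is to show that this iteration is not blocked, i.e.\ that after each local folding the residual morphism $\tilde\phi_{t_{i+1}\infty}:S_{t_{i+1}}\to T$ still satisfies the hypothesis of the local folding construction (in particular, induces at least two gates at every vertex of $S_{t_{i+1}}$). Pick any vertex $v\in S_{t_{i+1}}$. Its preimage under $\tilde\phi_{t_it_{i+1}}$ is a subtree $\tau_v\subset S_{t_i}$ of diameter at most $2\epsilon_i$ built out of points that have been identified under the speed-1 folding, and the gates at $v$ with respect to $\tilde\phi_{t_{i+1}\infty}$ correspond bijectively to the gates of $\tilde\phi_{t_i\infty}$ on directions of $S_{t_i}$ pointing out of $\tau_v$. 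Because folding only identifies directions that lie in the same gate at vertices of $\tau_v$, no pair of distinct gates in $S_{t_i}$ collapses to a single gate in $S_{t_{i+1}}$, so the at-least-two-gates property is inherited; this simultaneously verifies (4) and (5) and yields a well-defined train track structure on every $S_s$.

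The next main step is termination: we need $S_t=T$ for all large $t$, giving (1) and (3). For this I would track a simple combinatorial complexity, for example the sum over vertices of $S_s$ of the number of gates of $\tilde\phi_{s\infty}$ minus one, plus the number of points of $S_s$ that $\tilde\phi_{s\infty}$ fails to embed locally; this complexity is nonnegative, strictly decreases at each discrete folding event (two gates at a vertex merge, or an edge is consumed), and the amount of folding-time between consecutive events is bounded below by a definite constant depending on minimal edge lengths, so only finitely many events can occur. After the last event, $\tilde\phi_{t\infty}$ is an isometry onto its image, and speed-1 folding with respect to an isometry is the identity, so the path stabilizes at $T$. Continuity is then clear since during each interval $[t_i,t_{i+1}]$ edge lengths of $S_t$ vary linearly in $t$.

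For uniqueness, any other path $\{S_t'\}$ with morphisms $\tilde\phi_{st}'$ satisfying (1)--(6) must, by (6), agree with the speed-1 folding on a right-neighborhood of every time $s$, and (2) together with (3) pins down the factorizations through the given endpoint $T$; a standard connectedness argument on $[0,\infty)$ then forces $(S_t',\tilde\phi_{st}') = (S_t,\tilde\phi_{st})$. The step I expect to be genuinely delicate is the gate-count argument at newly created vertices of $S_{t_{i+1}}$ in the middle paragraph: one must really check that, at a point $v$ formed by identifying interior points of several edges of $S_{t_i}$, the residual morphism $\tilde\phi_{t_{i+1}\infty}$ still distinguishes at least two outgoing directions, and this is where the speed-1 convention (identifying only points at equal distance from their common image vertex in $T$) and the hypothesis that $\tilde\phi$ itself induces a train track structure are both essential.
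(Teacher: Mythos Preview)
Your approach is essentially the paper's method 2.2.C (integrating the speed-1 folding direction), and your uniqueness argument matches the paper's. The gate-preservation discussion is fine in spirit. The genuine gap is in your termination argument.

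You claim that a combinatorial complexity strictly decreases at each ``event'' and that consecutive events are separated by a definite positive time depending on minimal edge lengths. Neither claim is justified. First, the complexity you propose is vague: the ``number of points of $S_s$ that $\tilde\phi_{s\infty}$ fails to embed locally'' is not obviously finite (even equivariantly), and it is unclear why your sum strictly decreases when, say, folding creates a new valence-$3$ vertex. Second, and more seriously, minimal edge lengths are not bounded below along the path: an edge can shrink as both of its endpoints participate in folding, so the gaps between events can accumulate. With irrational edge lengths there is no reason the process terminates in finitely many local steps; the paper explicitly says so. What is true is that the total folding time is finite (the volume of the quotient decreases at rate equal to the illegality, which is $\geq 1$ while $S_t\neq T$), but that does not rule out infinitely many events in finite time.

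The paper's method 2.2.C handles exactly this: if $S_t$ has been defined on $[0,t_0)$, one shows that length functions converge to those of a limiting tree $S_{t_0}\in\hX$ (lengths are nonincreasing and bounded below by their values in $T$), that morphisms $S_t\to S_{t_0}$ exist as Gromov--Hausdorff limits, and that folding can resume from $S_{t_0}$. This limit step is what your argument is missing. Alternatively, the paper's method 2.2.B (Skora's vertical thickening of the graph of $\tilde\phi$) constructs all $S_t$ at once and sidesteps the accumulation issue entirely; that would be another way to close the gap.
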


\begin{proof} Uniqueness is clear from the definition of folding at
  speed 1. There can be no last time $s$ so that two paths satisfying
  the above conditions agree (including the maps $\tilde\phi_{tt'}$) up to
  $S_s$ but no further, by item (6).

There are three methods to establish existence, and they will be only
sketched. 

{\bf \ref{slope 1 folding}.A. Stallings' Method.} This works when $S$
and $T$ can be subdivided so that $\tilde\phi$ is simplicial and all
edge lengths are rational (or fixed multiples of rational numbers). In
our applications we can arrange that this assumption holds. Then we
may subdivide further so that all edge lengths are equal. The path
$S_t$ is then obtained exactly as in the Stallings' beautiful paper
\cite{stallings}, by inductively identifying any pair of edges with a
common vertex that map to the same edge in $T$. This operation of {\it
  elementary folding} can be performed continuously to yield a
1-parameter family of trees, i.e.\ a path, between the original tree
and the folded tree. Putting these paths together gives the path
$S_t$.

{\bf \ref{slope 1 folding}.B. Via the vertical thickening of the graph
  of $\tilde\phi$.} This method is due to Skora
\cite{skora:deformations}, who built on the ideas of Steiner. Skora's
preprint was never published; the interested reader may find the
details in \cite{matt} and \cite{2topologies}. Consider the graph of
$\tilde\phi$ as a subset of $S\times T$ and define the ``vertical
$t$-thickening'' of it as
$$W_t=\{(x,y)\in S\times T\mid d(\tilde\phi(x),y)\leq t\}$$
Next, consider the decomposition $\mathcal D_t$ of $W_t$ into the path
components of the sets $W_t\cap S\times\{y\}$, $y\in T$. Let
$S_t=W_t/\mathcal D_t$ be the decomposition space with the metric
defined as follows. A path in $W_t$ is {\it linear} if its projection
to both $S$ and $T$ has constant speed (possibly speed 0). A piecewise
linear path $\gamma$ in $W_t$ is {\it taut} if the preimages
$\gamma^{-1}(\ell)$ of leaves in $\mathcal D_t$ are connected. Then
define the distance in $S_t$ as the length of the projection to $T$ of
any piecewise linear taut path connecting the corresponding leaves in
$W_t$. In this way $S_t$ becomes a metric tree. The morphisms
$\tilde\phi_{st}$ are induced by inclusion $W_s\hookrightarrow W_t$.

{\bf \ref{slope 1 folding}.C. Via integrating the speed 1 folding
  direction.} Starting at $S$ consider the path $S_t, t\in
[0,\epsilon],$ obtained by folding all illegal turns at speed 1. Now
extend this path by folding all illegal turns of $S_\epsilon$ at speed
1. Continue in this way inductively, and show that either $T$ is
reached in finitely many steps, or there is a well defined limiting
tree, from which folding can proceed. This is the approach taken in
\cite{francaviglia-martino}, to which the reader is referred for
further discussion.

One possible approach is as follows. Say $S_t$ is defined for $t\in
[0,t_0)$ with $S_0=S$.
To define the limiting tree $S_{t_0}$, note that for each conjugacy class, the
length along the path is nonincreasing and thus converges. The
limiting length function defines a tree, representing a point in
compactified Outer space. The lengths of conjugacy classes
are bounded below by their values in $T$, so the limiting tree $S_{t_0}$ is
free simplicial and thus represents a point in Outer space. We may
view the tree $S_{t_0}$ as the equivariant Gromov-Hausdorff limit of the path
$S_t$. The maps $S\to S_t$, viewed as subsets of $S\times S_t$ via
their graphs, subconverge to a morphism $S\to S_{t_0}$, and similarly
by a diagonal argument one constructs morphisms $S_t\to S_{t_0}$ that
compose correctly. To show uniqueness of such morphisms, one uses
Gromov-Hausdorff limits and the fact that the only (equivariant) morphism
$S_{t_0}\to S_{t_0}$ is the identity.
\end{proof}

The path $S_t, t\in [0,\infty)$ from Proposition~\ref{slope 1
  folding} is {\it the path induced by
  $\tilde\phi$}. Lemma~\ref{l:folding segments} will be used in the proof of Theorem~\ref{t:gadget}. Its proof is immediate from
\ref{slope 1 folding}.B.

\begin{lemma}\label{l:folding segments}
Let $S_t$, $t\in [0,\infty)$, be the path in $\hX$ induced by $\tilde
  \phi:S\to T$ and $[s_1,s_2]$ be a path in $S_0$. Suppose
  $\tilde\phi(s_1)=\tilde\phi(s_2)$ and set $h$ equal to the {\em
    outradius of $\tilde\phi([s_1,s_2])$ with respect to
    $\tilde\phi(s_1)$}, i.e.\ $$h=\max_{s\in
    [s_1,s_2]}d_T(\tilde\phi(s_1),\tilde\phi(s))$$ Then $s_1$ and
  $s_2$ are identified by time $h$ but not before,
  i.e.\ $\tilde\phi_{0t}(s_1)=\tilde\phi_{0t}(s_2)$ iff $t\ge h$.
\end{lemma}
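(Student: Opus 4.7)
The plan is to deduce the lemma directly from the vertical thickening construction (method~B in the proof of Proposition~\ref{slope 1 folding}), since the statement is really a statement about how horizontal path components evolve with $t$. In that picture $S_t=W_t/\mathcal D_t$, and the map $\tilde\phi_{0t}:S\to S_t$ sends $s$ to the leaf of $\mathcal D_t$ containing $(s,\tilde\phi(s))\in W_t$.

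First I would observe that since $\tilde\phi(s_1)=\tilde\phi(s_2)=:y$, both $(s_1,y)$ and $(s_2,y)$ lie in the single horizontal slice $W_t\cap(S\times\{y\})$, and so they are identified in $S_t$ if and only if they lie in a common path component of that slice. Projection to the first factor identifies this slice homeomorphically with the sublevel set
\[
A_t(y):=\{\,s\in S\mid d_T(\tilde\phi(s),y)\le t\,\}\subset S,
\]
so the question reduces to asking for which $t$ the points $s_1$ and $s_2$ lie in the same path component of $A_t(y)$.

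For $t\ge h$, the very definition of the outradius $h$ gives $[s_1,s_2]\subset A_t(y)$, providing a path in $A_t(y)$ from $s_1$ to $s_2$. For $t<h$, there is $s^\ast\in[s_1,s_2]$ with $d_T(\tilde\phi(s^\ast),y)>t$, so $s^\ast\notin A_t(y)$; but because $S$ is an $\R$-tree, any connected subset of $S$ containing $s_1$ and $s_2$ must contain the unique arc $[s_1,s_2]$, and in particular must contain $s^\ast$, a contradiction.

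The lemma is thus essentially a direct translation from the thickening picture, and I do not anticipate a serious obstacle. The one small point to be careful about is the tree-theoretic step used to rule out identification before time $h$, namely that arcs in an $\R$-tree are contained in every connected subset joining their endpoints; this is what forces the horizontal slice to be disconnected between $s_1$ and $s_2$ whenever some point of $[s_1,s_2]$ escapes $A_t(y)$.
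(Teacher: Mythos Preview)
Your proof is correct and is exactly the argument the paper has in mind: the paper does not write out a proof but states that this lemma ``is immediate from \ref{slope 1 folding}.B,'' i.e.\ from the vertical thickening picture, which is precisely the route you take.
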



\vskip 0.5cm
\noindent
{\bf Folding paths.} Suppose $S\to T$ is an equivariant morphism
between trees in $\hX$ that induces a train track structure on $S$. The
induced path $S_t$, $t\in [0,\infty),$ is a {\it folding path in
    $\hX$}. The projection of $S_t$ to $\X$ is a {\it folding path in
    $\X$}. A folding path $\gG_t$ in $\X$ can be parametrized by
  arclength, so that $d_\X(\gG_{t'},\gG_t)=t-t'$ for $t'\leq t$.

\begin{notation}\label{n:folding path}
  From now on, we switch to thinking of points of Outer space as
  finite graphs when discussing folding paths.  We have several
  meanings for {\it folding path}. To avoid confusion, we will use the
  following notation:
\begin{enumerate}[(1)]
\item\label{i:unprojectivized path} $\hG_t$, $t\in [0,\infty),$ denotes
  a folding path in $\hX$, i.e.\ a path in $\hX$ induced by an
  equivariant morphism $S\to T$ giving a train track structure on $S$. If
  $\o\in [0,\infty)$ is minimal such that $\hG_\o=T$, we also refer to
    $\hG_t$, $t\in [0,\o],$ as a folding path. We call $t$ the {\it
      natural parameter}.
\item\label{i:projectivized path} $\G_t$, $t\in [0,\infty),$ (or
  $\G_t$, $t\in [0,\o]$) denotes a folding path in $\X$ obtained by
  projecting a path $\hG_t$ as in (\ref{i:unprojectivized path}).  So,
  $\G_t=\hG_t/vol(\hG_t)$.
\item\label{i:arclength path} $\bG_t$, $t\in [0,L],$ denotes a path as
  in (\ref{i:projectivized path}), but
  \hyphenation{re-param-e-trized}reparametrized in terms of arclength
  with respect to $d_\X$. So, $\bG_{t(s)}=\G_s$ where
  $$t(s)=d_\X(\G_0,\G_s)=\log\bigg(\frac{vol(\hG_0)}{vol(\hG_s)}\bigg)$$ For all
  $t\le t'\in [0,L]$ there is a morphism $e^{t'-t}\bG_t\to \bG_{t'}$.
\end{enumerate}
Unless otherwise noted, a folding path $\bG_t$ without further
adjectives or decorations will mean a folding path as in
(\ref{i:arclength path}).
\end{notation}

\begin{prop}[\cite{francaviglia-martino}]\label{rescaling}
Let $\gG,\Sigma\in\X$. There is a geodesic in $\X$ from $\gG$ to
$\Sigma$ which is the concatenation of two paths, the first is a
(reparametrized) linear path
in a single simplex of $\X$, and the second is a folding path in $\X$ parametrized by arclength.
\end{prop}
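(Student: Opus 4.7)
The plan is to construct the geodesic in two stages: first reshape $\gG$ within its simplex so that the optimal map to $\Sigma$ becomes a morphism with a train track structure, and then apply the folding path this morphism induces.

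First, apply Proposition~\ref{greengraph} to obtain an optimal map $\phi:\gG\to\Sigma$ with tension subgraph $\Delta\subset\gG$, Lipschitz constant $\lambda$, and an induced train track structure on $\Delta$; in particular $d_\X(\gG,\Sigma)=\log\lambda$. Define $\gG'$ in the closed simplex of $\gG$ by leaving each edge of $\Delta$ at its original length and shrinking each edge $e\notin\Delta$ from $\ell(e)$ to $|\phi(e)|/\lambda$ (so edges collapsed by $\phi$ shrink to zero and $\gG'$ may lie in a face). After the obvious linear reparametrization of $\phi$ on each shrunken edge, the resulting map $\gG'\to\Sigma$ has uniform slope $\lambda$ and is thus a morphism. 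The train track structure on $\Delta$ already provides at least two gates at every vertex of $\Delta$; at vertices of $\gG$ lying entirely outside $\Delta$ further adjustment within the simplex may be needed to guarantee at least two gates, and for this I defer to \cite{francaviglia-martino}. The first leg of the geodesic is then the straight-line interpolation from $\gG$ to $\gG'$ in the simplex, suitably reparametrized; the second leg is produced by applying Proposition~\ref{slope 1 folding} to the morphism $\gG'\to\Sigma$ and reparametrizing by arclength.

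To verify that the concatenation is a geodesic I perform a direct length calculation. Let $V'$ denote the volume of $\gG'$ prior to renormalizing to volume $1$. The identity $\gG\to\gG'$, with $\gG'$ rescaled to volume $1$ (so each edge is multiplied by $1/V'$), has slope $1/V'$ on every $\Delta$-edge and slope $\mu_e/(\lambda V')<1/V'$ on each shrunken edge (where $\mu_e<\lambda$ is the original slope), so the first leg has length $\log(1/V')$. After the same renormalization the morphism $\phi:\gG'\to\Sigma$ acquires uniform slope $\lambda V'$, so the folding path has length $\log(\lambda V')$. These sum to $\log\lambda=d_\X(\gG,\Sigma)$, confirming that the concatenated path is a geodesic.

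The main obstacle I foresee is precisely the train track structure at vertices outside $\Delta$, where Proposition~\ref{greengraph} gives no a priori control on the number of gates; handling this requires further moves within the simplex and is the technical heart of the construction in \cite{francaviglia-martino}. Once that is in place, the length computation above is essentially routine.
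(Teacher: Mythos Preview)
Your overall strategy matches the paper's: adjust the metric on the edges outside $\Delta$ so that the tension graph becomes everything and a train track structure appears, then fold. Your length computation showing the two legs add up to $\log\lambda$ is also fine.

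The gap you correctly identify---getting at least two gates at vertices lying outside $\Delta$---is real, but your proposed remedy does not work as stated. ``Further adjustment within the simplex'' means changing edge lengths, and changing edge lengths while keeping the same continuous map $\phi$ (linearly reparametrized) does not change the gate structure at all: two directions are in the same gate iff their $\phi$-images agree as germs, and that is metric-independent. To create a second gate at such a vertex you must \emph{homotope} $\phi$, not just rescale the domain.

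The paper resolves this with a single minimality argument that couples the metric change and the homotopy. For each choice of edge lengths $x$ on the complement of $\Delta$, let $\mu(x)$ be the smallest possible maximal slope over all maps homotopic to $\phi$ \emph{rel} $\Delta$; then pick $x_0$ of minimal total length among those with $\mu(x_0)=\lambda$, and let $\phi_0$ realize $\mu(x_0)$. If the tension graph of $\phi_0$ were not all of $\hG'$, some edge length could be decreased while keeping $\mu=\lambda$, contradicting minimality of $x_0$; if some vertex had only one gate, $\phi_0$ could be perturbed to strictly shrink the tension graph, again contradicting minimality. This simultaneously produces the full tension graph and the train track structure without a separate appeal to \cite{francaviglia-martino}. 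Your direct shrinking $e\mapsto|\phi(e)|/\lambda$ is in general a different (larger-volume) point than $x_0$, and at your point $\phi$ itself need not be optimal, which is why the two-gate condition can fail there.
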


\begin{proof}
  Fix an optimal map $\phi:\gG\to\Sigma$ and let
  $\Delta=\Delta(\phi)\subset \gG$ be its tension graph with maximal
  slope $\lambda=\lambda(\phi)$. If $\Delta=\gG$ then the rescaled map
  $\lambda\phi:\lambda\gG\to\Sigma$ is a morphism and satisfies the
  requirement that it induces a train track structure on $\gG$.
  Proposition~\ref{slope 1 folding} gives a folding path $\hG_t$ in
  $\hX$ from $\lambda\gG$ to $\Sigma$ and morphisms
  $\hat\phi_{st}:\hG_s\to\hG_t$. To see that $\hG_t$ projects to a
  geodesic $\G_t$ in $\X$, note that
  $\hat\phi_{su}=\hat\phi_{tu}\hat\phi_{st}$ for $s<t<u$ implies
  $d_\X(\G_s,\G_u)=d_\X(\G_s,\G_t)+d_\X(\G_t,\G_u)$.

  Now suppose $\Delta\neq\gG$. Denote by $e_1,\cdots,e_k$ the
  (topological) edges of $\gG$ outside of $\Delta$. For each tuple
  $x=(x_1,x_2,\cdots,x_k)$ of lengths in the cube $[0,\ell_1]\times
  [0,\ell_2]\times\cdots\times[0,\ell_k]$, where $\ell_i$ is the
  length of $e_i$ in $\gG$, denote by $\mu(x)$ the smallest maximal
  slope among maps $\gG\to\Sigma$ that are homotopic to $\phi$ rel
  $\Delta$, where $\gG$ is given the metric $x$ outside $\Delta$ (so
  $\mu(x)=\infty$ if some loop is assigned length 0). Among all $x$ in
  the cube with $\mu(x)=\lambda$ choose one with the smallest sum of
  the coordinates, say $x_0$. Denote by $\hG'\in\hX$ the graph $\gG$
  with the metric $x_0$ outside of $\Delta$. (Some edges of $\hG'$ may
  get length 0 in which case its projection $\gG'$ to $\X$ is on the
  boundary of the original simplex.)

Let $\phi_0:\hG'\to\Sigma$ be a map homotopic to $\phi$ rel
$\Delta$, linear on edges, and with the maximal slope
$\lambda(\phi_0)=\lambda$. We claim that $\phi_0$ is optimal with
$\Delta(\phi_0)=\hG'$. Indeed, $\Delta(\phi_0)=\hG'$ (otherwise
some edge length can be reduced contradicting the choice of $x_0$) and
$\phi_0$ induces at least two gates at every vertex (otherwise
$\phi_0$ may be perturbed so that the tension graph becomes a proper
subgraph, see the proof of Proposition~\ref{greengraph} e.g.\ in
\cite{b:bers}).

Since we have maps
$\gG\to\hG'$ and $\hG'\to\Sigma$ with slopes 1 and $\lambda$
respectively, we also have
$d_\X(\gG,\Sigma)=d_\X(\gG,\gG')+d_\X(\gG',\Sigma)$.
\end{proof}

\section{Detecting boundedness in the free factor complex ${\F}$}\label{s:whitehead} 
In this section we define a coarse projection $\pi:\X\to\F$ and prove
an analog of the inequality $d_{\mathcal C}(\alpha,\beta)\leq
1+i(\alpha,\beta)$ (see Lemma \ref{bounded crossing}). An immediate
consequence is that $\pi$ is coarsely Lipschitz (see Corollary
\ref{proj X->F continuous}).

Recall that a nontrivial conjugacy class $x$ in $\FF$ is {\it simple} if any
(equivalently some) representative is contained in a proper free
factor. If $x$ is a simple class, denote by $\ff x$ the conjugacy
class of a smallest free factor containing a representative of $x$. A
proper connected subgraph $P$ of a marked graph $\gG$ that contains a
circle defines a vertex $\ff P$ of $\F$.

\begin{lemma}\label{subgraph distance}
Let $\gG$ be a marked graph. If $P,Q\subset\gG$ are proper
connected subgraphs defining free factors $\ff P,\ff Q$ then
$d_{\F}(\ff P,\ff Q)\leq 4$.
\end{lemma}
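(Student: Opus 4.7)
The plan is to show that $\ff P$ and $\ff Q$ both lie within distance $2$ in $\F$ of a common ``carrier'' free factor of small rank.

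First, I would reduce to the case of two embedded circles: choose embedded loops $C_P\subseteq P$ and $C_Q\subseteq Q$. A standard maximal-tree argument---extend a maximal tree of $C_P$ to a maximal tree of $P$---shows that $[C_P]$ is part of a basis of $\pi_1(P)$, so $\langle[C_P]\rangle$ is a rank-$1$ free factor of $\pi_1(P)$, and by transitivity a free factor of $\FF$, equal to $\ff{C_P}$. Thus $\ff{C_P}\le\ff P$ in the free-factor partial order and $d_\F(\ff{C_P},\ff P)\le 1$; the analogous bound holds for $Q$. So it suffices to prove $d_\F(\ff{C_P},\ff{C_Q})\le 2$.

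Next, I would form a connected carrier: let $R=C_P\cup C_Q$ if these circles meet, and otherwise $R=C_P\cup\alpha\cup C_Q$ for an embedded arc $\alpha$ in $\gG$ joining them with interior disjoint from $C_P\cup C_Q$. Then $R$ is connected with $\rank(\pi_1(R))\le 2$, and the same maximal-tree argument shows $\ff{C_P},\ff{C_Q}\le\ff R$. When $\rank(\FF)\ge 3$, the rank bound forces $R\subsetneq\gG$; since $\gG$ has no valence $\le 2$ vertex, $R$ being a proper subgraph forces $\ff R$ to be a proper free factor (otherwise $\gG$ would deformation retract onto $R$, producing a valence-$1$ leaf), so $\ff R$ is a vertex of $\F$ and provides the required common super-factor.

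The only remaining obstacle is the case $R=\gG$, which forces $\rank(\FF)=2$. Here $\F$ is the Farey graph, and I would finish by a short enumeration of the three possible graphs in $\X$ of rank $2$ (the figure-$8$, theta, and dumbbell), checking in each that any two distinct embedded circles form a basis of $F_2$ and are therefore adjacent in $\F$. This yields $d_\F(\ff{C_P},\ff{C_Q})\le 1$ in rank $2$, and in all cases the triangle inequality gives $d_\F(\ff P,\ff Q)\le 4$.
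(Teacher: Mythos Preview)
Your proof is correct, but it takes a route genuinely different from the paper's. The paper argues by \emph{enlarging}: it extends $P$ and $Q$ to connected subgraphs $P',Q'$ each containing all but one edge of $\gG$; their intersection then contains a circle $R$, and the chain $P\le P'\ge R\le Q'\ge Q$ gives the length-$4$ path. You instead argue by \emph{shrinking}: you pass down to rank-$1$ subfactors $\ff{C_P}\le\ff P$ and $\ff{C_Q}\le\ff Q$, and then up to a common rank-$\le 2$ superfactor $\ff R$, giving the chain $P\ge C_P\le R\ge C_Q\le Q$. The two arguments are pleasantly dual---one finds a common subfactor of two large superfactors, the other a common superfactor of two small subfactors---and both yield the same bound of $4$. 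Your approach requires slightly more bookkeeping in rank $2$ (the enumeration of graph types), whereas the paper dispatches rank $2$ in one line since proper free factors there already have rank $1$; on the other hand your carrier $R$ is perhaps more concrete than the paper's $P'\cap Q'$. One small remark: your parenthetical about valence-$1$ leaves is unnecessary---once you know $\rank(\pi_1(R))\le 2<n$, properness of $\ff R$ is immediate.
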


\begin{proof}
If $\rank(\FF)=2$ then $d_{\F}(\ff P,\ff Q)\leq 1$ (using the
modified definition of $\F$). Now assume $\rank(\FF)\geq 3$.
Enlarge $P,Q$ to connected graphs $P',Q'$ that contain all but one
edge of $\gG$. Thus their intersection
contains a circle $R$, so we have a path in $\F$ given by subgraphs
$P,P',R,Q',Q$ and we see $d_\F(\ff P,\ff Q)\leq 4$. 
\end{proof}

If $\gG$ is a marked graph, define $$\pi(\gG):=\{\ff P\mid P\mbox{ is
  a proper, connected, noncontractible subgraph of } \gG\}$$ The
induced multi-valued, $Out(\FF)$-equivariant map $\X\to\F$ , still
called $\pi$, given by $\bG\mapsto \pi(\bG)$ is coarsely defined in
that, by Lemma~\ref{subgraph distance}, the diameter of each
$\pi(\gG)$ is bounded by 4. We refer to $\pi$ as the {\it coarse
  projection from $\X$ to $\F$}.

\begin{lemma}\label{bounded crossing}
Let $\gG$ be a marked graph and $x$ a simple class in $\FF$.
If $x|\gG$ crosses an edge $e$ $k$
times, then the distance in $\F$ between $\ff x$ and some free factor
represented by a subgraph of $\gG$ is $\leq 6k+9$.
\end{lemma}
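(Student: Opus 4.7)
I would prove this by induction on $k$, the crossing number. The bound $6k+9$ suggests a base case with cost at most $9$ (when $k=0$) and an inductive step that adds at most $6$ per crossing.

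For the base case $k=0$, the loop $x|\gG$ lies in $\gG\setminus\operatorname{int}(e)$. Take $P$ to be the core of the component of $\gG\setminus\operatorname{int}(e)$ containing $x|\gG$; it is a proper, connected, noncontractible subgraph of $\gG$, and $\pi_1(P)$ is a free factor of $\FF$ containing $x$. Hence $\ff x \leq \ff P$ in the free factor lattice, giving $d_\F(\ff x,\ff P)\leq 1 \leq 9$.

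For the inductive step ($k\geq 1$), I would look for a combinatorial modification of $\gG$ (a Stallings fold, a Whitehead move, or a slide of an edge adjacent to $e$) that produces a new marked graph $\gG'$ in which the loop $x|\gG'$ crosses some edge at most $k-1$ times, and then show that every subgraph of $\gG'$ representing a free factor lies within $\F$-distance $6$ of some subgraph of $\gG$. The move would be chosen using the hypothesis that $x$ is simple: by the Whitehead algorithm, at some endpoint of $e$ the turn structure of $x|\gG$ admits a reduction (e.g., a cut vertex or disconnection in the Whitehead graph). Applying the inductive hypothesis to $\gG'$ yields a subgraph $P'\subset\gG'$ with $d_\F(\ff x,\ff P')\leq 6(k-1)+9$, and transferring $P'$ back to a subgraph $P$ of $\gG$ at cost $\leq 6$ in $\F$ gives $d_\F(\ff x,\ff P)\leq 6k+9$ by the triangle inequality.

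The main obstacle is the inductive step: pinning down the exact move and verifying the constant $6$ for the transfer. The value $6$ likely arises from chaining together short paths in $\F$ through common enlargements of subgraphs in $\gG$ and $\gG'$ (in the spirit of the $4$-step chain $P,P',R,Q',Q$ used in Lemma~\ref{subgraph distance}, augmented by one or two steps tracking the combinatorial move). The Whitehead algorithm is the natural tool, but working on a general marked graph rather than a rose requires careful case analysis of the turn structure of $x|\gG$ at the endpoints of $e$ — in particular, the interaction of arcs of $x|\gG$ in $\gG\setminus\operatorname{int}(e)$ with the turns at the endpoints of $e$ — and this bookkeeping is where the technical heart of the argument lies.
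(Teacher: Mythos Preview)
Your outline has the right ingredients (Whitehead algorithm, cost $6$ per step) but is missing the organizational idea that makes the argument work. The paper does \emph{not} induct on $k$ by modifying the graph and reducing the crossing number by one each time. Instead it first collapses a maximal tree in $\gG$ avoiding $e$ (when $e$ is nonseparating) to pass to a rose with basis $a_1,\ldots,a_m,c$, where $c$ corresponds to $e$. It then fixes the free factor $A=\langle a_1,\ldots,a_m\rangle$ once and for all and applies the full Whitehead reduction sequence to $x$ (using Reiner Martin's extension of Whitehead's cut-vertex lemma to simple classes). The key dichotomy is: a Whitehead automorphism with special letter some $a_i^{\pm 1}$ fixes $A$ (cost $0$ in $\F$), while one with special letter $c^{\pm 1}$ moves $A$ by at most $6$ (since $\langle c\rangle$ is fixed and $d_\F(A,\langle c\rangle)\le 3$). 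Since each automorphism of the second kind strictly reduces the number of occurrences of $c^{\pm 1}$, there are at most $k$ of them in the entire reduction, regardless of how many steps of the first kind occur. When the Whitehead graph becomes disconnected, a one-edge blowup puts $x$ in a proper subgraph, giving $d_\F(\ff x,A)\le 5$ at that stage.

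Your inductive scheme runs into trouble because a single Whitehead move that reduces complexity need not reduce the number of crossings of $e$ --- the moves with special letter $a_i$ may not touch the $c$-count at all --- so you cannot guarantee each step drops $k$ by one. Tracking a subgraph through a graph modification is also harder than tracking a fixed free factor through an automorphism. The paper's reduction to a rose and the cheap/expensive dichotomy for Whitehead moves is precisely what you are missing; once you have it, the separating-edge case is handled by a similar (slightly more involved) analysis.
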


\begin{proof}
This proof will use the fact, due to Reiner Martin \cite{reiner}, that
if the Whitehead graph of a simple class $x$ is connected then it has
a cut point. The classical fact, due to Whitehead \cite{wh}, is the
analogous statement for the special case that $x$ is
primitive\footnote{an element of some basis for $\FF$.}. Stallings'
paper \cite{js:whitehead} is a good modern reference for Whitehead's
result and the reader is directed there for the definitions of
Whitehead graph and Whitehead automorphism.

First assume that $e$ is nonseparating. By collapsing a maximal tree
in $\gG$ that does not contain $e$ we may assume that $\gG$ is a
rose. Let $a_1,a_2,\cdots,a_m,c$ be the associated basis with $c$
corresponding to $e$ and set $A=\langle a_1,\cdots,a_m\rangle$. Thus
$c^{\pm 1}$ appears in the cyclic word for $x$ $k$ times. If the
Whitehead graph of $x$ is disconnected, consider a 1-edge blowup
$\tilde \gG$ of $\gG$ so that $x$ realized in $\tilde \gG$ is contained
in a proper subgraph. In this case $d_\F(\ff x,A)\leq 5$ by Lemma
\ref{subgraph distance} (4 for the distance between $A$ and the free
factor determined by the image of $x$, and 1 more to get to $\ff
x$). If the Whitehead graph is connected then it has a cut point
\cite{wh,reiner}. Let $\phi$ be the associated Whitehead automorphism. If the
special letter is some $a_i^{\pm 1}$ then the free factor $A$ is
$\phi$-invariant. If the special letter is $c^{\pm 1}$ then
$d_\F(A,\phi(A))\leq 6$ ($d_\F(A,\langle c\rangle)\leq 3$ and $\langle
c\rangle$ is fixed by $\phi$). But there are at most $k$ automorphisms
of the latter kind in the process of reducing $x$ until its Whitehead
graph is disconnected. Thus $d_\F(\ff x,A)\leq 6k+5$.

Now assume $e$ is separating. By collapsing a maximal tree on each
side of $e$ we may assume that $\gG$ is the disjoint union of two
roses $R_A$ and $R_B$ connected by $e$. Let $a_1,\cdots,a_n$ and
$b_1,\cdots,b_m$ be the bases determined by $R_A$ and $R_B$
respectively. Notice that the assumption about $e$ means that there
are $k$ times when the cyclic word for $x$ changes from the $a_i$'s to
the $b_j$'s or vice versa ($k$ is necessarily even here).  If the
Whitehead graph of $x$ with respect to $a_1,\cdots,a_n,b_1,\cdots,b_m$
is disconnected, we see as above that $d_\F(\ff x,A)\leq 5$ where
$A=\langle a_1,\cdots,a_n\rangle$. Otherwise there is a cut point and
let $\phi$ be the associated Whitehead automorphism. If the special
letter is some $a_i^{\pm 1}$ then $A$ is $\phi$-invariant. Likewise,
$\phi(A)$ is conjugate to $A$ if the special letter is $b_j^{\pm 1}$
and all the $a_i^{\pm 1}$ are on one side of the cut. If they are not
on one side of the cut, then the subgraph spanned by the $a_i^{\pm
  1}$'s is disconnected and we may consider the associated 1-edge
blowup $\tilde R_A$ of $R_A$. Let $\tilde\gG$ be the 1-edge blowup
of $\gG$ obtained by attaching $e\cup R_B$ to $\tilde R_A$ along
either of the two vertices. The blowup edge $e'$ can be crossed by $x$
only if it is immediately followed or preceded by $e$ (but not
both). Thus $x$ crosses $e'$ at most $k$ times. If $e'$ is
nonseparating then by the first paragraph $d_\F(\ff x,\ff P)\leq
6k+5$ for some subgraph $P\subset\tilde\gG$ and so $d_\F(\ff
x,\langle b_1,\cdots,b_m\rangle)\leq 6k+9$. If $e'$ is separating
replace $A$ by a smaller free factor $A'$ and continue.
\end{proof}

We introduce the following notation. Suppose $\gG,\gG'$ are marked
graphs, $A$ is a proper free factor of $\FF$, and $x$ is a simple
class in $\FF$. Then:
\begin{itemize}
\item
$d_\F(\gG,\gG'):=\sup \{d_\F(A,A')\mid A\in\pi(\gG), A'\in\pi(\gG')\}$

\item
$d_\F(A,\gG'):=\sup\{d_\F(A,A')\mid A'\in\pi(\gG')\}$

\item
$d_\F(\gG,x):=d_\F(\gG,\ff x)$

\item
$d_\F(A, x):=d_\F(A, \ff x)$
\end{itemize}
For example, Lemmas~\ref{subgraph distance} and \ref{bounded crossing}
combine to give the following:

\begin{lemma} \label{bounded crossing 2}
Let $x$ be a simple class in $\FF$ and $\gG$ a marked graph so that
  $x|\gG$ crosses some edge $\leq k$ times. 
Then $d_\F(\gG,x)\leq 6k+13$.
\end{lemma}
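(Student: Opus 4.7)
The plan is to combine the two previous lemmas via the triangle inequality. By definition,
\[
d_\F(\gG,x) = d_\F(\gG,\ff x) = \sup\{d_\F(A,\ff x) \mid A \in \pi(\gG)\},
\]
so it suffices to show that every $A \in \pi(\gG)$ satisfies $d_\F(A,\ff x) \le 6k+13$.

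First I would invoke Lemma~\ref{bounded crossing}: since $x|\gG$ crosses some edge at most $k$ times, there is a proper connected noncontractible subgraph $P \subset \gG$ with
\[
d_\F(\ff x, \ff P) \le 6k+9.
\]
This gives one good reference point in $\pi(\gG)$ whose distance to $\ff x$ is controlled.

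Next, for an arbitrary $A \in \pi(\gG)$, by definition $A = \ff Q$ for some proper connected noncontractible subgraph $Q \subset \gG$. Lemma~\ref{subgraph distance} then yields
\[
d_\F(A, \ff P) = d_\F(\ff Q, \ff P) \le 4.
\]
Applying the triangle inequality in $\F$ gives
\[
d_\F(A, \ff x) \le d_\F(A, \ff P) + d_\F(\ff P, \ff x) \le 4 + (6k+9) = 6k+13,
\]
and taking the supremum over $A \in \pi(\gG)$ completes the proof. There is no real obstacle here; the statement is essentially a repackaging that uses Lemma~\ref{subgraph distance} to pass from the distinguished subgraph provided by Lemma~\ref{bounded crossing} to an arbitrary element of $\pi(\gG)$.
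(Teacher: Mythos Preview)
Your proof is correct and is exactly the combination of Lemma~\ref{subgraph distance} and Lemma~\ref{bounded crossing} that the paper indicates (the paper does not spell out the argument beyond saying these two lemmas combine to give the result). The only thing worth noting is that the free factor produced by Lemma~\ref{bounded crossing} is indeed of the form $\ff P$ for a proper connected noncontractible subgraph $P\subset\gG$, so it lies in $\pi(\gG)$ as you use.
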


\begin{remark}\label{r:bounded crossing 2}
  Let $z$ be a conjugacy class in $\FF$ and $\bG\in\X$ a marked
  graph. Since $vol(\bG)=1$, there is an edge of $\bG$ that is crossed
  at most $[\ell(z|\bG)]$ times by $z|\bG$.
\end{remark}

\begin{cor}\label{proj X->F continuous}
If $d_\X(\gG,\gG')\leq \log K$ then $d_\F(\gG,\gG')\leq 12K+32$.
\end{cor}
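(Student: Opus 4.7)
The plan is to combine the Lipschitz hypothesis with Lemma~\ref{bounded crossing 2} by producing a single simple class $x$ that is short in both $\gG$ and $\gG'$ and that is subordinate to a prescribed free factor in $\pi(\gG)$. The key observation is that the hypothesis $d_\X(\gG,\gG')\le \log K$ gives a difference-of-markings map $\phi:\gG\to\gG'$ with Lipschitz constant at most $K$, so any conjugacy class $z$ satisfies $\ell(z|\gG')\le K\cdot \ell(z|\gG)$.

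Fix arbitrary $A\in\pi(\gG)$ and $A'\in\pi(\gG')$; by definition $A=\ff P$ for some proper, connected, noncontractible subgraph $P\subset\gG$. Since $P$ is connected with $\pi_1(P)\ne 1$ and $vol(P)\le vol(\gG)=1$, there is an embedded essential loop in $P$; let $x$ be the corresponding conjugacy class. Two facts follow immediately. First, $x$ is represented in $P\subset\gG$ by a loop of length $\ell(x|\gG)\le vol(P)\le 1$. Second, $x$ is contained in the proper free factor $A$, so $x$ is simple and $\ff x\subseteq A$; consequently $d_\F(A,\ff x)\le 1$.

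Applying the Lipschitz estimate gives $\ell(x|\gG')\le K$, and since $vol(\gG')=1$, Remark~\ref{r:bounded crossing 2} yields an edge of $\gG'$ crossed at most $[K]$ times by $x|\gG'$. Lemma~\ref{bounded crossing 2} then gives $d_\F(\gG',x)\le 6K+13$, hence $d_\F(\ff x,A')\le 6K+13$. The triangle inequality now produces
\[
d_\F(A,A')\le d_\F(A,\ff x)+d_\F(\ff x,A')\le 1+(6K+13)=6K+14,
\]
which (taking the supremum over $A\in\pi(\gG)$ and $A'\in\pi(\gG')$) is comfortably within the claimed bound $12K+32$.

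There is no real obstacle here: the whole argument is routine once one has Lemma~\ref{bounded crossing 2} and Remark~\ref{r:bounded crossing 2}. The only point requiring a moment's thought is that the short loop used to build $x$ must genuinely produce a simple class subordinate to the chosen $A\in\pi(\gG)$; this is why we pick the embedded loop \emph{inside} the subgraph $P$ defining $A$, rather than an arbitrary short loop in $\gG$ (which need not be simple).
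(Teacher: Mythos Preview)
Your proof is correct and in fact yields a sharper bound than the paper's ($6K+14$ versus $12K+32$). The approaches differ in how the short simple class is chosen. The paper picks a \emph{candidate} loop $z$ in $\gG$ realizing $d_\X(\gG,\gG')$; such a loop has $\ell(z|\gG)<2$ and crosses some edge of $\gG$ once, so Lemma~\ref{bounded crossing 2} is invoked \emph{twice}---once to bound $d_\F(\gG,z)\le 19$ and once to bound $d_\F(z,\gG')\le 12K+13$. You instead pick an embedded loop $x$ inside the very subgraph $P$ defining the chosen $A\in\pi(\gG)$, which gives $d_\F(A,\ff x)\le 1$ for free and requires only one application of Lemma~\ref{bounded crossing 2} on the $\gG'$ side. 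Your choice also has $\ell(x|\gG)\le 1$ rather than $<2$, halving the crossing estimate in $\gG'$. The paper's route has the minor conceptual appeal that the candidate is intrinsic to the pair $(\gG,\gG')$, but for the purposes of this corollary your argument is both shorter and quantitatively stronger.
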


\begin{proof}
Let $z$ be a candidate that realizes $d_\X(\gG,\gG')$. Thus
$\ell(z|\gG)<2$, $z|\gG$ crosses some edge once and
$\ell(z|\gG')<2K$, so $z|\gG'$ crosses some edge $<2K$
times. Therefore
$$d_\F(\gG,\gG')\leq d_\F(\gG,z)+d_\F(z,\gG')\leq
19+(12K+13)=12K+32$$ 
\end{proof}

In most of the paper we will be concerned with showing that distances
in $\F$ are bounded above. We will use the obvious terminology: In
Corollary~\ref{proj X->F continuous} we showed that the distance in
$\F$ between projections of graphs from $\X$ are bounded above as a
function of the distance in $\X$.

\begin{convention}[Bounded distance] When we say a distance in $\F$ is
  bounded without any variables, we mean bounded above by a universal
  constant that depends only on the rank $n$ of $\FF$.
\end{convention}

Note that Corollary~\ref{proj X->F continuous} says that $\pi:\X\to\F$
is {\it coarsely Lipschitz}\,: If $d_\X(\gG,\gG')\leq N$ with $N$ an
integer, then $d_\F(\gG,\gG')\leq CN$ for a universal $C>0$. Indeed,
choose a geodesic from $\gG$ to $\gG'$ and apply Corollary~\ref{proj
  X->F continuous} $N$ times to pairs of points at distance $\leq 1$.

By the {\it injectivity radius} $injrad(\gG)$ of a metric graph $\gG$
we mean the length of a shortest embedded loop in $\gG$. If $A$ is a
finitely generated subgroup of $\FF$ and $\hG\in\hX$ we denote by
$A|\hG$ the core of the covering space of $\hG$ corresponding to $A$.
Thus there is a canonical immersion $A|\hG\to \hG$.  We adopt the
following convention: Unless otherwise specified, the metric and
illegal turn structures on $A|\hG$ are the ones obtained by pulling
back via $A|\hG\to\hG$.

\begin{cor}\label{lucas}
Suppose $A$ is a proper free factor of $\FF$ and $\bG\in\X$. If $injrad(A|\bG)<k+1$ then $d_\F(A,\bG)\leq 6k+14$.
\end{cor}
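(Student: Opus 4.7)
The plan is to extract from the small injectivity radius of $A|\bG$ a short simple conjugacy class in $A$, apply the crossing bound already established (Lemma \ref{bounded crossing 2}) to relate it to $\bG$, and then observe that it is at distance at most $1$ from $A$ in $\F$.

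More precisely, the hypothesis $injrad(A|\bG)<k+1$ produces an embedded loop $\gamma$ in the core $A|\bG$ with $\ell(\gamma)<k+1$. This loop represents a nontrivial conjugacy class $z$ whose representative lies in the proper free factor $A$, so $z$ is simple in the sense required by Section~\ref{s:whitehead}. Because the metric on $A|\bG$ is by convention the pullback of the metric on $\bG$ under the immersion $A|\bG\to\bG$, the immersed loop $z|\bG$ satisfies $\ell(z|\bG)\le\ell(\gamma)<k+1$.

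Now I would invoke Remark~\ref{r:bounded crossing 2}: since $vol(\bG)=1$ and $\ell(z|\bG)<k+1$, some edge of $\bG$ is traversed at most $k$ times by $z|\bG$. Lemma~\ref{bounded crossing 2} then yields
\[
d_\F(\bG,z)\le 6k+13.
\]
On the other hand, the smallest free factor $\ff z$ containing a representative of $z$ is contained in $A$ (since $A$ itself is a free factor containing the representative of $z$ corresponding to $\gamma$), so either $\ff z=A$ or $\ff z$ is a proper subfactor of $A$; in both cases $d_\F(A,\ff z)\le 1$.

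Combining these with the triangle inequality gives
\[
d_\F(A,\bG)\;\le\;d_\F(A,\ff z)+d_\F(\ff z,\bG)\;\le\;1+(6k+13)\;=\;6k+14,
\]
which is the claimed bound. There is no real obstacle here; the only point requiring a moment's care is confirming that $\ff z\le A$, which follows immediately from the minimality built into the definition of $\ff z$ once a representative of $z$ has been exhibited inside $A$. All the substantive work has already been done in Lemmas~\ref{subgraph distance} and \ref{bounded crossing} and Remark~\ref{r:bounded crossing 2}.
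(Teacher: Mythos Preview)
Your proof is correct and follows exactly the approach the paper intends: pick a short embedded loop in $A|\bG$, note it represents a simple class $z$ with $\ell(z|\bG)<k+1$, apply Remark~\ref{r:bounded crossing 2} and Lemma~\ref{bounded crossing 2} to get $d_\F(\bG,z)\le 6k+13$, and add $1$ for $d_\F(A,\ff z)\le 1$. The paper's proof is the same argument stated in one line.
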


\begin{proof}
This follows immediately from Remark~\ref{bounded crossing} and Lemma~\ref{bounded crossing 2}.
\end{proof}

Coarse paths in $\F$ obtained from folding paths by projecting will
play a crucial role.

\begin{cor}\label{c:coarse paths}
The collection of projections to $\F$ of folding paths in $\X$ is a
coarsely transitive family: for any two proper free factors $A,B$ of
$\FF$ there is a folding path $\bG_t$, $t\in [0,L]$, such that
$A\in\pi(\bG_0)$ and $B\in \pi(\bG_L)$. The same is true for the
subcollection consisting of folding paths induced by morphisms that
satisfy the rationality condition from the Stallings method of folding
in \ref{slope 1 folding}.A.
\end{cor}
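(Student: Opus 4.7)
The plan is to apply Proposition~\ref{rescaling}: any two points of $\X$ are joined by a geodesic that is a linear path in a single closed simplex followed by a folding path. I will arrange the endpoint graphs so that $A$ and $B$ are realized by proper, connected, noncontractible subgraphs, and argue that the linear path cannot destroy the subgraph representing $A$. The folding path portion then meets the requirements of the corollary.

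Given proper free factors $A, B$ of $\FF$, extend a basis of $A$ to a basis $\{a_1,\ldots,a_n\}$ of $\FF$ with $A=\langle a_1,\ldots,a_k\rangle$, and similarly $\{b_1,\ldots,b_n\}$ with $B=\langle b_1,\ldots,b_l\rangle$, for some $1\le k,l<n$. Take $\gG_A$ to be the rose $R_n$ marked so its $i$-th loop represents $a_i$, with rational edge lengths summing to $1$; define $\gG_B$ analogously from $\{b_i\}$. The sub-rose on the first $k$ loops of $\gG_A$ is a proper, connected, noncontractible subgraph representing $A$, so $A\in\pi(\gG_A)$; similarly $B\in\pi(\gG_B)$. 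Proposition~\ref{rescaling} now gives a geodesic $\gG_A\to\gG_B$ consisting of a linear path in the closed simplex of $\gG_A$ ending at some $\bG_0$, followed by a folding path $\bG_t$, $t\in[0,L]$, with $\bG_L=\gG_B$; hence $B\in\pi(\bG_L)$ is immediate.

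The key step is to verify $A\in\pi(\bG_0)$. The linear path only changes the lengths of edges outside the tension graph $\Delta(\phi)$ of a fixed optimal map $\phi\colon\gG_A\to\gG_B$, and the essential observation is that in the rose $R_n$ every topological edge is itself a loop. The definition of $\mu(x)$ in the proof of Proposition~\ref{rescaling} assigns $\mu(x)=\infty$ whenever some loop is given length $0$; consequently the minimizer $x_0$ must give every edge of $\gG_A$ positive length in $\bG_0$. Therefore $\bG_0$ has the same underlying graph $R_n$ and the same marking as $\gG_A$, so the sub-rose on the first $k$ loops is still a proper, connected, noncontractible subgraph of $\bG_0$ representing $A$, giving $A\in\pi(\bG_0)$. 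For the Stallings-rationality refinement, beginning with rational edge lengths forces the maximum slope $\lambda$ to be rational, and the optimized lengths in $\bG_0$, being of the form $\ell(\phi_0(e))/\lambda$, are also rational; after a common subdivision the morphism $\lambda\bG_0\to\gG_B$ is simplicial and Stallings' method (\ref{slope 1 folding}.A) produces the desired folding path.

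The main obstacle to watch out for is precisely this preservation of the subgraph representing $A$ through the linear path. It is the ``every topological edge is a loop'' property of the rose that makes it automatic; a less symmetric choice of $\gG_A$ (say, two roses joined by a separating edge) would allow the connecting edge to collapse, and one would then have to argue separately that what remains still contains a subgraph representing $A$.
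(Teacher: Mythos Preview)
Your argument for the first statement is correct and is essentially the paper's proof: choose a rose for the source so that the linear portion of the geodesic from Proposition~\ref{rescaling} cannot collapse any edge (every edge being a loop), hence $\bG_0$ lies in the same open simplex and $A\in\pi(\bG_0)$.

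There is, however, a gap in your rationality argument. You correctly observe that $\lambda$ is rational (it is realized as $\ell(z|\gG_B)/\ell(z|\gG_A)$ for a candidate $z$, and both numerator and denominator are rational). But your claim that $\ell(\phi_0(e))$ is rational for each petal $e$ is not justified. The optimal map $\phi_0\colon\bG_0\to\gG_B$ need not send the vertex of the rose $\bG_0$ to the vertex of the rose $\gG_B$; it may land at an interior point $p$ of some petal, and then $\ell(\phi_0(e))$ involves the (possibly irrational) distance from $p$ to the vertex. Nothing in the construction of $\phi_0$ in Proposition~\ref{rescaling} pins down this image point, so you cannot conclude rationality of the edge lengths of $\bG_0$ from rationality of the edge lengths of $\gG_A$ and $\gG_B$ alone.

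The paper's fix is exactly to address this: once $\phi_0$ is in hand, perturb it by a small homotopy so that the vertex lands on a rational point of $\gG_B$, and simultaneously adjust the edge lengths of $\bG_0$ so that the perturbed map is still optimal with the same train track structure. After this perturbation your formula $\ell_{\bG_0}(e)=\ell(\phi_0(e))/\lambda$ does give rational lengths, and Stallings' method applies. You should insert this perturbation step.
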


\begin{proof}
Let $A,B$ be two free factors. Choose $\gG,\Sigma\in\X$ so that some
subgraph of $\gG$ represents $A$ and some subgraph of $\Sigma$
represents $B$, and so that $\gG$ is a rose, and apply
Proposition~\ref{rescaling} to obtain a geodesic from $\gG$ to $\Sigma$
that is the concatenation of two paths, the first a linear path from
$\gG$ to $\gG'$ in a single simplex, and the second a folding path from
$\gG'$ to $\gG$. The initial linear path keeps the underlying graph a
rose and its coarse projection is constant. Thus, the desired folding
path is the second path from $\gG'$ to $\Sigma$.

To achieve rationality, choose $\Sigma$ to be a rose with rational
edge lengths. Let $\phi:\gG\to \Sigma$ be an optimal map after
adjusting the metric so that $\Delta(\phi)=\gG$. If the vertex of
$\gG$ maps to the vertex of $\Sigma$, rationality is
automatic. Otherwise the vertex of $\gG$ maps to a point in the
interior of some edge. Perturb $\phi$ so that this point is rational,
and adjust the edge lengths in $\gG$ so that the perturbed map is
optimal, with the same train track structure. The new map satisfies
rationality. 
\end{proof}

\section{More on folding paths}\label{s:folding}
We now discuss folding in more detail. Let $\hG_t$, $t\in [0,\o]$, be
a folding path in $\hX$ (from now on we replace trees by quotient
graphs) with the natural parametrization. (See Notation~\ref{n:folding path} to recall our conventions.) So for $s<t$ we have
maps $\hat\phi_{st}:\hG_s\to \hG_t$ that have slope 1 on each edge,
immerse each edge, and induce train track structures.  \vskip 0.5cm
\noindent {\bf Unfolding.}  Traversing a folding path in reverse is
{\it unfolding}.  The main result of this subsection is
Theorem~\ref{t:gadget} giving local and global pictures of both
folding and unfolding. This result will only be used for the proofs of
technical Propositions~\ref{gafa}(surviving illegal turns) and
\ref{left-right}(legal and illegal) which should be skipped in a first
reading. In fact, Theorem~\ref{t:gadget} is obvious in the case of
Stallings' rational paths (see \ref{slope 1 folding}.A) and so could
be avoided altogether (see Corollary~\ref{c:coarse paths}).

\begin{definition}
An {\it (abstract) widget $W$ of radius $\epsilon$} is a metric graph
that is a cone on finitely many, but at least 2, points all the same
distance $\epsilon$ from the cone point. A widget has a canonical
morphism to $W\to [0,\epsilon]$ that sends the cone point to
$\epsilon$ and the other vertices to $0$.  There is also a canonical
path, parametrized by $[0,\epsilon]$, of finite trees from $W$ to
$[0,\epsilon]$ that locally folds all legal turns of the morphism with
speed 1. See Figure~\ref{f:abstract widget}. An {\it (abstract) gadget
  of radius $\epsilon$} is a union of finitely many widgets of radius
$\epsilon$.  The union is required to be disjoint except that widgets
are allowed to meet in vertices and is also required to be a forest.
There is a canonical path, parametrized by $[0,\epsilon]$, of forests
obtained by folding each widget.

\begin{figure}[h]
\begin{center}
\includegraphics[scale=0.5]{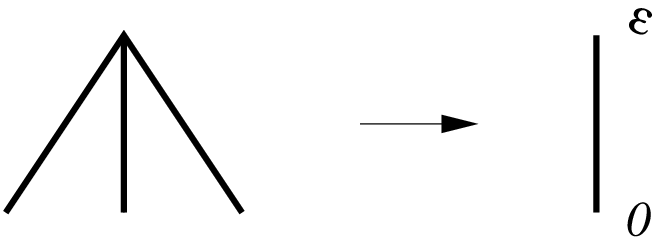}
\caption{An abstract widget of radius $\epsilon$.}
\label{f:abstract widget}
\end{center}
\end{figure}

Let $\hG\in\hX$. A {\it widget (resp.\ gadget) of radius $\epsilon$ in
  $\hG$}, is an embedding in $\hG$ of an abstract widget (resp.\
abstract gadget) of radius $\epsilon$. There is a natural path in
$\hX$ starting with $\hG$ and parametrized by $[0,\epsilon]$ given by
folding the gadget. See Figure~\ref{f:widgets}. An {\it (abstract)
  widget}, resp.\ {\it gadget} is a widget, resp.\ gadget, of some
radius.
\end{definition}

\begin{thm}\label{t:gadget}
  Let $\hG_t$, $t\in [0,\o]$ be a folding path in $\hX$ with its
  natural parametrization. There is a partition
  $0=t_0<t_1<\cdots<t_N=\o$ of $[0,\o]$ such that the restriction of $\hG_t$ to
  each $[t_i,t_{i+1}]$ is given by folding a gadget in
  $\hG_{t_i}$.
\end{thm}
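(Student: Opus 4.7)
The plan is to build the partition by repeatedly choosing the maximal interval over which the folding can be described by a single gadget. Fix a time $t_\ast \in [0,\omega)$. By Proposition~\ref{slope 1 folding}(6), there is $\epsilon_\ast > 0$ so that $\hG_{t_\ast + \tau}$ for $\tau \in [0,\epsilon_\ast]$ is obtained from $\hG_{t_\ast}$ by folding all illegal turns at speed $1$ with respect to $\hat\phi_{t_\ast,t_\ast+\epsilon_\ast}$. Shrink $\epsilon_\ast$ so that it is also less than half of the shortest edge-length of $\hG_{t_\ast}$. Define a gadget $G_\ast \subset \hG_{t_\ast}$ of radius $\epsilon_\ast$ as follows: at each vertex $v$ of $\hG_{t_\ast}$ and each gate $g$ of the induced illegal turn structure at $v$ containing at least two directions, include a widget $W_{v,g}$ whose tips run a distance $\epsilon_\ast$ from the cone point $v$ along each direction in $g$. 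By the choice of $\epsilon_\ast$, no widget tip reaches another vertex of $\hG_{t_\ast}$, and distinct widgets meet only at shared cone points (never along edges); thus $G_\ast$ is a disjoint union of stars meeting only at vertices, so a forest, and hence a valid gadget.

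The key verification is that the path obtained by folding $G_\ast$ starting from $\hG_{t_\ast}$ agrees with $\hG_{t_\ast + \tau}$ for $\tau \in [0,\epsilon_\ast]$. This is essentially an unwinding of definitions: the canonical folding of a widget at speed $1$ identifies the initial segments of its branches, which is exactly the ``fold the corresponding illegal turn at speed $1$'' operation from Section~\ref{p:folding}. Since the widgets of $G_\ast$ are in bijection with the nonsingleton gates of the illegal turn structure on $\hG_{t_\ast}$, folding $G_\ast$ coincides with folding all illegal turns at speed $1$, which by Proposition~\ref{slope 1 folding}(6) is the prescribed folding path.

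To construct the partition, start at $t_0 = 0$ and let $\epsilon(t_i)$ be the supremum of $\epsilon$ for which there exists a gadget of radius $\epsilon$ in $\hG_{t_i}$ whose folding agrees with $\hG_{t_i + \tau}$ for $\tau \in [0,\epsilon]$; then set $t_{i+1} = \min(t_i + \epsilon(t_i),\omega)$. The paragraph above shows $\epsilon(t_i) > 0$. At each $t_{i+1} < \omega$, the supremum must be attained because some discrete event has occurred: a widget has fully folded (its branches have been completely identified), a widget tip has reached another vertex of $\hG_{t_i}$ not belonging to the gadget, or two widget tips have met along a common edge; in any of these cases the gadget description breaks down and must be rebuilt from $\hG_{t_{i+1}}$.

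The main obstacle is verifying that this process terminates after finitely many steps. The plan is to fix, as a reference, the morphism $\hG_0 \to \hG_\omega$ induced by $\hat\phi_{0\omega}$. Subdivide $\hG_0$ so that this morphism is simplicial and the path $\hG_t$ can be viewed as a progressive identification of pairs of subedges of (the subdivided) $\hG_0$ that map to the same edge of $\hG_\omega$. There are only finitely many such pairs of subedges, and each discrete event corresponds to one of three combinatorial moves on this finite data (a pair finishing its identification, a new pair starting, or the meeting of two already-identifying pairs), each of which strictly simplifies a lexicographic pair of nonnegative integer invariants (e.g.\ number of as-yet unidentified subedge-pairs, then number of currently-active widgets). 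Hence the partition is finite.
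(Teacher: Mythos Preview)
Your forward construction---building the gadget at $\hG_{t_*}$ from the nonsingleton gates and checking it agrees with speed-1 folding---is correct and matches the paper. The gap is in your termination argument.

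The step ``subdivide $\hG_0$ so that this morphism is simplicial and the path $\hG_t$ can be viewed as a progressive identification of pairs of subedges'' only works in the Stallings/rational setting of \ref{slope 1 folding}.A. In general you can subdivide $\hG_0$ at preimages of vertices of $\hG_\omega$ so that $\hat\phi_{0\omega}$ sends each subedge to an edge of $\hG_\omega$, but the intermediate graphs $\hG_t$ are \emph{not} simplicial quotients of this subdivision: speed-1 folding identifies initial segments of varying lengths, so at an event time some subedge-pair is typically only partially identified while another has just finished. Your invariant ``number of as-yet unidentified subedge-pairs'' is therefore not well-defined (partial identification is neither identified nor unidentified), and there is no reason the events are in bijection with moves on the finite combinatorial data of the subdivision. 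In short, your argument proves the theorem for rational folding paths---which, as the paper notes, is obvious and would suffice for the applications---but not for arbitrary ones.

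The paper fills exactly this gap, and by a genuinely different route: rather than seeking a decreasing invariant, it proves a \emph{backward} statement---for every $t_*\in(0,\omega]$ there is $\epsilon>0$ so that $[t_*-\epsilon,t_*]$ is given by folding a gadget in $\hG_{t_*-\epsilon}$. This requires analyzing the preimage $N_\epsilon\subset\hG_{t_*-\epsilon}$ of a small neighborhood of each vertex of $\hG_{t_*}$ and showing it has the gadget shape. Once you have both the forward and backward statements, finiteness is immediate: if your greedy $t_i$'s accumulated at some $t_\infty\le\omega$, the backward statement would produce a gadget interval $[t_\infty-\epsilon,t_\infty]$; restricting that gadget gives a gadget in $\hG_{t_i}$ of radius $t_\infty-t_i$ for any $t_i>t_\infty-\epsilon$, contradicting maximality of $\epsilon(t_i)<t_\infty-t_i$.
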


In the context of Theorem~\ref{t:gadget}, we say that, as we traverse
$[t_i,t_{i+1}]$ in reverse, we are {\it unfolding a gadget}. The
analogous result holds for the induced path $\bG_t$, $t\in [0,L]$, in
$\X$ parametrized by arclength and we use the same terminology. For
example, we say $[0,L]$ has a finite partition into subintervals such
the restriction of $\bG_t$ to each subinterval is given by folding
(unfolding) a gadget.

Proposition~\ref{p:tame} is an immediate consequence of
Theorem~\ref{t:gadget}.

\begin{prop}\label{p:tame}
Let $\bG_t$, $t\in [0,L]$, be a folding path in $\X$. There is a
partition of $[0,L]$ into finitely many subintervals so that the
restriction of $\bG_t$ to each subinterval is a (reparametrized)
linear path in a simplex of $\X$.

In particular, a folding path in
$\X$ changes an open simplex only at discrete times. Outside these
times, illegal turns all belong to vertices with 2 gates, and one gate
is a single direction.\qed
\end{prop}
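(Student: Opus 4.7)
The plan is to derive Proposition~\ref{p:tame} as an immediate consequence of Theorem~\ref{t:gadget}, by unpacking the local picture of gadget folding. First, I apply Theorem~\ref{t:gadget} to obtain a partition $0=t_0<t_1<\cdots<t_N=\omega$ of the natural-parameter interval so that on each $[t_i,t_{i+1}]$ the path $\hG_t$ is obtained from $\hG_{t_i}$ by folding a gadget of radius $\epsilon_i := t_{i+1}-t_i$. Then I analyze one widget $W$ of radius $\epsilon_i$ in $\hG_{t_i}$: by the definition of speed-$1$ folding applied to the canonical morphism $W\to[0,\epsilon_i]$, at time $s=t-t_i\in[0,\epsilon_i]$ all points of $W$ within distance $s$ of the cone point become identified, so $W$ has become a star with a merged central segment of length $s$ emanating from the cone point, a new branch point at its far end, and the original widget edges continuing beyond the branch point for length $\epsilon_i-s$ each to the leaves.

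Running this picture simultaneously over all widgets of the gadget, the combinatorial type of $\hG_t$ is constant on the open subinterval $(t_i,t_{i+1})$ while the affected edge lengths change linearly in $t$ (some grow at unit speed, others shrink at unit speed) and the remaining edge lengths of $\hG_{t_i}$ are unchanged. Hence $\hG_t$ traces a linear path in a single open simplex of $\hX$, and the endpoints $t_i,t_{i+1}$ lie on proper faces corresponding to the degenerate configurations where a merged segment has length $0$ or where the outgoing widget edges have been fully merged. Projectivizing to $\X$ by rescaling to unit volume and then reparametrizing by arclength---both monotone operations that preserve linearity in a simplex up to reparametrization---yields the claimed reparametrized linear path in a simplex of $\X$.

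For the ``in particular'' clause, within the open subinterval $(t_i,t_{i+1})$ the simplex is fixed, so simplex changes occur only at the discrete times $t_i$. The only vertices of $\hG_t$ not already present in $\hG_{t_i}$ are the new branch points of the partially folded widgets, and since Theorem~\ref{t:gadget} accounts for \emph{all} the folding activity on the subinterval via the gadget, these branch points carry all the illegal turns. At such a branch point the directions split into exactly two gates: the single backward direction along the merged central segment forms a singleton gate, and the multiple outward directions along the still-distinct widget edges share a common image under the residual morphism and together form the other gate. Granting Theorem~\ref{t:gadget}, there is no real obstacle here; the argument is just an unpacking of the definition of speed-$1$ folding of a widget, with the only care needed being to separate the behavior on the open subintervals from that at the discrete endpoints $t_i$, where the simplex and gate structure may jump.
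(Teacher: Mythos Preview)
Your proposal is correct and takes the same approach as the paper, which simply states that Proposition~\ref{p:tame} is an immediate consequence of Theorem~\ref{t:gadget} and gives no further argument. You have faithfully unpacked that implication: the gadget partition from Theorem~\ref{t:gadget} gives the subintervals, the explicit description of speed-$1$ widget folding shows the combinatorial type is constant and edge lengths vary linearly on each open subinterval, and the new branch points are precisely the two-gate vertices with a singleton gate (the direction back toward the cone point), with Proposition~\ref{slope 1 folding}(5) ensuring these are indeed all the illegal turns.
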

The restriction of a folding path to a simplex of $\X$ need not be
linear. This can happen, for example, if an illegal turn becomes
legal.

The rest of this subsection is devoted to proving
Theorem~\ref{t:gadget}. It is clear from our description of folding in
\ref{slope 1 folding}.C that, for each $t_*\in [0,\o)$, there is
$\epsilon>0$ such that the restriction of $\hG_t$ to
$[t_*,t_*+\epsilon]$ is given by folding a gadget in $\hG_{t_*}$. To
complete the proof of Theorem~\ref{t:gadget}, we will show that, for
each $t_*\in (0,\omega]$, there is $\epsilon>0$ such that the
restriction of $\hG_t$ to $[t_*-\epsilon,t_*]$ is given by folding a
gadget in $\hG_{t_*-\epsilon}$.

Let $N$ be the closed $\epsilon$-neighborhood of a vertex $v$ in
$\hG_{t_*}$ of valence $\ge 3$. We will describe the preimage
$N_\epsilon$ of $N$ in $\hG_{t_*-\epsilon}$ for small enough
$\epsilon>0$. As long as $\epsilon$ is small enough and $N_\epsilon\to
N$ is not injective, we will find a connected gadget of radius
$\epsilon$ in $N_\epsilon$ so that $N_\epsilon\to N$ folds this
gadget. The gadget needed to complete the proof will be the disjoint
union of these connected gadgets of radius $\epsilon$, one for each
such vertex of $\bG_{t_*}$. We will also equip $N$ and $N_\epsilon$
with height functions. For convenience, set
$\hat\phi_\epsilon:=\hat\phi_{t_*-\epsilon,t_*}$.

First we describe the height functions. Assume that $\epsilon$ is
small enough so that $N$ is a cone on a finite set with cone point
$v$. The height function on $N$ is the morphism $N\to
[-\epsilon,\epsilon]$ given as follows. If the length in $N$ of
$[v,w]$ is $\epsilon$ and the direction at $v$ determined by $[v,w]$
has more than one preimage in $N_\epsilon$, then map $[v,w]$
isometrically to $[0,\epsilon]$; otherwise map $[v,w]$ isometrically
to $[0,-\epsilon]$. The height function $h$ on $N_\epsilon$ is the
composition $N_\epsilon\to N\to [-\epsilon,\epsilon]$.

Now we describe $N_\epsilon$ for small enough $\epsilon$, and justify
this description immediately after that. In $N_\epsilon$, the preimage
of $v$ is the set of points of height 0. The set
$h^{-1}([0,\epsilon])$ is a gadget with widgets the closures of the
components of $h^{-1}((0,\epsilon])$.  Each of the height $\epsilon$
vertices has a unique direction in $\hG_{t_*-\epsilon}$ not in the
gadget, and we draw this direction upwards. Height 0 vertices may have
additional directions not contained in the gadget, and we draw those
downwards.  See Figure~\ref{f:widgets}.
\begin{figure}[h]
\begin{center}
\includegraphics[scale=0.5]{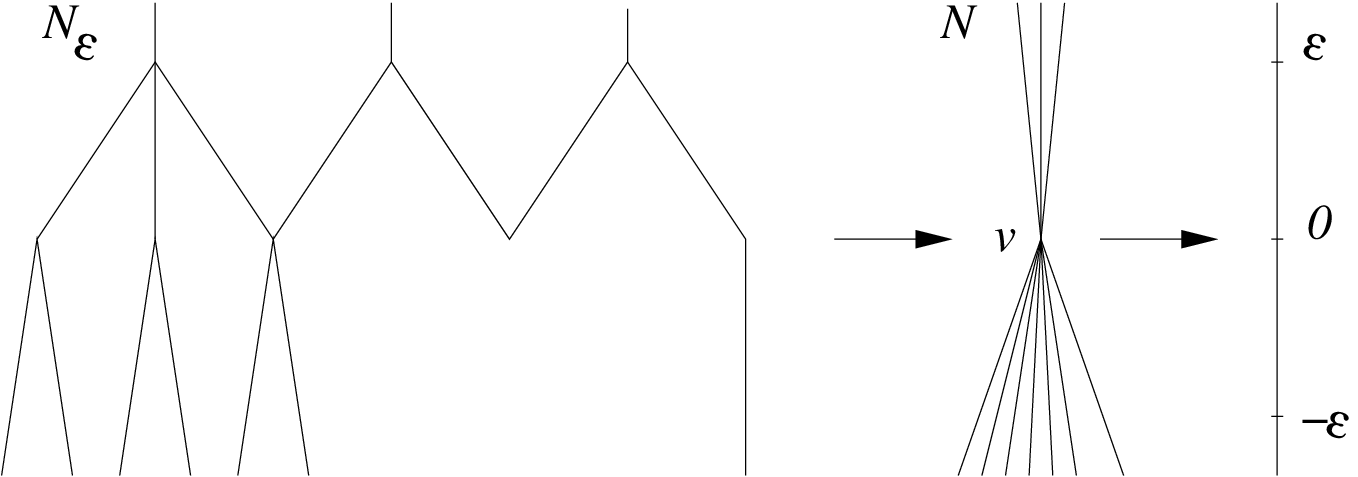}
\caption{An example of $N_\epsilon$ with 3 widgets, 3 vertices at
  height $\epsilon$ and 5 vertices at height 0. The union of the 3 widgets is a gadget.}
\label{f:widgets}
\end{center}
\end{figure}
All illegal turns in $N_\epsilon$ appear at vertices of height
$\epsilon$ and these vertices have two gates in $\hG_{t_*-\epsilon}$
(all downward directions form one gate and the single upward direction
is the other gate). In particular, all turns at the height 0 vertices
are legal.  After the widgets are folded, in $\hG_{t_*}$, the height 0
vertices get identified to $v$, each widget contributes an upward
direction at $v$, and the downward directions at $v$ come from
downward directions in $N_\epsilon$ based at height 0 vertices. Some
pairs of directions may be illegal in $\hG_{t_*}$, but in that case
they have to come from directions in $N_\epsilon$ that don't form a
turn (i.e.\ that are based at different vertices).

Now we justify our description of $N_\epsilon$. First choose
$\epsilon_0$ small enough so that:
\begin{enumerate}[(1)]
\item the closed $\epsilon_0$-neighborhood of $v$ is a cone on a
  finite set with cone point $v$ and
\item the cardinality of the preimage of $v$ in $N_{\epsilon}$ and the number
  of directions based at points in the preimage of $v$ is independent
  of $0<\epsilon\le\epsilon_0$;
\end{enumerate}
Note that it is possible to choose $\epsilon_0$ satisfying (2)
because, since all $\hat\phi_{st}$ are surjective, the cardinality of
$\hat\phi_\epsilon^{-1}(v)$ is non-increasing as $\epsilon$ decreases, and similarly the number of directions based at
$\hat\phi_\epsilon^{-1}(v)$ is non-increasing.

Choose $0<\epsilon_1<\epsilon_0$ so that the closed
$\epsilon_1$-neighborhood $N'$ in $N_{\epsilon_0}$ of the preimage of
$v$ contains no valence $\ge 3$ vertices of $\hG_{t_0-\epsilon_0}$
other than the preimages of $v$. We claim that, for
$0<\epsilon\le\epsilon_1$, $N_\epsilon$ has the description given
above.  It is enough to consider the case $\epsilon=\epsilon_1$
because our description of $N_\epsilon$ is stable under decreasing
$\epsilon$.

By definition, $N_{\epsilon_1}$ is the preimage in
$\hG_{t_*-\epsilon_1}$ of $N$. It is equal to the closed
$\epsilon_1$-neighborhood of the preimage of $v$. (Indeed, it is clear
that the neighborhood is in $N_{\epsilon_1}$. If $w\in N_{\epsilon_1}$
does not map to $v$, then since $\hG_{t_*-\epsilon_1}$ has a train
track structure there is a direction at $w$ whose image in $N$ points
toward $v$ and there is a legal path of length $|h(w)|$ starting at
this direction. The endpoint of the path then maps to $v$.) Similarly,
$N'$ is the preimage in $\hG_{t_*-\epsilon_0}$ of $N$. In particular,
$N'\to N_{\epsilon_1}$ is surjective.

$N'$ is a disjoint union of cones on points of height $\pm\epsilon_1$
with cone points the preimages of $v$. Notice that all turns in $N'$
are legal (or else (2) fails). $N'\to N_{\epsilon_1}$ is an embedding
off the points of height $\pm\epsilon_1$ in $N'$ (otherwise there
would be a path in $N_{\epsilon_1}$ between distinct directions at
preimages of $v$ whose image has outradius with respect to $v$ that is
less than $\epsilon_1$, by Lemma~\ref{l:folding segments} these
distinct directions would then be identified before time $t_*$, and we
contradict (2)). In fact, $N'\to N_{\epsilon_1}$ is an embedding
off points of height $\epsilon$ (otherwise there would be a path
$\sigma$ in $N_{\epsilon_1}$ that is cone on a pair of distinct points
in the preimage of $v$ with cone point of height $-\epsilon_1$,
contradicting the definition of $h$ since the directions at the ends
of $\sigma$ are identified in $\hG_{t_*}$). We see that
$N_{\epsilon_1}$ is the obtained from $N'$ by identifying some pairs
of points of height $\epsilon$.  The picture is completed with a few
observations.
\begin{itemize}
\item {\it $N_{\epsilon_1}$ is connected:} By Lemma~\ref{l:folding
    segments}, any two points in the the preimage of $v$ in
  $N_{\epsilon_1}$ are connected by a path in $N_{\epsilon_1}$
  (or else they aren't identified in $\hG_{t_*}$). So, the preimage of
  $v$ is contained in a single component and from the picture we've
  developed so far we see that $N_{\epsilon_1}$ is connected.
\item {\it $N_{\epsilon_1}$ is a tree:} A loop $\sigma$ in $N_{\epsilon_1}$
  has homotopically trivial image in $N$. Since
  $\hat\phi_{\epsilon_1}$ is a homotopy equivalence, $\sigma$ is
  homotopically trivial.
\item {\it Every point $w$ in $N_{\epsilon_1}$ of height $\epsilon_1$ has
  at least two downward directions:} Otherwise, there is a point $v'$ in the
  preimage of $v$ and a path $[v',w]$ of increasing height with the
  property that the induced direction at $v'$ is not identified by
  $\hat\phi_{\epsilon_1}$ with any other direction. This contradicts
  the definition of positive height.
\end{itemize}
This completes the proof of Theorem~\ref{t:gadget}.

Fix $\epsilon\le\epsilon_1$. We introduce a little more terminology
for later use. By construction, every point in $\hG_{t_*}$ at distance
$\epsilon$ from $v$ has a unique direction pointing away from $v$. If
$d$ is a direction at $v$, there is then a unique corresponding
direction $d'$ pointing away from $v$ and based at a point at distance
$\epsilon$ from $v$ determined by ``$d$ points to $d'$''. If the
height of $d'$ is $\epsilon$ then we denote $d'$ by $d^{\epsilon}$; if
the height of $d'$ is $-\epsilon$ then we denote $d'$ by
$d^{-\epsilon}$. We say that $d^{\epsilon}$ {\it points up} and
$d^{-\epsilon}$ {\it points down}. The direction $d^{\pm\epsilon}$ has
a unique lift $\tilde d^{\pm\epsilon}$ to $\hG_{t_*-\epsilon}$. We say
$\tilde d^{\epsilon}$ {\it points up } and $\tilde d^{-\epsilon}$ {\it
  points down }. See Figure~\ref{f:notation}.

\vskip 0.5cm
\noindent
{\bf Unfolding a path.}
Given an immersed path $\gamma$ in $\hG_\omega$, one may
try to ``lift'' it along the folding path, i.e.\ to find immersed paths
$\gamma_t$ in $\hG_t$ that map to $\gamma$ (up to homotopy rel
endpoints). This is always possible, since it is clearly possible
locally. At discrete times new illegal turns may appear inside the
path. Note that at discrete times the lifts are not unique, when an
endpoint of the path coincides with the vertex of an illegal turn.
Figure~\ref{f:unfolding} illustrates the nonuniqueness
of lifts.

\begin{figure}[h]
\begin{center}
\includegraphics[scale=0.5]{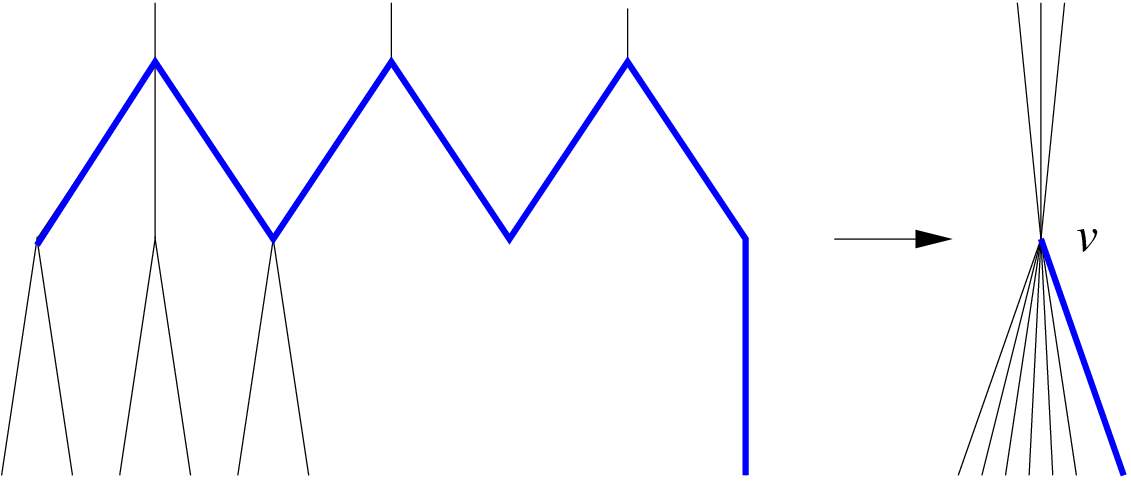}
\caption{The figure illustrates the ambiguity in lifting paths under
  unfolding which we see is parametrized by the preimage of $v$.}
\label{f:unfolding}
\end{center}
\end{figure}

To get uniqueness, we can remove the end of the path that lifts
nonuniquely. Thus we may have to remove segments\footnote{We use
  ``segment'' synonymously with ``immersed path'', but usually to
  connote a smaller piece of something.} at the ends whose size grows
at speed 1. Now suppose there are illegal turns in the path. As we
unfold, each illegal turn makes the length of the path grow with speed
2, and the illegal turn closest to an end moves away from the end at
speed 1. We deduce that {\it lifting is unique between the first and
  last illegal turns along the path $\gamma$, including the germs of
  directions beyond these turns.} We call this the {\bf unfolding
  principle}.

In particular, this applies to illegal turns themselves: if a loop
$z|\hG_\omega$ has two occurrences of the same illegal turn, pulling
back these turns produces two occurrences of the same path (most of
the time a neighborhood of a single illegal turn, but see
Figure~\ref{f:unfold.v2}).
\begin{figure}[h]
\begin{center}
\includegraphics[scale=0.5]{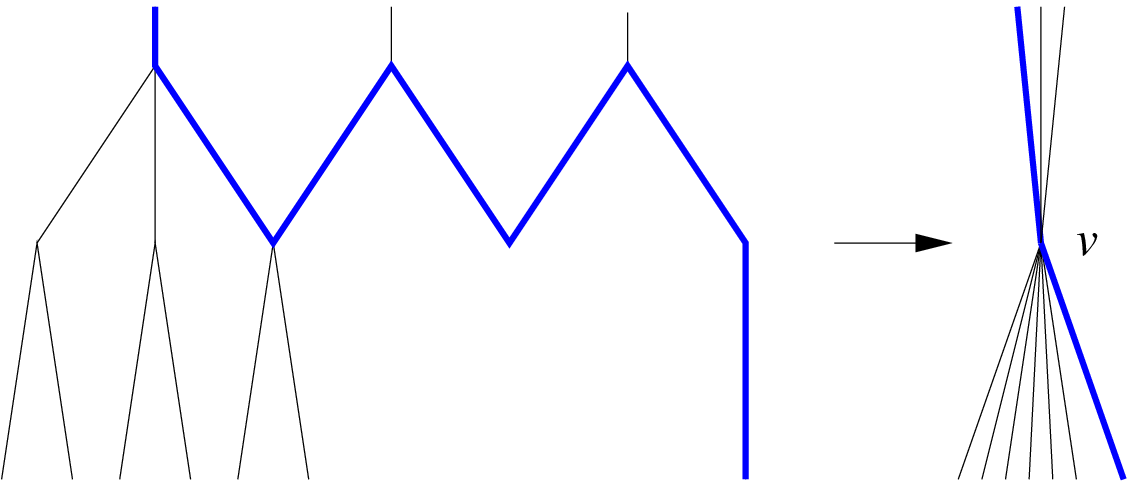}
\caption{A path unfolds to a path with two illegal turns.}
\label{f:unfold.v2}
\end{center}
\end{figure}
But note that distinct illegal turns might pull back to the same
illegal turn, see Figure~\ref{f:fold}.

\begin{figure}[h]
\begin{center}
\includegraphics[scale=0.4]{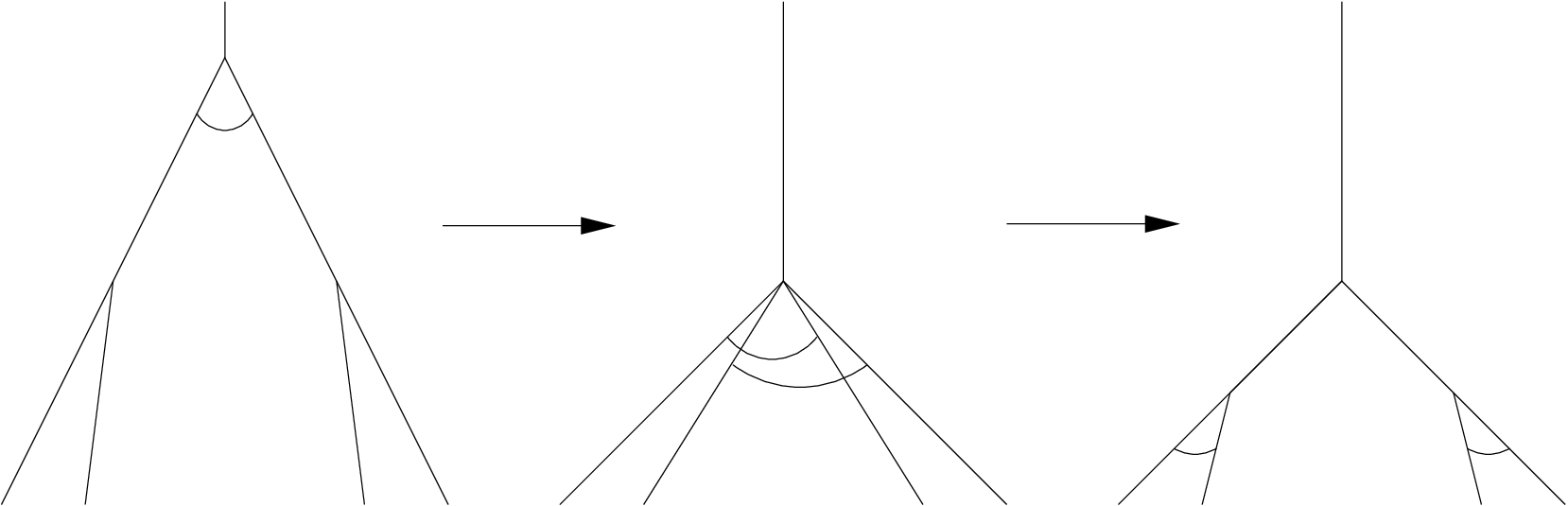}
\caption{In this folding path the number of illegal turns
  grows from 1 to 2.}
\label{f:fold}
\end{center}
\end{figure}

\vskip 0.5cm
\noindent {\bf \Ill.} \label{p:illegality} If a graph $\gG$ is
equipped with a track structure, by the {\it \ill\ of $\gG$} we mean
$$\m(\gG)=\sum_{v}\sum_{\Omega_v}(|\Omega_v|-1)$$
where the sum is over all vertices $v$ of $\gG$ and all gates $\Omega_v$
at $v$. Thus a gate that contains $k\geq 1$ directions contributes
$k-1$ to the count. The
right derivative of the function
$$t\mapsto vol(\hG_t)$$ at $t=t_0$ is $-\m(\hG_0)$, the negative of
the \ill\ of $\hG_{t_0}$. If $\gG$ is marked and $z$ is a conjugacy
class in $\FF$ then $\k(z|\gG)$ is by definition the number of illegal
turns in $z|\gG$, i.e.\ the number of illegal turns in the illegal
turn structure on $z|\gG$ induced by $z|\gG\to\gG$.

{\bf For the rest of this section,} $\bG_t$, $t\in [0,L]$, is a
folding path in $\X$ parametrized by arclength. We will sometimes
abbreviate $\m(\bG_t)$ by simply $\m_t$, $z|\bG_t$ by $z_t$ and
$\k(z_t)$ by $\k_t$.

\begin{lemma}\label{derivative}
\begin{enumerate}[(1)]
\item\label{i:loop} Let $z$ be a conjugacy class in $\FF$. The the
  right derivative of the length function $t\mapsto\ell(z_t)$ at $t=0$
  is
$$\frac d{dt}\ell(z_t)|_{t=0^+}=\ell(z_{0})-2\frac{\k_0}{\m_0}$$
\item\label{i:segment} Let $\sigma_0$ be a nondegenerate immersed path
  in $\bG_0$ whose initial and terminal directions are in illegal
  turns of $\bG_0$. Then, for small $t$, there is a corresponding path
  $\sigma_t$ whose initial and terminal directions are in illegal
  turns of $\bG_t$. The right derivative at 0 of the length $L_t$ of
  $\sigma_t$ is $L_0-2\frac{\k_0}{\m_0}$, where now $\k_0-1$ is the
  number of illegal turns in the interior of $\sigma_0$. If the
  initial and terminal directions of $\sigma_0$ are in the same gate,
  then the same is true for $\sigma_t$.
\end{enumerate}
\end{lemma}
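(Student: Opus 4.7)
The plan is to compute derivatives first in the unprojectivized picture $\hG_s$ with natural parameter $s$, then transfer to the arclength parameter $t$ via the chain rule. Using the identity $\frac{d}{ds}vol(\hG_s)|_{s=0^+}=-\m_0$ (established just before the lemma) together with $vol(\hG_{s(t)})=vol(\hG_0)e^{-t}$, we obtain $\frac{ds}{dt}|_{t=0^+}=vol(\hG_0)/\m_0$. If $f(s)$ denotes the unprojectivized length of the relevant object in $\hG_s$ and $F(t)=f(s(t))/vol(\hG_{s(t)})$ the corresponding projectivized length in $\bG_t$, a direct product-rule computation yields
\[
F'(0^+)=F(0)+\frac{1}{\m_0}\,f'(0^+).
\]
Both parts of the lemma therefore reduce to showing $f'(0^+)=-2\k_0$ in $\hG_s$ (with the stated conventions on $\k_0$).

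For (1): at each illegal turn of $z|\hG_0$ at a vertex $v$ in a gate $\Omega$, the loop enters $v$ via some $d\in\Omega$ and exits via another $d'\in\Omega$. Folding for small time $\tau$ identifies the initial $\tau$-segments of all directions in $\Omega$ into a single folded segment from $v$ to a new ``fork'' vertex at distance $\tau$. Tracing the image of $z$ in $\hG_\tau$ through this turn, the loop now traverses this folded segment from the fork to $v$ and immediately back, producing a backtrack of length $2\tau$; tightening to the immersed loop $z|\hG_\tau$ cancels the backtrack, shortening the loop by $2\tau$ per illegal turn. For small $\tau$ the backtracks at distinct illegal turns are disjoint, so the contributions add, giving $f'(0^+)=-2\k_0$.

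For (2): each interior illegal turn of $\sigma_0$ contributes $-2\tau$ exactly as in (1). At an endpoint $v_1$ with initial direction $d_1\in\Omega$, however, $\sigma_0$ uses only $d_1$, so no backtrack forms. Instead the image of $\sigma_0$ in $\hG_\tau$ begins at $v_1$, traverses the folded $\tau$-segment, and reaches the fork vertex; at the fork the continuations of the directions in $\Omega$ are still in a common gate under the morphism $\hG_\tau\to\hG_\omega$ (the original gate at $v_1$ pulls back to the fork), so the fork carries an illegal turn containing the continuation of $d_1$. We take $\sigma_\tau$ to begin at this fork vertex, losing $\tau$ of length at this endpoint; the terminal endpoint is treated identically. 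The total unprojectivized rate is $-2(\k_0-1)-2=-2\k_0$, and the chain rule above gives the claimed formula. For the last clause, if $d_1$ and the terminal direction $d_2$ share a gate at a common vertex, then folding slides both endpoints to the same fork vertex, where the continuations of $d_1$ and $d_2$ remain in a common gate, so the same-gate property persists for $\sigma_t$. The main subtlety is the endpoint argument: one has to identify the fork vertex in $\hG_\tau$ correctly and verify that it carries the requisite illegal turn, which is most transparently done via the local widget picture of Theorem~\ref{t:gadget}, where the gate at $v_1$ corresponds to a widget being folded and the illegal turn at the fork is exactly what is pulled back under $\hG_\tau\to\hG_\omega$.
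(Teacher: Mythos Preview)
Your argument is correct and follows the same approach as the paper: compute in the unprojectivized picture with the natural parameter, then transfer to arclength via the relation $s(t)=-\log(vol(\hG_t))$ (the paper normalizes $vol(\hG_0)=1$ and simply records the linear formulas $vol(\hG_t)=1-\m_0 t$ and $\ell(z|\hG_t)=\ell(z|\hG_0)-2\k_0 t$, leaving the chain rule implicit). Your writeup is considerably more detailed than the paper's---in particular your explicit backtrack description for~(1) and your fork-vertex analysis for the endpoints in~(2)---but the underlying computation is identical.
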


\begin{proof}(1): Let $\hG_t$ be the naturally parametrized by path in $\hX$ so that
  $\bG_{s(t)}=\G_t=\hG_t/vol(\G_t)$ and $vol(\hG_0)=1$ (see
  Notation~\ref{n:folding path}). For small $t\ge 0$,
  $vol(\hG_t)=1-\m_0t$, $\ell(z|\hG_t)=1-2\k_0t$, and
  $s(t)=-\log(vol(\hG_t))$. The proof of the derivative formula in (2)
  is identical.
\end{proof}

We will sometimes abuse notation and write $\ell'(z_{t_0})$ for $\frac
d{dt}\ell(z|\bG_t)|_{t=t_0^+}$.

\begin{cor}\label{c:length}
Let $z$ and $w$ be conjugacy classes in $\FF$ and suppose $\m_t$ and
$\k_t=\k(z_t)$ are constant for $t\in [0,\epsilon)$.
\begin{enumerate}[(1)]
\item\label{i:local}
The length $\ell(z_t)=a e^t+b$ on $[0,\epsilon)$ where $a=\ell(z_0)-\frac{2\k_0}{\m_0}$ and $b=\frac{2\k_0}{\m_0}$.
\item\label{i:compare}
If $\ell'(z_0)\geq \ell'(w_0)$ then $\ell(z_t)-\ell(w_t)$ is
nondecreasing on $[0,\epsilon)$.
\item\label{i:average}
If the average length $A$ of a maximal legal segment in the loop
$z_0$ is $>2/\m_0$ the loop grows in length on $[0,\epsilon)$, and
if it is $<2/\m_0$ it shrinks.
\end{enumerate}
\end{cor}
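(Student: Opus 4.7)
The plan is to derive all three parts from the derivative formula in Lemma~\ref{derivative}(\ref{i:loop}) by an elementary first-order ODE analysis, with the main subtlety appearing in part (\ref{i:compare}).

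For (\ref{i:local}), I would apply Lemma~\ref{derivative}(\ref{i:loop}) not just at $t=0$ but at every $t_0\in[0,\epsilon)$: because both $\m_{t_0}$ and $\k(z_{t_0})$ equal their values at $0$, the right derivative satisfies $\ell'(z_{t_0}) = \ell(z_{t_0}) - 2\k_0/\m_0$ throughout the whole interval. This is the autonomous linear ODE $y' = y - 2\k_0/\m_0$, whose general solution is $y(t) = Ce^t + 2\k_0/\m_0$; the initial value $y(0) = \ell(z_0)$ forces $C = \ell(z_0) - 2\k_0/\m_0$, giving exactly the claimed formula.

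Part (\ref{i:average}) is then immediate from (\ref{i:local}). Since $z_0$ has $\k_0$ maximal legal segments of total length $\ell(z_0)$, the average length is $A = \ell(z_0)/\k_0$, so the coefficient in (\ref{i:local}) is $a = \ell(z_0) - 2\k_0/\m_0 = \k_0(A - 2/\m_0)$. Because $\ell(z_t) = ae^t + b$ is strictly monotone as soon as $a\ne 0$, the sign of $A - 2/\m_0$ alone controls whether $\ell(z_t)$ grows or shrinks on $[0,\epsilon)$.

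For (\ref{i:compare}), the key observation is that the hypothesis $\m_t\equiv\m_0$ forbids any discrete combinatorial event (simplex transition, vertex merger, edge collapse) in $[0,\epsilon)$: by Proposition~\ref{p:tame} such events are the only places where the gate structure of $\bG_t$ can change, and any such change would alter $\m_t$. Consequently the combinatorial edge path of $w$ in $\bG_t$, and the gates at each vertex it crosses, are the same throughout the interval, so $\k(w_t)$ is also constant. Thus (\ref{i:local}) applies verbatim to $w$ and yields $\ell(w_t) = a_w e^t + b_w$ with $a_w = \ell'(w_0)$ and $b_w = 2\k(w_0)/\m_0$. Subtracting the two closed-form expressions, $\ell(z_t) - \ell(w_t) = (a - a_w)e^t + (b - b_w)$, whose derivative $(a - a_w)e^t = (\ell'(z_0)-\ell'(w_0))e^t$ is nonnegative by hypothesis; hence the difference is nondecreasing.

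The main obstacle I expect is exactly the reduction in (\ref{i:compare}) from the stated hypothesis (only $\m_t$ and $\k(z_t)$ constant) to the effective constancy of $\k(w_t)$, since the ODE formula for $w$ is only available once we know $\k(w_t)$ does not jump. Once this is in hand via Proposition~\ref{p:tame}, the whole corollary collapses to solving a single linear ODE and comparing the two resulting exponentials.
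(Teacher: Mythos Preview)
Your approach for parts~(\ref{i:local}) and~(\ref{i:average}) is exactly what the paper does: feed the derivative formula from Lemma~\ref{derivative} into the linear ODE $y'=y-2\k_0/\m_0$ and read off the sign of the coefficient $a$. Nothing to add there.

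For part~(\ref{i:compare}) you correctly isolate the real issue, namely that you need $\k(w_t)$ to be constant on $[0,\epsilon)$ so that the same closed form applies to $w$. Your proposed justification, however, has a gap: you assert that ``any such change would alter $\m_t$'', i.e.\ that a simplex transition of $\bG_t$ necessarily changes the illegality. Proposition~\ref{p:tame} does not say this, and it is not obviously true --- one can imagine a gadget finishing its fold while a new illegal turn appears so that the total $\m_t$ is unchanged. So from the stated hypotheses alone you have not established that $\k(w_t)$ is constant.

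That said, the paper's own proof (``(1) and (2) follow easily from Lemma~\ref{derivative}'') is strictly less informative than yours on this point. The intended reading is that (\ref{i:compare}) is applied on one of the open subintervals of Proposition~\ref{p:tame}, where the entire train track structure of $\bG_t$ is frozen and hence $\k(w_t)$ is automatically constant; in every application (e.g.\ Case~4 of Lemma~\ref{monogon}) the corollary is only invoked locally in a ``no last time'' argument, where one may shrink $\epsilon$ to land inside such a subinterval. If you replace your unjustified claim by ``after possibly shrinking $\epsilon$ so that we are on a single subinterval of Proposition~\ref{p:tame}, $\k(w_t)$ is also constant'', your ODE comparison for (\ref{i:compare}) goes through and the proof is complete.
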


\begin{proof}
(\ref{i:local}) and (\ref{i:compare}) follow easily from
  Lemma~\ref{derivative}. For (\ref{i:average}), $\k_0A=\ell(z_0)$
  and so, by (\ref{i:local}), $a=\ell(z_0)(1-2/\m_0A)$.
\end{proof}

\begin{cor}\label{c:global}
Let $z$ be a conjugacy class in $\FF$. The length $\ell(z_t)$ is
piecewise exponential on $[0,L]$, i.e.\ $[0,L]$ has a finite partition
such that the restriction of $\ell(z_t)$ to each subinterval is as
in Corollary~\ref{c:length}(\ref{i:local}).
\end{cor}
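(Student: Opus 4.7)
The plan is to partition $[0,L]$ into finitely many subintervals on each of which both the global illegality $\m_t$ and the loop illegality $\k_t = \k(z_t)$ are constant, and then apply Corollary~\ref{c:length}(\ref{i:local}) on each subinterval. The essential input is Theorem~\ref{t:gadget} in its arclength formulation: it produces a finite partition $0 = t_0 < t_1 < \cdots < t_N = L$ such that the restriction of $\bG_t$ to each $[t_i, t_{i+1}]$ arises from folding a single gadget in $\bG_{t_i}$.

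On each open subinterval $(t_i, t_{i+1})$ both $\m_t$ and $\k_t$ are constant. The illegal turns of $\bG_t$ all sit at the inner vertices of widgets, where the partially folded stem meets the still-starlike head; a widget whose underlying cone has $n$ leaves contributes a single gate of size $n$ at its inner vertex, hence $n-1$ to $\m_t$ independent of $t$, and no illegal turns exist outside the gadget (otherwise the gadget decomposition given by Theorem~\ref{t:gadget} would have to be finer). For $\k_t$: the combinatorial pattern of $z_t$ relative to the gadget---which leaves of which widgets the loop enters and exits, and whether each traversal is leaf-to-leaf or leaf-to-cone-point---is fixed across the open subinterval, and illegal turns of $z_t$ occur precisely at leaf-to-leaf traversals of widgets (a turn at the inner vertex between two upward directions is illegal, while one involving the unique downward direction is legal). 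Summing over widgets gives a count that is the same for every $t \in (t_i, t_{i+1})$.

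With these constancies, Corollary~\ref{c:length}(\ref{i:local}), time-shifted so that its starting time $0$ corresponds to $t_i$, yields $\ell(z_t) = a_i e^{t - t_i} + b_i = (a_i e^{-t_i})\, e^t + b_i$ on each $(t_i, t_{i+1})$, and continuity of $t \mapsto \ell(z_t)$ extends the formulas to the closed subintervals, so $\ell(z_t)$ has the claimed form on the given partition. The main technical point in this plan is the verification that $\k_t$ is constant: although the loop's contact points with each widget drift at speed $1$ as $t$ increases, the widget's abstract combinatorial type and gate structure at its inner vertex are preserved throughout the folding of that widget, so the loop's traversal pattern remains static. This is immediate from the explicit gadget picture in Theorem~\ref{t:gadget} and the fact that the unique downward direction at the inner vertex always forms a gate by itself while the $n$ upward directions form a single gate.
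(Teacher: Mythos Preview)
Your proof is correct and follows essentially the same approach as the paper: partition $[0,L]$ so that $\m_t$ and $\k_t$ are constant on each piece, then invoke Corollary~\ref{c:length}(\ref{i:local}). The paper's proof is terse---it simply asserts such a partition exists and cites Proposition~\ref{p:tame} (itself a corollary of Theorem~\ref{t:gadget}) together with Corollary~\ref{c:length}(\ref{i:local})---whereas you work directly from Theorem~\ref{t:gadget} and spell out why both quantities are constant on each open gadget interval, which is a welcome elaboration.

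One small terminological remark: your use of ``upward'' and ``downward'' at the inner vertex is inverted relative to the paper's height convention (in the paper the cone point sits at height~$\epsilon$ and the single direction leaving the gadget is drawn upward, while the widget legs point downward toward height~$0$). Your picture is internally consistent and the argument is unaffected, but if you want to match the paper's language you should swap the two.
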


\begin{proof}
Since we may subdivide $[0,L]$ as $0=s_0<s_1<\cdots<s_k=L$ so that, on
each $[s_i,s_{i+1})$, $\m_t$ and $\k_t$ are constant, the corollary
  follows directly from Corollary~\ref{c:length}(\ref{i:local}) and
  Proposition~\ref{p:tame}.
\end{proof}

We will say that a segment {\it has endpoints illegal turns} if the
path has been infinitesimally extended at each end, i.e.\ directions
have been specified at the initial and terminal endpoints, and these
directions together with the segment determine illegal turns. If the
segment is degenerate then this means that the added directions form
an illegal turn.

\begin{cor}\label{c:segments}
Let $\sigma_L$ be an immersed path in $\bG_L$ with endpoints illegal
turns and, for $t\in [0,L]$, let $\sigma_t$ be the immersed path in
$\bG_t$ with endpoints illegal turns obtained by applying the
Unfolding Principle to $\sigma_L$. Corollaries~\ref{c:length} and
\ref{c:global} hold if $\ell(z_t)$ is replaced by the length of
$\sigma_t$.
\end{cor}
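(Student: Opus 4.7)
The plan is to reduce the statement entirely to the loop case, using Lemma~\ref{derivative}(\ref{i:segment}) as the substitute for Lemma~\ref{derivative}(\ref{i:loop}). First I would observe that the Unfolding Principle ensures $\sigma_t$ is well-defined for every $t\in[0,L]$: because the initial and terminal directions of $\sigma_L$ are specified and lie inside illegal turns, the principle's guarantee of uniqueness of pullback ``between the first and last illegal turns, including the germs of directions beyond these turns'' covers the entire segment at each folding step. Thus $t\mapsto \sigma_t$, and hence $t\mapsto L_t:=\ell(\sigma_t)$, is a well-defined function on $[0,L]$.

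Next, on any subinterval on which $\m_t$ and $\k_t:=\k(\sigma_t)$ are constant, Lemma~\ref{derivative}(\ref{i:segment}) gives the right derivative $L_t' = L_t - 2\k_t/\m_t$, identical in form to the formula for loops in Lemma~\ref{derivative}(\ref{i:loop}). The ODE solution is therefore the same, yielding $L_t = a\, e^{t-t_0} + b$ with $a = L_{t_0} - 2\k_{t_0}/\m_{t_0}$ and $b = 2\k_{t_0}/\m_{t_0}$, which is exactly the segment-version of Corollary~\ref{c:length}(\ref{i:local}). Parts (\ref{i:compare}) and (\ref{i:average}) depend only on this derivative formula and so transfer verbatim. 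For the global Corollary~\ref{c:global}, Proposition~\ref{p:tame} already supplies a finite partition of $[0,L]$ on each piece of which $\m_t$ is constant; the task is to refine this partition so that $\k_t$ is also constant on each piece.

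The main obstacle is verifying that $\k_t$ is piecewise constant with only finitely many jumps. The Unfolding Principle prevents illegal turns in the interior of $\sigma_t$ from disappearing as $t$ decreases (they may split, as in Figure~\ref{f:unfold.v2}, but cannot merge into a legal configuration), and the endpoint turns persist because the germs of directions defining them are part of the data carried back by unfolding. Conversely, new interior illegal turns can only appear at simplex-transition times of the underlying folding path, which by Proposition~\ref{p:tame} form a finite set. A further refinement of the partition at these finitely many events therefore makes $\k_t$ constant on each subinterval, and concatenating the local exponential formulas yields the desired global piecewise-exponential description of $L_t$.
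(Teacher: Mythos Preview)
Your proof is correct and follows the same approach as the paper, which simply observes that since the derivative formula in Lemma~\ref{derivative}(\ref{i:segment}) has the identical form to that in Lemma~\ref{derivative}(\ref{i:loop}), the proofs of Corollaries~\ref{c:length} and \ref{c:global} transfer verbatim. Your final paragraph justifying that $\k_t$ has only finitely many jumps is more elaborate than needed: it suffices to note (as implicitly in the loop case) that $\k_t$ is integer-valued and non-increasing in $t$, since folding can only merge illegal turns or render them legal.
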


\begin{proof}
Since the derivative formula in
Lemma~\ref{derivative}(\ref{i:segment}) is obtained from that in
Lemma~\ref{derivative}(\ref{i:loop}) by such a
replacement, proofs of corresponding statements are
identical.
\end{proof}

\begin{cor}\label{3}
  A legal segment $\sigma_0$ of length $L_0\geq 2$ inside $z_0$
  gives rise to a legal segment $\sigma_t$ of length $L_t\geq
  2+(L_0-2)e^t$ inside $z_t$ for $t\in [0,L]$. In particular, a
  legal segment of length $L_0\geq 3$ grows exponentially.
\end{cor}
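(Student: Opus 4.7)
The plan is to take $\sigma_\tau \subset z_\tau$ to be the surviving subpath of the image $\phi_\tau(\sigma_0)$ under the folding morphism $\phi_\tau\colon \bG_0 \to \bG_\tau$ of slope $e^\tau$, after the tightening that converts the unreduced loop $\phi_\tau(z_0)$ into $z_\tau$.

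First I would verify that $\phi_\tau$ embeds $\sigma_0$ isometrically onto a legal immersed path of length $e^\tau L_0$ in $\bG_\tau$. Morphisms embed each edge isometrically, and a turn is legal in $\bG_0$ exactly when its two directions have distinct images in the common target $\bG_L$ of the folding path; factoring through $\bG_\tau$ shows the images remain in distinct gates of $\bG_\tau$, so legal turns are preserved. The image $\phi_\tau(z_0)$ has backtracking localized at the illegal turns of $z_0$, so tightening cancels nothing inside the legal path $\phi_\tau(\sigma_0)$; the only cancellations touching $\phi_\tau(\sigma_0)$ occur at its endpoints, and only when those endpoints coincide with illegal turns of $z_0$. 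At each such endpoint, at most $\lambda(\tau)$ is removed, where $\lambda(\tau)$ is the length in $\bG_\tau$ of the folded edge produced at that illegal turn.

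The quantitative input is the bound $\lambda(\tau) \le e^\tau - 1$. In the natural parametrization $\hG_s$ with $vol(\hG_0)=1$, folding runs at unit speed, so any folded edge has length at most $s$ in $\hG_s$, hence at most $e^\tau s$ in $\bG_\tau = e^\tau \hG_s$. Since $\m(\hG_u)\ge 1$ throughout the folding and $\tfrac{d}{du}vol(\hG_u) = -\m(\hG_u)$,
$$ s \;\le\; \int_0^s \m(\hG_u)\,du \;=\; 1 - vol(\hG_s) \;=\; 1 - e^{-\tau}, $$
so $\lambda(\tau) \le e^\tau(1 - e^{-\tau}) = e^\tau - 1$. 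Defining $\sigma_\tau$ as $\phi_\tau(\sigma_0)$ with these endpoint pieces removed produces a legal subpath of $z_\tau$ of length at least $e^\tau L_0 - 2(e^\tau-1) = 2 + (L_0-2)e^\tau$, which is the desired bound.

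The main obstacle is making Step 2 precise: one has to show that the global tightening does not chain-cancel beyond one fold-length at each illegal-turn endpoint of $\sigma_0$. The check is that after the initial round of tightening at such an endpoint, the new junction meets two distinct outgoing edges, namely the continuation of $\sigma_0$ and the continuation of the adjacent segment of $z_0$, so no backtracking propagates further into the interior of $\sigma_0$. This is exactly what the preservation of legal turns under $\phi_\tau$ gives, and it is also what justifies bounding the loss at each end by a single fold-length $\lambda(\tau)$ rather than by a potentially larger chain.
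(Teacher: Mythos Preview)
Your approach is genuinely different from the paper's. The paper gives a one-line argument via Lemma~\ref{derivative}(\ref{i:segment}): taking $\sigma_0$ to be bounded by illegal turns (so $\k_0=1$ in the notation there), the derivative formula gives $L_t' = L_t - 2/\m_t \geq L_t - 2$, and integrating the differential inequality $(L_t-2)' \geq L_t-2$ yields $L_t \geq 2 + (L_0-2)e^t$. This completely sidesteps any analysis of how tightening interacts with the combinatorics of the loop. Your direct geometric argument reaches the same bound, and the estimate $s \leq 1 - e^{-\tau}$ via $\m\geq 1$ is the right quantitative input.

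However, your justification of the ``no chain cancellation'' step has a gap. You argue that after one round of cancellation at the endpoint $v$ of $\sigma_0$, the new junction shows two distinct outgoing directions, so tightening stops. This is fine when the adjacent maximal legal segment $\sigma_1$ is long, but if $|\sigma_1|$ is small (e.g.\ the fold-lengths at its two endpoints sum to more than $|\sigma_1|$), then all of $\sigma_1$ disappears under tightening and $\sigma_0$ now abuts $\sigma_2$ or beyond; your argument does not cover this. The conclusion is nevertheless correct, and a clean fix goes through Lemma~\ref{l:folding segments} in the universal cover: if $p\in\tilde\sigma_0$ is at distance $d>s$ from the endpoint $\tilde v$ and $q$ is any other point of the axis $A_0$, then the geodesic $[p,q]_{\tilde\hG_0}$ contains a legal subsegment of $\tilde\sigma_0$ of length $>s$ (namely $[p,\tilde v]$ if $q$ lies beyond $\tilde v$, or $[p,q]$ itself otherwise), which maps isometrically to $\tilde\hG_L$; hence the outradius of $\tilde\phi([p,q])$ exceeds $s$ and $p,q$ cannot be identified by time $s$. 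Thus $\tilde\phi_{0s}(p)$ survives on the axis $A_s$, independently of what happens to short neighbors. With this in place your argument goes through; but note how much shorter the paper's differential-inequality route is.
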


\begin{proof}
$(L_t-2)'\ge L_t-2$ using Lemma~\ref{derivative}(\ref{i:segment}).
\end{proof} 

Similar considerations control the lengths of topological edges $e$ of
$\bG_t$ that are not {\it involved}\, in any illegal turn, i.e.\ the
directions determined by the ends of $e$ aren't in any illegal turns.

\begin{lemma}\label{l:good edges}
Suppose that $e_0$ is an edge of $\bG_\0$ that is not involved in an
illegal turn. Then for small $t$, there is a corresponding edge $e_t$
of $\bG_t$ not involved in an illegal turn. Further, the length $L_t$
of $e_t$ satisfies $L_t=L_0e^t$.
\end{lemma}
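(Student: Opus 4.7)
The plan is to work first in the unprojectivized picture $\hG_s$ (natural parameter $s$), where folding happens at speed $1$ and is a local operation at illegal turns, and only afterwards rescale to arclength to recover $\bG_t$.

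First I would observe that the hypothesis ``$e_0$ is not involved in an illegal turn'' means that the direction $d$ determined by each end of $e_0$ is the only direction in its gate at the corresponding vertex. By the description of folding in \ref{slope 1 folding}.C (or, equivalently, by the gadget description in Theorem~\ref{t:gadget}), the folding operation at time $s$ identifies only points that lie in widgets based at the endpoints of illegal turns; in particular the relation $u\sim_s v$ requires $u$ and $v$ to lie in a common gate at a common vertex. Since $d$ is a singleton gate, no interior point of $e_0$ gets identified with anything off $e_0$, and neither endpoint of $e_0$ is merged with anything along the direction of $e_0$. Consequently, for small $s>0$ there is a corresponding edge $e_s$ in $\hG_s$ with the same length $L_0$ as in $\hG_0$, and the direction of $e_s$ at each endpoint is still a singleton gate, so $e_s$ is not involved in an illegal turn.

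Second, I would convert from the natural parameter $s$ to the arclength parameter $t$. With $vol(\hG_0)=1$ (as in the setup of Notation~\ref{n:folding path}), the arclength from $\bG_0$ to $\bG_{t(s)}=\G_s$ is
\[
t(s)=d_\X(\G_0,\G_s)=\log\!\left(\frac{vol(\hG_0)}{vol(\hG_s)}\right)=-\log vol(\hG_s),
\]
so $vol(\hG_s)=e^{-t}$. Projectivizing $\hG_s$ to $\G_s=\hG_s/vol(\hG_s)$ multiplies every edge length by $1/vol(\hG_s)=e^{t}$, so the length of $e$ in $\bG_t$ is
\[
L_t \;=\; \frac{L_0}{vol(\hG_s)} \;=\; L_0\,e^{t},
\]
which is what we want.

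The only genuine content is the first paragraph, i.e.\ checking that ``not in an illegal turn'' is preserved by the local folding step and that the edge $e_0$ is not touched during folding. This is the main (minor) obstacle: one must be careful that although the vertices at the ends of $e_0$ may carry other illegal turns (not involving $d$) and so get modified by the fold, the direction of $e_0$ itself remains a singleton gate at the (possibly new) vertex; this follows from the equivalence relation $\sim_s$ in \ref{slope 1 folding}.C respecting gates. Once this is granted the arclength computation above gives the formula $L_t=L_0 e^t$ for all $t$ for which $e_s$ persists as an edge not involved in an illegal turn.
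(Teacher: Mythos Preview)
Your proof is correct and takes essentially the same approach as the paper. The paper compresses the argument into a single sentence --- ``for small $t$, the morphism $e^t\bG_0\to\bG_t$ restricted to $e_0$ is an isometric embedding with image an edge of $\bG_t$'' --- whereas you unpack this by working in the natural parameter $\hG_s$ first (where the edge length is constant because folding only touches gates with more than one direction) and then rescaling by $1/vol(\hG_s)=e^t$; the content is the same.
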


\begin{proof}
For small $t$, the morphism $e^t\bG_0\to\bG_t$ restricted to $e_0$ is an isometric embedding with image an edge of $\bG_t$.
\end{proof}

\begin{lemma}\label{l:bounded everywhere}
Suppose $z$ is a conjugacy class such that $k(z_0)$ and $\ell(z_L)$
are bounded. Then $\ell(z_t)$ is bounded for all $t\in [0,L]$.
\end{lemma}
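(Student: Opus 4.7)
My plan is to establish $\ell(z_t)\le \max\bigl(\ell(z_L),\,k(z_0)(2+\ell(z_L))\bigr)$ for every $t\in[0,L]$, which is bounded under the hypotheses. I combine two observations.

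\emph{(i) Every maximal legal sub-arc of $z_t$ has length at most $2+\ell(z_L)$.} Apply Corollary~\ref{3} to the folding sub-path $\{\bG_s\}_{s\in[t,L]}$: a maximal legal sub-arc of $z_t$ of length $\lambda\ge 2$ gives rise to a legal sub-arc of $z_L$ of length at least $2+(\lambda-2)e^{L-t}$, which is bounded above by $\ell(z_L)$. Hence $\lambda\le 2+\ell(z_L)e^{-(L-t)}\le 2+\ell(z_L)$; arcs of length less than $2$ satisfy the bound trivially.

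\emph{(ii) $k(z_t)\le k(z_0)$.} Since $\hat\phi_{0L}=\hat\phi_{tL}\circ\hat\phi_{0t}$, every legal turn of $\bG_0$ remains legal in $\bG_t$ under $D\hat\phi_{0t}$: any identification by $D\hat\phi_{0t}$ would force one by $D\hat\phi_{0L}$. Consequently, each maximal legal sub-arc of $z_0$ maps to a legal sub-path of the image $\hat\phi_{0t}(z_0)$. Passing from $\hat\phi_{0t}(z_0)$ to $z_t$ tightens at the trivial turns arising from illegal turns of $z_0$ that have been fully folded; each elementary backtrack cancellation removes three turns (one trivial at the backtrack vertex, two at the neighbouring vertex) and inserts one new turn at the neighbouring vertex. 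Crucially, no trivial turn ever appears in the interior of a legal sub-arc, so tightening can only shorten, consume, or merge such arcs, never split them. Therefore the number of maximal legal sub-arcs of $z_t$ is at most $k(z_0)$.

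Combining (i) and (ii): if $k(z_t)\ge 1$, partitioning $z_t$ into its $k(z_t)$ maximal legal sub-arcs gives $\ell(z_t)\le k(z_t)(2+\ell(z_L))\le k(z_0)(2+\ell(z_L))$. If $k(z_t)=0$, then $z_t$ is a legal loop; legal loops remain legal under folding, and the length evolves as $\ell(z_s)=\ell(z_t)e^{s-t}$ for $s\ge t$, whence $\ell(z_t)=\ell(z_L)e^{-(L-t)}\le \ell(z_L)$. Either way the claimed bound holds.

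The main obstacle is the bookkeeping in (ii): verifying that cascading cancellations, and junction turns that happen to be illegal, do not cause the number of maximal legal sub-arcs to increase. The crux is that tightening operates only at vertices carrying trivial turns, which never lie in the interior of a legal sub-arc, so a legal sub-arc can never be split at an interior point.
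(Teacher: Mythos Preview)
Your proof is correct, but it takes a different route from the paper's. The paper argues via the growth criterion (Corollary~\ref{c:length}(\ref{i:average})): since $k(z_t)\le k(z_0)$, whenever $\ell(z_t)\ge 2k(z_0)$ the average maximal legal segment has length $\ge 2\ge 2/m_t$, so $\ell(z_t)$ is increasing; hence $\ell(z_t)\le\max\{2k(z_0),\ell(z_L)\}$. You instead bound each maximal legal segment individually via Corollary~\ref{3} and then sum. Both arguments rest on the same monotonicity $k(z_t)\le k(z_0)$, which the paper treats as evident (see the opening of Section~\ref{s:gafa}: illegal turns can only become legal or collide), so your detailed justification in (ii) is more than is strictly needed. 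The paper's approach yields the sharper constant $\max\{2k(z_0),\ell(z_L)\}$ and avoids Corollary~\ref{3}, while yours is a perfectly valid alternative that makes the contribution of each legal arc explicit.
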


\begin{proof}
By Corollary~\ref{c:length}(\ref{i:average}) and
Corollary~\ref{c:global}, if $\ell(z_t)/\k(z_t)\ge\ell(z_t)/\k(z_0)\ge
2\ge 2/m_t$, then $\ell(z_t)$ grows.  Therefore,
$\ell(z_t)\le\max\{2k(z_0), \ell(z_L)\}$ for all $t\in [0,L]$.
\end{proof}

By a {\it surface relation} we mean a conjugacy class that, with
respect to some rose, crosses every edge twice and has a circle as its
Whitehead graph (equivalently, attaching a 2-cell to the rose via the
curve results in a surface).

\begin{lemma}\label{monogon}
Suppose $\m_t\ge m$, for all $t\in [0,L]$, and let $w$ be a conjugacy
class in $\FF$. Assume $\k(w_0)=m$. If $\ell(w_L)\leq K$ then
either
\begin{enumerate}[(i)]
\item there is a simple class $u$ such that:
\begin{itemize} 
\item
$\k(u_0)$ is bounded; and
\item
$\ell(u_t)$ is bounded by a function of $K$ for all $t\in [0,L]$ 
\end{itemize}
(in particular $d_\F(\bG_0,\bG_L)$ is bounded by a function of $K$);
or
\item $w$ is a surface relation.
\end{enumerate}
Moreover, if $\m_t>m$ for all $t$ then
(i) holds. 
\end{lemma}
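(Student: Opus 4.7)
The plan is to reduce condition~(i) to the task of finding a simple class $u$ whose length in $\bG_L$ is bounded in terms of $K$. Indeed, $\k(u_0)$ is automatically bounded by $\m_0$, which is at most the total number of directions at vertices of $\bG_0$, a quantity depending only on $n$. Given a simple $u$ with $\ell(u_L)\le f(K)$ and $\k(u_0)$ bounded, Lemma~\ref{l:bounded everywhere} provides a bound on $\ell(u_t)$ for all $t\in[0,L]$. The ``in particular'' clause about $d_\F(\bG_0,\bG_L)$ then follows by applying Lemma~\ref{bounded crossing 2} to $\bG_0$ and $\bG_L$ through the simple class $u$, using Remark~\ref{r:bounded crossing 2} to find an edge crossed a bounded number of times.

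If $w$ itself is simple, we take $u=w$ and are done. So I will assume throughout that $w$ is not simple, and aim to produce either a simple $u$ of bounded length in $\bG_L$ or conclude that $w$ is a surface relation. The core of the argument is a combinatorial analysis of how $w_0$ uses the gates of the train track structure on $\bG_0$. The hypothesis $m=\k(w_0)\le\m_t$ for all $t$ is a saturation condition: the loop $w_0$ realizes at least as many illegal turns as must persist throughout the folding process, which constrains the combinatorics rigidly.

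At each vertex $v$ of $\bG_0$, the loop $w_0$ determines an assignment of pairs of directions to each pass, and the gate structure partitions these directions. I will argue by a Whitehead-type analysis (in the spirit of Lemma~\ref{bounded crossing} and its appeal to \cite{wh,reiner,js:whitehead}) that if at some vertex the Whitehead graph of $w_0$ (read off from the way $w_0$ enters and leaves $v$) is disconnected or has a cut vertex, then one can peel off a subclass $u_0$ of $w_0$ that lies in a proper free factor; this $u_0$ is simple and its length in $\bG_L$ is controlled by $\ell(w_L)\le K$ because $u_L$ arises as a subloop of $w_L$ under the morphism along the folding path. If no such vertex exists, then at every vertex the directions must be used in a perfectly balanced way and the Whitehead graph must be a single cycle, which by the cited results means precisely that $w$ is a surface relation, giving case~(ii).

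The ``moreover'' clause then follows from the observation that a surface relation~$w$ is forced to have a time $t$ along the folding path at which $\m_t=m$ (the balanced use of directions and the cyclic Whitehead structure collapse the illegality to exactly $m$ at some stage), so strict inequality $\m_t>m$ throughout rules out case~(ii) and forces case~(i). The main obstacle is the precise execution of the combinatorial dichotomy in the previous paragraph: carefully matching the unbalanced/balanced distinction to the presence of a Whitehead cut via the Reiner Martin refinement of Whitehead's algorithm, and verifying that a simple subclass produced from $w_0$ genuinely has bounded length in $\bG_L$ (not merely in $\bG_0$), which requires control over how the Whitehead move interacts with the folding morphisms $\bG_0\to\bG_L$.
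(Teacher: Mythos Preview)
Your proposal has a genuine gap: it never uses the hypothesis $\k(w_0)=m\le\m_t$ in the way that actually drives the argument. The paper's first step is to combine this with the derivative formula (Lemma~\ref{derivative}) to deduce $\ell(w_t)\le\max\{2,K\}$ for \emph{all} $t$, since a loop with $\k(w_t)\le m\le\m_t$ and length $>2$ has nonnegative derivative. This bound on $\ell(w_0)$ is what makes any combinatorial analysis of $w_0$ tractable. Without it, $w_0$ could be arbitrarily long and your Whitehead analysis at vertices of $\bG_0$ would tell you nothing useful.

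Once $\ell(w_0)$ is bounded near $2$, the paper does a case split on $\ell(w_0)$: for $\ell(w_0)<2$ the loop misses an edge so $w$ is simple; for $\ell(w_0)$ between $2$ and $2+injrad(\bG_0)$ one collapses a maximal tree to a rose and observes the Whitehead graph is a $1$-manifold (giving either simple or surface relation); and for $\ell(w_0)\ge 2+injrad(\bG_0)$ one takes $u=v$ a \emph{shortest} loop in $\bG_0$ and uses the derivative formula again to show $\ell(v_t)\le\ell(w_t)-2$ persists. This last case is invisible in your outline, and your proposed $u$ is always built from $w$ rather than taken independently. Your Whitehead dichotomy is also backwards: the Whitehead/Martin cut-vertex result applies when $w$ is already known to be simple, so it cannot be invoked after you have assumed $w$ is not simple; and ``peeling off a subclass'' via a cut vertex is not a well-defined operation (a cut vertex yields a length-reducing automorphism, not a subloop). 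Finally, your argument for the ``moreover'' clause is incorrect: there is no reason a surface relation forces $\m_t=m$ at some time. The paper instead observes that $\m_t\ge m+1$ sharpens the derivative bound to $\ell'(w_t)\ge\ell(w_t)-\tfrac{2m}{m+1}$, so if $\ell(w_0)\ge 2$ then $\ell(w_t)$ grows at a definite exponential rate, bounding $d_\X(\bG_0,\bG_L)$ directly.
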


\begin{proof}  
Arguing as in Lemma~\ref{l:bounded everywhere}, we have
$\ell(w_t)\leq\max\{2,K\}$ (loops of length $>2$ with no more illegal
turns than the \ill\ grow under folding). So we may take $u=w$
provided $w$ is simple. See Lemma~\ref{bounded crossing 2} and
Remark~\ref{r:bounded crossing 2} for the parenthetical remark. We now
consider four cases.

{\it Case 1.} $\ell(w_0)<2$. Then $w$ is simple as $w_0$
crosses some edge at most once.

{\it Case 2.} $\ell(w_0)=2$. Then either $w$ is simple or
$w_0$ crosses every edge exactly twice. In the latter case,
collapse a maximal tree in $\bG_0$ -- with respect to the
resulting rose the Whitehead graph of $w$ is either a circle (and then
$w$ is a surface relation) or the disjoint union of at least two
circles (and then $w$ is simple). 

{\it Case 3.} $2<\ell(w_0)<2+injrad(\bG_0)$. Then either
$w$ is simple or $w_0$ crosses every edge at least
twice. Assume the latter. Under our assumption the edges crossed more
than twice form a forest. Collapse a maximal tree that contains this
forest and argue as in Case~2.

{\it Case 4.} $\ell(w_0)\geq 2+injrad(\bG_0)$. Choose a conjugacy
class $v$ with $\ell(v_0)=injrad(\bG_0)$. We now claim that
$\ell(v_t)\leq\ell(w_t)-2$ for all $t$. This is clearly true
at $t=0$. In fact, this condition {\it persists} in that there is no
last time $t_0<L$ where it is true. Indeed,
Lemma~\ref{derivative}(\ref{i:loop}) shows that $$\ell'(v_{t_0})\leq \ell(v_{t_0}) \leq
\ell(w_{t_0})-2\leq \ell'(w_{t_0})$$ and so
the inequality continues to hold for $t>t_0$ (see
Corollary~\ref{c:length}(\ref{i:compare})).  Thus $v$ is a simple
class with both $\ell(v_t)$ bounded. We may take $u=v$.

For the moreover part, we have $\ell'(w_t)\geq
\ell(w_t)-2\frac m{m+1}$ for all $t$. Thus if $\ell(w_0)<2$
then $w$ is simple and the statement follows with $u=w$. If
$\ell(w_0)\geq 2$, then we claim $d_\X(\bG_0,\bG_L)$ is bounded
(see Corollary~\ref{proj X->F continuous}). Indeed, by
Corollary~\ref{c:length} we have $\ell(w_t)\ge ae^t+b$ where
$a=\ell(w_0)-2\frac{m}{m+1}\ge \frac{2}{m+1}$ and $b\ge 2$. In
particular, $K\ge \frac{2}{m+1}e^L+2$. We may take $u=v$ such that
$\ell(v_0)=injrad(\bG_0)$.
\end{proof}

We also have the following variant.

\begin{lemma}\label{monogon2}
Suppose in addition to the hypotheses of Lemma~\ref{monogon} that, for
some illegal turn in $w|\bG_0$, one of the two edges $e$ forming the
turn is nonseparating and has length a definite fraction $p>0$ of
$injrad(\bG_0)$. Then either:
\begin{enumerate}[(i\,$'$)]
\item there is a simple class $u$ such that:
\begin{itemize}
\item
$\k(u_0)$ is bounded; and
\item
$\ell(u_t)$ is bounded
  as a function of $K$ and $p$ for all $t\in [0,L]$
\end{itemize}
(in particular 
$d_\F(\bG_0,\bG_L)$ is bounded in terms of $K$ and $p$); or
\item $w$ is a surface relation and any class $z$ such that $z_0$
  contains a segment $S=e\cdots e$ that closes up (by identifying the
  two copies of $e$) to $w_0$ fails to be simple.
\end{enumerate}
\end{lemma}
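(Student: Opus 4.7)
\textbf{Step 1: Reduction via Lemma \ref{monogon}.} I plan to begin by applying Lemma \ref{monogon} to $w$. If alternative (i) of that lemma holds, then its simple class $u$ already satisfies (i$'$), since the bounds coming from (i) depend only on $K$, hence \emph{a fortiori} on $(K,p)$. So I may assume $w$ is a surface relation. The proof of Lemma \ref{monogon} actually establishes the estimate $\ell(w_t)\le K':=\max\{2,K\}$ for every $t\in[0,L]$ (loops of length $>2$ with at most $\m_t$ illegal turns grow under folding), and in particular $\ell(e)\le\ell(w_0)\le K'$. Combined with the hypothesis $\ell(e)\ge p\cdot injrad(\bG_0)$, this yields the key quantitative anchor
\[
injrad(\bG_0)\le K'/p.
\]

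\textbf{Step 2: The dichotomy and candidate construction.} If no simple class $z$ has $z_0$ containing a segment $S=e\cdots e$ that closes up to $w_0$, then conclusion (ii$'$) holds and we are done. Otherwise, fix such a simple $z$ minimizing $\ell(z_0)$. Writing $w_0=e\cdot y$ cyclically, $z_0$ has the cyclic form $e\cdot y\cdot e\cdot\delta$, where $\delta$ is a combinatorial path in $\bG_0$ from the terminal vertex of the second $e$ back to the initial vertex of the first, chosen so the resulting loop is immersed and simple (such $\delta$ exists in $\bG_0\setminus\mathrm{int}(e)$ because $e$ is nonseparating). I will take $u:=z$. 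Since $\bG_0\in\X$ has volume $1$, its diameter is bounded and $\ell(S)\le 2K'$, so $\ell(u_0)\le 2K'+\ell(\delta)$; the path $\delta$ may be rerouted through the bounded supply of short embedded loops guaranteed by $injrad(\bG_0)\le K'/p$, giving $\ell(u_0)$ bounded as a function of $K$, $p$, and the rank $n$. The same combinatorial control yields $\k(u_0)\le C(n)$, since $u_0$ decomposes as a bounded-complexity concatenation (the surface-relation word, the extra copy of $e$, and $\delta$), each piece contributing a bounded number of illegal turns.

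\textbf{Step 3: Propagation along $[0,L]$ --- the main obstacle.} The principal difficulty is bounding $\ell(u_t)$ \emph{uniformly} in $t\in[0,L]$. I will follow the scheme of Lemma \ref{l:bounded everywhere}: once $\ell(u_L)$ and $\k(u_0)$ are bounded, Corollary \ref{c:length}(\ref{i:average}) combined with $\k(u_t)\le\m_t$ (which persists because $\k(u_0)\le m\le\m_t$) forces $\ell(u_t)\le\max\{2,\ell(u_L)\}$ for all $t$. To bound $\ell(u_L)$, I will decompose $u_L$ into the portion that arose from $S$---controlled by $\ell(w_L)+\ell(e_L)\le 2K$---and the portion arising from $\delta$, the latter controlled by a derivative-comparison argument modeled on Case 4 of Lemma \ref{monogon}: one checks $\ell'(\delta_t)\le\ell(\delta_t)$ against $\ell'(w_t)\ge\ell(w_t)-2$ to propagate an initial length inequality.

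The delicate and genuinely obstructing point here is verifying that $\k(u_t)\le\m_t$ persists throughout: along a folding path, illegal turns within a loop can in principle be created at discrete times when a gate fully folds, and one must rule out that this causes $\k(u_t)$ to exceed $\m_t$. This is handled using Proposition \ref{p:tame} to localize the phenomenon to discrete times, together with the explicit combinatorial structure of $u_0$ (inherited from the surface relation $w$ via the doubling along $e$), which bounds the newly created illegal turns by the available illegality reservoir $\m_t-\k(u_0)\ge 0$. With this in hand, the bound propagates and (i$'$) follows.
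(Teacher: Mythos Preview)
Your Step~2 has a genuine gap that the rest of the argument cannot repair. You split on whether some simple $z$ of the required shape exists; if one does, you set $u:=z$ and then claim you can bound $\ell(u_0)$ and $\k(u_0)$ by ``rerouting $\delta$'' through short loops. But $z$ is a \emph{given} simple class---you do not get to alter $\delta$. Rerouting produces a different conjugacy class, and nothing guarantees that new class is simple. Conversely, if you instead try to \emph{construct} $z$ by choosing $\delta$ short (say an embedded arc in $\bG_0\setminus\mathrm{int}(e)$, which is what your parenthetical about $e$ being nonseparating seems to suggest), you have no argument that the resulting loop is simple. So neither reading of Step~2 yields a simple $u$ of controlled complexity, and Step~3---which already has its own unresolved ``obstructing point'' and a decomposition of $u_L$ that does not survive tightening---never gets off the ground. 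Your bound $injrad(\bG_0)\le K'/p$ is correct but does not rescue this: bounding the injectivity radius at time $0$ does not by itself produce a simple class whose length stays bounded along the whole path.

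The paper's proof avoids this entirely by establishing the \emph{other} alternative in the hard case. After Cases~1 and~4 of Lemma~\ref{monogon} (which give (i$'$) directly), it makes one further reduction: if $\ell(w_0)\ge 2+p\cdot injrad(\bG_0)$ then the Case-4 comparison with the shortest loop $v$ gives $\ell(v_t)\le (\ell(w_t)-2)/p$ for all $t$, so (i$'$) holds with bounds depending on $K$ and $p$. Hence one may assume $\ell(w_0)<2+p\cdot injrad(\bG_0)$; combined with $\ell(e)\ge p\cdot injrad(\bG_0)$ this forces $w_0$ to cross $e$ at most twice, so $e$ lies outside the forest of over-crossed edges. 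Collapse a maximal tree containing that forest but avoiding the nonseparating edge $e$. In the resulting rose the Whitehead graph of $w$ is a circle on all $2n$ directions (this is exactly what ``surface relation'' means), and since $e$ survives, any $z=eAeB$ with $S=eAe$ closing up to $w$ has Whitehead graph \emph{containing} that circle---hence connected with no cut vertex---so $z$ is not simple. This proves (ii$'$) for every such $z$ at once, with no need to manufacture a good $u$ in this branch.
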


\begin{proof}
Referring to the proof of Lemma~\ref{monogon}, in Cases~1 and 4, ({\it i\,$'$})
holds; so assume we are in Cases~2 or 3. In fact we are free to assume
$\ell(w_0) < 2+p~ injrad(\bG_0)$, for otherwise the argument of
Case~4 shows that for $v$ with $\ell(v_0)=injrad(\bG_0)$ we have
$\ell(v_t)\leq \frac{\ell(w_t)-2}p$ for any $t$ and ({\it
  i\,$'$}) follows (and this time the bound also depends on $p$). Now
the forest consisting of the edges crossed by $w_0$ more than
twice (assuming all edges are crossed at least twice) does not include
$e$, and we may collapse a maximal tree that contains this forest but
does not contain $e$. Now $z_0$ can be thought of as $e\cdots
e\cdots=eAeB$ with the subpath $S=eAe$ giving $w$. Since $e$ is not
collapsed, the Whitehead graph of $z$ in the rose contains the
Whitehead graph of $w$, which is a 1-manifold. So if $w$ is not
simple, neither is $z$.
\end{proof}

\section{Loops with long illegal segments}\label{s:gafa}
The key result of this section is Proposition~\ref{gafa}(surviving
illegal turns) which is a generalization of \cite[Lemma~2.10]{gafa}.
Before stating Proposition~\ref{gafa}, we
need a bit of terminology and some preliminary lemmas to be used in
the proof. In this section, $\bG_t$, $t\in [0,L]$, is a folding path
in $\X$ parametrized by arclength.  Consider a conjugacy class $z$ in
$\FF$ and the induced path of loops $z_t:=z|\bG_t$. The illegal turns
along $z_t$ are folding as $t$ increases, but at discrete times an
illegal turn may become legal, or several illegal turns may collide
and become one (e.g.\ see Figure~\ref{f:unfold.v2}). We say that a
consecutive collection of illegal turns along $z$ {\it survives to
  $\bG_L$} if none of them become legal nor do they collide with a
neighboring illegal turn in the collection, for any $t\in [0,L]$. In
particular, each illegal turn in the collection in $z_t$ unfolds to a
single illegal turn in the collection in $z_{t'}$ for $t'\le t$. We
will call the portion\footnote{usually a segment, but possibly all of
  $z_t$} of $z_t$ spanned by our collection the {\it good portion of
  $z_t$}. The turns in the collection in $z_t$ are, in order,
$\tau_{t,1}, \tau_{t,2},\dots$ and have vertices $p_{t,1},
p_{t,2},\dots$. In particular, if $t'<t$ then $\tau_{t,i}$ unfolds to
$\tau_{t',i}$. The image of $\tau_{t,i}$ in $\bG_t$ is
$\bar\tau_{t,i}$. In this context, the Unfolding Principle gives the
implication
$\bar\tau_{t,i}=\bar\tau_{t,j}\implies\bar\tau_{t',i}=\bar\tau_{t',j}$.
We also say that $\bar\tau_{t,i}$ {\it unfolds to} $\bar\tau_{t',i}$.

Suppose that a consecutive collection of illegal turns along $z$
survives to $\bG_L$. For each $t\in [0,L]$ denote by $\T_t$ the set of
turns that occur in the given consecutive collection, i.e.\
$\T_t=\{\bar\tau_{t,1}, \bar\tau_{t,2},\dots\}$. Let $D_t$ denote the
set of directions in $\bG_t$ that occur in a turn in $\T_t$.  Of
course, $D_t$ is partitioned into equivalence classes with respect to
the relation ``being in the same gate'', but we consider a finer
equivalence relation generated by $d\sim d'$ if $\{d,d'\}\in\T_t$. We
will call the equivalence classes {\it subgates}. The {\it Whitehead
  graph of $\bG_t$}, denoted $\W(\bG_t)$ or often just $\W_t$, is
the simple\footnote{no multiple edges, no edges that are
  loops} graph whose vertices are directions at
vertices of $\bG_t$ and edges are illegal turns in $\bG_t$. Note that directions at vertices are in the same gate if and only if they are in the same component of $\W_t$. Define
$\SW(\bG_t)$, or just $\SW_t$, to be the subgraph of $\W_t$ spanned by
the edges in $\T_t$, i.e.\ a vertex in $\SW_t$ is a direction in a
turn in $\T_t$ and an edge is an element of $\T_t$. Note that
directions are in the same subgate if and only if they are in the same
component of $\SW_t$.

\begin{lemma}\label{sublemma}
In the situation above, let $d_1,d_2,\cdots,d_k$ be the vertices along
an embedded closed curve in $SW_t$ (that is,
$d_i\neq d_j$ for $i\neq j$). Then for any $t'<t$ there is
an induced embedded closed curve in $SW_{t'}$.
\end{lemma}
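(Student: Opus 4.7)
\emph{Proof plan.} The plan is to reduce from an arbitrary unfolding to finitely many elementary gadget unfoldings via Theorem~\ref{t:gadget}, and then handle each elementary step by local analysis.

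First, use Theorem~\ref{t:gadget} to choose a partition $t'=t_0<t_1<\cdots<t_N=t$ so that on each subinterval $[t_i,t_{i+1}]$ the path is obtained by folding a single gadget. A straightforward downward induction on $i$ reduces the lemma to the case of a single gadget unfolding, i.e.\ proving: if an embedded cycle $d_1,\ldots,d_k$ exists in a Whitehead graph of a subgate at $\bG_{t_{i+1}}$, then a cycle exists in some Whitehead graph of a subgate at $\bG_{t_i}$.

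For the single-unfolding step, I would proceed as follows. By the Unfolding Principle, each edge $\{d_j,d_{j+1}\}$ of the cycle, which is a turn in $\mathcal T_{t_{i+1}}$, has a unique preimage turn $\bar\tau'_j\in\mathcal T_{t_i}$; its two directions project to $d_j$ and $d_{j+1}$ under $D\hat\phi_{t_it_{i+1}}$. I would then use the explicit local picture supplied by Theorem~\ref{t:gadget}: the preimage in $\bG_{t_i}$ of the vertex $v$ where the cycle sits is either a single vertex (if untouched by the gadget) or a widget of radius $\epsilon$ whose cone point has a single gate that folds to a single direction at $v$. In the trivial case, the cycle transports essentially unchanged. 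In the widget case, one must show that all of the preimage turns $\bar\tau'_1,\ldots,\bar\tau'_k$ live at the cone point of a single widget, and that the directions assigned to shared endpoints match up around the cycle; this yields a closed curve in the Whitehead graph at the cone point, from which an embedded sub-cycle can be extracted by pigeonhole if the closed curve is not already embedded.

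The main obstacle is the widget case of the third step: I need to verify that consecutive lifted turns $\bar\tau'_{j-1}$ and $\bar\tau'_j$ share not only a preimage of the common direction $d_j$ but in fact share it \emph{as a direction at a common vertex} of $\bG_{t_i}$. I expect this to follow by examining the germ of $z$ near each occurrence of the relevant turns and applying the uniqueness clause of the Unfolding Principle (including germs of directions beyond illegal turns), together with the explicit cone-on-points structure of a widget, which limits how preimages of a single gate at $v$ can be distributed. Bookkeeping the assignment of directions to turn-endpoints across the widget is the technically delicate part, but once this sharing is established, the cyclic concatenation of the $\bar\tau'_j$'s produces the required embedded closed curve in a subgate at the cone point.
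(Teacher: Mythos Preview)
Your reduction via Theorem~\ref{t:gadget} to a single gadget step is exactly how the paper begins, and you correctly identify that the crux is showing the lifted turns $\bar\tau'_1,\ldots,\bar\tau'_k$ are all based at a common vertex. But your local picture of the unfolding is wrong in a way that makes the remaining argument collapse.

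You write that the preimage of the vertex $v$ is ``either a single vertex \ldots\ or a widget of radius $\epsilon$ whose cone point has a single gate that folds to a single direction at $v$.'' This is not the structure. The preimage $N_\epsilon$ of the $\epsilon$-neighborhood of $v$ is a \emph{gadget}: a tree built from possibly several widgets glued at height-$0$ vertices. The preimage of $v$ itself consists of the height-$0$ vertices (there may be many), while the illegal turns of $\hG_{t_i}$ inside $N_\epsilon$ sit at the height-$\epsilon$ cone points of the widgets, \emph{not} at the preimages of $v$. So each lifted turn $\bar\tau'_j$ lives at some height-$\epsilon$ cone point, and a priori different $j$'s could land at cone points of different widgets. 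Ruling this out is exactly the content of the lemma, and it does not follow from the Unfolding Principle alone: one needs the five structural observations the paper records (e.g.\ a single widget cannot support both an upward $\tilde d_i^*$ and a downward $\tilde d_j^*$; adjacent widgets cannot both support upward $\tilde d^*$'s; etc.), followed by a case analysis on whether the shared direction $\tilde d_i^*$ points up or down. Without this combinatorics you have no mechanism to force the $\bar\tau'_j$ to coalesce at one cone point.

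A secondary gap: the lemma asserts that the \emph{specific} unfolded turns form an embedded cycle (see the sentence following the lemma's statement). Your pigeonhole extraction of an embedded sub-cycle would discard some of the $\bar\tau'_j$, so you would not get that unfolding preserves the equivalence $\bar\tau_{t,i}\sim\bar\tau_{t,j}$, which is what the application to Proposition~\ref{gafa} uses. The paper proves embeddedness directly, again via the structural observations (items (2)--(5)), by showing that for $i\neq j$ the directions $\tilde d_i^*$ and $\tilde d_j^*$ determine distinct directions at the common vertex $w$.
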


Specifically, each turn $\{d_i,d_{i+1}\}$ (taken $\mbox{mod } k$, so
including $\{d_k,d_1\}$) unfolds to a turn in $\T_{t'}
$; Lemma~\ref{sublemma} says that these turns also form an embedded closed
curve in a subgate (in particular, all are based at the same vertex).

\begin{proof}[Proof of Lemma~\ref{sublemma}]
Let $\hG_{s(t)}$ be the path in $\hX$ giving rise to $\bG_t$ (see
Notation~\ref{n:folding path}).  As combinatorial graphs, we identify
$\bG_t$ and $\hG_{s(t)}$ (they differ only by homothety). In
particular, a direction in one is naturally identified with a
direction in the other and we will use the same names for two such
directions. If we set $s:=s(t)$ and $s':=s(t')$, it suffices to argue
in the case $s'=s-\epsilon$ for small $\epsilon>0$; for the conclusion
clearly holds in the limit. To that end, we use the description of
$N_\epsilon$ developed in Section~\ref{s:folding}. So, let $v$ be the
vertex in $\hG_s$ that is the base of the directions $d_i$ in our
subgate, $N$ be the $\epsilon$-neighborhood in $\hG_s$ of $v$, and
$N_\epsilon$ be the preimage of $N$ in $\hG_{s'}$. Set
$d_i^*=d_i^\epsilon$ or $d_i^{-\epsilon}$ depending on whether or not
$d_i$ points up or down and similarly for $\tilde d_i^*$. For $i\not=
j$, let $[d^*_i,d^*_j]$ denote the path in $\hG_s$ from the base of
$d^*_i$ to the base of $d^*_j$ extended infinitesimally by the
outgoing (germs of) directions $d^*_i$ and $d^*_j$. Similarly,
$[\tilde d^*_i,\tilde d^*_j]$ denotes the unique immersed path lifting
$[d^*_i,d^*_j]$. Note that if $z|\hG_s$ contains the illegal turn
$\{d_i,d_j\}$ then it also contains the (infinitesimally extended)
path $[d^*_i,d^*_j]$ and $z|\hG_{s'}$ contains $[\tilde d^*_i,\tilde
  d^*_j]$. If $\tilde d_i^*$ points down then it is {\it supported} by
a widget $W$ if $\tilde d_i^*$ is in the downward direction from some
$x\in W$ of height 0. In this case, we also say that $\tilde d_i^*$ is
{\it supported} by $x$. If $\tilde d_i^*$ points up, then it is {\it
  supported} by the unique widget at which it is based. See
Figure~\ref{f:notation}.

\begin{figure}[h]
\begin{center}
\includegraphics[scale=0.5]{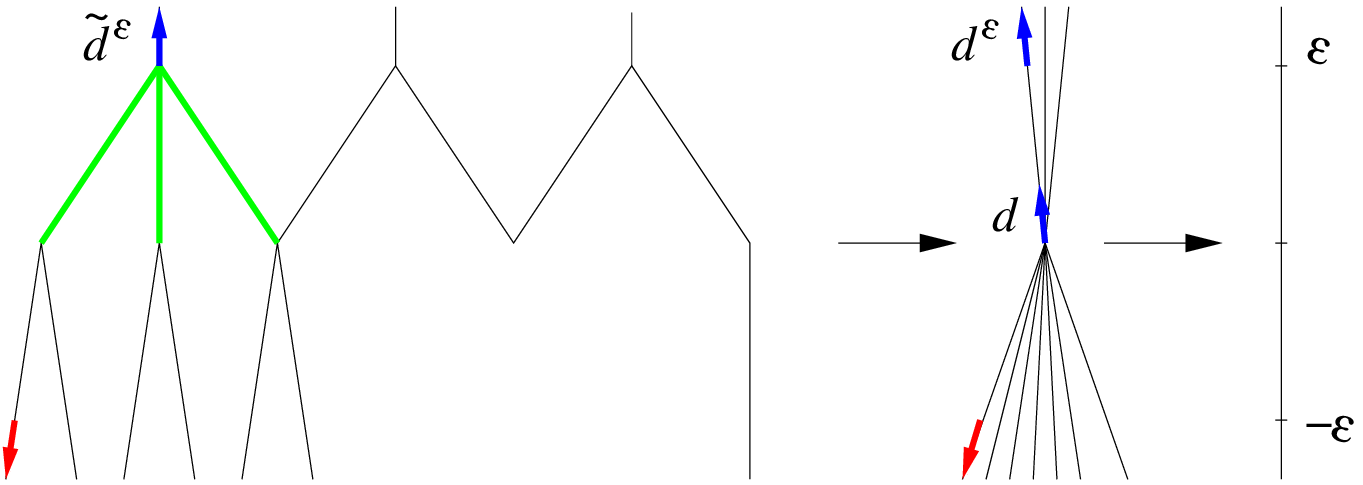}
\caption{An upward and a downward direction supported by the leftmost widget.}
\label{f:notation}
\end{center}
\end{figure}

By hypothesis, $[\tilde d^*_i,\tilde d^*_{i+1}]$ contains a unique
illegal turn that we abusingly denote $\{\tilde d^*_i,\tilde
d^*_{i+1}\}$.  Recall that all illegal turns in $\bG_{s'}$ have height
$\epsilon$ and are in a widget. Note that if $[\tilde d^*_i,\tilde
  d^*_{i+1}]$ (or indeed any path between height $\pm\epsilon$
vertices) contains its unique illegal turn in $W$ then it falls into one of the
following three cases:
\begin{itemize}
\item
$\tilde d^*_i$ and $\tilde d^*_{i+1}$ point up in distinct widgets
  adjacent to $W$
\item
$\tilde d^*_i$ and $\tilde d^*_{i+1}$ point down and are supported by $W$
\item one of $\tilde d^*_i$ or $\tilde d^*_{i+1}$ points down and is supported by $W$ and the other points up in a widget adjacent to $W$
\end{itemize}

We now make the following observations.
\begin{enumerate}[(1)]
\item If $\tilde d^*_i$ points up and is supported by the widget $W$,
  then all directions pointing upward at height 0 in $W$ are mapped to
  $d_i$. If $\tilde d^*_i$ points down, then it is in a unique
  downward direction from a height 0 point and this direction maps to
  $d_i$.

\item The directions $\tilde d^*_i$ are all distinct. Indeed, by
  hypothesis the $d_i$ (hence the $d^*_i$) are distinct, and $\tilde
  d^*_i$ is the unique lift of $d^*_i$.

\item It is not possible for a single widget to support both an upward
  $\tilde d^*_i$ and a downward $\tilde d^*_j$. Indeed, this would
  force $\{d_i,d_j\}$ to be illegal and then $[\tilde d^*_i,\tilde
    d^*_j]$ would have a height 0 illegal turn. However, all illegal
  turns occur at height $\epsilon$.

\item It is not possible for adjacent widgets to both support upward
  $\tilde d^*_i$ and $\tilde d^*_j$. This is because this would force an
  illegal turn at the common height 0 vertex formed by directions that map to
  $d_i$ and $d_j$. 

\item It is not possible for downward $\tilde d^*_i$ and $\tilde
  d^*_j$ to be supported by the same height 0 vertex. Indeed, this
  would then force an illegal turn based at this height 0 vertex.
\end{enumerate}

Recall that we want to prove $\{\tilde d^*_1,\tilde d^*_{2}\},
\{\tilde d^*_2,\tilde d^*_{3}\},\dots,\{\tilde d^*_k,\tilde d^*_{1}\}$
gives rise to an embedded closed curve in $\SW_{s'}$. To do this we
prove two things.

\begin{itemize}
\item (there is a loop) The turns $\{\tilde d^*_i,\tilde d^*_{i+1}\}$
  are all based at the same vertex $w$, i.e.\ the base of the illegal turn
  crossed by $[\tilde d^*_i,\tilde d^*_{i+1}]$ is independent of $i$.
\item (the loop is embedded) For $i\not=j$, $\tilde d^*_i$ and $\tilde d^*_{j}$ are not in the same direction from $w$.
\end{itemize}

To see there is a loop, suppose we have three consecutive directions
$\tilde d^*_{i-1},\tilde d^*_i,\tilde d^*_{i+1}$ that determine two
illegal turns in $N_\epsilon$ not based at the same vertex. There are
two cases. First suppose $\tilde d^*_i$ points down and is supported
by some height 0 vertex $x$. Paths from $\tilde d_i^*$ to $\tilde
d^*_{i\pm 1}$ lead through two distinct widgets each containing
$x$. Since our directions $\tilde d^*_1,\dots,\tilde d^*_k$ are
cyclically ordered and $N_\epsilon$ is a tree, there must be some $j$
with $j\neq i\neq j+1$ so that $[\tilde d^*_j,\tilde d^*_{j+1}]$
passes through $x$. (Indeed, otherwise all $\tilde d^*_j$, for
$j\not=i$, lie in the same component of
$N_\epsilon\setminus\{w\}$. Since $\tilde d^*_{i-1}$ and $\tilde
d^*_{i+1}$ lie in distinct components, this is a contradiction.) The
path $[\tilde d^*_j,\tilde d^*_{j+1}]$ cannot terminate at $\tilde
d^*_i$ by (2), and by (5) it cannot terminate in any downward
direction supported by $x$, but must continue to another (adjacent)
widget. By (3) the widgets containing $x$ do not support upward
$\tilde d^*_j$ and $\tilde d^*_{j+1}$, so the path crosses two illegal
turns, contradiction. The other case is that $\tilde d^*_i$ is upward,
say based at a height $\epsilon$ vertex $x$ inside a widget $W$. Then
there are distinct widgets $W_+$ and $W_-$ adjacent to $W$ so that $[\tilde d^*_i,\tilde d^*_{i\pm 1}]$ crosses an illegal
turn in $W_\pm$. Again there must be some $j$ so that $[\tilde d^*_j,\tilde d^*_{j+1}]$ either crosses $x$ (if $W_+\cap
W_-=\emptyset$) or crosses the intersection point $W_+\cap W_-$ (if
there is one, and then this point is in $W$ as well). In the latter
case the path does not terminate at any direction supported by this
point by (3) nor at an upward direction supported by $W_\pm$ by
(4). Thus this path has two illegal turns, contradiction. In the
former case, $[\tilde d^*_j, \tilde d^*_{j+1}]$ must
have at least 3 illegal turns: one at $x$ and one on each side of $x$,
by (3) and (4).

We have established the first item. To see that the loop is embedded,
suppose that there are $i\not= j$ such that $\tilde d^*_i$ and $\tilde
d^*_j$ are in the same direction from $w$. Each of these directions is
contained in a path between height $\pm \epsilon$ vertices with one
illegal turn, that illegal turn being based at $w$. Considering the
three cases listed above for such paths, $\tilde d^*_i$ and $\tilde
d^*_j$ have to either be both downward and supported by the same
height 0 vertex of the widget $W$ containing $w$, contradicting (5),
or one is downward and the other upward supported in the same widget
adjacent to $W$, contradicting (3), or they are upward and supported
by widgets adjacent to $W$ and to each other, contradicting (4).
\end{proof}

Given a finite, simple graph $\gG$, let $\sim$ be the equivalence
relation on the set of (unoriented) edges $\E(\gG)$ of $\gG$ generated
by $e\sim e'$ if there is an embedded loop containing $e$ and $e'$. We
also view $C\in\E(\gG)/\sim$ as a subgraph of $\gG$, specifically the
union of the edges in $C$. Let $\V(\gG)$ denote the set of vertices of
$\gG$ and let $\CV(\gG)\subset\V(\gG)$ be the set of cut vertices.

\begin{lemma}\label{l:graph}
  Suppose $\gG$ is a finite, simple, and connected graph. 
\begin{enumerate}[(1)]
\item\label{i:component} $e\sim e'$ if and only if they are not
  separated by any cut vertex $v\in\gG$, i.e.\ their interiors are in
  the same component of $\gG\setminus\{v\}$.
\item\label{i:count}
$|\V(\gG)|-1\ge |\E(\gG)/\sim|$ with equality iff $\gG$ is a tree.
\end{enumerate}
\end{lemma}

\begin{proof}
  (1): If an embedded circle contains $e$ and $e'$, then $e$ and $e'$
  are not separated by a cut vertex. The same then holds if
  $e\sim e'$. Now assume $e$ and $e'$ are separated by no cut vertex.
  Let $e=e_0, e_1, \dots, e_N=e'$ be an embedded edge path. We claim
  there is an embedded loop containing each $e_i, e_{i+1}$. Indeed, if
  $v$ is the common vertex of $e_i$ and $e_{i+1}$, then the claim is
  clear if $v$ is not a cut vertex, and the claim follows from our
  assumption if $v$ is a cut vertex.

  (2): Consider the bipartite graph $\Gamma$ whose vertex set is the
  disjoint union of $\E(\gG)/\sim$ and $\CV(\gG)$ and where
  $C\in\E(\gG)/\sim$ and $v\in\CV(\gG)$ span an edge if $v\in C$. We
  claim $\Gamma$ is a tree. Indeed, $\Gamma$ is connected since $\gG$
  is connected. Also, we see from the proof of (1) that the graph
  $C$ is connected and so an embedded loop in $\Gamma$ would give rise to an
  embedded loop in $\gG$ containing inequivalent edges. Note that
  elements of $\CV(\gG)$ are not leaves of $\Gamma$. Fix a leaf $C_0$
  of $\Gamma$ and consider the function $\CV\to\E(\gG)/\sim$ given by
  $v\mapsto C$ where $C$ is the vertex adjacent to $v$ in the
  direction of $C_0$. The image is the union of $\{C_0\}$ and
  complement of the set of leaves of $\Gamma$. The inequality follows
  by noting that, since every $C$ contains at least one edge, every
  $C$ has at least two vertices. The inequality is an equality iff $C$
  has valence one or two in $\Gamma$ and consists of a single edge,
  i.e.\ iff $\gG$ is a tree.
\end{proof}

We won't need it, but the proof also shows that the graphs $C$ have no
cut vertices of their own. Also, $e\sim e'$ iff $e=e'$ or there is an
embedded loop in $\gG$ containing $e$ and $e'$. (Hint: prove by
induction on $i$ that if $e\not= e'$ are equivalent and connected by a
length $i$ embedded edge path then this path can be extended to an
embedded loop.)

We now apply this to $\SW_t$, i.e.\ we consider the equivalence
relation on $\T_t$ generated by $\bar\tau_{t,i}\sim\bar\tau_{t,j}$ if
there is an embedded loop $\SW_t$ containing $\bar\tau_{t,i}$ and
$\bar\tau_{t,j}$. Recall that $\m_t=\m(\bG_t)$ is the \ill\ of
$\bG_t$, see Page~\pageref{p:illegality}.

\begin{lemma}\label{l:|classes|}
$|\,\T_t\,/\!\!\sim\!\!|\le\m_t$ with equality iff subgates
  coincide with gates and $\SW_t$ is a forest.
\end{lemma}

\begin{proof}
  We suppress the subscript $t$. Let $\W=\sqcup_i\W_j$ and
  $\SW=\sqcup_i\SW_i$ be decompositions into components. We have
$$|\T/\sim|\le\sum_i(|\V(\SW_i)|-1)\le \sum_j(|\V(\W_j)|-1)=m$$ where
  the first inequality follows from Lemma~\ref{l:graph}(\ref{i:count})
  (with equality iff $\SW$ is a forest) and the second follows because
  $\SW_t$ is a subgraph of $\W_t$ (with equality iff the sets of gates
  and subgates coincide).
\end{proof}

We are now ready for the main result of this section. Let $\hm$ denote
the maximal possible number of illegal turns for any train track
structure on any element of $\X$.

\begin{prop}[surviving illegal turns]\label{gafa}
  Let $z$ be a simple class and $\bG_t$, $t\in [0,L],$ a folding path
  in $\X$ parametrized by arclength. Assume that $M=2\hm+1$
  consecutive illegal turns of $z|\bG_0$ survive to $\bG_L$ and that
  the legal segments between them in $z|\bG_L$ have bounded size. Then
  $d_\F(\bG_0,\bG_L)$ is bounded.
\end{prop}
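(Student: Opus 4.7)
The plan is to use a pigeonhole argument on the surviving illegal turns to extract a short test-loop $w$ that can be tracked along the folding path, and then to appeal to the length-boundedness machinery of Section~\ref{s:folding} together with Lemma~\ref{monogon}.

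First, note that $\bG_L$ has at most $\hm$ distinct illegal turns. Among the $M=2\hm+1$ consecutive surviving turns $\bar\tau_{L,1},\ldots,\bar\tau_{L,M}$ in $\bG_L$, pigeonhole on any $\hm+1$ consecutive ones forces coincidences, yielding indices $1\le i<j\le \hm+1$ with $j-i\le \hm$ and $\bar\tau_{L,i}=\bar\tau_{L,j}$. Depending on whether the loop $z|\bG_L$ traverses these two occurrences with the same (parallel) or opposite (anti-parallel) orientation, I would close up the subpath of $z|\bG_L$ between $\tau_{L,i}$ and $\tau_{L,j}$ into an immersed loop $w_L$ in $\bG_L$: in the parallel case, directly at the common vertex $p_{L,i}=p_{L,j}$; in the anti-parallel ``lollipop'' case, at the endpoint where the two shared edges of the repeated turn meet. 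In either case $w_L$ has at most $\hm$ illegal turns, and since the legal segments between consecutive surviving turns are bounded by hypothesis, $\ell(w_L)$ is bounded by a constant depending only on $\hm$ and the legal-segment bound.

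Next I would transport $w$ back along the folding path. By the Unfolding Principle, $w_L$ determines a well-defined conjugacy class $w$ and $w|\bG_0$ is a subpath of $z|\bG_0$ that closes up at the unfolded copy of the repeated turn. Because the surviving turns are \emph{consecutive} in $z_0$, no illegal turn of $z_0$ lies strictly between $\tau_{0,i}$ and $\tau_{0,j}$; hence all illegal turns of $w_0$ arise from the surviving turns and $\k(w_0)=\k(w_L)\le \hm$. Lemma~\ref{l:bounded everywhere} then gives a uniform bound on $\ell(w_t)$ for all $t\in[0,L]$.

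Finally, I would feed $w$ into Lemma~\ref{monogon} applied with $m=\k(w_0)$ (after arranging, or sidestepping, the hypothesis $\m_t\ge m$ by taking $m$ minimal): either a simple class $u$ of uniformly bounded length throughout is produced and Lemma~\ref{bounded crossing 2} yields bounded $d_\F(\bG_0,u)$ and $d_\F(\bG_L,u)$, hence bounded $d_\F(\bG_0,\bG_L)$; or $w$ is a surface relation, which is the delicate case. Here I would invoke the hypothesis that $z$ itself is simple, writing $z=w\cdot u'$ with $u'$ the complementary subpath, and use Lemma~\ref{monogon2} applied to an edge involved in an illegal turn of $w_0$: if $w$ is a surface relation then any class whose reading contains a copy of the segment underlying $w$ fails to be simple, contradicting the simplicity of $z$ and thereby ruling out the surface relation alternative.

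The main obstacles I expect are twofold. First, the anti-parallel closing-up in Case~B is fiddly: the naive sub-loop has a backtrack at the closing, and one must instead take the subpath between the shared endpoints of the two illegal-turn edges, recognizing the structure as a lollipop in $\bG_L$ and verifying that the resulting sub-loop still has bounded length and illegality. Second, ruling out the surface-relation alternative is the genuine place where the simplicity of $z$ must be used non-trivially, and this is where Lemma~\ref{monogon2}'s conclusion~(ii\,$'$) — that a surface-relation $w$ sitting inside $z$ forces $z$ to be non-simple — is essential.
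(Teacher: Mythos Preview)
Your outline captures the right shape—close up a subsegment of $z$ between repeated illegal turns to get a short test loop $w$, then invoke the monogon lemmas—and the claim that the closed-up class is the same at time $0$ and time $L$ does go through (equal turns are in particular in the same gate, which is all the paper's argument for $w(0)=w(L)$ needs). But two steps are genuine gaps, not details.

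First, the hypothesis $\m_t\ge m=\k(w_0)$ in Lemma~\ref{monogon} cannot be ``sidestepped by taking $m$ minimal.'' Your pigeonhole yields $\k(w_0)\le j-i\le\hm$, but $\m_t$ is the illegality $\sum(|\Omega|-1)$, which can be much smaller than the number of illegal turns (hence than $\hm$); Case~4 of Lemma~\ref{monogon} genuinely needs $\k(w_t)\le\m_t$ to get $\ell'(w_t)\ge\ell(w_t)-2$. The paper's fix is the subgate machinery: the relation $\sim$ on $\mathcal T_t$ satisfies $|\mathcal T_t/{\sim}|\le\m_t$ (Lemma~\ref{l:|classes|}), and when some Whitehead graph contains a circle one closes up between $\sim$-equivalent turns so that $\k(w_t)<\m_t$ strictly, unlocking the ``moreover'' clause of Lemma~\ref{monogon}. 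Lemma~\ref{sublemma} is precisely what guarantees this circle persists under unfolding. Second, your appeal to Lemma~\ref{monogon2} to kill the surface-relation alternative is incomplete: that lemma requires a \emph{nonseparating} edge $e$ of length a definite fraction of $injrad(\bG_0)$ involved in an illegal turn of $w_0$, and its conclusion (ii$'$) applies only to segments of the specific shape $e\cdots e$. You have arranged neither. This is exactly the content of the paper's Case~2 (Lemma~\ref{l:case 2}): when no Whitehead graph has a circle, one shows the $s=|\mathcal T_t|$ turns recur in cyclic order along $z$, locates (or waits for) a suitable edge $e$, and in the hardest subcase builds an auxiliary folding path that folds only the turns in $\mathcal T_t$ to reduce to the already-handled situation $\m_t\equiv s$.
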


We saw above that two illegal turns in our consecutive collection in
$z|\bG_t$ that give the same element of $\T_t$ also give the
same element of $\T_{t'}$ for $t'\le t$, i.e.\ we saw
$\bar\tau_{t,i}=\bar\tau_{t,j}$ implies
$\bar\tau_{t',i}=\bar\tau_{t',j}$. However, distinct illegal turns
might unfold to the same illegal turn, i.e.\
$\bar\tau_{t',i}=\bar\tau_{t',j}$ does not imply
$\bar\tau_{t,i}=\bar\tau_{t,j}$. So $|\T_{t'}|\le|\T_t|$. By partitioning $[0,L]$ into a bounded number of subintervals
and renaming, we may assume in proving Proposition~\ref{gafa} that
$|\T_t|$ is constant on $[0,L)$. Likewise, we may assume that
$|\,\T_t\,/\!\!\sim\!\!|$ is constant (in general $|\,\T_t\,/\!\!\sim\!\!|$ may decrease under unfolding when a new circle is
formed). The proof of Proposition~\ref{gafa} breaks into two cases: for small $\epsilon>0$,
\begin{enumerate}[{Case}~1:]
\item
$\SW_{L-\epsilon}$ is not a forest;
\item
$\SW_{L-\epsilon}$ is a forest.
\end{enumerate}
In Lemma~\ref{l:case 1}, resp.\ Lemma~\ref{l:case 2}, we show that
Proposition~\ref{gafa} holds in Case~1, resp.\ Case~2. So once we have
proved these next two lemmas, we will also have proved
Proposition~\ref{gafa}. In Lemma~\ref{l:case 1}, we prove a little more.

\begin{lemma}\label{l:case 1}
  Suppose that, in addition to the hypotheses of
  Proposition~\ref{gafa}, 
\begin{itemize}
\item
$\SW_{L-\epsilon}$ is not a forest; and
\item
$|\T_t|$ and $|\,\T_t\,/\!\!\sim\!\!|$ are constant on $[0,L)$.
\end{itemize}
Then there is a simple class $u$ such that
$\ell(u|\bG_t)$ is bounded for all $t\in [0,L]$.
In particular, $d_\F(\bG_0,\bG_L)$ is bounded (see Lemma~\ref{bounded crossing 2}
and Remark~\ref{r:bounded crossing 2}).
\end{lemma}

\begin{proof}
By restricting to $[0,L-\epsilon]$, we may assume that $\SW_L$ is
defined and not a forest, and that $|\T_t|$ and
$|\,\T_t\,/\!\!\sim\!\!|$ are constant on $[0,L]$. In light of
Lemma~\ref{l:bounded everywhere}, we only need to prove there is a
simple $u$ such that $\k(u|\bG_{0})$ and $\ell(u|\bG_{L})$ are
bounded. Since $\SW_L$ is not a forest, by Lemma~\ref{sublemma} the
same is true at every $t$. Choose distinct illegal turns $\tau_{L,i},
\tau_{L,j}$ in $z_L$ that are equivalent in $\T_L$ so that the number
of illegal turns between in the good portion of $z_L$ is smaller than
the number of equivalence classes. (This is possible because embedded
circles $\SW_L$ have more than one edge.)  Let $[p_{t,i},p_{t,j}]$
denote the resulting segment in the good portion of $z_t$ and let
$\sigma_t$ be the loop obtained by closing up our segment, i.e.\ by
identifying $p_{t,i}$ and $p_{t,j}$. We refer to $\sigma_t$ as a {\it
  monogon} because it is immersed in $\bG_t$ except possibly at the
point $p_{t,i}=p_{t,j}$. In particular, the conjugacy class $w(t)$ in
$\FF$ represented by $\sigma_t$ is nontrivial. Of course, $w(t)$ is
also represented by the immersed circle $w(t)|\bG_t$ which is obtained
by tightening $\sigma_t$. By construction and Lemma~\ref{l:|classes|},
$\k(w(t)|\bG_t)\le|\,\T_t\,/\!\!\sim\!\!|<\m_t$. In particular,
$w(L)|\bG_L$ has bounded length. We claim that $w(0)=w(L)$. Once this
claim is established, the last sentence of Lemma~\ref{monogon} (with
$m$ equal to the constant $|\,\mathcal T_t\,/\!\!\sim\!\!|$) completes
the proof of this lemma.

To prove our claim, we must show that $\sigma_{0}$ and $\sigma_L$
determine the same conjugacy class. In folding $z_{0}$ to $z_L$,
maximal arcs in the directions of $\tau_{0,k}$ are identified in
$\bG_L$, i.e.\ they have the same image which is an immersed arc
$\alpha_{k}$ in $\bG_L$. If we tighten the image of
$[p_{0,i},p_{0,j}]$ in $\bG_L$, the result is the image of
$[p_{L,i},p_{L,j}]$ extended at its ends by $\alpha_{i}$ and
$\alpha_{j}$. The claim follows since
$\alpha_{i}=\alpha_{j}$. Indeed, $\tau_{0,i}$ and $\tau_{0,j}$ are
equivalent, and so in the same subgate, and so in the same gate. We
see $\alpha_i=\alpha_j$ at least if $L$ is small enough and that this
condition persists.
\end{proof}

\begin{lemma}\label{l:case 2}
  Suppose that, in addition to the hypotheses of
  Proposition~\ref{gafa}, 
\begin{itemize}
\item
$\SW_{L-\epsilon}$ is a forest; and
\item
$|\T_t|$ and $|\,\T_t\,/\!\!\sim\!\!|$ are constant on $[0,L)$.
\end{itemize} Then
  $d_\F(\bG_0,\bG_L)$ is bounded.
\end{lemma}

\begin{proof} As in the proof of Lemma~\ref{l:case 1}, we restrict to $[0,L-\epsilon]$ and so assume that $SW_L$ is a forest and that $|\T_t|$ and $|\,\T_t\,/\!\!\sim\!\!|$ are constant on $[0,L]$. In particular, $s:=|\T_L|=|\,\T_L\,/\!\!\sim\!\!|\le \m_t$ and $\SW_t$ is a forest for all
  $t\in [0,L]$ (Lemma~\ref{l:|classes|}).

First assume that there are two occurrences of the same illegal turn
in the consecutive collection at time $L$ that are separated by $<s-1$
illegal turns, i.e.\ there are $\tau_{L,i}$ and $\tau_{L,j}$ with
$\bar\tau_{L,i}=\bar\tau_{L,j}$ and $0<j-i<s-1$. Closing up gives a
curve with $<s$ illegal turns, so again the conclusion follows by
arguing as in Case~1.

So from now on we assume that this does not happen, i.e.\ all $s$
illegal turns occur repeatedly in a cyclic order in the consecutive
collection along $z_t$. If it so happens that one of these illegal
turns at $t=0$ involves an edge $e$ which is nonseparating and has
length a definite fraction $p$ of $injrad(\bG_0)$, we argue using
Lemma~\ref{monogon2} as follows. Consider the loop in $\bG_0$ obtained
by closing up the segment that starts with $e$ and ends at the next
occurrence of the same illegal turn. (Such a segment exists since
there are $M>2\hm$ illegal turns in our collection.) If the edge
following this segment is $e$, we can appeal to Lemma~\ref{monogon2}
to deduce the conclusion of the lemma (because $z$ is simple). If the
edge following the segment is not $e$, then the last edge $\bar e$ of
the segment is $e$ with the opposite orientation and closing up forces
cancellation. If the tightened loop has length $<2$, it is simple and
its image in $\bG_L$ is bounded, so the conclusion follows (cf.\ the
first sentence of the proof of Lemma~\ref{monogon}). If the tightened
loop has length $\geq 2$, then the original segment from $e$ to $\bar
e$ has length $\geq 2+2p~injrad(\bG_0)$ and the same argument as in
Case~4 of Lemma~\ref{monogon} (using
Lemma~\ref{derivative}(\ref{i:segment}) instead of
Lemma~\ref{derivative}(\ref{i:loop}); see also proof of
Lemma~\ref{monogon2}) shows that for $\ell(v|\bG_0)=injrad(\bG_0)$ we
must have $\ell(v|\bG_t)$ bounded.

To summarize, the conclusion of the lemma holds whenever the following
condition is satisfied at $t=0$:

\begin{itemize}
\item[$(\star)$]
There is a nonseparating edge $e$ in $\bG_t$ such that:
\begin{enumerate}[(i)]
\item
$e$ is in the good portion of $z_t$;
\item
$\ell_{\bG_t}(e)\ge p~injrad(\bG_t)$; and
\item
$e$ is involved in some turn in our collection $\T_t$.
\end{enumerate}
\end{itemize}
So now all that remains is to reduce to the case where $(\star)$ is
satisfied at $t=0$.

As a warmup, first consider the case where $\m(\bG_t)=s$ for all
$t$. In particular, every direction that is involved in an illegal turn of
$\bG_t$ is involved in an illegal turn of our collection
(Lemma~\ref{l:|classes|}). Let $\beta\in [0,L]$ be the first time that
a nonseparating edge of length $\geq \frac 1{3n-3}injrad(\bG_\beta)$
is involved in an illegal turn of $\bG_\beta$. If there is no such
$\beta$ then we claim that the conclusion of the lemma holds. Indeed,
if $v$ is a conjugacy class with $\ell(v_0)=injrad(\bG_0)$ where
$v_0:=v|\bG_0$ then there is a nonseparating edge $e_0$ of length
$\geq \frac 1{3n-3}injrad(\bG_0)$ in $v_0$. By Lemma~\ref{l:good
  edges}, there is a corresponding edge $e_t$ for small $t$. Note
that $$\ell(e_t)=\ell(e_0)e^t\ge\frac{\ell(v_0)e^t}{3n-3}\ge\frac{\ell(v_t)}{3n-3}\ge\frac
1{3n-3}injrad(\bG_t)$$ and so $e_t$ is not involved in any illegal
turns. In fact, we see that there can be no first time where
$\ell(e_t)<\frac 1{3n-3}injrad(\bG_t)$. We have $\ell(v_t)\le
e^t\ell(v_0)\le (3n-3){e^t}\ell(e_0)=(3n-3)\ell(e_t)<3n-3$ is bounded.

If there is such a $\beta$, then for the same reason the conclusion
holds for $\bG_t$, $t\in [0,\beta]$. Also, the conclusion holds for
$\bG_t$, $t\in [\beta,L]$ since ($\star$) holds at $t=\beta$.

It remains to consider the case when $\m_t$ is perhaps sometimes $>s$.
Let $\bG'_t$, $t\in [0,L']$ be the path in $\X$ starting at $\bG_0$
obtained as in \ref{slope 1 folding}.C by folding only the $s$ illegal
turns in our collection $\T_t$ (and then normalizing and
reparametrizing by arclength). Note that by the Unfolding Principle
each illegal turn $\tau_{0,i}$ our collection at time 0 comes with a
pair of legal paths that get identified in the folding process.
Folding only these turns amounts to identifying these paths. There are
induced morphisms $\bG_0\to e^{-L'}\bG'_{L'}\to e^{-L}\bG_L$. (In
particular, $\bG'_t$ is a folding path.) We can scale the second of
these morphisms to obtain $\phi:\bG'_{L'}\to \bG_L$. By the special
case $\m(\bG'_t)=s$, there are simple classes $v$ and $u$ and a
partition of $[0,L']$ into two subintervals such that $\ell(v|\bG'_t)$
is bounded on the first and $\ell(u|\bG'_t)$ is bounded on the second.
Note that $\phi$ is an immersion on the segment spanned by our
collection of turns, i.e.\ this segment is now legal in the train
track structure induced by $\phi$. Let $x$ be the conjugacy class
obtained by identifying two consecutive occurrences of the same
oriented element of our collection (possible since $M>2\hm$). The
assumption in the paragraph after the statement of
Proposition~\ref{gafa} guarantees that the definition of $x$ is
independent of $t$. By the construction, $x|\bG'_{L'}$ is legal with respect to
$\bG'_{L'}\to\bG_L$. Also 
$\ell(x|\bG_L)$ is bounded by the hypothesis of
Proposition~\ref{gafa}. If 
$\ell(x|\bG'_{L'})<2$, then $x$ is simple and
the length of $x$ along the path from $\bG'_{L'}$ to $\bG_L$ is
bounded. Hence, the conclusion of the lemma holds. If
$\ell(x|\bG'_{L'})\ge 2$, then $d_\X(\bG'_{L'},\bG_L)=L-L'$ is
bounded. Indeed, since
$$\ell(x|\bG_L)=\ell(x|\bG'_{L'})e^{L-L'}\ge 2e^{L-L'}$$ is bounded, so is $L-L'$.
In particular, the length of the simple class $u$ along the path from
$\bG'_{L'}$ to $\bG_L$ is bounded by $\ell(u|\bG'_{L'})e^{L-L'}$ and
again the conclusion of the lemma holds. 
\end{proof}

We have completed the proof of Proposition~\ref{gafa}.\qed

\begin{definition}\label{d:illegal loop}
An immersed path or a loop in a metric graph $\gG$ equipped with a
train track structure is {\it illegal} if it does not contain a legal
segment of length 3 (in the metric on $\gG$).
\end{definition}

\begin{lemma}\label{unfolding}
  Let $z$ be a simple class and $\bG_t$, $t\in [0,L],$ a folding path in
  $\X$ parametrized by arclength.  Assume $z_t$ is illegal for all
  $t$. Then either $\ell(z_L)<\ell(z_0)/2$ or $d_\F(\bG_0,\bG_L)$ is
  bounded.
\end{lemma}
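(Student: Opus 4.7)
We may assume $\ell(z_L)\ge\ell(z_0)/2$ and show $d_\F(\bG_0,\bG_L)$ is bounded. Set $M=2\hm+1$. The plan is to reduce to Proposition~\ref{gafa}(surviving illegal turns). Because $z_L$ is illegal, its maximal legal segments have length less than $3$, so the ``bounded legal segments between consecutive illegal turns'' hypothesis of Proposition~\ref{gafa} is satisfied automatically, and it remains to exhibit $M$ consecutive illegal turns of $z|\bG_0$ that survive along the folding path to $\bG_L$.

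First dispose of the small-$k$ case. If $k(z_0)\le M$, then illegality forces $\ell(z_0)<3M$; and since illegal turns of $z|\bG_t$ can only die as $t$ grows, $k(z_L)\le k(z_0)\le M$ and illegality again gives $\ell(z_L)<3M$. Remark~\ref{r:bounded crossing 2} then produces edges of $\bG_0$ and $\bG_L$ crossed by $z$ boundedly many times, so Lemma~\ref{bounded crossing 2} bounds $d_\F(\bG_0,z)$ and $d_\F(\bG_L,z)$, and the triangle inequality completes this case.

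Now suppose $k(z_0)>M$. A \emph{death event} on $[0,L]$ is an instance where an illegal turn of $z|\bG_t$ either becomes legal or collides with an adjacent illegal turn; each such event decreases $k_t$ by at least one. Let $D$ denote the total number of death events on $[0,L]$. A cyclic pigeonhole argument on the $k(z_0)$ illegal turns of $z|\bG_0$ shows that if $D\le k(z_0)/(M+1)$, then after removing dead turns from the cycle, the surviving turns form arcs of average length $(k(z_0)-D)/D\ge M$, so some arc of at least $M$ consecutive surviving turns exists, and we are done by Proposition~\ref{gafa}.

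The main obstacle is to rule out the opposite case $D>k(z_0)/(M+1)$ under the hypothesis $\ell(z_L)\ge\ell(z_0)/2$. The idea is that each death is accompanied by substantial ``folding'' at the dying turn: for a collision, the full intervening legal segment is absorbed, and for a ``became legal'' death the two emergent sides agree long enough to make the turn legal in $\bG_L$. Quantitatively, the total folded length along the path equals $e^L\ell(z_0)-\ell(z_L)\le\ell(z_0)(e^L-\tfrac12)$ under the hypothesis. One then combines this with the piecewise-exponential evolution $\ell(z_t)=a_ie^{t-t_i}+2k_i/m_i$ of Corollary~\ref{c:length} on each maximal subinterval where $(k_t,m_t)$ is constant, the uniform illegality bound $\ell(z_t)<3k_t$ (since every maximal legal segment of $z_t$ has length less than $3$), and the bound $m_t\le\hm$, to show that an accumulation of more than $k(z_0)/(M+1)$ deaths telescopes to force $\ell(z_L)<\ell(z_0)/2$, a contradiction. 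Making the telescoping estimate precise---carefully attributing a quantum of length loss to each death event while controlling how $\ell$ may grow on the intervening subintervals of constant $(k_t,m_t)$---is the technical heart of the proof.
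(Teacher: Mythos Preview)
Your proposal has a genuine gap in its final, decisive step. You reduce to showing that if the number of death events satisfies $D>k(z_0)/(M+1)$ then $\ell(z_L)<\ell(z_0)/2$, and you describe this as ``the technical heart of the proof'' without actually carrying it out. The heuristic you offer---that each death contributes a quantum of length loss---is not correct as stated. A collision of two adjacent illegal turns absorbs only the intervening legal segment, which may be arbitrarily short; and a turn becoming legal need not cost any length at all. So many deaths can occur with negligible length loss, and there is no evident telescoping estimate of the kind you suggest. (There is also a subtlety in your pigeonhole step: after a collision the merged turn can participate in later collisions, so marking exactly $D=k(z_0)-k(z_L)$ turns as ``dead'' and declaring the rest a surviving collection requires more care.)

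The paper's argument avoids this trap by a different dichotomy. It splits according to the \emph{average} length of a maximal legal segment in $z_0$. If this average is $<1/\hm$, then $k_0/m_0>\ell(z_0)$ and the derivative formula gives $\ell'(z_0)<-\ell(z_0)$, so the length shrinks exponentially until either half is lost after bounded $d_\X$-progress (hence bounded $d_\F$-progress) or the average rises to $\ge 1/\hm$. In the regime where the average is $\ge 1/\hm$, one applies Proposition~\ref{gafa} \emph{contrapositively}: once $d_\F$ exceeds the bound of that proposition, every block of $M$ consecutive illegal turns must fail to survive, so at least a $1/M$ fraction of the illegal turns die. Iterating this a bounded number of times (in $\F$-progress) drives $k_t$ down to $k_0/(6\hm)$; then either $\ell(z_t)<\ell(z_0)/2$ already, or the average legal segment length exceeds $3$, contradicting illegality. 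The point is that loss of illegal turns is tied to $\F$-progress (via Proposition~\ref{gafa}) rather than to length loss; your attempt to tie it to length loss is what fails.
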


\begin{proof}
  There are two cases. First suppose that the average distance between
  consecutive illegal turns in $z_0$ is $\geq 1/\hm$.  Then, by
  Proposition~\ref{gafa}(surviving illegal turns), after a bounded
  progress in $\F$ the loop $z$ must lose at least $1/M$ of its
  illegal turns. Repeating this a bounded number of times, we see
  that, after bounded progress in $\F$, $\k_t\le \k_0/6\hm$. Thus
  either the length of $z_t$ is less than $1/2$ of the length of $z_0$
  or the average distance between illegal turns at time $t$ is $\geq
  3$, so there is a legal segment.

  Now suppose the average distance between illegal turns in $z_0$ is
  $<1/\hm$. By Lemma~\ref{derivative},
$$\ell'(z_0)=\ell(z_0)-2\frac{\k_0}{\m_0}$$
We are assuming that
$\k_0/\m_0>\ell(z_0)$ so the above derivative is
$<-\ell(z_0)$. 
Thus in
this case the length of $z$ decreases exponentially, until either half
the length is lost after a bounded distance in $\X$, or the average
distance between illegal turns becomes $\geq 1/\hm$, when the above
argument finishes the proof.
\end{proof}

\begin{remark}
One source of asymmetry between legality and illegality is that a long
legal segment gets predictably longer under folding, while a long
illegal segment may not get longer under unfolding. For example, take
a surface relation inside a subgraph where folding amounts to an axis
of a surface automorphism. But the lemma above implies that an illegal
segment inside a loop representing a simple class will get predictably
longer under unfolding after definite progress in $\F$.
\end{remark}

\section{Projection to a folding path}\label{s:projection}
We thank Michael Handel for pointing out the technique for proving the
following lemma (see \cite[Proposition 8.1]{michael-lee}). 

\begin{lemma}\label{michael2}
  Let $\bG\in\X$ be a metric graph with a train track structure and
  $A<\FF$ a free factor. 
Suppose $A|\bG$ satisfies:
\begin{itemize}
\item there is an illegal loop $a$ in $A|\bG$ (see
  Definition~\ref{d:illegal loop}),
\item there is an immersed legal segment in $A|\bG$ of length
  $3(2n-1)$, $n=\rank(\FF)$.
\end{itemize}
Then $d_\F(A,\bG)$ is bounded.
\end{lemma}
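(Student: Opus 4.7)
\noindent
The plan is to bound $\operatorname{injrad}(A|\bG)$ and then invoke Corollary~\ref{lucas}, which translates such a bound directly into a bound on $d_\F(A,\bG)$. So the goal is to produce an embedded loop in $A|\bG$ of length depending only on $n$.

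First I would observe that $r:=\rank(A)$ satisfies $2\le r\le n-1$. The upper bound is because $A$ is a proper free factor (otherwise $A$ is not a vertex of $\F$ and the conclusion is vacuous). The lower bound is because a rank-one core $A|\bG$ is a single embedded loop, and one loop cannot simultaneously be illegal (no legal subsegment of length $3$) and contain a legal subsegment of length $3(2n-1)\ge 3$. Consequently, $A|\bG$ has at most $3r-3\le 3n-6$ topological edges.

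Let $\sigma$ denote the legal immersed segment in $A|\bG$ of length $L:=3(2n-1)$. The first step is a dichotomy on whether $\sigma$ is embedded in $A|\bG$. If $\sigma$ fails to be embedded, I would take the first self-intersection pair $(s,t)$, $s<t$, with $\sigma(s)=\sigma(t)$ and $t$ minimal; then $\sigma|_{[s,t]}$ is an embedded legal loop in $A|\bG$ of length at most $L$, which bounds $\operatorname{injrad}(A|\bG)\le L$ and is enough.

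If $\sigma$ is embedded, the argument is subtler. Since $\sigma$ has length $6n-3$ and traverses each of the at most $3n-6$ topological edges of $A|\bG$ at most once (by embeddedness), pigeonhole forces some topological edge $E$ of $A|\bG$ to contain a subarc $\sigma'$ of $\sigma$ of length exceeding $2$. Projecting via the isometric immersion $A|\bG\to\bG$, the image $\bar\sigma'$ is a legal immersed path of length $>2$ in a graph of volume $1$, so some combinatorial edge $e$ of $\bG$ must be crossed at least three times by $\bar\sigma'$; embeddedness of $\sigma$ then yields three distinct lifts of $e$ sitting inside the single topological edge $E\subset A|\bG$.

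The main obstacle, where I expect most of the work, is the last step: converting the existence of these multiple lifts of $e$ on $E$, together with the illegal loop $a$, into a short embedded loop in $A|\bG$. The illegal loop $a$ provides non-trivial illegal-turn data that is not present in $\sigma$ or $E$ alone; one uses these illegal turns to close up a short portion of $E$ between two consecutive lifts of $e$ into a loop, whose length is bounded in terms of $n$ alone. This identification step is essentially combinatorial bookkeeping relating illegal turns in $a$ to crossings of $e$, and it is the content of the argument the authors attribute to Handel.
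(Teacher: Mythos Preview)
Your plan is to bound $\operatorname{injrad}(A|\bG)$ and then quote Corollary~\ref{lucas}, but $\operatorname{injrad}(A|\bG)$ is \emph{not} bounded in general under the hypotheses of the lemma.  Nothing in the statement prevents $A|\bG$ from being a very large graph; for instance, if $\bG$ lies deep in the thin part one can have $A$ of corank one with $A|\bG$ of arbitrarily large injectivity radius while still supporting both an illegal loop and a long legal arc.  So the whole strategy collapses, and this is reflected in your final paragraph: the ``closing up'' step is not combinatorial bookkeeping that can be completed --- a subarc of a topological edge $E$ of $A|\bG$ is an arc, and there is no way to turn two lifts of $e$ on $E$ into a short embedded loop in $A|\bG$ using illegal turns of $a$.  (There is also a minor slip earlier: an immersed path of length $>2$ in a volume-$1$ graph need not cross any single edge three times.)

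The paper's argument does not try to bound $\operatorname{injrad}(A|\bG)$.  Instead it first disposes of the case $\operatorname{injrad}(A|\bG)\le 3(2n-1)$ via Corollary~\ref{lucas}, and in the remaining case builds an \emph{auxiliary} marked graph: wedge a rose for a complementary free factor $B$ onto $A|\bG$ to obtain $H$ with a difference-of-markings map $H\to\bG$, then fold until the first graph $H'$ with $\operatorname{injrad}(H')=3(2n-1)$.  The point is that $A|\bG\to H'$ is an immersion, so the given legal segment embeds in $H'$; since $H'$ has at most $2n-2$ topological vertices, some length-$3$ legal subsegment lies inside a single topological edge of $H'$.  The illegal loop $a$ cannot cross that edge (it would acquire a legal subsegment of length $3$), so $a$ is carried by a proper subgraph of $H'$, giving $d_\F(a,H')\le 5$.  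The bound on $d_\F(A,\bG)$ then follows by chaining $d_\F(A,a)$, $d_\F(a,H')$, and $d_\F(H',\bG)$, the last via Corollary~\ref{lucas} applied to a shortest loop in $H'$.  The essential idea you are missing is this passage to $H'$: one manufactures a graph of \emph{controlled} injectivity radius in which $a$ is forced into a proper subgraph.
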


\begin{proof}
If the injectivity radius of $A|\bG$ is $\leq 3(2n-1)$ the conclusion
follows from Corollary~\ref{lucas}.

Choose a complementary free factor $B$ to $A$ and add a wedge of
circles to $A|\bG$ representing $B$ to get a graph $H$. Extend
$A|\bG\to \bG$ to a homotopy equivalence (difference of markings)
$H\to\bG$, which is an immersion on each 1-cell. Pull back the metric
to $H$ and consider the folding path in $\hX$ induced by $H\to\bG$.
Let $H'$ be the first graph on this folding path with injectivity
radius $3(2n-1)$. Now give $H'$ the pullback illegal turn structure
via $H'\to\bG$. Since $A|\bG\to H'$ is an immersion, the interior of
the legal segment in $A|\gG$ embeds in $H'$. Since $H'$ has at most
$2n-2$ topological vertices, the interior of the legal segment meets at
most $2n-2$ topological vertices. So there is a legal segment of length
$3$ inside one of the topological edges of $H'$. Thus $a|H'$ does not
cross this topological edge and hence $d_\F(a,H')\leq 1+4$ (1 bounds the
distance between $\ff a$ and the free factor given by the subgraph of
$H'$ traversed by $a$ and 4 coming from Lemma~\ref{subgraph
  distance}).  Since $d_\F(a,A)\leq 1$ and $d_\F(H',\gG)$ is bounded
by Corollary~\ref{lucas} applied to a shortest loop in $H'$, the
statement follows.
\end{proof}

The following is a slight generalization.

\begin{lemma}\label{michael3}
  Let $\gG\in\X$ be a metric graph with a train track structure and
  $A<\FF$ a free factor. 
Suppose $A|\gG$ satisfies:
\begin{itemize}
\item there is a loop $a$ in $A|\gG$ with
the maximal number of pairwise disjoint legal segments of length 3
bounded by $N$, 
\item there is an immersed legal segment in $A|\gG$ of length
  $3(2n-1)$, $n=\rank(\FF)$.
\end{itemize}
Then $d_\F(A,\gG)$ is bounded as a function of $N$.
\end{lemma}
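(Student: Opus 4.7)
The plan is to follow the argument for Lemma~\ref{michael2} step by step, modifying only the point where total illegality of $a$ was used: instead of concluding that $a|H'$ avoids a distinguished topological edge entirely, the aim is to bound the number of times it crosses that edge in terms of $N$.

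First, if $injrad(A|\gG)\le 3(2n-1)$ the conclusion follows immediately from Corollary~\ref{lucas}. Otherwise, I would replicate the construction: choose a complementary free factor $B$ to $A$, wedge on a rose representing $B$ to form a marked graph $H\supset A|\gG$, and extend the immersion $A|\gG\to\gG$ to a homotopy equivalence $H\to\gG$ that immerses each $1$-cell, pulling back the metric. Consider the folding path in $\hX$ induced by $H\to\gG$, and let $H'$ be the first graph on this path with $injrad(H')=3(2n-1)$, equipped with the pullback train track structure via $H'\to\gG$. Exactly as in Lemma~\ref{michael2}, $A|\gG\to H'$ is an immersion (this uses Lemma~\ref{exercise2} applied to the folding path and the subgroup $A$), so the interior of the given legal segment of length $3(2n-1)$ embeds in $H'$. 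Since $H'$ has at most $2n-2$ topological vertices, this interior must contain a legal subsegment $\gamma^*$ of length $3$ lying inside the interior of a single topological edge $e^*$ of $H'$.

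The key new step is counting crossings of $e^*$ by $a|H'$. Since $a|H'$ is immersed in $H'$, every full traversal of $e^*$ contains a copy of $\gamma^*$, and distinct traversals yield disjoint copies as subarcs of $a|H'$ (consecutive traversals are separated by a nontrivial path outside $e^*$ or backtracking would occur). The morphism $H'\to\gG$ preserves legality of the pullback structures, so each such copy maps to a legal segment of length $3$ in $\gG$, and since $A|\gG\to\gG$ is an immersion that also pulls back the train track structure, these lift to pairwise disjoint legal segments of length $3$ in the loop $a$ as realized in $A|\gG$. By hypothesis the number of such disjoint length-$3$ legal segments in $a$ is at most $N$, so $a|H'$ crosses $e^*$ at most $N$ times.

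To conclude, observe that $a$ is a simple class in $\FF$ (being contained in the proper free factor $A$), so Lemma~\ref{bounded crossing 2} yields $d_\F(H',a)\le 6N+13$. Combined with $d_\F(A,a)\le 1$ and the uniform bound on $d_\F(H',\gG)$ from Corollary~\ref{lucas} applied to a shortest loop in $H'$ (using $injrad(H')=3(2n-1)$), the triangle inequality gives $d_\F(A,\gG)\le 6N+C$ for a universal constant $C$ depending only on $n$. The main technical point to verify is the immersion claim for $A|\gG\to H'$, which was used tacitly in Lemma~\ref{michael2}; the crossing count in the third paragraph is the new content and should be routine once the setup of Lemma~\ref{michael2} is in place.
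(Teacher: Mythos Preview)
Your proposal is correct and follows the paper's own approach: the paper's proof says only that ``with $H'$ defined as in the proof of Lemma~\ref{michael2}, there is an edge in $H'$ which is crossed by $a|H'$ at most $N$ times,'' and then applies Lemma~\ref{bounded crossing 2}; you have simply filled in the crossing-count argument in detail. One minor remark: the immersion $A|\gG\to H'$ does not really need Lemma~\ref{exercise2}; it follows directly from the fact that the composition $A|\gG\to H'\to\gG$ is the canonical immersion, so any two directions identified by $A|\gG\to H'$ would also be identified in $\gG$.
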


\begin{proof}
  The proof is similar. With $H'$ defined as in the proof of
  Lemma~\ref{michael2}, there is an edge in $H'$ which is crossed by
  $a|H'$ at most $N$ times. Hence, by Lemma~\ref{bounded crossing 2},
  $d_\F(H',a)\le 6N+13.$
\end{proof}

For the rest of this section, let $\bG_t$, $t\in [0,L],$ be a folding
path in $\X$ parametrized by arclength and let $A$ be the conjugacy class
of a proper free factor. A folding path has a natural orientation given by the
parametrization. We will think of this orientation as going left to
right.  

\label{p:I} Set
$$I:=(18\hm(3n-3)+6)(2n-1)$$ 
where recall that $\hm$ is the maximal possible number of illegal
turns in any $\gG\in\X$ (so $\hm$ is some linear function of the
rank). The number $I$ comes from Proposition~\ref{left-right}(legal
and illegal) which will say that if $A|\bG_0$ has a long (i.e.\ of
length $>3$) legal segment and $A|\bG_L$ has a long (i.e.\ of length
$>I$) illegal segment then $d_\F(\bG_0,\bG_L)$ is bounded.  Recall
that a segment is illegal if it does not contain a legal subsegment of
length 3.  This motivates the following definitions.

\begin{definition}
  Denote by $\lt_{\bG_t}(A)$ (or just $\lt(A)$ if the path $\bG_t$ is
  understood) the number
$$\inf \{t\in [0,L]\mid A|\bG_t \mbox{ has
  an immersed legal segment of length } 3\}$$ 
The {\it left projection $\Lt_{\bG_t}(A)$ of $A$ to the path
$\bG_t$} is $\bG_{\lt(A)}$.

Denote by $\rt_{\bG_t}(A)$ (or just $\rt(A)$ if $\bG_t$ is understood)
the number $$\sup \{t\in [0,L]\mid A|\bG_t \mbox{ has an immersed
  illegal segment of length } I\}$$ The {\it right projection
  $\Rt_{\bG_t}(A)$ of $A$ to the path} is $\bG_{\rt(A)}$. If the above
sets are empty, we interpret $\inf$ as $L$ and $\sup$ as $0$.

For a simple class $a$, we define $\lt_{\bG_t}(a):=\lt_{\bG_t}(\ff
a)$, $\Lt_{\bG_t}(a):=\Lt_{\bG_t}(\ff a)$,
$\rt_{\bG_t}(a):=\rt_{\bG_t}(\ff a)$, and
$\Rt_{\bG_t}(a):=\Rt_{\bG_t}(\ff a)$. In all cases, we may suppress subscripts if
the path is understood. Note that the first set
displayed above is closed under the operation of increasing $t$.
Clearly, $\lt(A)\leq\rt(A)$.

We also generalize these definitions from free factors to marked
graphs in the obvious way. If $H\in\X$, $\lt(H):=\min\lt(\pi(H))$,
$\Lt(H):=\bG_{\lt(H)}$, $\rt(H):=\max\rt(\pi(H))$, and
$\Rt(H):=\bG_{\rt(H)}$.
\end{definition}

\begin{prop}\label{L2}
Suppose $B<A$ are free factors. Then:
\begin{itemize}
\item $\lt(A)\leq \lt(B)$, $\rt(A)\geq \rt(B)$; and 
\item either $d_\X({\Lt(A)},{\Lt(B)})=e^{\lt(B)-\lt(A)}$ is bounded or
  the distance in $\F$ between $A$ and every element of $\{\bG_t\mid t\in
  [\lt(A),\lt(B)]\}$ is bounded.
\end{itemize}
\end{prop}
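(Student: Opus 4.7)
The key tool is that because $B<A$, the natural covering map $\tilde\bG/B\to\tilde\bG/A$ restricts to a local isometry $B|\bG_t\to A|\bG_t$ whose composition with $A|\bG_t\to\bG_t$ equals $B|\bG_t\to\bG_t$. Since the train track structures on $B|\bG_t$ and $A|\bG_t$ are each pulled back from $\bG_t$, this map carries legal turns to legal turns and illegal turns to illegal turns, and hence sends immersed legal (respectively, illegal) segments in $B|\bG_t$ to immersed legal (respectively, illegal) segments in $A|\bG_t$ of the same length. Consequently
\[
\{t\in[0,L]\mid B|\bG_t\text{ has an immersed legal segment of length }3\}\ \subseteq\ \{t\mid A|\bG_t\text{ does}\},
\]
and the analogous containment holds with ``illegal segment of length $I$''. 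Taking infima yields $\lt(A)\le\lt(B)$, taking suprema yields $\rt(A)\ge\rt(B)$, which is the first bullet.

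For the second bullet I set $t_1:=\lt(A)$ and $t_2:=\lt(B)$. If $t_2-t_1\le\log(2n-1)$, then both $d_\X(\Lt(A),\Lt(B))=t_2-t_1$ and $e^{t_2-t_1}\le 2n-1$ are bounded, and the first alternative holds. Otherwise I pick $t^*:=t_1+\log(2n-1)\in(t_1,t_2)$ and argue that $A|\bG_{t^*}$ simultaneously carries a long legal segment and an illegal loop. For the legal side: by definition of $\lt(A)$, the graph $A|\bG_{t_1}$ contains an immersed legal segment $\sigma$ of length $3$, and the morphism $e^{t^*-t_1}\,A|\bG_{t_1}\to A|\bG_{t^*}$ isometrically embeds edges and preserves legality (equivalently, a Corollary~\ref{3} style growth argument), so $\sigma$ pushes to an immersed legal segment in $A|\bG_{t^*}$ of length $3e^{t^*-t_1}=3(2n-1)$. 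For the illegal side: $t^*<\lt(B)$ forces $B|\bG_{t^*}$ to contain no legal segment of length $3$, and since the free factor $B$ has positive rank, $B|\bG_{t^*}$ contains an embedded loop, which is then illegal in the sense of Definition~\ref{d:illegal loop}; pushing this loop forward via $B|\bG_{t^*}\to A|\bG_{t^*}$ yields an illegal loop in $A|\bG_{t^*}$. Both hypotheses of Lemma~\ref{michael2} are now met for $A$ and $\bG_{t^*}$, giving a universal bound on $d_\F(A,\bG_{t^*})$, whence the $\F$-distance from $A$ to $\{\bG_t\mid t\in[t_1,t_2]\}$ is bounded.

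\textbf{Main obstacle.} The one point that requires care is the setup of the map $B|\bG_t\to A|\bG_t$ and the verification that legality, illegality, and lengths transfer cleanly through it; the rest of the argument is a direct pairing of exponential growth of a legal segment (in the style of Corollary~\ref{3}) with Lemma~\ref{michael2}. Everything needed is already developed earlier in the paper.
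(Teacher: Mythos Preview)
Your approach is essentially the same as the paper's, and the argument is correct in outline. One small point deserves care: the claim that the morphism $e^{t^*-t_1}\,A|\bG_{t_1}\to A|\bG_{t^*}$ isometrically embeds a legal segment of length $3$ onto a legal segment of length $3e^{t^*-t_1}$ in $A|\bG_{t^*}$ is not quite justified. The folding map lifts only to a \emph{generalized} morphism between the cores (see the remark after Lemma~\ref{exercise2}); the image of the legal segment in the full $A$-cover of $\bG_{t^*}$ may stick out of the core $A|\bG_{t^*}$ near its ends. The safe route is exactly the one the paper takes: extend the legal segment to a loop in $A|\bG_{t_1}$ and invoke Corollary~\ref{3}, which gives a legal segment in $A|\bG_{t^*}$ of length at least $2+e^{t^*-t_1}$, not $3e^{t^*-t_1}$. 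This only means your $t^*$ must be chosen a bit larger (e.g.\ $t_1+\log(6n-5)$) so that $2+e^{t^*-t_1}\ge 3(2n-1)$. The paper packages this by defining $\lt'(A)$ as the first time a legal segment of length $3(2n-1)$ appears and noting $\lt'(A)-\lt(A)$ is bounded by Corollary~\ref{3}.

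One further remark: the paper actually applies Lemma~\ref{michael2} for \emph{every} $t\in[\lt'(A),\lt(B)]$, not just a single $t^*$, concluding that all of $\pi(\bG_t)$ for $t$ in this interval lies boundedly close to $A$. Your argument extends immediately to this (the legal segment only gets longer and $B|\bG_t$ stays illegal on $[t^*,\lt(B))$), and this stronger conclusion is what is used in Corollary~\ref{coarse Lipschitz}.
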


\begin{proof}
The first bullet is clear since we are taking $\inf$ and $\sup$ over
smaller sets.
Denote by $\lt'(A)$ the first time along the folding path that
$A|\bG_t$ has a legal segment of length $3(2n-1)$. Then
$\lt(A)\leq\lt'(A)$, and $\lt'(A)-\lt(A)$ is bounded by Corollary~\ref{3}. If
$\lt(B)\leq \lt'(A)$ we are done, so suppose
$\lt'(A)<\lt(B)$. It follows from Lemma~\ref{michael2} that the
  set of $\bG_t$'s for $t\in [\lt'(A),\lt(B)]$ has a bounded
  projection in $\F$, and the projection is close to $A$.
\end{proof}

Given a constant $K>0$, we will say that a coarse path
$\gamma:[\alpha,\omega]\to\F$ is a {\it reparametrized quasi-geodesic}
if there is a subdivision $\alpha=t_0<t_1<\cdots<t_m=\omega$ such that
$diam_\F(\gamma([t_i,t_{i+1}]))\leq K$, $m\leq
d_\F(\gamma(\alpha),\gamma(\omega))$, and $|i-j|\leq
d_\F(\gamma(t_i),\gamma(t_j))+2$ for all $i,j$. In particular, a map
$[0,m]\to \F$ given by mapping $x\in [0,m]$ to an element of
$\gamma(t_{[x]})$ is a quasi-geodesic with constants depending only on
$K$.  A collection $\{\gamma_i\}_{i\in I}$ of reparametrized
quasi-geodesics is {\it uniform} if the $K$ appearing in the
definition of $\gamma_i$ is independent of $i$ and is, in fact, a
function of the rank $n$ of $\FF$ alone. A {\it coarse Lipschitz}
function $f:X\to Y$ between metric spaces is one that satisfies
$d_Y(f(x_1),f(x_2))\leq K~ d_X(x_1,x_2)+K$ for all $x_1,x_2\in X$.  A
function $f:X\to A\subseteq X$ is a {\it coarse retraction} if
$d(a,f(a))\leq K$ for all $a\in A$.  In all these cases, $f$ is
allowed to be multivalued with the bound of $K$ on the diameter of a
point image.

\begin{cor}\label{coarse Lipschitz}
For any folding path $\bG_t$ the projection
$$\F\to\pi(\bG_t)$$
$$A\mapsto \pi({\Lt(A)})$$ is a coarse Lipschitz retraction
with constants depending only on $\rank(\FF)$. Consequently, the
collection of paths $\{\pi(\bG_t)\}$ is a uniform collection of
\hyphenation{re-pa-ram-e-trized} reparametrized quasi-geodesics in
$\F$.
\end{cor}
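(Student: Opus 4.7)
The plan has three parts: verify that $r(A):=\pi(\Lt(A))$ is a coarse retraction, verify it is coarse Lipschitz, and then deduce the reparametrized quasi-geodesic statement by a standard coarse-geometric argument.

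\textbf{Retraction.} Given $A\in\pi(\bG_s)$, I need $d_\F(A,\pi(\Lt(A)))$ bounded. The strategy is to fold past $\Lt(A)$ by a universal amount $c=\log(6n-5)$ and apply Lemma~\ref{michael3}. By Corollary~\ref{3}, the legal segment of length $\geq 3$ guaranteed in $A|\bG_{\lt(A)}$ grows to length $\geq 3(2n-1)$ in $A|\bG_{\lt(A)+c}$. Since $\rank(A)\leq n-1$, the core graph $A|\bG_{\lt(A)+c}$ has bounded combinatorial complexity, and one finds a loop in it with a universally bounded number $N$ of maximal pairwise disjoint legal segments of length 3; Lemma~\ref{michael3} then bounds $d_\F(A,\bG_{\lt(A)+c})$. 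Finally Corollary~\ref{proj X->F continuous} controls $d_\F(\bG_{\lt(A)+c},\bG_{\lt(A)})\leq 12e^c+32$ since the $\X$-distance is $c$.

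\textbf{Coarse Lipschitz.} It suffices to handle $d_\F(A,B)\leq 1$; the general case follows by iterating along a geodesic in $\F$. After swapping labels we may assume $B<A$ (the rank-2 Farey-graph case is analogous). Proposition~\ref{L2} presents two alternatives. Either $d_\X(\Lt(A),\Lt(B))=e^{\lt(B)-\lt(A)}$ is bounded, and Corollary~\ref{proj X->F continuous} immediately bounds $d_\F(\pi(\Lt(A)),\pi(\Lt(B)))$; or $A$ lies within bounded $\F$-distance of every $\bG_t$ with $t\in[\lt(A),\lt(B)]$, so both $\pi(\Lt(A))$ and $\pi(\Lt(B))$ are close to $A$ in $\F$ and hence close to each other.

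\textbf{Reparametrized quasi-geodesic, and main obstacle.} Given the coarse Lipschitz retraction $r$ onto $P:=\{\pi(\bG_t)\mid t\in[0,L]\}$ and the coarse Lipschitz parametrization $t\mapsto\pi(\bG_t)$ (Corollary~\ref{proj X->F continuous}), I fix $D$ larger than twice the Lipschitz constants and partition $[0,L]$ inductively by $t_0=0$, $t_{i+1}=\min\{t>t_i:d_\F(\pi(\bG_{t_i}),\pi(\bG_t))>D\}$ (or $t_m=L$ if no such $t$ exists). The diameter bound $\diam_\F(\pi(\bG_{[t_i,t_{i+1}]}))\leq K$ follows from Corollary~\ref{proj X->F continuous} and the choice of $D$; the inequality $|i-j|\leq d_\F(\pi(\bG_{t_i}),\pi(\bG_{t_j}))+2$ follows by applying $r$ to an $\F$-geodesic from $\pi(\bG_{t_i})$ to $\pi(\bG_{t_j})$, using that the choice of $D$ prevents the resulting coarse path in $P$ from skipping intermediate partition points. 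The most delicate point of the proof is the retraction step: one must locate, inside $A|\bG_{\lt(A)+c}$, a loop whose number of legal segments of length 3 is universally bounded, not just bounded combinatorially via the rank of $A$, since the pulled-back train-track structure on $A|\bG_{\lt(A)+c}$ can subdivide each topological edge of the core.
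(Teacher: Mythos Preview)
Your coarse Lipschitz step is fine and matches the paper. The retraction step, however, has a genuine gap that you yourself flag at the end without resolving: you assert that $A|\bG_{\lt(A)+c}$ contains a loop with a universally bounded number of disjoint legal segments of length $3$, but you give no reason for this. The combinatorial complexity of the core $A|\bG$ bounds the number of topological edges, not the number of illegal turns in the pulled-back structure, and folding from $\lt(A)$ to $\lt(A)+c$ can convert illegal turns to legal ones, merging many short legal segments into long ones. There is no evident mechanism forcing some loop to remain mostly illegal after this bounded amount of folding.

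The paper sidesteps this entirely with a different idea: instead of working inside $A|\bG$, it picks a \emph{legal candidate} $a$ in the ambient graph $\bG_{t_0}$ (where $A\in\pi(\bG_{t_0})$). Since $a|\bG_{t_0}$ is legal, $\lt(a)\le t_0$; moreover the first time $t'$ at which $a|\bG_{t'}$ is legal must satisfy $\ell(a|\bG_{t'})<2$ (legal loops grow, and $\ell(a|\bG_{t_0})<2$), so $d_\F(a,\bG_{t'})$ is bounded. Finally one checks $\lt(a)=t'$, because a legal segment of length $3$ at some $t<t'$ would force $\ell(a|\bG_{t'})>3$. This gives $d_\F(\dot a,\Lt(\dot a))$ bounded, and then the already-established coarse Lipschitz property handles the passage from $\dot a$ to an arbitrary $A\in\pi(\bG_{t_0})$.

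Your quasi-geodesic argument is also different from the paper's and is sketchy as written: you construct the subdivision intrinsically from the path and then appeal to the retraction to get $|i-j|\le d_\F(\gamma(t_i),\gamma(t_j))+2$, but the claim that ``the resulting coarse path in $P$ cannot skip intermediate partition points'' needs justification, since $r$ applied to an $\F$-geodesic need not land monotonically along the folding path. The paper instead projects an $\F$-geodesic $C_0,\dots,C_m$ to points $D_i$ on $\pi(\bG_t)$, extracts a monotone subsequence, and then takes a minimal such admissible subdivision to force the $|i-j|$ inequality.
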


\begin{proof}
  That the
  map is coarsely Lipschitz follows from Proposition~\ref{L2}. To
  prove that it is a coarse retraction, we need to argue that
  $d_\F(\Lt(\bG_{t_0}),\bG_{t_0})$ is bounded for $t_0\in [0,L]$. Let
  $a$ be the conjugacy class of a legal candidate in $\bG_{t_0}$. In
  particular, $\langle a\rangle$ is a rank one free factor,
  $\ell(a_{t_0})<2$, and $d_\F(a, \bG_{t_0})$ is bounded.  Since our
  map is coarsely Lipschitz, it is enough to show that
  $d_\F(\Lt(a),\bG_{t_0})$ is bounded. Let $t'$ be the smallest
  parameter such that $a_{t'}$ is legal. We have $\lt(a)\le t'\le
  t_0$. For $t\in [t',t_0]$, $a_t$ is legal and $\ell(a_{t_0})<2$, and
  so $\ell(a_t)<2$. This has two consequences. First,
  $d_\F(\bG_t,\bG_{t_0})$ is bounded (Lemma~\ref{bounded crossing 2}
  and Remark~\ref{r:bounded crossing 2}). Second, $\lt(a)=t'$ (since, for
  $t''<t'$, a legal segment of length 3 in $a_{t''}$ would force
  $\ell(a_{t'})\ge 3$). Hence $d_\F(\bG_{t'}=\Lt(a),\bG_{t_0})$ is
  bounded.

  The argument for the second part is from \cite{bo:hyp}.  Let $\bG_t$
  be a folding path so that $\pi(\bG_t)$ is a coarse path joining free
  factors $A$ and $B$. Choose a geodesic $C_i$, $i=0,\cdots,m$ of free
  factors joining $C_0=A$ and $C_m=B$ in $\F$. Consider the coarse
  projection $D_i$ of $C_i$ to $\pi(\bG_t)$. By Proposition~\ref{L2}
  the diameter of the segment bounded by $D_i$ and $D_{i+1}$ is
  uniformly bounded. Now the $D_i$'s may not occur monotonically along
  $\pi(\bG_t)$. To fix this, let $i_1<i_2<\cdots<i_k$ be the sequence
  defined inductively by $i_1=0$ and $i_{j+1}$ is the smallest index
  such that $D_{i_{j+1}}$ occurs after $D_{i_j}$ in the order on
  $\bG_t$ given by $t$. Then by construction the interval between
  $D_{i_j}$ and $D_{i_{j+1}}$ has uniformly bounded diameter and the
  number $k$ is bounded by $m=d_\F(A,B)$. Call a subdivision
  satisfying these properties a {\it admissible}. To ensure the last
  property $|i-j|\leq d_\F(\gamma(t_i),\gamma(t_j))+2$ take an
  admissible subdivision with minimal $k$.
\end{proof}

\begin{definition}\label{d:projection to Gt}
  Given $A\in\F$, $H\in\X$, and a folding path $\bG_t$, $\pi(\Lt(A))$
  is {\it the projection of $A$ to $\pi(\bG_t)$} and $\pi(\Lt(H))$ is
  {\it the projection of $H$ to $\pi(\bG_t)$}.
  \end{definition}

\begin{definition}
  For $\kappa>0, C>0$ we say a folding path $\bG_t$ makes {\it
    $(\kappa,C)$-definite progress in $\F$} if for any $D>0$ and
  $s<t$, $d_\X(\bG_s,\bG_t)>D\kappa+C$ implies $d_\F(\bG_s,\bG_t)>D$.
\end{definition} 

\begin{cor}
For any folding path $\bG_t$ the projection
$$\F-R(\bG_t)\to \{\bG_t\}$$
$$A\mapsto {\Lt(A)}$$
where $R(\bG_t)$ is the set of free factors at a certain bounded
distance from $\bG_t$ measured in $\F$, is coarsely Lipschitz (with
respect to the path metric in $\F-R(\bG_t)$).

Moreover, the projection is coarsely defined and coarsely Lipschitz on
all of $\F$ provided $\bG_t$ makes $(\kappa,C)$-definite progress in
$\F$ (with constants depending on $\kappa,C$).  \qed\end{cor}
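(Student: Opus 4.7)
My plan is to reduce the problem to the case of free factors $A, B$ adjacent in $\F$ via concatenation along geodesics, applying Proposition~\ref{L2} to each adjacent pair. Recall that, for $B < A$, that proposition yields two alternatives: either $d_\X(\Lt(A), \Lt(B))$ is bounded directly, or $A$ lies at bounded $\F$-distance from the segment $\{\bG_t : t \in [\lt(A), \lt(B)]\}$.

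For the first statement I will define $R(\bG_t)$ to be the set of free factors within some $K_0$ in $\F$ of the coarse path $\pi(\bG_t)$, with $K_0$ chosen larger than the bound furnished by the second alternative of Proposition~\ref{L2} plus the uniform diameter bound for $\pi(\bG_s)$ from Lemma~\ref{subgraph distance}. For adjacent $A, B \in \F \setminus R(\bG_t)$ the second alternative becomes incompatible with $A \notin R(\bG_t)$, so the first alternative must hold and $d_\X(\Lt(A), \Lt(B))$ is uniformly bounded. Concatenating along a geodesic of length $m$ in $\F \setminus R(\bG_t)$ will then yield $d_\X(\Lt(A), \Lt(B)) \le C' m$, establishing coarse Lipschitzness with respect to the intrinsic path metric.

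For the moreover part, I plan to revisit the proof of Proposition~\ref{L2}: its proof via Lemma~\ref{michael2} actually forces $d_\F(A, \bG_s)$ to be uniformly bounded for every $s$ in the subinterval $[\lt'(A), \lt(B)]$, where $\lt'(A)$ differs from $\lt(A)$ by a bounded amount thanks to Corollary~\ref{3}. Under $(\kappa, C)$-definite progress, uniform boundedness of $d_\F(\bG_s, \bG_{s'})$ immediately upgrades to uniform boundedness of $d_\X(\bG_s, \bG_{s'})$, so in either alternative of Proposition~\ref{L2} the quantity $d_\X(\Lt(A), \Lt(B))$ will be bounded by constants depending on $\kappa$ and $C$. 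No excision of $R(\bG_t)$ will be required, and concatenation along a geodesic in $\F$ itself completes the argument.

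The main obstacle I anticipate is extracting this strengthened form of Proposition~\ref{L2}'s second alternative: the statement of the proposition bounds the $\F$-distance from $A$ only to \emph{some} point of the segment, whereas the definite-progress argument needs a bound at \emph{every} point of a subinterval covering all but a bounded initial piece. This strengthening is in fact implicit in the proof, via Lemma~\ref{michael2} applied to the whole subpath, and a careful unpacking of that application is the technical heart of the moreover clause.
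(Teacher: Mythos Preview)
Your proposal is correct and is precisely the argument the paper intends by its bare \qed: the first part follows from the dichotomy in Proposition~\ref{L2} once the second alternative is excluded by $A\notin R(\bG_t)$, and for the moreover part you correctly observe that the proof of Proposition~\ref{L2} (via Lemma~\ref{michael2}) actually bounds $d_\F(A,\bG_s)$ for \emph{all} $s\in[\lt'(A),\lt(B)]$, so that $(\kappa,C)$-definite progress converts this to an $\X$-bound on $\lt(B)-\lt(A)$. Your identification of the needed strengthening of Proposition~\ref{L2}'s second alternative, and its availability from the proof rather than the statement, is exactly the point.
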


\begin{lemma}\label{forward immersion}
  Let $\bG_t$, $t\in [0,L],$ be a folding path and $A$ a free factor.
  The length of any illegal segment contained in a topological edge of
  $A|\bG_L$ is less than
$$\frac 32 m\cdot edgelength(A|\bG_0)+6$$
where $m=\max\{m_t\mid t\in [0,L]\}$ and $edgelength(A|\bG_0)$ is the
maximal length of a topological edge in $A|\bG_0$.
\end{lemma}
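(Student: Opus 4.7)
The plan is to reduce the length bound to a bound on the number of illegal turns. Let $k$ denote the number of illegal turns in the given illegal segment $\sigma_L\subset A|\bG_L$. Since $\sigma_L$ contains no legal subsegment of length $3$, the $k+1$ maximal legal subsegments into which the illegal turns partition $\sigma_L$ each have length strictly less than $3$, so $\ell(\sigma_L)<3(k+1)$. It therefore suffices to establish the estimate
\[
k\le \frac{mE}{2}+1,\qquad E:=edgelength(A|\bG_0),
\]
since then $3(k+1)\le \frac{3mE}{2}+6$.

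To control $k$, I would unfold using the Unfolding Principle applied to the subpath $\sigma_L^\ast$ of $\sigma_L$ running from the first to the last illegal turn of $\sigma_L$. By the Unfolding Principle, $\sigma_L^\ast$ (together with the germs of directions beyond its extreme turns) lifts uniquely along the folding path to immersed paths $\sigma_t^\ast\subset A|\bG_t$ whose endpoints are illegal turns; moreover, each illegal turn of $\sigma_L^\ast$ lifts to a distinct illegal turn of $\sigma_t^\ast$. Thus $\sigma_0^\ast\subset A|\bG_0$ has at least $k$ illegal turns.

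The central geometric claim is that $\sigma_0^\ast$ is contained in a single topological edge of $A|\bG_0$, which immediately gives $\ell(\sigma_0^\ast)\le E$. The reason is that $\sigma_L^\ast$ lies in a single topological edge $e_L$ of $A|\bG_L$, so all vertices of $A|\bG_L$ that $\sigma_L^\ast$ meets internally are valence $2$. If $\sigma_0^\ast$ crossed a topological vertex $v_0$ of $A|\bG_0$ (valence $\ge 3$), its image $\phi(v_0)$ would sit in the interior of $e_L$ and hence be valence $2$ in $A|\bG_L$; but then the $\ge 3$ directions at $v_0$ must collapse onto only two directions at $\phi(v_0)$, and a direct inspection of the gate combinatorics at $v_0$, combined with the immersedness of $\sigma_L^\ast$ and the fact that morphisms are isometric on each edge, rules out such a crossing.

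Finally, given $\ell(\sigma_0^\ast)\le E$, I would convert this to a bound on $k$ using the derivative formula from Corollary~\ref{c:segments}. The piecewise relation $\ell'(\sigma_t^\ast)=\ell(\sigma_t^\ast)-2\k_t/\m_t$ together with $\m_t\le m$ and $\ell(\sigma_0^\ast)\le E$ shows, via the piecewise exponential evolution of $\ell(\sigma_t^\ast)$, that the average length between consecutive illegal turns of $\sigma_L^\ast$ is bounded in a way forcing $\k_L^\ast\le\frac{mE}{2}$, i.e., $k-1\le\frac{mE}{2}$, which is the required bound. The main obstacle will be the careful combinatorial justification that $\sigma_0^\ast$ stays inside a single topological edge of $A|\bG_0$ and the bookkeeping of the piecewise dynamics needed to pin down the exact constant $\frac{m}{2}$ in the final estimate.
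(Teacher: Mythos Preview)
Your approach is essentially the paper's: pass to the sub-segment $\sigma_L^\ast$ with endpoints at illegal turns (recovering at most $3$ on each end afterward), unfold via the Unfolding Principle, argue the lift lies in a single topological edge of $A|\bG_0$ so that $\ell(\sigma_0^\ast)\le E$, and then extract the bound from the derivative formula for segments. The reorganisation ``bound $k$ first, then use $\ell(\sigma_L)<3(k+1)$'' is equivalent to what the paper does.

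There are two gaps. First, your ``direct inspection of the gate combinatorics'' does not actually rule out $\sigma_0^\ast$ crossing a topological vertex. A valence-$3$ vertex $v_0$ of $A|\bG_0$ with exactly two gates (say $a,c$ in one gate and $b$ alone) maps under folding to a valence-$2$ point, and $\sigma_0^\ast$ may pass through $v_0$ via the \emph{legal} turn $\{a,b\}$ with no contradiction coming from the immersedness of $\sigma_L^\ast$ or from morphisms being isometric on edges. (To be fair, the paper also simply asserts the lift sits in a topological edge without further justification.)

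Second, and this is where the real content lies, your final paragraph is not an argument: from $\ell(\sigma_0^\ast)\le E$ and the derivative relation alone you cannot conclude $k_L^\ast\le\tfrac{mE}{2}$, since at time $0$ the segment may have arbitrarily many illegal turns packed into length $\le E$. The paper's device, which you are missing, is to take $t_0$ to be the \emph{first} time the right derivative of $\ell_t$ is nonnegative. If no such $t_0$ exists then $\ell_L\le\ell_0\le E$ and the bound follows directly. If $t_0$ exists then $\ell_{t_0}\le\ell_0\le E$ and $\ell_{t_0}-2k_{t_0}/m_{t_0}\ge 0$ forces $k_{t_0}\le\tfrac{m_{t_0}}{2}\ell_{t_0}\le\tfrac{m}{2}E$; since the number of illegal turns in the segment is nonincreasing under folding, $k_L\le k_{t_0}\le\tfrac{mE}{2}$. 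This is precisely the inequality you need, and the paper phrases the contradiction as: if $\ell_L>\tfrac{3}{2}m\ell_{t_0}$ then the average maximal legal segment at time $L$ has length $\ge 3$, contradicting illegality.
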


\begin{proof}
  Fix an illegal segment of length $\ell_L$ in the interior of a
  topological edge of $A|\bG_L$. We will assume that the endpoints are
  illegal turns and argue
\begin{equation*}
\ell_L\leq \frac 32 m\cdot edgelength(A|\bG_0)
\tag{$\diamond$}\end{equation*}

\noindent After adding $<3$ on each end we recover any illegal path.
By the Unfolding Principle our path lifts to an illegal segment
bounded by illegal turns inside some topological edge of $A|\bG_t$,
whose length will be denoted $\ell_t$. In particular, $\ell_0\leq
edgelength(A|\bG_0)$. In order to obtain a contradiction, assume
($\diamond$) fails. Let $t_0$ be the first time the right derivative
of $\ell_t$ is nonnegative (if such $t_0$ does not exist then
$\ell_L\leq \ell_0$ and ($\diamond$) holds, contradiction). Thus
$\ell_{t_0}\leq \ell_0$ and the average length of a maximal legal
segment inside the path is $\geq 2/\m_{t_0}\geq 2/m$ by
Corollaries~\ref{c:length}(\ref{i:average}) and \ref{c:segments}.
Since $\ell_L\ge \frac 32m\ell_{t_0}$, the average length of a legal
segment of our path is guaranteed to be $\ge 3$, contradicting the
hypothesis that our segment is illegal. Thus ($\diamond$) holds and
the lemma follows.
\end{proof}

Recall that the number $I$ used in the next proposition was defined on
Page~\pageref{p:I}.

\begin{prop}[legal and illegal]\label{left-right}
  Let $\bG_t$, $t\in [0,L],$ be a folding path and $A$ a free factor.
  Assume that $A|\bG_0$ has a legal segment of length $3$, and that
  $A|\bG_L$ has an illegal segment of length $I$. Then
  $d_\F(\bG_0,\bG_L)$ is bounded.
\end{prop}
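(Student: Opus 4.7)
The strategy is to reduce Proposition~\ref{left-right} to Proposition~\ref{gafa}(surviving illegal turns) by manufacturing, from the long illegal segment $\sigma_L\subset A|\bG_L$ of length $I$, a simple class $z\in A$ whose realization $z|\bG_0$ carries $M=2\hm+1$ consecutive illegal turns that survive to $\bG_L$ with legal gaps of length at most $3$ at time~$L$. Having such a $z$ allows Proposition~\ref{gafa} to produce the required bound on $d_\F(\bG_0,\bG_L)$ directly.

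Concretely, I would proceed as follows. Since no length-$3$ subsegment of $\sigma_L$ is legal, consecutive illegal turns of $\sigma_L$ are at distance at most $3$, so $\sigma_L$ contains at least $\lfloor I/3\rfloor$ illegal turns, far more than $M$. Because $A|\bG_L$ is the core of the cover of $\bG_L$ associated to $A$ and has no valence-$1$ vertex, one may close a subsegment of $\sigma_L$ by a short arc in $A|\bG_L$ to an immersed loop $a$; set $z=[a]$, which is a simple class since $a\subset A|\bG_L$ and $A$ is a proper free factor. Then $z|\bG_L$ agrees with $a$ up to tightening at the closure, and by the Unfolding Principle $z|\bG_0$ is the immersed lift of $a$ back along the folding path. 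The illegal turns of $a$ lying inside $\sigma_L$ lift to distinct illegal turns of $z|\bG_0$ which remain distinct and illegal for all $t\in[0,L]$, hence survive in the sense of Proposition~\ref{gafa}. Selecting $M$ of them deep inside $\sigma_L$, with generous buffer on each side, the legal gaps between them in $z|\bG_L$ are subsegments of $\sigma_L$ and so have length $\leq 3$. Proposition~\ref{gafa} then yields the desired conclusion.

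The main obstacle is ensuring the buffer in $\sigma_L$ is large enough to survive two sources of interference with the chosen $M$ turns: (i) tightening of the closing arc at the loop basepoint can eat into the ends of $\sigma_L$ inside $z|\bG_L$, and (ii) at discrete times of the folding additional illegal turns can appear in $z|\bG_0$ that interleave with the lifts of the selected turns, threatening the consecutiveness required by Proposition~\ref{gafa}. The precise value $I=(18\hm(3n-3)+6)(2n-1)$ is calibrated for this bookkeeping: the factor $2n-1$ pigeonholes over the at most $2n-1$ topological edges of $A|\bG_L$ and combines with Lemma~\ref{forward immersion}, which bounds any illegal segment contained in a single topological edge of $A|\bG_L$; the factor $18\hm(3n-3)+6$ provides multiplicative slack absorbing both the maximal illegality $\hm$ and the $\leq 3n-3$ topological edges, together with the interleaved turns. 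The hypothesized legal segment of length $3$ in $A|\bG_0$ enters via Corollary~\ref{3}: either $d_\X(\bG_0,\bG_L)$ is already bounded (and then Corollary~\ref{proj X->F continuous} settles the claim) or this legal segment grows into a legal segment of length at least $3(2n-1)$ in $A|\bG_L$, which, combined with $\sigma_L$, rules out degenerate configurations of $A|\bG_L$ and activates Lemma~\ref{forward immersion} in the buffer count.
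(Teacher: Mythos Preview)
Your plan to reduce Proposition~\ref{left-right} to Proposition~\ref{gafa} does not go through, and the gap is exactly the one you flag as obstacle~(ii) but never actually resolve. Proposition~\ref{gafa} demands $M$ illegal turns that are \emph{consecutive in $z|\bG_0$} and survive forward. You produce $M$ consecutive illegal turns in $z|\bG_L$ and unfold them. By the Unfolding Principle the lifted segment in $z|\bG_0$ certainly contains the $M$ lifted turns, but it may contain additional illegal turns interleaved with them (this is precisely the phenomenon in Figure~\ref{f:unfold.v2}). If you enlarge your collection to include these extras so as to make it consecutive at time $0$, the extras must disappear by time $L$---by becoming legal or by colliding with their neighbours---so the enlarged collection does not survive. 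If you keep only the $M$ lifted turns, they need not be consecutive at time $0$. Neither option satisfies the hypotheses of Proposition~\ref{gafa}, and nothing in your buffer accounting addresses this; the constant $I$ is not calibrated to control how many spurious illegal turns can appear under unfolding over an $\F$-distance you are trying to prove is bounded.

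Your appeal to Lemma~\ref{forward immersion} is also misplaced. That lemma bounds illegal segments inside a topological edge of $A|\bG_L$ in terms of $\mathrm{edgelength}(A|\bG_0)$, a quantity you have no control over. In the paper the lemma is applied not to $A$ but to an auxiliary free factor $B$ carried by a graph $K_t$ whose edge lengths are forced to be at most $4(3n-3)$ by an explicit construction; this is where the factor $18\hm(3n-3)+6$ in $I$ actually comes from. The paper's argument does not use Proposition~\ref{gafa} at all: it wedges $A|\bG_\tau$ to a complementary rose at a well-chosen intermediate time $\tau$, folds down to a graph $H$ with a distinguished long edge $E$, and then runs two separate cases (depending on whether $E$ contains a long $p$-legal segment) by building companion paths $H_t$, $K_t$ to $\bG_t$ on which Lemma~\ref{forward immersion} gives the needed control. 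The legal segment of length $3$ in $A|\bG_0$, after growing via Corollary~\ref{3}, is what drives the contradiction in Case~2 of that argument; in your outline it plays no real role.
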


\begin{proof}
Since a legal segment of length 3 grows to a legal segment of length
$>12(3n-3)(2n-1)$ in bounded time (Corollary~\ref{3}), by replacing
$\bG_0$ with $\bG_{t}$ for a bounded $t$, we may assume that $A|\bG_0$
has a legal segment of length $12(3n-3)(2n-1)$.  In order to obtain a
contradiction, assume the distance $d_\F(\bG_0,\bG_L)$ is large. Let
$\tau\in [0,L]$ then be chosen so that $d_\F (\bG_0, \bG_\tau)$, $d_\F
(\bG_\tau , \bG_L )$ and $d_\F (\bG_\tau , A)$ are all large.

Wedge $A|\bG_\tau$ onto a rose representing a complementary free
factor to $A$ in order to obtain a graph $H'\in\hX$ and a difference
of markings morphism $H'\to\bG_\tau$ extending $A|\bG_\tau\to\bG_\tau$
and which is an isometric immersion on every edge. In particular,
$H'\to\bG_\tau$ induces a train track structure on $H'$. If $H'$ has
bounded injectivity radius then $d_\F (A, \bG_\tau )$ is bounded,
contradicting the choice of $\bG_\tau$. So suppose the injectivity
radius of $H'$ is large and fold $H'\to \bG_\tau$ until a graph $H''$
is reached which is the last time there is an edge $E''$ of length
$4$.  Cf.\ \cite[Proposition 8.1]{michael-lee}.

In particular, $vol(H'')\le 4(3n-3)$. We continue by folding with
speed 1 the subset of those illegal turns of $H''\to\bG_\tau$ that
don't involve $E''$. Since $H''\to\bG_\tau$ induces a train track
structure on $H''$, so does our subset. For small $t$, we obtain a
graph $H''_t$ where the image $E''_t$ of $E''$ (perhaps no longer
topological) still
has length 4 and a morphism $H''_t\to\bG_\tau$ inducing a train track
structure. We continue folding all illegal turns not involving $E''_t$
(as in \ref{slope 1 folding}.C) until we obtain morphism
$H\to\bG_\tau$ inducing a train track structure and isometrically
immersing both the image $E$ in $H$ of $E''$ and its complement. The
only illegal turns of $H\to\bG_\tau$ involve $E$.  In fact, since our
illegal turns give a train track structure, the only illegal turns
involve the topological edge containing $E$. We will now use $E$ for the
name of this topological edge. After folding from $H''$ to $H$, some
edge lengths may now be $>4$. But since $vol(H)\le vol(H'')$, edge
lengths in $H$ are at most $4(3n-3)$ (and $E$ still has length at
least 4).

We may assume that the complement of $E$ does not have a valence 1
vertex. Indeed, assuming otherwise, with respect to the train track
structure induced by $H\to\bG_\tau$ there are two possibilities for
the illegal turns (which recall must involve $E$), see
Figure~\ref{f:lollipop}.  In the left picture, the length of $E$ stays
constant under folding. We continue folding until the separating edge
folds in with $E$, and this is our new $H$.  The right picture is
impossible: $E$ is a ``monogon'' (of length at least 4) and the
folding towards $\bG_\tau$ stops before the loop degenerates. But this
means that $\bG_\tau$ has volume $>2$, a contradiction.

\begin{figure}[h]
\begin{center}
\includegraphics[scale=0.4]{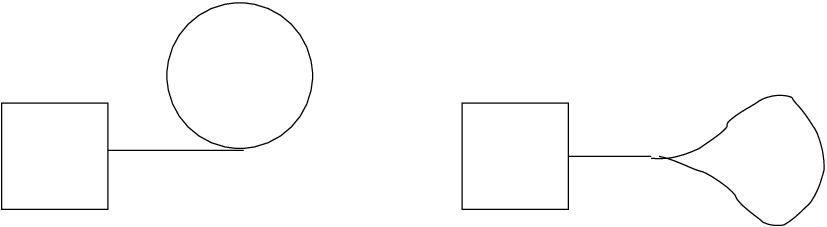}
\caption{Two possibilities when $E$ is a loop attached to a separating
edge. The square represents the remainder of the graph.}
\label{f:lollipop}
\end{center}
\end{figure}

We will also assume for concreteness that the complement of $E$ is
connected, and denote by $B$ the free factor determined by it. When
the complement is disconnected, there are two free factors determined
by the components. The changes are straightforward and left to the
reader.

We have morphisms $H\to\bG_\tau\to \bG_L$ and now also bring in the
pullback illegal turn structure via $H\to\bG_\tau$. To distinguish
between the two structures, terms like $p$-legal and $p$-illegal will
refer to this pullback, i.e.\ the one induced by $H\to\bG_L$. Terms
like $i$-legal and $i$-illegal will refer to the structure induced by
$H\to\bG_\tau$.  The same terminology will be applied to turns in
$\hat H_t$ (resp.\ $H_t$ and $K_t$) with respect to $\hat\psi_s:\hat
H_s\to\hG_s$ (resp.\ $H_t\to \bG_t$ and $K_t\to\bG_t$) constructed
below. Note that $H$ may have $p$-illegal turns in the interior of
topological edges.  (Consider that perhaps $H=H'$.) By construction,
$i$-illegal turns must involve $E$.

There are two cases.

{\bf Case 1.} $E$ contains a $p$-legal segment of length 3. As $G_\tau$
folds toward $\bG_L$, we will use the technique of \ref{slope 1
  folding}.C to fold $H$ and produce a new path (though usually not a
folding path) in $\X$. To describe this path, it is convenient to view
the folding path $\bG_t$ as in Proposition~\ref{slope 1 folding},
i.e.\ without rescaling and folding with speed 1. So, let
$\hG_{s(t)}$, $t\in [\tau,L],$ be the folding path $\hX$ induced by
the morphism $\bG_\tau\to e^{\tau-L}\bG_L$ with natural parameter $s$.

We claim that, for $s\in [s(\tau),s(L)]$, there is a path $\hat H_s$
in $\hX$ that starts at $H$ and satisfies:
\begin{enumerate}[(1)]
\item $\hat H_s=B|\hG_s\cup \hat E_s$, where $\hat E_s$ is a
  topological edge containing a $p$-legal segment of length at least
  $3\, vol(\hG_s)$,
\item the immersion $B|\hG_s\to\hG_s$ extends to a morphism
  (difference of markings) $\hat\psi_s:\hat H_s\to \hG_s$ inducing a train
  track structure on $\hat H_s$. In particular, $\hat \psi_s$ is an
  isometric immersion on $\hat E_s$,
\item 
$A|\hG_s\to \hG_s$ factors through $\hat\psi_s$. In particular,
  $A|\hG_s\to\hat H_s$ is an isometric immersion.
\end{enumerate}
By construction (1--3) hold for $\hat H_{s(\tau)}:=H$ and $\hat
E_{s(\tau)}:=E$. Following \ref{slope 1 folding}.C, assume $\hat H_s$
has been defined on a subinterval $J$ of $[s(\tau),s(L)]$ containing
$s(\tau)$. If $J=[s(\tau),s_0]$ with $s_0\not=s(L)$, then we can, for
small time $\epsilon>0$, fold all $p$-illegal turns of $\hat H_{s_0}$
at speed 1 (see Page~\pageref{p:folding}) thereby extending the path
to $[s(\tau), s_0+\epsilon]$. We see that (1--3) hold for $s\in
[s_0,s_0+\epsilon]$. Indeed, $\hat H_{s_0}\to\hG_{s}$ factors through
$\hat H_{s_0}\to\hat H_{s}$ and so $\hat H_s\to\hG_s$ is a morphism.
By Lemma~\ref{3}, $\hat H_s$ has a topological edge $\hat E_s$
containing a $p$-legal segment of length at least $3\,vol(\hG_s)$ and
whose complement has core representing $B$. Since $B|\hat H_{s_0}$,
$A|\hat H_{s_0}$, and the interior of $\hat E_{s_0}$ contain no
$i$-illegal turn, the same is true at $s$. There must be an
$i$-illegal turn of $\hat H_{s_0}$ involving both $\hat E_{s_0}$ and
an edge in $B|\hat H_{s_0}$ (or else $\hat H_s$ has a monogon as above
which has been ruled out).

We move to the case $J=[s(\tau),s_0)$. As in \ref{slope 1 folding}.C,
we may define a limit tree $\hat H_{s_0}\in \hX$. By Lemma~\ref{3},
$\hat H_{s_0}$ has a topological edge containing a $p$-legal segment
of length at least $3\,vol(\bG_{s_0})$ and whose complement has core
representing $B$. The limit of these morphisms is a morphism, so (2)
and (3) also hold. Finally, $\hat E_{s_0}$ can't be a loop connected
to $B|\hat H_{s_0}$ by a separating edge (or else the same would have
been true at smaller $s$).

Set $H_s:=\hat H_s/vol(\hG_s)$ and define the image of $\hat E_s$ in
$H_s$ to be $E_s$. Reverting to our original parametrization, we now
have our original path $\bG_t$, $t\in [\tau,L],$ in $\X$ and a new
path $H_t$, $t\in [\tau, L],$ in $\hX$ such that, for each $t$,
$H_t=B|H_t\cup E_t$, there is a morphism $\psi_t:H_t\to \bG_t$,
$A|\bG_t$ isometrically immerses in $H_t$, $B|\bG_t$ isometrically
embeds in $H_t$, and $E_t$ contains a $p$-legal segment of length at
least 3.

We need one more modification to control the length of $E_t$. Define
$K_t\in\hX$ as follows. If the length of $E_t$ is $\le 4$, $K_t:=H_t$.
If the length of $E_t$ in $H_t$ is $>4$, define $K_t$ to be the graph
obtained by folding $i$-illegal turns of $H_t\to\bG_t$ until the
length of $E_t$ is 4.  Since $A|\bG_t$ and $B|\bG_t$ are immersed in
$H_t$, the only effect is to fold pieces of the end of $E_t$ into
$B|\bG_t$.  In particular, the analogues of (1--3) hold for $K_t$,
except it is possible that $E_t$ no longer has a $p$-legal segment of
length 3.

By keeping in mind that the length of $E_t$ in $K_t$ is at most 4 and
applying Lemma~\ref{forward immersion} to $\bG_t$, $t\in [\tau, L],$
and $B$, the length of any $p$-illegal segment contained in a
topological edge of $K_t$ is bounded by
$$\frac 32\hm\cdot
edgelength(B|\bG_\tau)+6\le \frac 32\hm\cdot 3\,
edgelength(K_\tau)+6\le 18\hm(3n-3)+6$$ Since the number of
topological vertices of $K_t$ is $\le 2n-2$, a $p$-illegal segment in
$K_t$ of length $I=(18\hm(3n-3)+6)(2n-1)$ meets some topological
vertex of $K_t$ twice.  We see that, for $t\in [\tau,L]$, either the
$injrad(K_t)$ is bounded by $I$ (in which case $d_\F(\bG_t, B)$, and
hence $d_\F(\bG_t, G_\tau)$, is bounded), or there are no $p$-illegal
segments in $K_t$ of length $I$ and hence the same holds for
$A|\bG_t$. Applying this to $t=L$ (by hypothesis there is
  a $p$-illegal segment of length $I$ in $A|\bG_L$) 
we see that
$d_\F(\bG_\tau,\bG_\L)$ is bounded, contradicting the choice of
$\bG_\tau$.

{\bf Case 2.} $E$ doesn't contain a $p$-legal segment of length 3. In
particular, the interior of $E$ crosses a $p$-illegal turn. Let
$\hG_{s(t)}$, $t\in [0,\tau],$ be the folding path in $\hX$ giving
rise to $\bG_t$ and ending at $\bG_\tau$. We will produce a path $\hat
H_{s(t)}$, $t\in [0,\tau]$ (usually not a folding path) in $\hX$
ending at $H$ and, for each $s\in [s(0),s(\tau)]$, satisfying:
\begin{enumerate}
\item $\hat H_s=B|\hG_s\cup \hat E_s$, where $\hat E_s$ is a single
  edge,
\item the immersion $B|\hG_s\to\hG_s$ extends to a morphism (difference
  of markings)
  $\hat\psi_s:\hat H_s\to\hG_s$ which is an isometric immersion on $\hat E_s$,
\item $A|\hG_s\to\hG_s$ factors through $\hat\psi_s$. In particular,
  $A|\hG_s\to\hat H_s$ is an isometric immersion.
\end{enumerate}

Note that (1--3) hold for $\hat H_{s(\tau)}=H$. Let
$0=s_0<s_1<\dots<s_N=s(\tau)$ be a partition of $[0,s(\tau)]$ so that the
restriction of $\hG_t$ to each $[s_i,s_{i+1}]$ is given by folding a
  gadget.  Assume $\hat H_s$ has been defined for $s\in [s_{i},s_N]$
  satisfying (1--3).  We now work to extend $\hat H_s$ over
  $[s_{i-1},s_N]$ still satisfying (1--3).

  We first define $\hat H_s$, $s\in (s_{i-1},s_i]$, via the following
  local operations. $\hat H_s$ is defined as $B|\hG_s$ with an edge
  $\hat E_s$ attached, and we specify the attaching points. Consider
  first the case that a direction $e$ of an end of $\hat E_{s_i}$
  forms a $i$-illegal turn with a direction $b$ in $B|\hG_{s_i}$ (such
  a direction is then unique).  Intuitively, as $s$ decreases,
  $B|\hG_s$ unfolds and we choose to fold $b$ and $e$ with speed 1. A
  more elaborate description follows.

Let $\hat\phi=\hat\phi_{ss_i}:\hG_{s}\to \hG_{s_i}$ be the folding
morphism. It induces a morphism $\hat\phi_B:B|\hG_{s}\to B|\hG_{s_i}$.
Let $\epsilon=s_i-s$ and let $\tilde N$ be the $\epsilon$-neighborhood
in $B|\hat H_{s_i}=B|\hG_{s_i}$ of the vertex $v$ of $e$. $N(B)$ is a
subset of the $\epsilon$-neighborhood $N$ of $v$ in $\hG_{s_i}$.
$N_{\epsilon}$ denotes the preimage in $\hG_{s}$ of $N$ and $\tilde
N_{\epsilon}$ is the preimage of $\tilde N$ in $B|\hG_{s}$.  Using the
language of widgets, we attach the end
of $\hat E_{s}$ corresponding to $e$ to the base of $\tilde b^*$ in
$\tilde N_\epsilon$.  (To recall notation, see
Figure~\ref{f:notation}.)  To define $\hat E_{s}$ delete a length
$\epsilon$ segment from the end of $\hat E_{s_i}$.
$B|\hG_{s}\to\hG_{s}$ now extends to a morphism $\hat\psi_{s}:\hat
H_{s}\to\hG_{s}$.  Figure~\ref{f:case2.1} illustrates the diagram
$$
\begindc{\commdiag}[40]
\obj(1,2)[12]{$\tilde N_{\epsilon}$}
\obj(2,1)[21]{$N$}
\obj(2,2)[22]{$\tilde N$}
\obj(1,1)[11]{$N_{\epsilon}$}
\mor{11}{21}{}
\mor{12}{22}{}[\atleft,\solidarrow]
\mor{22}{21}{}
\mor{12}{11}{}[\atright,\solidarrow]
\enddc
$$
with ends of $\hat E_{s}$ and $\hat E_{s_i}$ attached to (resp.) $\tilde N_{\epsilon}$ and $\tilde N$.

\begin{figure}[h]
\begin{center}
\includegraphics[scale=0.3]{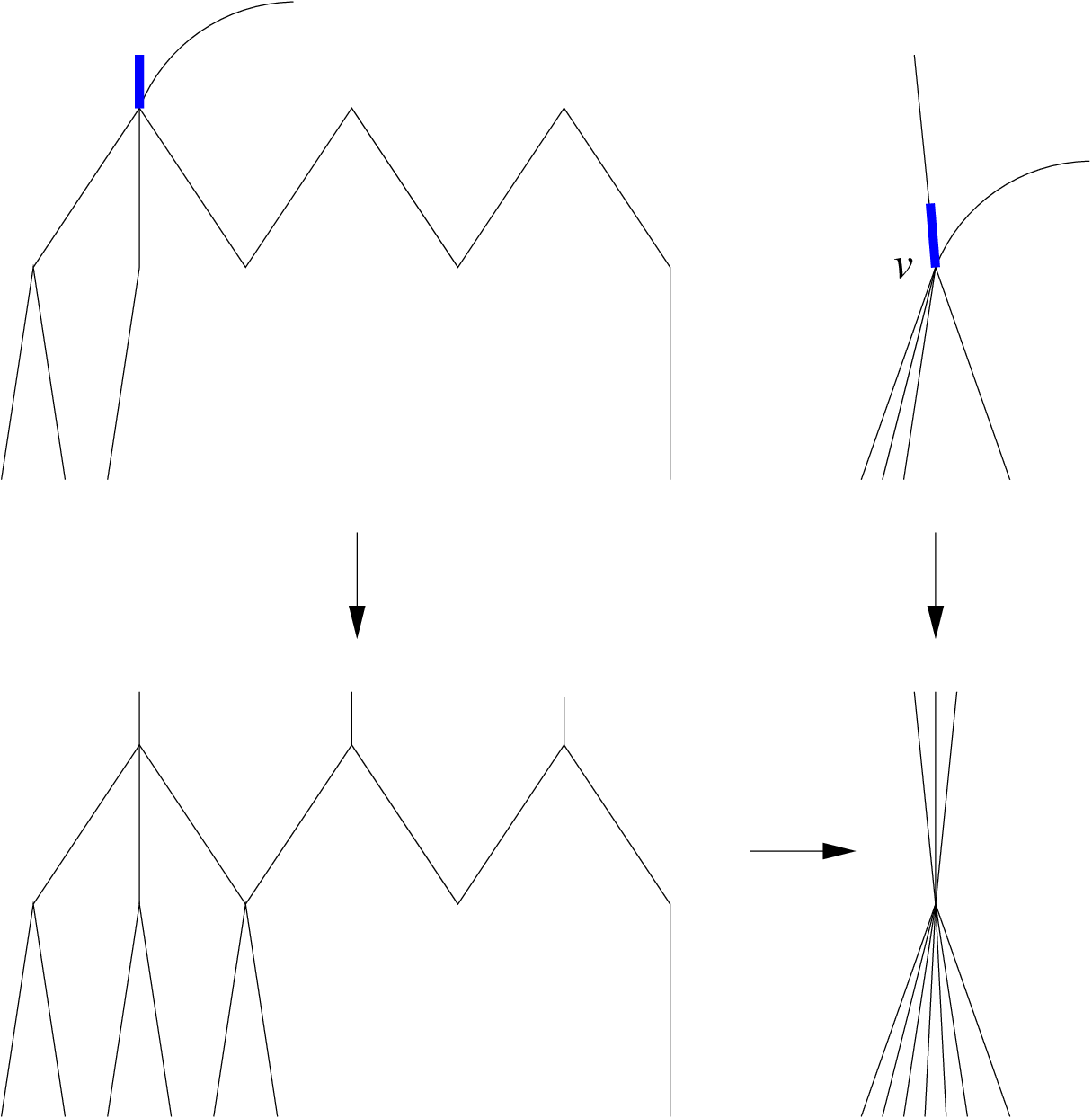}
\caption{The thickened segments represent $\tilde b^*$ and $b$. The curved segments represent ends of $\tilde E_{s}$ and $\tilde E_{s_i}$.}
\label{f:case2.1}
\end{center}
\end{figure}

Now suppose that the direction $e$ does not form an $i$-illegal turn
with any direction in $B|\hG_{s_i}$. Then there is a natural way to
construct the attaching point in $\hG_s$ by watching $\hG_{s_i}$
unfold to $\hG_s$. In terms of widgets, we attach the end of $\tilde
E_{s}$ corresponding to $e$ to the point in $B|\hG_{s}$ closest to
$\tilde e^*$.  Figure~\ref{f:case2.2} illustrates the diagram
$$
\begindc{\commdiag}[40]
\obj(1,2)[12]{$\tilde N_\epsilon$}
\obj(2,1)[21]{$N$}
\obj(2,2)[22]{$\tilde N$}
\obj(1,1)[11]{$N_{\epsilon}$}
\mor{11}{21}{}
\mor{12}{22}{}[\atleft,\solidarrow]
\mor{22}{21}{}
\mor{12}{11}{}[\atright,\solidarrow]
\enddc
$$
with $\hat E_{s}$ and $\hat E_{s_i}$ attached.
\begin{figure}[h]
\begin{center}
\includegraphics[scale=0.3]{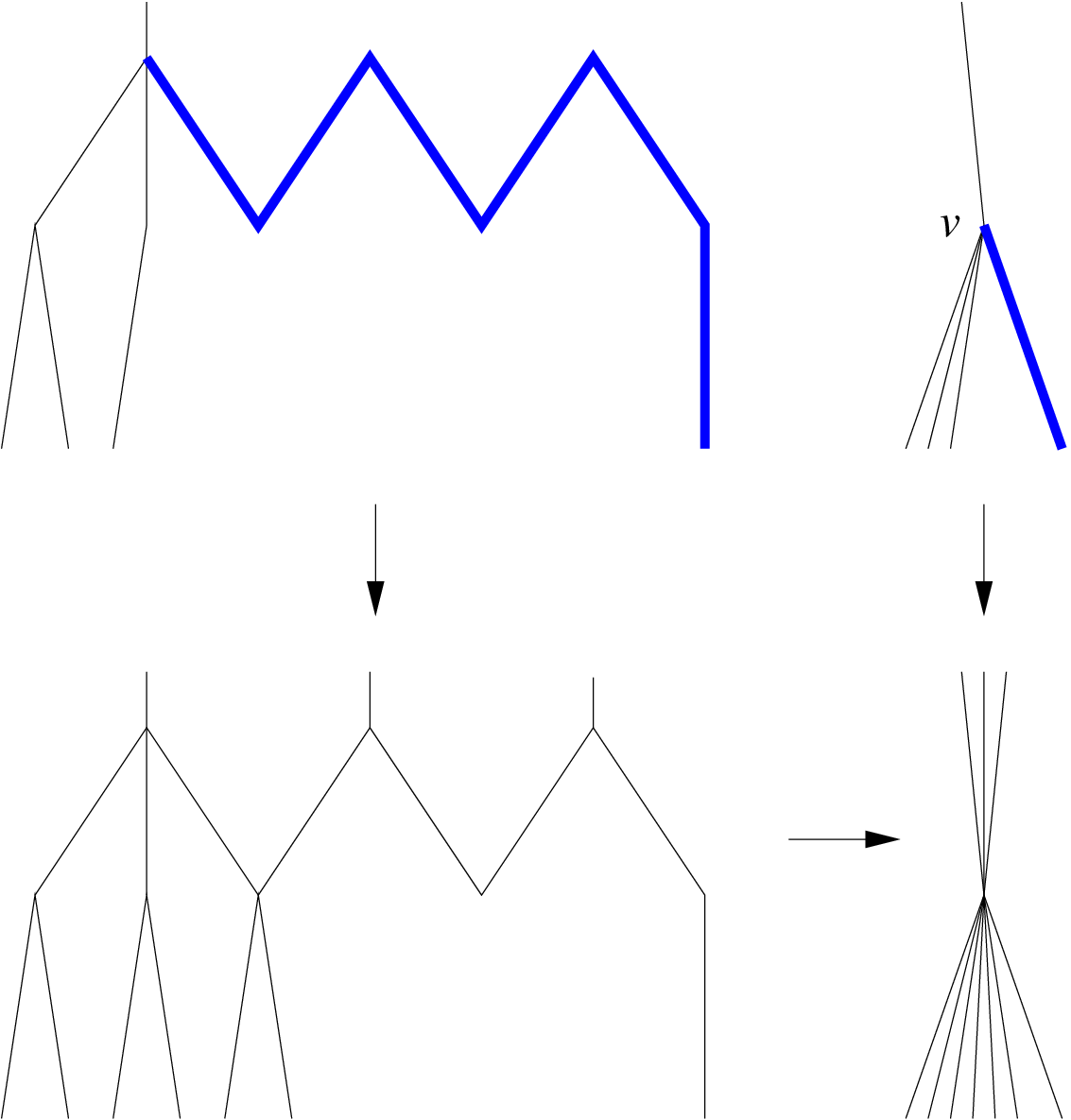}
\caption{To decide where to attach $\hat E_s$ mimic what happens in
  $\hG_s$. The thickened lines represent $\hat E_{s}$ and $\hat E_{s_i}$.}
\label{f:case2.2}
\end{center}
\end{figure}

There is a unique homotopy class of paths in $\hG_{s}$
connecting the images of attaching points. The map
$\hat\psi_{s}$ is defined so that it isometrically immerses
$\hat E_{s}$ to the immersed path in the above homotopy class.

Now suppose $\hat H_s$ is defined for $s\in (s_{i-1},s(\tau)]$ and we
want to define $\hat H_{s_{i-1}}$. Let $\sigma$ be a conjugacy class
in $\FF$.  By construction, for $s\in (s_{i-1},s_i)$,
$\ell(\sigma|\bG_s)=\ell(\sigma|\bG_{s_{i}})+(2x-y)(s_i-s)$ where $x$
is the number of $p$-illegal turns crossed by $\sigma|\bG_s$ that are
$i$-legal and $y$ is the number of $i$-illegal turns crossed. Note
that $x$ and $y$ are constant on $(s_{i-1},s_i)$. In particular,
$\lim_{s\to s_{i-1}^+}\ell(\sigma|\hat H_s)$ exists, thereby defining
$\hat H_{s_{i-1}}$. That $\hat H_{s_{i-1}}$ is in $\hX$ follows from
the existence of the limiting morphism $\hat
H_{s_{i-1}}\to\hG_{s_{i-1}}$.  Finally, note that $\hat E_s$ doesn't
degenerate to a point in this limit. Indeed, $\hat E_{s(\tau)}$
crosses a $p$-illegal turn and this property persists by the Unfolding
Principle (as $s$ decreases, $p$-illegal turns in $\hat E_s$ move away
from the endpoints of $\hat E_s$ which balances any loss at the ends
of $\hat E_s$ due to $i$-illegal turns).

Set $H_s=\hat H_s/vol(\hG_s)$ and revert to our original
parametrization. We now have a path $H_t$, $t\in [0,\tau]$. Define $K_t$
exactly as before, i.e.\ if $\ell(E_t)>4$, then fold $i$-illegal turns
of $H_t\to\bG_t$ until $E_t$ has length 4.

A $p$-legal segment of length $>3\cdot 4(3n-3)$ interior to an edge of
$K_s$ would produce an edge in $B|\bG_s$, hence also in $B|\bG_\tau$,
of length $>3\cdot 4(3n-3)$. We would then have an edge of length
$>4(3n-3)$ in $K_\tau$, contradiction. A $p$-legal segment in $K_s$ of
length $>12(3n-3)(2n-1)$ is then forced meet least $2n-1$ topological
vertices which implies $injrad(K_s)\le 12(3n-3)(2n-1)$. By assumption,
$A|\bG_0$, hence also $K_0$, has a $p$-legal segment of length
$>12(3n-3)(2n-1)$.  Arguing as at the end of Case~1, we get the
contradiction that $d_\X(\bG_0,\bG_\tau)$ is bounded.
\end{proof}

\begin{cor}\label{left-right2}
The image in $\F$ of the interval
$[\Lt_{\bG_t}(A),\Rt_{\bG_t}(A)]$ has bounded diameter.
\end{cor}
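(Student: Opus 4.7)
The plan is to reduce Corollary~\ref{left-right2} to Proposition~\ref{left-right} applied at the two endpoints of the interval, and then use the fact from Corollary~\ref{coarse Lipschitz} that $\pi(\bG_t)$ is a uniform reparametrized quasi-geodesic to upgrade a bound at the two endpoints to a bound on the whole image.

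First I would dispose of the trivial case $\lt(A)=\rt(A)$ and thereafter assume $\lt(A)<\rt(A)$. The key observation is that $\lt$ and $\rt$ are defined by one-sided conditions that are preserved under sliding $t$ in the relevant direction, in the following weak sense. By definition, the set of $t\in[0,L]$ with $A|\bG_t$ containing a legal segment of length $3$ is closed under increasing $t$ and has infimum $\lt(A)$; hence for \emph{every} $s\in(\lt(A),\rt(A)]$ the graph $A|\bG_s$ contains such a legal segment. Similarly, by definition of $\rt(A)$ as a supremum, there is a sequence $t_n\uparrow\rt(A)$ such that $A|\bG_{t_n}$ contains an illegal segment of length $I$ (the set itself may not reach $\rt(A)$, which is why a sequence is used).

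Next I would pick $s^*\in(\lt(A),\rt(A))$ close to $\lt(A)$, say with $s^*-\lt(A)\le 1$, and for large $n$ (so that $t_n>s^*$ and $\rt(A)-t_n\le 1$) apply Proposition~\ref{left-right}(legal and illegal) to the sub-folding-path $\bG_t$, $t\in[s^*,t_n]$: both hypotheses hold, so $d_\F(\bG_{s^*},\bG_{t_n})$ is bounded by a universal constant. Since the path is parametrized by arclength, $d_\X(\bG_{\lt(A)},\bG_{s^*})\le 1$ and $d_\X(\bG_{t_n},\bG_{\rt(A)})\le 1$, so by Corollary~\ref{proj X->F continuous} the projections $\pi(\bG_{\lt(A)})$ and $\pi(\bG_{s^*})$, resp.\ $\pi(\bG_{t_n})$ and $\pi(\bG_{\rt(A)})$, are at uniformly bounded $\F$-distance. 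The triangle inequality then yields a universal bound on $d_\F(\bG_{\lt(A)},\bG_{\rt(A)})$.

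Finally, Corollary~\ref{coarse Lipschitz} tells me that $\pi(\bG_t)$ is a uniform reparametrized quasi-geodesic in $\F$. From the definition, whenever two endpoints of a sub-path of such a quasi-geodesic are at $\F$-distance at most $D$, the number of admissible subdivision intervals between them is bounded (by the lower bound $|i-j|\le d_\F(\gamma(t_i),\gamma(t_j))+2$), and each such interval has $\F$-diameter bounded by the universal constant $K$. Applying this with the endpoints $\bG_{\lt(A)}$ and $\bG_{\rt(A)}$ bounds the diameter of $\pi([\bG_{\lt(A)},\bG_{\rt(A)}])$ by a universal constant, which is exactly the desired conclusion. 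The only nontrivial point, and the place the heavy machinery of the paper enters, is the endpoint bound in the previous paragraph, where everything is inherited from Proposition~\ref{left-right}; the rest is essentially bookkeeping around the fact that $\lt(A)$ need not be a minimum and $\rt(A)$ need not be a maximum.
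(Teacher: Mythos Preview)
Your proof is correct and follows the same two-step approach as the paper: bound the $\F$-distance between the endpoints via Proposition~\ref{left-right}, then invoke Corollary~\ref{coarse Lipschitz} to conclude the whole interval has bounded $\F$-diameter. The paper's proof is the one-sentence version of yours; you have simply been more careful about the fact that $\lt(A)$ is an infimum and $\rt(A)$ a supremum, working instead with nearby $s^*>\lt(A)$ and $t_n<\rt(A)$ where the defining conditions are known to hold, and then closing the gap with Corollary~\ref{proj X->F continuous}.
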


\begin{proof}
  The endpoints have bounded $\F$-distance by
  Proposition~\ref{left-right}(legal and illegal), and therefore the
  whole interval projects to a bounded set by Corollary~\ref{coarse
    Lipschitz}.
\end{proof}

Corollary~\ref{left-right2} says that the projection of $A\in\F$ to
$\pi(\bG_t)$ is bounded distance from
$\pi([\Lt_{\bG_t}(A)),\Rt_{\bG_t}(A)])$. (Recall
Definition~\ref{d:projection to Gt}.)  We will, in
Lemma~\ref{criterion}, see a way to estimate where the this projection
lies.  To prove Lemma~\ref{criterion}, we first need a simple lemma
about canceling paths in a graph and then a general fact that in a
different form appears in \cite[Proposition~5.10 and claim on p.~2218]{yael}.

If $X$ is an edge-path in a graph, then $[X]$ denotes the path
obtained from $X$ by {\it tightening}, i.e.\ $[X]$ is the immersed
edge-path homotopic rel endpoints to $X$. If the endpoints of $X$
coincide and the resulting closed path is not null-homotopic, then
$[[X]]$ denotes the loop obtained from $X$ by {\it tightening}, i.e.\
$[[X]]$ is the immersed circle freely homotopic to $X$.

\begin{lemma}\label{l:simple}
  Let $V$ be an immersed edge path in a graph $G$. Suppose $V$
  represents an immersed circle, i.e.\ $V$ begins and ends at a vertex
  $P$ and has distinct initial and terminal directions. 
Let $W$ be a
  nontrivial initial edge subpath of $V$ ending at a vertex $Q$ and
  let $V'=[W^{-1}VW]$ (so that $V'$ also represents an immersed
  circle).  Also, let $X$ and $Y$ be immersed edge paths in $G$ starting at $P$ and $Q$ respectively. Suppose that $WY$ is immersed in $G$. Then:
\begin{enumerate}[(1)]
\item\label{i:3 segments} the maximal common initial subpath of $X$, $WY$, and $WV'Y$ has
  the form $V^NW'$ for some $N\ge 0$ and some initial subpath $W'$ of $V$.
\item \label{i:2 segments} The maximal initial subpath of $X$ and $VX$ has the form $V^NW'$
  for some $N\ge 0$ and some initial subpath $W'$ of $V$.
\end{enumerate}
\end{lemma}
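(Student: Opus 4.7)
The plan is to establish~(\ref{i:2 segments}) by a short induction and then derive~(\ref{i:3 segments}) from it by the identity $WV'Y = VWY$.

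For~(\ref{i:2 segments}) I would induct on the length of $X$. If $X$ does not begin with $V$, then the maximal common initial subpath of $X$ and $VX$ is an initial subpath of $V$, giving the claim with $N = 0$. Otherwise, write $X = V X_1$ with $X_1$ based at $P$ (which is legitimate since $V$ ends at $P$); then $VX = V^2 X_1$, and the common initial subpath of $X = V X_1$ and $VX = V \cdot V X_1$ equals $V$ concatenated with the common initial subpath of $X_1$ and $V X_1$. Since $|X_1| < |X|$, the inductive hypothesis produces the latter in the form $V^K W'$ with $W'$ an initial subpath of $V$, so the common initial subpath of $X$ and $VX$ is $V^{K+1} W'$.

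For~(\ref{i:3 segments}), the key observation is $WV'Y = VWY$ as immersed paths. Since $V$ begins with $W$, write $V = W V_2$; then $W^{-1} V W$ reduces formally to $V_2 W$, and the concatenation $V_2 W$ is already tight because $V_2$ ends at $P$ in the terminal direction of $V$ and $W$ begins at $P$ in the initial direction of $V$, which are distinct by the immersed-circle hypothesis on $V$. Hence $V' = V_2 W$ and $W V' = W V_2 W = V W$. The concatenation $VW$ is tight for the same reason, and $VWY$ is tight because $WY$ is immersed by hypothesis, while the turn at $P$ in $V \cdot WY$ is between the terminal direction of $V$ and the initial direction of $W$ (which is the initial direction of $V$), again legal. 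Having identified $WV'Y$ with $VWY$, I would then apply~(\ref{i:2 segments}) with $WY$ in the role of $X$ (it starts at $P$ as required): the common initial subpath of $WY$ and $V \cdot (WY)$ has the form $V^N W'$ with $W'$ an initial subpath of $V$. Finally, the common initial subpath of $X$ with $V^N W'$ is itself an initial subpath of $V^N W'$ and therefore of the form $V^K W''$ with $K \leq N$ and $W''$ an initial subpath of $V$, completing~(\ref{i:3 segments}).

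The only mildly technical step, and the main place to be careful, is the tightness check at $P$ and $Q$ that legitimizes the rewriting $WV'Y = VWY$; these reduce to the hypothesis that $V$ has distinct initial and terminal directions at $P$ and that $WY$ is immersed, but must be done separately at each concatenation point.
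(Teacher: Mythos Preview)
Your argument is correct. The paper leaves the proof to the reader, supplying only the hint that~(\ref{i:2 segments}) follows from~(\ref{i:3 segments}) by specializing to the triple $X$, $VX$, $V^2X$ (taking $W=V$, $Y=X$, so that $V'=V$ and $WV'Y=V^2X$). You go in the opposite direction: a direct induction on the length of $X$ gives~(\ref{i:2 segments}), and then the identity $WV'Y = VWY$ reduces~(\ref{i:3 segments}) to~(\ref{i:2 segments}) applied with $WY$ in place of $X$. Both routes hinge on the same key rewriting $WV' = VW$, and your order is arguably the more natural one, since~(\ref{i:2 segments}) is the simpler statement to establish from scratch.
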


\noindent The proof of Lemma~\ref{l:simple} is left to the reader. Note that
(2) follows from (1) applied to $X$, $VX$, and $V^2X$.

If $a$ and $b$ are conjugacy classes in $\FF$ and $\bG\in\X$ then by
{\it $a|\bG$ and $b|\bG$ share a segment of length $K$} we mean that
there is an isometric immersion $[0,K]\to \bG$ that lifts to both
$a|\bG$ and $b|\bG$.

\begin{prop}[closing up to a simple class]\label{general}
There is a constant $C_n>0$ so that the following holds.  Suppose
$G,H\in\X$, $z$ is any class (not necessarily simple), and $K>0$. If
$\ell(z|G)\geq C_nK\ell(z|H)$ then there is a class $u$ in $\FF$ such
that:
\begin{itemize}
\item $\ell(u|H)<2$;
\item $d_\F(H,u)$ is bounded; and 
\item $u|G$ and $z|G$ share a segment of length $K$.
\end{itemize}
In particular, $u$ is simple.
\end{prop}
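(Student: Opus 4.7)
The plan is to exhibit $u$ as a shortcut of a long subarc of $z|G$ that happens to close up to a short loop in $H$. Parametrize $z|G$ by arclength as $\gamma \colon \R/L\Z \to G$ with $L = \ell(z|G)$, fix an optimal map $\phi \colon G \to H$, and sample $N := \lfloor L/K \rfloor$ equally spaced points $p_k := \gamma(kK)$ for $0 \leq k < N$. The loop $\phi\circ\gamma$ lifts canonically to a closed loop $\bar\gamma$ in the $\langle z \rangle$-cover $H_z$ of $H$, and since $\phi\circ\gamma$ is freely homotopic to the core circle $z|H$ through tracks of length $\leq \diam(H) \leq 1$, the lifted samples $\bar q_k := \bar\gamma(kK)$ all lie in the $1$-neighborhood of $z|H$ in $H_z$, a region of volume $\leq \ell(z|H) + C(n)$. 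Since the hypothesis gives $N \geq C_n \ell(z|H)$, for $C_n$ large enough a pigeonhole argument (collecting three nearby lifts per bin to guarantee $j - i \geq 2$) produces $0 \leq i < j < N$ with $j - i \geq 2$ and $d_{H_z}(\bar q_i, \bar q_j)$ arbitrarily small; the degenerate case $\ell(z|H) < 2$ is dispatched by taking $u = z$.

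Define $u$ to be the conjugacy class of the loop $\alpha := \gamma|_{[iK, jK]} \cdot \delta_G$ in $G$, where $\delta_G$ is the shortest $G$-path from $\gamma(jK)$ to $\gamma(iK)$, of length $\leq \diam(G) \leq 1$. Tightening $\alpha$ cancels at most $|\delta_G|$ per seam, so $u|G$ retains a subarc of $\gamma|_{[iK, jK]}$ of length $\geq (j-i)K - 2 \geq K$ as a segment shared with $z|G$ (assuming $K \geq 2$; smaller $K$ is absorbed into the constant $C_n$). Lemma~\ref{l:simple} organizes this cancellation precisely.

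The main obstacle is proving $\ell(u|H) < 2$. A direct bound using $|\phi(\alpha)|$ is useless since the Lipschitz constant of $\phi$ is uncontrolled. Instead, the $H_z$-closeness of $\bar q_i, \bar q_j$ lifts to $d_{\tilde H}(y_i, z^m y_j) < \e$ for some $m \in \Z$, where $y_k := \tilde\phi(\tilde\gamma(kK))$. The delicate step is to choose the closing path (or equivalently to consider a companion loop wrapping $z|G$ an extra $m$ times) so that the corresponding $u$ satisfies $u y_i \approx z^m y_j$ in $\tilde H$, yielding $\ell(u|H) \leq d_{\tilde H}(y_i, u y_i) < 2$ for $\e$ small. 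Lemma~\ref{l:simple} is invoked here to align the combinatorics of $\alpha$ in $G$ with the geometry of its image in $\tilde H$, synchronizing the two constraints. Once $\ell(u|H) < 2$, Lemma~\ref{bounded crossing 2} immediately gives $d_\F(H, u)$ bounded, and in particular $u$ is simple.
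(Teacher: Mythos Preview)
Your approach has a genuine gap at its two central steps, and it diverges from the paper's argument in a way that matters.

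First, the direction of your optimal map is $\phi\colon G\to H$, whereas the paper uses $\phi\colon H\to G$. With your choice, the claim that the lifted samples $\bar q_k$ lie in the $1$-neighborhood of the core of $H_z$ is unjustified. A free homotopy in $H$ with tracks $\le 1$ would lift to one in $H_z$ with tracks $\le 1$, forcing $\bar\gamma$ into the $1$-neighborhood of the core; but the Lipschitz constant of $\phi$ is $e^{d_\X(G,H)}$, which is uncontrolled, so $\phi\circ\gamma$ can have arbitrarily long backtracks, and its lift $\bar\gamma$ then makes arbitrarily deep excursions into the infinite trees hanging off the core. No such homotopy need exist, and the pigeonhole region has no useful volume bound.

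Second, and more fundamentally: even granting two samples $\bar q_i,\bar q_j$ close in $H_z$, you cannot extract the element $u$ you need. Closeness in $H_z$ means precisely $d_{\tilde H}(y_i,z^m y_j)<\epsilon$ for some $m\in\Z$; the only group element this hands you is $z^m$, since $\pi_1(H_z)=\langle z\rangle$. Your $u$, however, is defined independently by closing $\gamma|_{[iK,jK]}$ with a short $G$-path $\delta_G$, and there is no link between the two constructions: $\phi(\delta_G)$ can be enormously long in $H$, and $u$ need not move any point of $\tilde H$ a short distance. You flag this as ``the delicate step'' but do not carry it out; Lemma~\ref{l:simple} is a cancellation statement about common initial segments of the form $V^NW'$ and does not supply the missing synchronization.

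The paper sidesteps both problems by reversing the map. With $\phi\colon H\to G$, one cuts $z|H$ into $O(\ell(z|H))$ pieces $z_i$ (each a single edge, or a path of length $\le 1$ in the thin part of $H$); the hypothesis forces some $[\phi(z_i)]$ to carry a segment of length $\sim C_nK$ in common with $z|G$. One then builds $u$ \emph{in $H$} as a short candidate-type loop through that piece, so $\ell(u|H)<2$ is automatic, and the work (Lemmas~\ref{l:closing1} and \ref{l:closing2}) is a case analysis---monogons and spirals, Types 1--1, 1--2, 2--2---showing that after tightening $\phi(u)$ in $G$ a long subsegment of $z|G$ survives. Lemma~\ref{l:simple} enters only to control cancellation in those specific spiral configurations.
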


If $\ell(z|H)<2$ then $u=z$ {\it works}, i.e.\ $z$ satisfies the
conclusions of the lemma. So, in the proof of
Proposition~\ref{general} we assume $\ell(z|H)\ge 2$.

Fix a map $\phi:H\to G$ so that each edge is immersed (or collapsed)
and each vertex has at least two gates (e.g. first change the metric
on $H$ as in Proposition \ref{rescaling} so that the tension graph is
all of $H$, but in the rest of the proof we use the original metric).

In the proof we will not keep track of exact constants, but will talk
about ``long segments in common with $z|G$''. For example, suppose an
edge path $A$ in $H$ is the concatenation $A=BC$ of two sub-edge
paths, and we look at $[\phi(A)]$, which is the
tightening of the composition $[\phi(B)][\phi(C)]$. If $[\phi(A)]$
contains a long segment in common with $z|G$, then so does $[\phi(B)]$
or $[\phi(C)]$ (or both), but ``long'' in the conclusion means about a
half of ``long'' in the assumption. The number of times this argument
takes place will be bounded, and at the end the length can be taken as
large as we want by choosing the original length (i.e.\ $C_n$) large.

Represent $z|H$ as a composition of $\sim\ell(z|H)$ paths $z_i$ where
each $z_i$ is either an edge, or a combinatorially long (but of length
$\leq 1$) path contained in the thin subgraph (union of immersed loops
of small length). Thus the loop $z|G$ is obtained by tightening the
composition of the paths $[\phi(z_i)]$. In the process of tightening,
everything must cancel except for a (possibly degenerate) segment
$\sigma_i\subset [\phi(z_i)]$ in each path, and at least one
$\sigma_i$ must have length $\geq\sim C_nK$. So we conclude that, for
some $z_i$, $[\phi(z_i)]$ contains a long segment in common with
$z|G$.  There are now two cases, depending on whether $z_i$ is
contained in the thin part or is an edge. Lemmas~\ref{l:closing1} and
\ref{l:closing2} will prove in turn that in each case the conclusions
of the proposition hold. After proving these lemmas, we will have
completed the proof of Proposition~\ref{general}.

\begin{lemma} \label{l:closing1} Assume that, in addition to the
  hypotheses of Proposition~\ref{general}, there is an edge $e$ so
  that $\phi(e)$ contains a segment of length $\sim C_nK$ in common
  with $z|G$. Then the conclusions of Proposition~\ref{general} hold.
\end{lemma}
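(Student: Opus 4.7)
Write $p,q$ for the endpoints of $e$ in $H$ and $\sigma\subset\phi(e)$ for the subsegment of length $\sim C_nK$ in common with $z|G$. The strategy is to define $u$ as a short simple loop in $H$ through $e$, arranged so that $\phi(u)$ is cyclically reduced near $\phi(e)$ and hence $\sigma$ survives in $u|G$.

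Assume first that $e$ is non-separating in $H$. Since $\phi$ has at least two gates at each vertex, I can pick an outgoing direction $d_1$ at $q$ lying in a gate different from that of $\bar e$, and an outgoing direction $d_2$ at $p$ lying in a gate different from that of $e$; then the turns $\{\bar e,d_1\}$ at $q$ and $\{e,d_2\}$ at $p$ are legal with respect to $\phi$. Using connectedness of $H$ minus the interior of $e$ (together with the fact that every vertex of $H$ has valence $\ge 3$, since $H\in\X$), I would take $\gamma$ to be a short path in $H$ minus the interior of $e$, from $q$ to $p$, starting in direction $d_1$ and arriving at $p$ with terminal direction $\bar d_2$. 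Choosing $\gamma$ to traverse each edge of $H$ (other than $e$) at most twice, $\ell(\gamma)\le 2(vol(H)-\ell(e))$, so setting $u:=e\gamma$ gives $\ell(u|H)\le 2-\ell(e)<2$. Since any class of length less than $2\,vol(H)=2$ is simple (as recorded in Section~\ref{s:review}), $u$ is simple, and then $d_\F(H,u)$ is bounded via Remark~\ref{r:bounded crossing 2} and Lemma~\ref{bounded crossing 2}.

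The heart of the argument is that by the choice of $d_1$ and $d_2$ the two turns at the junctions of the cyclic word $\phi(u)=\phi(e)\phi(\gamma)$ are legal with respect to $\phi$, so in the tightening no cancellation takes place between $\phi(e)$ and $\phi(\gamma)$ at either junction. Therefore $\phi(e)$, and in particular the subsegment $\sigma$, survives as an honest subpath of $u|G$, giving a common segment of length $\sim C_nK$ with $z|G$; for $C_n$ chosen large enough this provides a segment of length at least $K$, as required. The bridge case is parallel but more delicate: $\gamma$ is replaced by a closing of the form $\alpha\bar e\beta$ with $\alpha,\beta$ short embedded loops at $q,p$ in the two components of $H$ minus the interior of $e$, chosen so that all four turns at the copies of $e,\bar e$ in $\phi(u)$ are legal, using the two-gate condition at $q$ and at $p$ on each side.

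\textbf{Main obstacle.} The subtle point is that, although the choice of $d_1,d_2$ kills cancellation at the junctions with $\phi(e)$, cancellation at illegal turns internal to $\gamma$ could a priori propagate through $\phi(\gamma)$ and consume part of $\phi(e)$. The fix is to arrange $\gamma$ to be \emph{legal} as a whole with respect to $\phi$, so that $\phi(\gamma)$ is itself immersed and no such propagation can occur; the main technical content is the existence of a legal $\gamma$ with the prescribed initial and terminal directions. I expect this to rely on iteratively extending a legal initial direction using the two-gate condition, together with Lemma~\ref{l:simple} (or a direct combinatorial argument) to ensure we can close up at $p$ through the required terminal gate; the bridge case requires the same careful legal-extension argument on each side of $e$.
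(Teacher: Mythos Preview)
Your approach has a genuine gap: the legal path $\gamma$ in $H\setminus\mathrm{int}(e)$ with prescribed initial and terminal directions need not exist, even before you ask for legality. The obstruction is that the \emph{only} directions at $p$ in $H\setminus\mathrm{int}(e)$ may all lie in the same gate as $e$. Concretely, take $H$ a rank-$2$ rose with petals $a,b$, let $G$ be a rose with petals $x,y$, and let $\phi(a)=xy\bar x$, $\phi(b)=xy$. This is a homotopy equivalence inducing the train-track structure with gates $\{a,\bar a,b\}$ and $\{\bar b\}$ at the vertex. With $e=a$, the only direction available to $\gamma$ at $p$ is $b$ or $\bar b$; you need the direction pointing back along $\gamma$ at $p$ to lie in a gate different from $a$, i.e.\ in $\{\bar b\}$, forcing $\gamma$ to end with the letter $b$; but you also need $\gamma$ to start with $\bar b$. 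No immersed word in $\{b,\bar b\}$ does both. So no $\gamma$ exists, legal or not. (A barbell example with $e$ a loop shows the same phenomenon when there is a second vertex.) Your separating case has the analogous problem: there is no reason $\alpha$ and $\beta$ with the four required legal turns, and legal internally, should exist.

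The paper's argument avoids this by \emph{not} confining the extension to $H\setminus e$ and not prescribing terminal directions. Instead it extends $e$ legally on each side until an oriented edge repeats, and classifies the first repetition as either $e$ itself (Type~0: a legal loop through $e$ and you are done), a reversed edge (Type~1: a ``monogon''), or a repeated oriented edge other than $e$ (Type~2: a ``spiral''). Combining the two sides gives a loop $u$ with at most two illegal turns and crossing each edge at most twice (so $\ell(u|H)<2$). The point you correctly flagged---cancellation at these illegal turns eating into $\phi(e)$---is then handled not by eliminating illegal turns but by Lemma~\ref{l:simple}: the cancelling segment at a spiral's illegal turn is a power of $\phi(v)$ times an initial piece of $\phi(v)$, where $v$ is the spiral's loop, so either $u$ retains a long piece of $z|G$, or $v$ already does and one takes $v$ instead. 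This case analysis (Type~1--1, 1--2, 2--2) is the actual content of the lemma and replaces the ``find a globally legal $\gamma$'' step that your plan leaves open.
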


\begin{proof}
Start extending $e$ to a legal edge path until an edge is
repeated. There are several possibilities.

{\it Type~0.} The first repetition is $e$ itself, i.e.\ we have
$e..e$. Then identifying the $e$'s gives a legal loop $u$ that crosses each
edge at most once, and $u$ works. 

{\it Type~1.} The first repetition is either $e^{-1}$ or another
edge with reversed orientation, i.e.\ $e..e^{-1}$ or
$e..a..a^{-1}$. Schematically we picture this as a monogon. Note
that there are two ways to traverse the monogon starting with $e$ and
ending with $e^{-1}$, both legal.

{\it Type~2.} The first repetition is an edge $a$ different from $e$
and with the same orientation, i.e.\ $e..a..a$. We picture
this as a spiral.

A monogon or spiral has its {\it tail} and its {\it loop}. In
$e..e^{-1}$ the tail is $e$ and the loop is represented by the edge
path between $e$ and $e^{-1}$; in $e..a..a^{-1}$, the tail is $e..a$
and the loop is represented by the edge path between $a$ and
$a^{-1}$; and in $e..a..a$ the tail is $e..a$ and the loop is
represented by the edge path between the $a$'s.

We can also extend $e$ in the opposite direction until an edge
repeats, and so we have three subcases.

{\it Subcase 1.} Type 1-1, i.e.\ we have Type 1 on both sides. Here we
have a morphism to $H$ from a graph $Y$ as in Figure~\ref{f:type.1.1}
whose induced illegal turns form a subset of those indicated.

\begin{figure}[h]
\begin{center}
\includegraphics[scale=0.4]{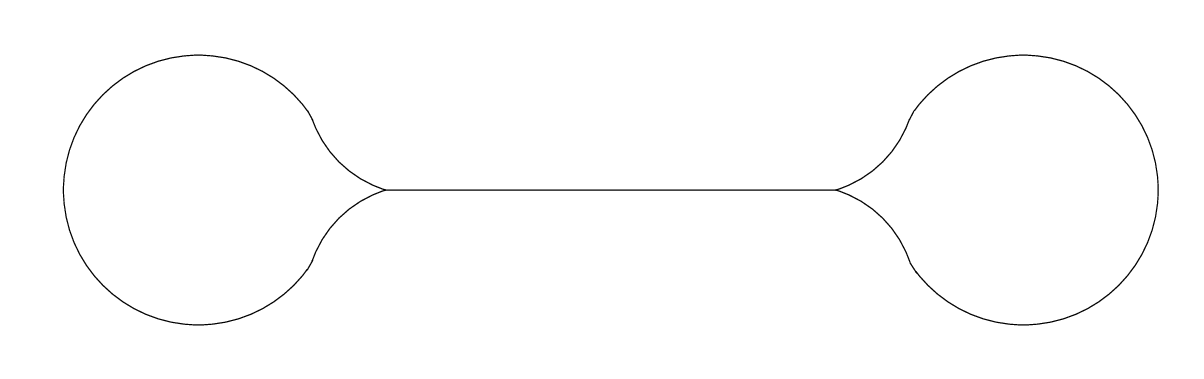}
\caption{Type 1-1.}
\label{f:type.1.1}
\end{center}
\end{figure}

If there is an edge $b$ (different from $e$) crossed by both monogons
then by switching from one copy of $b$ to the other we may form a
legal loop $u$ that crosses $e$ once and all other edges at most
twice.  Indeed, if at least one copy of $b$ is in a loop of a monogon,
then there is a legal segment in $Y$ of the form $b..e..b$ that
crosses $e$, $b$, and $b^{-1}$ only as indicated. If both copies of
$b$ are in tails, then there is either $b..e..b$ as above or
$b..e..b^{-1}..b$. In the latter case, choose the first $b$ as close
as possible to $e$ (to guarantee all edges in our legal loop are
crossed at most twice), and this loop works. If there is no such $b$,
the loop $u$ that traverses both monogons once is legal, crosses each
edge at most twice, and crosses some edge once, so $u$ works.

{\it Subcase~2.} Type 1-2, i.e.\ we have a monogon on one side and a
spiral on the other. 

\begin{figure}[h]
\begin{center}
\includegraphics[scale=0.4]{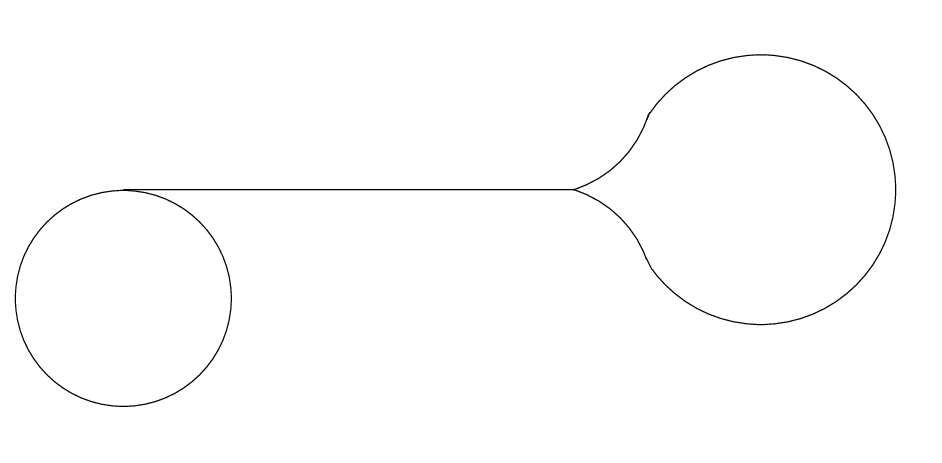}
\caption{Type 1-2.}
\label{f:type.1.2}
\end{center}
\end{figure}

If some edge $b$ different from $e$ is crossed by both the spiral and
the monogon, we can form a legal loop $u$ that crosses $e$ once as in
Subcase~1, and we are done.  Otherwise, let $u$ be the loop that
crosses both the spiral and the monogon, so it has (potentially) one
illegal turn. We claim that either the loop $v$ of the spiral or $u$
works. Indeed, $u$ crosses some edge once and all edges at most
twice. Write $u$ as $Ev'E^{-1}v^{-1}$, where $E$ is the edge-path
formed by the two tails and $v'$ is the loop of the monogon.
Schematically, $u$ can be drawn as in Figure~\ref{f:monogon}.

\begin{figure}[h]
\begin{center}
\begin{picture}(0,0)%
\includegraphics{monogon.pstex}%
\end{picture}%
\setlength{\unitlength}{4144sp}%
\begingroup\makeatletter\ifx\SetFigFont\undefined%
\gdef\SetFigFont#1#2#3#4#5{%
  \reset@font\fontsize{#1}{#2pt}%
  \fontfamily{#3}\fontseries{#4}\fontshape{#5}%
  \selectfont}%
\fi\endgroup%
\begin{picture}(1971,2765)(3946,-3354)
\put(5311,-1366){\makebox(0,0)[lb]{\smash{{\SetFigFont{14}{16.8}{\rmdefault}{\mddefault}{\updefault}{\color[rgb]{0,0,0}$E$}%
}}}}
\put(3961,-2851){\makebox(0,0)[lb]{\smash{{\SetFigFont{14}{16.8}{\rmdefault}{\mddefault}{\updefault}{\color[rgb]{0,0,0}$E$}%
}}}}
\put(5806,-3211){\makebox(0,0)[lb]{\smash{{\SetFigFont{14}{16.8}{\rmdefault}{\mddefault}{\updefault}{\color[rgb]{0,0,0}$v'$}%
}}}}
\put(4546,-1186){\makebox(0,0)[lb]{\smash{{\SetFigFont{14}{16.8}{\rmdefault}{\mddefault}{\updefault}{\color[rgb]{0,0,0}$v$}%
}}}}
\end{picture}%
\caption{$u$ in Subcase 2.}
\label{f:monogon}
\end{center}
\end{figure}

Now consider the image $\phi(u)$. To see how much cancellation occurs,
let $Z$ be the maximal common initial segment of $[\phi(E)]$ and
$[\phi(vE)]$. By Lemma~\ref{l:simple}(\ref{i:2 segments}), $Z$ has the form $[\phi(v)]^NW'$
for some initial segment $W'$ of $[\phi(v)]$. If $Z$ shares a long
segment with $z|G$ then $v$ works. Otherwise, $u|G$
shares a long segment with $z|G$, and so works.

{\it Subcase~3.} Type 2-2, i.e.\ we have two spirals. 

\begin{figure}[h]
\begin{center}
\includegraphics[scale=0.4]{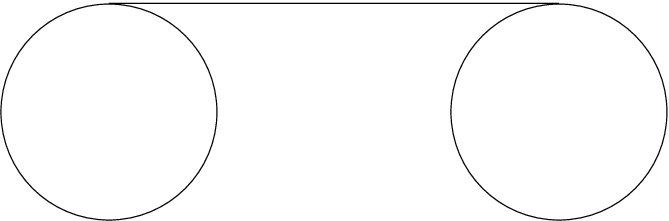}
\caption{Type 2-2.}
\label{f:type.2.2}
\end{center}
\end{figure}

The first case is that the two spirals do not contain any edges in
common except for $e$. Then the loop $u$ that crosses both spirals can
be written in the form $u=Ev'E^{-1}v^{-1}$, pictured as a bigon (see
Figure~\ref{f:bigon}). The argument is now similar to Subcase~2.
Consider the maximal common initial segment $V$ (resp.\ $V'$) of
$\phi(E)$ and $[\phi(vE)]$ (resp.\ $\phi(E^{-1})$ and
$[\phi(v'E^{-1})]$).  If either $V$ or $V'$ shares a long segment with
$z|G$ then $v$ or $v'$ works. Otherwise, $u$ works.

\begin{figure}[h]
\begin{center}
\begin{picture}(0,0)%
\includegraphics{bigon.pstex}%
\end{picture}%
\setlength{\unitlength}{4144sp}%
\begingroup\makeatletter\ifx\SetFigFont\undefined%
\gdef\SetFigFont#1#2#3#4#5{%
  \reset@font\fontsize{#1}{#2pt}%
  \fontfamily{#3}\fontseries{#4}\fontshape{#5}%
  \selectfont}%
\fi\endgroup%
\begin{picture}(2994,1369)(3499,-3059)
\put(4816,-2986){\makebox(0,0)[lb]{\smash{{\SetFigFont{14}{16.8}{\rmdefault}{\mddefault}{\updefault}{\color[rgb]{0,0,0}$E$}%
}}}}
\put(4906,-1861){\makebox(0,0)[lb]{\smash{{\SetFigFont{14}{16.8}{\rmdefault}{\mddefault}{\updefault}{\color[rgb]{0,0,0}$E$}%
}}}}
\put(3556,-2491){\makebox(0,0)[lb]{\smash{{\SetFigFont{14}{16.8}{\rmdefault}{\mddefault}{\updefault}{\color[rgb]{0,0,0}$v$}%
}}}}
\put(6301,-2446){\makebox(0,0)[lb]{\smash{{\SetFigFont{14}{16.8}{\rmdefault}{\mddefault}{\updefault}{\color[rgb]{0,0,0}$v'$}%
}}}}
\end{picture}%
\caption{$u$ in the first case of Subcase 3.}
\label{f:bigon}
\end{center}
\end{figure}

The second case is that some edge $b$, other than $e$, occurs on both
spirals and we can construct a loop that crosses $e$ only once by
jumping from one $b$ to the other. If this loop is legal, we can take
it for $u$. Otherwise, it has one illegal turn, and there are two
possibilities.

Suppose first that $b$ in the tail of a spiral whose loop is $v$.
There is then a segment of the form $b..e..b^{-1}..v..b$ where the
only illegal turn is the initial point of $v$. Let $u$ be the loop
obtained by identifying the first and last $b$'s. Exactly as in
Subcase~2, either the canceling segments of $[\phi(u)]$ share a long
segment with $z|G$ (in which case $v$ works) or else $u|G$ shares a
long segment with $z|G$.

Secondly, suppose that $b$ appears in the loop $v$ of a spiral. Here
there is a segment of the form $b..e..w$ where $w$ is an initial
segment of $v$ ending with $b$ and the only illegal turn is the
initial vertex of $w$. Let $u$ be the loop obtained by identifying the
$b$'s in $b..e..w$. Let $u'$ be the loop obtained by identifying the
first and last $b$'s of $b..e..vw$. By Lemma~\ref{l:simple}(\ref{i:3
  segments}), either the canceling segments in one of $\phi(u)$ and
$\phi(u')$ shares a long segment with $z|G$ (in which case $v$ works)
or else one of $u|G$ and $u'|G$ shares a long segment with $z|G$.

Finally, by choosing the first $b$ of our segment as close as possible
to $e$, we guarantee that the loop we produce crosses each edge at
most twice and some edge once and so works.
\end{proof}

\begin{lemma}\label{l:closing2}
Suppose that, in addition to the hypotheses of Proposition~\ref{general}, there is no edge as in Lemma~\ref{l:closing1}, but there is a path $w$
in the thin part of $H$ of length $\leq 1$ such that $[\phi(w)]$
contains a segment of length $\sim C_nK$ in common with $z|G$. Then the conclusions of Proposition~\ref{general} hold.
\end{lemma}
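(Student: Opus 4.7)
The plan is to close up the path $w$ into a short loop inside the thin subgraph of $H$, producing a simple conjugacy class $u$ carried by a proper free factor close to $H$ in $\F$, and to argue that cyclic tightening of the resulting loop preserves the common segment of $[\phi(w)]$ with $z|G$.

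First I fix the thinness threshold $\epsilon$ small in terms of $n=\rank(\FF)$ so that the thin subgraph is a proper subgraph of $H$. This is possible because $vol(H)=1$ while $H$ has at most $3n-3$ edges and the thin subgraph only contains edges of length $\le\epsilon$. Let $T$ be the connected component of the thin subgraph containing $w$. Then $T$ is a proper connected subgraph of $H$ containing a circle (every edge of $T$ lies in a short immersed loop), so $T$ determines a proper free factor $A=\ff T$, and $injrad(A|H)\le\epsilon$; Corollary~\ref{lucas} gives $d_\F(A,H)$ bounded.

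Next I close up $w$ to a loop within $T$. Since $T$ is connected of rank at most $n-1$ with all edges of length $\le\epsilon$, its combinatorial diameter is bounded in terms of $n$, so there is a path $\sigma\subset T$ of length $\le n\epsilon$ joining the terminal endpoint of $w$ back to its initial endpoint. Set $u=[[w\sigma]]$. Then $\ell(u|H)\le\ell(w)+\ell(\sigma)<2$ for $\epsilon$ small, and $u$ is simple because it is represented by a loop in the proper subgraph $T$, hence carried by $A$; consequently $d_\F(u,H)\le d_\F(u,A)+d_\F(A,H)$ is bounded by Lemma~\ref{subgraph distance}. If $w\sigma$ happens to be null-homotopic for every available $\sigma$, I would instead take $u=[[\alpha]]$ for any short immersed loop $\alpha\subset T$, which is still simple and satisfies $\ell(u|H)<2$.

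Finally, for the third bullet, the loop $u|G$ is the cyclic tightening of $\phi(w)\phi(\sigma)$, and cancellation happens only in neighborhoods of the two splice points, whose extent is bounded by twice the length of $[\phi(\sigma)]$ in $G$. The main obstacle is that this cancellation is not a priori bounded independently of the Lipschitz constant of $\phi\colon H\to G$, which itself is not controlled. I expect to handle this by refining the closing step: decompose $w$ into a bounded number of sub-paths (a bound depending only on $n$ via the rank of $T$), each tracing a power of a single short core loop of $T$ up to short connecting transits, locate the sub-path whose $\phi$-image contains the length-$\sim C_nK$ common segment with $z|G$, and take $u=[[\alpha]]$ for the corresponding core loop $\alpha$. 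Because $\phi$ is an immersion on each edge and $\alpha$ is an immersed circle, $[\phi(\alpha^N)]=[\phi(\alpha)]^N$ is itself an immersed path for every $N$, and the shared segment descends directly to a cyclic subword of $u|G$. The bounded combinatorics of $T$ (depending only on $n$) then absorb all bounded losses into the constant $C_n$ in the hypothesis, leaving a shared subsegment of length $K$, completing the proof.
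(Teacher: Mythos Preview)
Your first two bullets are fine, but the third bullet has a genuine gap that your proposed fix does not close.

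The obstacle you correctly identify is that after closing up $w$ to a loop, cyclic tightening of $\phi(w\sigma)$ in $G$ may destroy the long common segment with $z|G$. Concretely, writing the tightened based loop as $[\phi(w)]=VUV^{-1}$ with $U$ cyclically reduced, the long shared segment may lie entirely in $V$; then $u|G=[[U]]$ contains none of it. Your remedy --- decompose $w$ into boundedly many pieces each of the form $\alpha^N$ for a core loop $\alpha$ of $T$ --- fails on two counts. First, in a thin subgraph of rank $\geq 2$ an immersed path of bounded metric length can be combinatorially arbitrary (e.g.\ in a rank-$2$ rose, any reduced word in $a,b$), so no such decomposition with a bound depending only on $n$ exists. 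Second, even when $w=\alpha^N$, your claim that $[\phi(\alpha^N)]=[\phi(\alpha)]^N$ is immersed is false: $\phi$ immerses edges but may fold at vertices, so $[\phi(\alpha)]$ has the form $XYX^{-1}$ with $Y=\alpha|G$, whence $[\phi(\alpha^N)]=XY^NX^{-1}$ and the long shared segment can sit in $X$, not in $Y^N$.

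The paper's argument confronts exactly this $VUV^{-1}$ obstruction. After closing $w$ into a loop (the non-Case~1 hypothesis bounds the $\phi$-image of the added edges, so the long shared segment survives in $[\phi(w)]$), if $u=w$ does not already work one has $[\phi(w)]=VUV^{-1}$ with $V$ long. One then picks any combinatorially short loop $a$ in the thin part based at the basepoint of $w$, writes $[\phi(a)]=ABA^{-1}$, and analyzes the cancellation in $\phi(aw)$ via Lemma~\ref{l:simple}(\ref{i:2 segments}): either the cancelled piece is a long power of $B$ (so $a$ works) or it is not (so $aw$ works). This dichotomy is the missing idea in your proposal.
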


\begin{proof}
  First, if necessary, concatenate $w$ with a combinatorially bounded
  path in the thin part so that its endpoints coincide. After this
  operation $[\phi(w)]$ still has a long piece in common with $z$
  since $\phi$-images of edges do not.  If taking $u=w$ does not work,
  then the path $[\phi(w)]$ has the form $VUV^{-1}$ with $V$ having a
  long piece in common with $z|G$. Choose a combinatorially short loop
  $a$ in the thin part, based at the endpoints of $w$. We aim to show
  that either $a$ or $aw$ works.

Write $[\phi(a)]=ABA^{-1}$ so that $B=[[B]]$. The loop $[[\phi(aw)]]$
is obtained by tightening the loop $ABA^{-1}UVU^{-1}$ which has at
most two illegal turns (the two occurrences of $\{A,U\}$). See
Figure~\ref{f:aw}.

\begin{figure}[h]
\begin{center}
\includegraphics[scale=1]{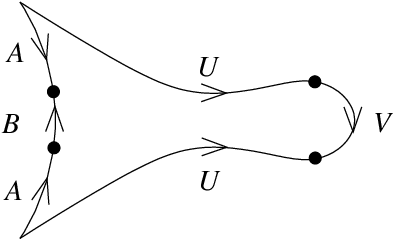}
\caption{$\phi(aw)$}
\label{f:aw}
\end{center}
\end{figure}

There are two cases. If the maximal common initial segment of $A$ and
$U$ is a proper segment of $A$ then our loop becomes immersed after
canceling the copies of this this common initial segment at the two
illegal turns. Here $aw$ works.

If $U=AU'$, then after canceling the common $A$'s, we are left with
$BU'VU'^{-1}$. Since the initial and terminal directions of $B$ are
distinct, one of the turns $\{B, U'\}$ and $\{B^{-1},U'\}$ is legal.
Using Lemma~\ref{l:simple}(\ref{i:2 segments}) again, either a power of $B$ has a long segment
in common with $U$ (in which case $a$ works) or not (in which case
$aw$ works).
\end{proof}

We have completed the proof of Proposition~\ref{general}.\qed

\begin{lemma}\label{criterion}
  Let $\bG_t$, $t\in[0,L],$ be a folding path in $\X$ parametrized by
  arclength, $H\in\X$, and $z$ a class in $\FF$.  For some $\tau\in
  [0,L]$, assume that $\ell(z|\bG_\tau)\geq\ell(z|H)$.
\begin{enumerate}[(i)]
\item If $z|\bG_\tau$ is legal then either
  $\lt(H)\le \tau$ or
  $d_\F(\Lt(H),\bG_\tau)$ is bounded. (Recall Definition~\ref{d:projection to Gt}.)
\item[(i$\,'$)] If $z|\bG_\tau$ has no immersed illegal segment of
  length $I$ then either $\lt(H)\le \tau$ or $d_\F(\Lt(H),\bG_\tau)$
  is bounded.
\item If $z$ is simple and $z|\bG_\tau$ is illegal then either\,
  $d_\F(\Rt(H),\bG_\tau)$ is bounded or $\rt(H)\ge\tau$.
\end{enumerate}
\end{lemma}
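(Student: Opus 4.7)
The common strategy for all three statements is to apply Proposition~\ref{general}(closing up to a simple class) at a time $\tau^*$ close to $\tau$ so that the length hypothesis holds for $G=\bG_{\tau^*}$, producing a simple class $u$ with $\ell(u|H)<2$, $d_\F(H,u)$ bounded, and $u|\bG_{\tau^*}$ sharing a controlled subsegment with $z|\bG_{\tau^*}$. The time bound $\lt(u)\le\tau^*$ (in (i), (i$\,'$)) or $\rt(u)\ge\tau^*$ (in (ii)) is then transferred to $\Lt(H)$ or $\Rt(H)$ using Lemma~\ref{bounded crossing}, Corollary~\ref{coarse Lipschitz}, and Corollary~\ref{left-right2}.

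For (i), take $\tau^*=\min(\tau+\log(3C_n),L)$ with $C_n$ the constant from Proposition~\ref{general}. Since legal turns persist under folding, $z|\bG_{\tau^*}$ remains legal and $\ell(z|\bG_{\tau^*})=e^{\tau^*-\tau}\ell(z|\bG_\tau)\ge 3C_n\,\ell(z|H)$; apply Proposition~\ref{general} with $K=3$ to obtain $u$ sharing a legal length-$3$ segment, hence $\lt(u)\le\tau^*$. For (i$\,'$), enlarge $\tau^*-\tau$ and $K$ so that every length-$K$ subsegment of $z|\bG_{\tau^*}$ contains a legal subsegment of length~$3$; this is possible because under folding the illegal segments in $z|\bG_\tau$ (bounded in length by $I$) cannot grow while legal segments of length $3$ grow exponentially, so legal pieces come to dominate at $\tau^*$, and again $\lt(u)\le\tau^*$. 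For (ii), go backwards: a legal segment of length $3$ in $z|\bG_s$ would persist forward and grow, contradicting $z|\bG_\tau$ illegal, so $z|\bG_s$ is illegal for all $s\le\tau$. Iterate Lemma~\ref{unfolding}: successively find $s_i<s_{i-1}$ (starting with $s_0=\tau$) such that $\ell(z|\bG_{s_i})\ge 2\ell(z|\bG_{s_{i-1}})$ and $d_\F(\bG_{s_i},\bG_{s_{i-1}})$ is bounded. After $O(\log(C_nI))$ such steps one reaches $s^*$ with $\ell(z|\bG_{s^*})\ge C_nI\,\ell(z|H)$ and $d_\F(\bG_{s^*},\bG_\tau)$ bounded; if instead the iteration first reaches $s=0$, then $d_\F(\bG_0,\bG_\tau)$ is already bounded and conclusion (b) follows directly from the reparametrized quasi-geodesic property of $\pi(\bG_t)$ applied to any $\Rt(H)\in[\bG_0,\bG_\tau]$. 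Now apply Proposition~\ref{general} at $\bG_{s^*}$ with $K=I$: since $z|\bG_{s^*}$ is illegal, the shared length-$I$ subsegment is itself an illegal segment of length $I$ inside $u|\bG_{s^*}$, so $\rt(u)\ge s^*$ and hence $\rt(\ff u)\ge s^*$.

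To transfer the time bound on $u$ into one on $H$, observe that $\ell(u|H)<2$, so Remark~\ref{r:bounded crossing 2} combined with Lemma~\ref{bounded crossing} produces a free factor $\ff P\in\pi(H)$ with $d_\F(\ff P,\ff u)$ bounded (note that we do \emph{not} get $\ff u\le\ff P$). By Corollary~\ref{coarse Lipschitz}, $\pi(\Lt(\ff P))$ and $\pi(\Lt(\ff u))$ are $\F$-close; by the reparametrized quasi-geodesic property of $\pi(\bG_t)$, the $\F$-image of the subinterval of the folding path between $\bG_{\lt(\ff u)}$ and $\bG_{\lt(\ff P)}$ then has bounded $\F$-diameter. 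A short case analysis on the relative ordering of $\tau,\lt(\ff u),\lt(\ff P)$ (using $\lt(H)\le\lt(\ff P)\le\tau^*$ and $|\tau^*-\tau|$ bounded) shows that $\Lt(H)=\bG_{\lt(H)}$ is either at time $\le\tau$ (first conclusion of (i)) or lies on a subinterval of bounded $\F$-diameter containing $\bG_\tau$. The argument for (ii) is the exact analog using $\rt$ in place of $\lt$, where one additionally invokes Corollary~\ref{left-right2} (bounding the $\F$-diameter of $[\Lt(\cdot),\Rt(\cdot)]$ for each of $\ff u$ and $\ff P$) to pass from $\F$-closeness of $\pi(\Lt(\cdot))$'s to $\F$-closeness of $\pi(\Rt(\cdot))$'s.

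The main obstacle is the lack of factor containment $\ff u\le\ff P$ between the simple class produced by Proposition~\ref{general} and any factor represented by a subgraph of $H$: Proposition~\ref{L2} therefore cannot be invoked directly, and one must combine the coarse Lipschitz retraction with the reparametrized quasi-geodesic structure of the folding path's $\F$-projection to carry time bounds from $u$ to $H$. A secondary technical point is the verification (for (i$\,'$) and the reverse direction in (ii)) that under folding illegal segments do not grow while legal segments grow exponentially, which is what makes the perturbation by a bounded additive constant $|\tau^*-\tau|$ sufficient.
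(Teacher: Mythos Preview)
Your overall strategy matches the paper's proof closely: move $\tau$ a bounded amount (forward for (i), (i$'$); backward via Lemma~\ref{unfolding} for (ii)) to get the multiplicative length inequality, apply Proposition~\ref{general} to produce a simple class $u$ with $\ell(u|H)<2$ and $d_\F(H,u)$ bounded whose realization in $\bG_{\tau^*}$ inherits the relevant legality/illegality from $z$, and then transfer the resulting bound $\lt(u)\le\tau^*$ (resp.\ $\rt(u)\ge\tau^*$) to $H$. The paper does exactly this, but compresses the entire transfer step into the one line ``We are done by Lemma~\ref{bounded crossing 2} and Corollary~\ref{coarse Lipschitz}.''

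Your elaboration of the transfer step, however, contains an error: you assert $\lt(\ff P)\le\tau^*$ for the factor $\ff P\in\pi(H)$ coming from Lemma~\ref{bounded crossing}. There is no reason this should hold; all you know is $\lt(\ff u)\le\tau^*$. The correct argument is simpler than your case analysis. Since $\Lt(H)=\Lt(A_0)$ for some $A_0\in\pi(H)$ (namely the one realizing the minimum defining $\lt(H)$), and since $d_\F(A_0,\ff P)\le 4$ by Lemma~\ref{subgraph distance}, Corollary~\ref{coarse Lipschitz} gives $d_\F(\pi(\Lt(H)),\pi(\Lt(\ff P)))$ bounded. Combined with $d_\F(\pi(\Lt(\ff P)),\pi(\Lt(\ff u)))$ bounded (which you already have), you get $d_\F(\pi(\Lt(H)),\pi(\Lt(\ff u)))$ bounded. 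Now if $\lt(H)>\tau$, then $\bG_\tau$ lies on the folding path between $\Lt(\ff u)$ and $\Lt(H)$ (or within bounded $\X$-distance of this interval when $\lt(\ff u)\in(\tau,\tau^*]$), so the reparametrized quasi-geodesic property forces $d_\F(\Lt(H),\bG_\tau)$ bounded. The same correction applies verbatim in (ii) with $\rt$ in place of $\lt$, and your invocation of Corollary~\ref{left-right2} is then unnecessary. In short, the ``main obstacle'' you flag dissolves once you remember that all elements of $\pi(H)$ already have $\F$-close projections.

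A minor imprecision in (i$'$): illegal segments can in fact grow in length under folding; what is true (and what the paper asserts) is that the property ``no illegal segment of length $I$'' is stable under folding, which follows by unfolding a hypothetical long illegal segment and observing it remains illegal and at least as long.
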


\begin{proof}
  We first prove (i). Let $C_n$ be the constant from
  Proposition~\ref{general}(closing up to a simple class). Since $z|\bG_\tau$
  is legal, by replacing $\tau$ by $\tau+e^{3C_n}$ we may assume that
  $\ell(z|\bG_\tau)\geq 3C_n\ell(z|H)$. Proposition~\ref{general} then
  provides a simple class $u$ with $\ell(u|H)<2$ and $d_\F(H,u)$
  bounded so that $u|\bG_\tau$ has a segment of length 3 in common
  with $z|\bG_\tau$; in particular $u|\bG_\tau$ contains a legal
  segment of length 3 and so $\lt_{\bG_t}(u)\le\tau$. We are done by
  Lemma~\ref{bounded crossing 2} and Corollary~\ref{coarse Lipschitz}.

  The proof of (i$'$) is similar. Because $z|\bG_\tau$ has no illegal
  segment of length $I$, its length grows at an exponential rate under
  folding. Also, the property of not having an illegal segment of
  length $I$ is stable under folding.  Therefore we may assume that
  $\ell(z|\bG_\tau)\ge C_nI\ell(z|H)$ and conclude that $u|\bG_\tau$
  produced by Proposition~\ref{general} shares a length $I$ segment with
  $z|\bG_\tau$ and so has a legal segment of length 3.

  The proof of (ii) is also analogous. Using Lemma~\ref{unfolding}, by
  moving left from $\bG_\tau$ a bounded amount in $\F$ we may assume
  $\ell(z|\bG_\tau)\geq C_nI\ell(z|H)$. Then Proposition~\ref{general}
  provides a simple class $u$ with $\ell(u|H)<2$ and $d_\F(H,u)$
  bounded so that $u|\bG_\tau$ has a segment of length $I$ in common
  with $z|\bG_\tau$. In particular, this segment is illegal.
\end{proof}

We summarize the conclusions in Lemma~\ref{criterion}(i) and (i$'$)
[resp.~(ii)] by saying that {\it the projection of $H$ to $\bG_t$ is
  coarsely to the left [resp.\ right] of $\bG_\tau$, measured in
  $\F$.} (Recall Corollary~\ref{left-right2}.)

\section{Hyperbolicity}\label{s:hyperbolicity}
The following proposition provides a blueprint for proving
hyperbolicity of $\F$. In the case of the curve complex the same
blueprint was used by Bowditch \cite{bo:hyp}.

\begin{prop}\label{hyperbolicity}
$\F$ is hyperbolic if and only if the following holds for projections of
  folding paths. There is $C>0$ so that:
\begin{enumerate}[(i)]
\item (Fellow Travel) Any two projections
  $\pi(G_t)$ and $\pi(H_t)$ of folding paths that start and end
  ``at distance 1'' (coarsely interpreted) are in each other's
  Hausdorff 
$C$-neighborhood.
\item (Symmetry) If $\pi(G_t)$ goes from $A$ to $B$ and $\pi(H_t)$
  from $B$ to $A$ then the two projections are in each other's
  Hausdorff 
$C$-neighborhood.
\item (Thin Triangles) Any triangle formed by projections of three
  folding paths is $C$-thin.
\end{enumerate}
\end{prop}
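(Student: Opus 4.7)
The plan is to prove the two directions separately: the \emph{only if} direction is a routine consequence of quasi-geodesic stability in hyperbolic spaces, while the \emph{if} direction is the substantive content and follows the Bowditch-style criterion alluded to in the introduction.

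For the \emph{only if} direction, suppose $\F$ is hyperbolic. By Corollary~\ref{coarse Lipschitz}, every projected folding path is a reparametrized quasi-geodesic with constants depending only on $\rank(\FF)$. Morse stability of (reparametrized) quasi-geodesics in a hyperbolic space then implies that any two such quasi-geodesics with endpoints at bounded distance lie in uniformly bounded Hausdorff neighborhoods of each other, which yields both (i) Fellow Travel and (ii) Symmetry. Property (iii) Thin Triangles follows from the uniform thinness of geodesic triangles in a hyperbolic space together with the same Morse stability applied to each of the three sides.

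For the \emph{if} direction, the strategy is to invoke Bowditch's isoperimetric criterion \cite[Proposition~3.1]{bo:hyp}. That criterion asserts, roughly, that a connected graph is Gromov-hyperbolic whenever one can specify between any two vertices a (coarse) connecting path, such that this family of paths enjoys uniformly thin triangles and uniformly close fellow-traveling for paths sharing endpoints. The ingredients we need are already in hand: Corollary~\ref{c:coarse paths} provides a projected folding path between any two prescribed free factors (coarse transitivity of the family), Corollary~\ref{coarse Lipschitz} upgrades these to a uniform family of reparametrized quasi-geodesics, and hypotheses (i)--(iii) supply exactly the fellow-traveling, symmetry, and thin-triangle properties the criterion demands. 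Applying the criterion to the family of projected folding paths then yields hyperbolicity of $\F$.

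The main subtlety lies in the inherent asymmetry of folding: a folding path from $S$ to $T$ and a folding path from $T$ to $S$ are genuinely different objects, and their projections to $\F$ are not obviously related. Hypothesis (ii) Symmetry is the axiomatic substitute for the symmetry that Teichm\"uller geodesics enjoy automatically in the corresponding curve complex argument; once (ii) is known, one may canonically associate to each unordered pair $\{A,B\}$ of free factors a coarse path in $\F$ well-defined up to uniformly bounded error, and the remainder of the deduction is the standard isoperimetric bookkeeping from \cite{bo:hyp}. The hard work of the paper, of course, lies not in this proposition but in verifying (i)--(iii) for projected folding paths---that task is deferred to Sections~\ref{s:FT} and \ref{s:thin}.
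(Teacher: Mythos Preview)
Your proposal is correct and follows essentially the same approach as the paper. The only difference is presentational: where you invoke Bowditch's Proposition~3.1 as a black box, the paper sketches the underlying isoperimetric argument directly (subdivide a loop of length $L\sim 3\cdot 2^N$ into a hierarchy of triangles with sides represented by projected folding paths, use Thin Triangles to fill each, and sum to get area $\sim N\cdot 2^N\sim L\log L$); your observation that (i) and (ii) together make the choice of folding path between a pair of free factors coarsely irrelevant is exactly what the paper's phrase ``up to bounded Hausdorff distance the choices are irrelevant'' encodes.
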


More precisely, in (i), if $G$ and $H$ are the initial points of the
two paths, the hypothesis means that there exist adjacent free factors
$A$ and $B$ such that $A\in\pi(G)$ and $B\in\pi(H)$, and similarly for
terminal points. 

\begin{proof}
  It is clear that (i)-(iii) are necessary for hyperbolicity, since
  projections of folding paths form a uniform collection of
  reparametrized quasi-geodesics and in hyperbolic spaces
  quasi-geodesics stay in bounded neighborhoods of geodesics. The
  converse is due to Bowditch \cite{bo:hyp} (a variant was used
  earlier by Masur-Minsky \cite{MM}). Here is a sketch. We will show
  that any loop $\alpha$ in $\F$ of length $L$ bounds a disk of area
  $\sim L\log L$. (Think of bounded length loops as bounding disks of
  area 1.) Subdivide $\alpha$ into $3\times 2^N$ segments of size
  $\sim 1$ and think of it as a polygon. Subdivide it into triangles
  in a standard way: a big triangle in the middle with vertices $2^N$
  segments apart, then iteratively bisect remaining polygonal paths.
  Represent each diagonal by the image of a folding path -- up to
  bounded Hausdorff distance the choices are irrelevant. Using Thin
  Triangles, each triangle of diameter $D$ can be filled with a disk
  of area $\sim D$. Adding the areas of all the triangles gives $\sim
  N\times 2^N\sim L\log L$.
\end{proof}

Proposition~\ref{contraction} generalizes Algom-Kfir's result
\cite{yael}. 

\begin{prop}\label{contraction}
  Let $H,H'\in\X$ with $d_\X(H,H')\leq M$ and let $\bG_t$ be a folding
  path such that $d_\X(H,\bG_t)\geq M$ for all $t$. Then the distance
  between the projections of $H$ and $H'$ to $\bG_t$ is uniformly
  bounded in $\F$.
\end{prop}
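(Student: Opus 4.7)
The plan is to compare both projections to that of a single short simple class in $H$. Let $z$ be a legal candidate in $H$ (with respect to some train-track structure on $H$, e.g.~one induced by an optimal map), so $z$ is a simple class with $\ell(z|H)<2$ and $d_\F(z,H)$ bounded by Lemma~\ref{bounded crossing 2}. A difference-of-markings map $H\to H'$ of Lipschitz constant $\le e^M$ gives $\ell(z|H')\le 2e^M$.

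First I would argue that $\pi(\Lt(H))$ is $\F$-close to $\pi(\bG_{\lt(z)})$. Indeed, $z$ generates a rank-one free factor $\langle z\rangle$ which, up to conjugacy, is contained in some $A\in\pi(H)$ realizing $\lt(H)$; applying Proposition~\ref{L2} to the inclusion $\langle z\rangle\subseteq A$ gives that either $\bG_{\lt(H)}$ and $\bG_{\lt(z)}$ are $\X$-close (hence $\F$-close), or $A$ is $\F$-close to the arc between them. In either case, since $d_\F(A,H)$ and $d_\F(z,H)$ are bounded, $\pi(\Lt(H))$ is $\F$-close to $\pi(\bG_{\lt(z)})$.

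The main task is to show that $\pi(\Lt(H'))$ is also $\F$-close to $\pi(\bG_{\lt(z)})$. I plan to apply Lemma~\ref{criterion} to the simple class $z$ at the graph $H'$. By Corollary~\ref{3} and the evolution formulas in Corollary~\ref{c:length}, once $t$ moves past $\lt(z)$ by a controlled amount, $z|\bG_t$ becomes fully legal and then grows like $e^t$, so there is $\tau_+>\lt(z)$ with $z|\bG_{\tau_+}$ legal and $\ell(z|\bG_{\tau_+})\ge \ell(z|H')$. Lemma~\ref{criterion}(i) then places $\pi(\Lt(H'))$ either at parameter at most $\tau_+$ or within bounded $\F$-distance of $\pi(\bG_{\tau_+})$. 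Symmetrically, Lemma~\ref{unfolding} applied in reverse shows that illegal segments of $z|\bG_t$ lengthen as $t$ decreases past $\lt(z)$, giving a time $\tau_-<\lt(z)$ at which $z|\bG_{\tau_-}$ has an immersed illegal segment of length $I$; then Lemma~\ref{criterion}(ii) places $\pi(\Rt(H'))$ at parameter at least $\tau_-$ up to bounded $\F$-error. By Corollary~\ref{left-right2}, $\pi(\Lt(H'))$ is $\F$-close to the image of $[\bG_{\tau_-},\bG_{\tau_+}]$, which contains $\bG_{\lt(z)}$.

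The main obstacle is to ensure the $\F$-diameter of $\pi([\bG_{\tau_-},\bG_{\tau_+}])$ is bounded independently of $M$: these intervals can have $\X$-length on the order of $\log(e^M)=M$, since $\ell(z|\bG_{\tau_+})$ must reach $2e^M$. The hypothesis $d_\X(H,\bG_t)\ge M$ is precisely what saves the argument, forcing the folding path to remain in a uniformly bounded $\F$-neighborhood of $\pi(H)$ throughout, independently of $M$; making this quantitative, by combining the length estimates above with the coarse Lipschitz retraction property of the $\F$-projection (Corollary~\ref{coarse Lipschitz}), is the heart of the proof.
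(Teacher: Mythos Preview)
Your argument has a genuine gap, which you yourself flag in the final paragraph but do not close. You fix a single candidate $z$ in $H$ and then try to locate $\tau_\pm$ at which $\ell(z|\bG_{\tau_\pm})\ge \ell(z|H')\approx 2e^M$. The problem is exactly what you say: the interval $[\tau_-,\tau_+]$ can have $\X$-length of order $M$, and nothing you have written bounds its $\F$-diameter independently of $M$. Your assertion that the hypothesis $d_\X(H,\bG_t)\ge M$ ``forces the folding path to remain in a uniformly bounded $\F$-neighborhood of $\pi(H)$ throughout'' is simply false: a folding path that is $\X$-far from $H$ can wander arbitrarily far from $\pi(H)$ in $\F$ outside a bounded window, and indeed the proposition is precisely the statement that there \emph{is} such a bounded window. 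Appealing to Corollary~\ref{coarse Lipschitz} does not help, since that only tells you the projection to the path is coarse Lipschitz, not that the path itself stays near $\pi(H)$. (There is also a smaller issue: $z|\bG_t$ need never become fully legal, so Lemma~\ref{criterion}(i) is not available; you would need (i$'$).)

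The missing idea is to let the candidate vary with the point on the folding path. The paper first takes the entire projection of $H$: let $\bG_1$ be the leftmost of $\lt(z)$ and $\bG_2$ the rightmost of $\rt(z)$ as $z$ ranges over \emph{all} candidates of $H$. This interval has bounded $\F$-diameter by Proposition~\ref{L2} and Corollary~\ref{left-right2}. Now at $\bG_1$ choose the candidate $z_1$ that \emph{realizes} $d_\X(H,\bG_1)$. The hypothesis gives $\ell(z_1|\bG_1)\ge e^M\ell(z_1|H)\ge \ell(z_1|H')$ directly, with no need to move along the path; and since $\bG_1$ is the leftmost $\lt$, every candidate---in particular $z_1$---is still illegal there, so Lemma~\ref{criterion}(ii) applies at $\bG_1$ itself and places $\Rt(H')$ coarsely to the right of $\bG_1$. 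Symmetrically at $\bG_2$ with Lemma~\ref{criterion}(i$'$). Thus the projection of $H'$ is trapped in $[\bG_1,\bG_2]$ up to bounded $\F$-error, with no $M$-dependent interval to control.
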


\begin{proof}
  Denote by $\bG_1$ the leftmost of $\lt_{\bG_t}(z)$ and by $\bG_2$
  the rightmost of $\rt_{\bG_t}(z)$ as $z|H$ varies over candidates in
  $H$. Then the interval $[\bG_1,\bG_2]$ is bounded in $\F$ by
  Lemma~\ref{bounded crossing 2}, Proposition~\ref{L2}, and Corollary
  \ref{left-right2}. Let $z_1|H$ be a candidate that realizes the
  distance to $\bG_1$, so $\ell(z_1|\bG_1)\geq e^M\ell(z_1|H)$ and
  $\ell(z_1|H')\leq {e^M}\ell(z_1|H)$. Combining these inequalities
  gives $\ell(z_1|\bG_1)\geq\ell(z_1|H')$, so
  Lemma~\ref{criterion}(ii) shows $\Rt_{\bG_t}(H')$ is coarsely to the
  right of $\bG_1$, measured in $\F$. In the same way one argues that
  $\Lt_{G_t}(H')$ is coarsely to the left of $\bG_2$, measured in
  $\F$. The claim follows.
\end{proof}

\begin{cor}\label{c:contraction}
A folding line that makes $(\kappa,C)$-definite progress in $\F$ is strongly
contracting in $\X$ (with the constants depending on $\kappa$ and $C$).\qed
\end{cor}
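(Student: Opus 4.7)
The plan is to deduce the corollary directly by combining Proposition~\ref{contraction} with the definite progress hypothesis. Fix a folding line $\bG_t$ making $(\kappa,C)$-definite progress in $\F$, and consider $H,H'\in\X$ satisfying $d_\X(H,H')\le M$ and $d_\X(H,\bG_t)\ge M$ for all $t$ (so $H'$ lies in an $\X$-ball around $H$ that is disjoint from the line). Take the projections to the line in $\X$ to be $\Lt(H)=\bG_{\lt(H)}$ and $\Lt(H')=\bG_{\lt(H')}$. By Proposition~\ref{contraction}, these satisfy $d_\F(\Lt(H),\Lt(H'))\le B_0$, in the notation of Section~\ref{s:whitehead}, for a constant $B_0$ depending only on the rank $n$.

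Next, I invoke the contrapositive of the $(\kappa,C)$-definite progress hypothesis: for $s\le t$, if $d_\F(\bG_s,\bG_t)\le D$ then $d_\X(\bG_s,\bG_t)\le D\kappa+C$. After relabeling so that $\lt(H)\le\lt(H')$, applying this with $D=B_0$ yields
\[
d_\X(\Lt(H),\Lt(H'))\le B_0\kappa+C,
\]
which is precisely the strong contraction estimate in $\X$ with a constant depending only on $\kappa$, $C$, and $n$.

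There is no real obstacle here: the substantive work of bounding projections in $\F$ is already accomplished in Proposition~\ref{contraction}, and the notion of $(\kappa,C)$-definite progress is tailor-made to convert an $\F$-bound between two points on the folding line into an $\X$-bound. The asymmetry of the Lipschitz metric causes no trouble, since we are only comparing two points lying on the same naturally oriented folding line, where the parameter $t$ coincides with $\X$-arclength. The only bookkeeping needed is to verify that the $\Lt$-projection used in Proposition~\ref{contraction} may legitimately serve as a projection in $\X$ (it does, since $\Lt(H)=\bG_{\lt(H)}$ is a literal point of the line), and that the final constant $B_0\kappa+C$ is independent of $H,H',M$.
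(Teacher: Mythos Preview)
Your proposal is correct and matches the paper's intended argument. The paper gives no explicit proof (just a \qed), but the sentence immediately following the corollary---``This simply means that, in the situation of Proposition~\ref{contraction}, projections of $H$ and $H'$ to $\bG_t$ are at a uniformly bounded distance in $\X$ (depending on $\kappa$ and $C$), measured from left to right''---is exactly the deduction you carry out: bound the $\F$-distance via Proposition~\ref{contraction}, then take the contrapositive of $(\kappa,C)$-definite progress to convert to an $\X$-bound along the oriented folding line.
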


This simply means that, in the situation of
Proposition~\ref{contraction}, projections of $H$ and $H'$ to $\bG_t$
are at a uniformly bounded distance in $\X$ (depending on $\kappa$ and
$C$), measured from left to right.

Note that a folding line that makes definite progress in $\F$ is
necessarily in a thick part of $\X$ (i.e.\ the injectivity radius of
$\bG_t$ is bounded below by a positive constant). The converse does not
hold (but recall that it does hold in Teichm\"uller space, and Corollary~\ref{c:contraction} is the direct analog of Minsky's theorem \cite{minsky}
that Teichm\"uller geodesics in the thick part are strongly
contracting). 

\begin{remark}
  One can avoid the use of the technical
  Proposition~\ref{gafa}(surviving illegal turns) in the proof of
  Proposition~\ref{contraction}.
\end{remark}

\section{Fellow Travelers and Symmetry}\label{s:FT}
We will fix constants $C_1$, $C_2$, and $D$ from 
Proposition~\ref{L2} and Proposition~\ref{contraction}, so that:
\begin{itemize}
\item If $B<A$ are free factors at $\F$-distance $\geq C_1$ from a
  folding path $\bG_t$ then the $\X$-distance along $\bG_t$ between
  $\lt(A)$ and $\lt(B)$ is bounded by $D$,
\item The $\F$-diameter of the projection of a path of length $M$ to
  any folding path at distance $\geq M$ is always $\leq C_2$,
\item $C_1>C_2$.
\end{itemize}

\begin{prop}\label{FT0}
Fix $C$ sufficiently large.
Suppose $\bG_t$ and $H_\tau$ are two folding paths,
$d_\F(\bG_t,H_\tau)\geq C$ for all $t,\tau$, but the initial points and
the terminal points are at $\F$-distance $\leq 10C$. Then the
projections of the two paths to $\F$ are uniformly bounded in
diameter.

The same holds if the initial point of $\bG_t$ is $10C$-close to the
terminal point of $H_\tau$ and the terminal point of $\bG_t$ is
$10C$-close to the initial point of $H_\tau$.
\end{prop}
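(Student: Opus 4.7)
The plan is to argue by contradiction, bounding the $\F$-diameter of $\pi(\bG_t)$ by combining the coarse Lipschitz retraction $r_\bG:\F\to\pi(\bG_t)$ from Corollary~\ref{coarse Lipschitz} with the strong contraction property in Corollary~\ref{c:contraction}, and then deducing the bound on $\pi(H_\tau)$ via the triangle inequality and endpoint closeness. I treat the parallel case; the anti-parallel case is analogous.

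First I would translate the $\F$-separation into an $\X$-separation: by Corollary~\ref{proj X->F continuous}, the hypothesis $d_\F(\bG_t,H_\tau)\geq C$ forces $d_\X(\bG_t,H_\tau)\geq M$ for any preassigned constant $M$, provided $C$ is taken large enough. Fix $M$ so that Proposition~\ref{contraction} applies to pairs of points on $H_\tau$ at $\X$-distance at most $M$. Second, since $r_\bG$ is coarsely Lipschitz and a coarse retraction, the $\F$-closeness of endpoints (distance $\leq 10C$) and the fact that $\pi(\bG_0),\pi(\bG_L)\subset\pi(\bG_t)$ together force $r_\bG(\pi(H_0))$ to lie within bounded $\F$-distance of $\pi(\bG_0)$, and similarly $r_\bG(\pi(H_{L'}))$ within bounded $\F$-distance of $\pi(\bG_L)$.

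Now assume for contradiction that $\pi(\bG_t)$ has large $\F$-diameter. Since $\pi(\bG_t)$ is a reparametrized quasi-geodesic in $\F$ (Corollary~\ref{coarse Lipschitz}), some subpath of $\bG_t$ makes $(\kappa,C')$-definite progress in $\F$, and by Corollary~\ref{c:contraction} this subpath is strongly contracting in $\X$. The folding path $H_\tau$, being parametrized by $\X$-arclength, is a quasi-geodesic in $\X$ that stays at $\X$-distance $\geq M$ from $\bG_t$. Standard properties of strongly contracting quasi-geodesics then yield that the $\X$-projection of $H_\tau$ to this subpath has bounded $\X$-diameter, hence also bounded $\F$-diameter via the coarse Lipschitz projection $\X\to\F$. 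Combined with the preceding endpoint estimates, this forces $\pi(\bG_0)$ and $\pi(\bG_L)$ to be $\F$-close; since $\pi(\bG_t)$ is a reparametrized quasi-geodesic in $\F$, $\F$-closeness of the endpoints bounds its $\F$-diameter, contradicting the assumption. The diameter bound on $\pi(H_\tau)$ then follows from the triangle inequality and the $10C$-closeness of the endpoints.

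The main obstacle is promoting the pointwise contraction of Proposition~\ref{contraction} to a uniform bound on the projection of the entire quasi-geodesic $H_\tau$, for which it is essential to pass to a subpath of $\bG_t$ making definite progress so as to invoke strong contraction rather than merely iterating the local statement (which would give a bound that grows with the $\X$-length of $H_\tau$). For the anti-parallel case, only the endpoint analysis changes: $r_\bG(\pi(H_0))$ is close to $\pi(\bG_L)$ and $r_\bG(\pi(H_{L'}))$ is close to $\pi(\bG_0)$ in $\F$, but the subsequent bounded-projection argument still collapses these two images and so forces the endpoints of $\bG_t$ to be $\F$-close, yielding the same conclusion.
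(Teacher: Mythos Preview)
Your argument has a genuine gap at the step where you assert that ``some subpath of $\bG_t$ makes $(\kappa,C')$-definite progress in $\F$.'' This is not a consequence of $\pi(\bG_t)$ being a reparametrized quasi-geodesic with large $\F$-diameter. The reparametrized quasi-geodesic property guarantees a subdivision $t_0<\cdots<t_m$ with $m$ bounded by the $\F$-diameter and each subinterval projecting to bounded $\F$-diameter, but the $\X$-lengths $t_{i+1}-t_i$ are completely uncontrolled. A folding path can contain arbitrarily long $\X$-segments whose $\F$-image is bounded (think of folding that only affects a proper invariant subgraph). So no subpath need make $(\kappa,C')$-definite progress with uniform constants, and without uniformity Corollary~\ref{c:contraction} gives no uniform contraction constant, so the ``standard strongly contracting'' argument does not go through.

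The paper's proof avoids this by never invoking strong $\X$-contraction. Instead it uses only Proposition~\ref{contraction} (which gives $\F$-contraction with no definite-progress hypothesis) in a back-and-forth argument: subdivide $H_\tau$ into a minimal number $m$ of pieces of $\F$-diameter $\le C_1$, project to $\bG_t$ via $\Lt$ to obtain an interval of $\X$-length $\le mC_1D$ (using the constant $D$ from Proposition~\ref{L2}), then project this interval back to $H_\tau$ using Proposition~\ref{contraction} to get $\F$-diameter $\le mC_2$. Since the projection back must coarsely cover $H_\tau$, one obtains an inequality of the form $mC_2 + O(1) \ge (m-1)C_1$, and the choice $C_1>C_2$ forces $m$ bounded. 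The key trick you are missing is this comparison of two different Lipschitz constants for the two projection directions; it is precisely what substitutes for the strong contraction you tried to invoke.
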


\begin{proof}
  Subdivide $H_\tau$ into a minimal number of segments whose
  $\F$-diameter is bounded by $C_1$. Say the subdivision points are
  $s_0<s_1<s_2<\cdots<s_m$. Let $\bG_{t_i}=\lt_{\bG_t}(H_{s_i})$. When
  $C>2C_1$ we have that the distance, measured from left to right,
  between $\bG_{t_i}$ and $\bG_{t_{i+1}}$ along $\bG_t$ is $\leq C_1D$
  (here $C>2C_1$ is needed so that interpolating free factors are also
  far from $\bG_t$). The $\F$-distance between $\bG_{t_0}$ and the
  initial point of $\bG_t$, and also between $\bG_{t_m}$ and the
  terminal point of $\bG_t$ is bounded (recall Corollary~\ref{coarse
    Lipschitz}). Further, $d_\X(\bG_{t_0},\bG_{t_m})\leq mC_1D$ as long
  as $\bG_{t_0}$ is to the left of $\bG_{t_m}$ (if it is to the right,
  the whole path $\bG_t$ is $\F$-bounded). So the projection of
  $[\bG_{t_0},\bG_{t_m}]$ to $H_\tau$ is bounded by $mC_2$, as long as
  the $\X$-distance between $\bG_t$ and $H_\tau$ is bounded below by
  $C_1D$ (if not then the $\F$-distance between $\bG_t$ and $H_\tau$ is
  bounded, contradiction when $C$ is sufficiently large, see Corollary
  \ref{proj X->F continuous}). So,
$$mC_2+2(\mbox{const})\geq (m-1)C_1$$
and since $C_1>C_2$ this implies that $m$ is bounded above. The claim
follows.

The proof is analogous in the ``anti-parallel'' case.
\end{proof}

Fellow Traveler and Symmetry properties are now immediate.

\begin{prop}\label{FT}
Let $\bG_t$ and $H_\tau$ be folding paths whose initial points are at
$\F$-distance $\leq R$ and the same holds for terminal points. Then
$\pi(\bG_t)\subset\F$ and $\pi(H_\tau)\subset\F$ are in each other's
bounded Hausdorff neighborhoods, the bound depending only on $R$.

The same holds when the initial point of $\pi(\bG_t)$ is $R$-close to the
terminal point of $\pi(H_\tau)$ and the terminal point of $\bG_t$ is $R$-close
to the initial point of $H_\tau$. 
\end{prop}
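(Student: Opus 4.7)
I will derive the Fellow Travelers and Symmetry properties directly from Proposition~\ref{FT0} in two stages: a cutting argument that establishes the case $R\leq 10C$, where $C$ is the constant supplied by Proposition~\ref{FT0}, followed by an interpolation that reduces arbitrary $R$ to this base case.

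\emph{Base case ($R\leq 10C$).} Fix $\bG_{t_0}$ on $\bG_t$ and define $f(t)=\min_{\tau}d_\F(\pi(\bG_t),\pi(H_\tau))$. By hypothesis $f(0),f(L)\leq R$. Choose a large constant $K>R$; I will show $f(t_0)\leq K$. Assume for contradiction that $f(t_0)>K$, and let $(a,b)\ni t_0$ be the maximal subinterval of $[0,L]$ on which $f\geq C$. Then $a>0$ and $b<L$ since $f(0),f(L)<K$, and the maximality forces $f(a)=f(b)=C$. Pick $\tau_a,\tau_b\in[0,L']$ with $d_\F(\pi(\bG_a),\pi(H_{\tau_a}))\leq C$ and $d_\F(\pi(\bG_b),\pi(H_{\tau_b}))\leq C$. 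The two subpaths $\bG_t|_{[a,b]}$ and $H_\tau|_{[\tau_a,\tau_b]}$, with the parallel or anti-parallel endpoint matching according to whether $\tau_a<\tau_b$, have endpoints at $\F$-distance $\leq C\leq 10C$; and by the defining property of $(a,b)$, every point of $\pi(\bG_t|_{[a,b]})$ is at $\F$-distance $\geq C$ from \emph{all} of $\pi(H_\tau)$, and in particular from $\pi(H_\tau|_{[\tau_a,\tau_b]})$. Proposition~\ref{FT0} therefore bounds the $\F$-diameter of $\pi(\bG_t|_{[a,b]})$. Hence $\pi(\bG_{t_0})$ lies within a universal distance of $\pi(\bG_a)$, which is within $C$ of $\pi(H_{\tau_a})$. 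Choosing $K$ to exceed this universal bound plus $C$ contradicts $f(t_0)>K$. Swapping the roles of $\bG_t$ and $H_\tau$ yields the other Hausdorff inclusion.

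\emph{Reduction to the base case.} For general $R$, choose a geodesic $A_0,A_1,\dots,A_k$ in $\F$ with $k\leq R$, $A_0\in\pi(\bG_0)$, and $A_k$ close to $\pi(H_0)$ (parallel) or to $\pi(H_{L'})$ (anti-parallel); pick an analogous geodesic $B_0,\dots,B_k$ for the other pair of endpoints. Corollary~\ref{c:coarse paths} then produces folding paths $\bG^{(i)}$ with $A_i\in\pi(\bG^{(i)}_0)$ and $B_i\in\pi(\bG^{(i)}_{L_i})$, with $\bG^{(0)}=\bG_t$ and with $\bG^{(k)}$ sharing endpoints (up to a universal constant in $\F$) with $H_\tau$ in the appropriate orientation. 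Consecutive paths $\bG^{(i)},\bG^{(i+1)}$ have endpoints at $\F$-distance bounded by a universal constant, so the base case uniformly bounds the Hausdorff distance between their $\F$-projections. Applying the triangle inequality across the $k\leq R$ steps yields the desired Hausdorff bound as a function of $R$, for both the parallel and anti-parallel clauses.

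The main step is the base case, and the key technical point inside it is that the symmetric distance condition required by Proposition~\ref{FT0} is delivered for free by the cutting: the subpath $\bG_t|_{[a,b]}$ is far from all of $\pi(H_\tau)$, not merely from the selected subarc, so no separate verification is needed on the $H_\tau$-side. The parallel and anti-parallel cases are handled uniformly because Proposition~\ref{FT0} addresses both and the cutting step naturally selects whichever of $\tau_a<\tau_b$ or $\tau_a>\tau_b$ occurs.
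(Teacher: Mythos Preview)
Your base-case cutting argument is essentially the paper's proof, and it is correct in spirit, but there is a small logical slip. You write ``$a>0$ and $b<L$ since $f(0),f(L)<K$,'' but the maximal interval on which $f\ge C$ can reach the endpoints: all you know is $f(0),f(L)\le R\le 10C$, which need not be $<C$. The fix is painless: if $a>0$, maximality together with the coarse Lipschitz property of $t\mapsto \pi(\bG_t)$ (Corollary~\ref{proj X->F continuous}) gives $f(a)$ within a universal constant of $C$; if $a=0$, then $f(a)=f(0)\le R\le 10C$. Either way $f(a)\le 10C$, which is exactly the endpoint hypothesis Proposition~\ref{FT0} needs. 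The same applies at $b$. Your claim ``$f(a)=f(b)=C$'' should likewise be weakened to ``$\le 10C$.''

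The more substantive difference from the paper is structural. Proposition~\ref{FT0} begins with ``Fix $C$ sufficiently large,'' meaning $C$ may be any constant above a universal threshold, and its output bound is universal (independent of $C$). The paper exploits this: it simply takes $C>R$ and runs your cutting argument once, with no separate base case and no reduction step. Your interpolation via a chain of $\le R$ folding paths from Corollary~\ref{c:coarse paths} is correct and is a legitimate alternative, but it is unnecessary once you notice that $C$ is free.
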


\begin{proof}
  Let $C>R$ be a sufficiently large constant as in
  Proposition~\ref{FT0}. If $\pi(\bG_t)$ is not contained in the
  Hausdorff $C$-neighborhood of $\pi(H_\tau)$ there is a subpath
  $[\bG_{t_1},\bG_{t_2}]$ such that no point of it is $C$-close to
  $\pi(H_\tau)$, but the endpoints $\bG_{t_1},\bG_{t_2}$ are within
  $10C$. Then there is a subpath $[H_{\tau_1},H_{\tau_2}]$ of $H_\tau$
  whose endpoints are within $10C$ of the endpoints of
  $[\bG_{t_1},\bG_{t_2}]$ (but notice that we don't know in advance if
  the orientations are parallel or anti-parallel). Now by
  Proposition~\ref{FT0} the set $\pi([\bG_{t_1},\bG_{t_2}])$ is in a
  bounded Hausdorff neighborhood of $\pi(H_\tau)$. By the same
  argument $\pi(H_\tau)$ is contained in a bounded Hausdorff
  neighborhood of $\pi(\bG_t)$.
\end{proof}

\begin{remark}\label{FT2}
  Note that, in the situation of Proposition~\ref{FT}, any $\bG_{t_0}$
  is bounded $\F$-distance from its projection to $H_\tau$.  Indeed,
  if $H_{\tau_0}$ is bounded $\F$-distance from $\bG_{t_0}$ then from
  Corollary~\ref{coarse Lipschitz} it follows that the projection of
  $\bG_{t_0}$ is bounded $\F$-distance from $H_{\tau_0}$. 
\end{remark}

\begin{prop}\label{FT for projections}
  Let $\bG_t$ and $H_\tau$ be folding paths whose initial points are
  at $\F$-distance $\leq R$ and the same holds for terminal points.
  There is a uniform bound, depending only on $R$, to
  $d_\F(\Lt_{\bG_t}(A),\Lt_{H_\tau}(A))$ for any free factor $A$.

The same holds in the anti-parallel case.
\end{prop}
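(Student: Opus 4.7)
The plan is to combine Fellow Travel (Proposition~\ref{FT}), its consequence Remark~\ref{FT2}, and the criterion Lemma~\ref{criterion} to bound $d_\F(\pi(\bG_\ast),\pi(H_\ast))$, where $\bG_\ast:=\Lt_{\bG_t}(A)$ and $H_\ast:=\Lt_{H_\tau}(A)$.

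First I reduce to a comparison on $H_\tau$: by Remark~\ref{FT2}, the projection $H_{\ast\ast}:=\Lt_{H_\tau}(\bG_\ast)$ of the graph $\bG_\ast$ to the other path lies within uniformly bounded $\F$-distance of $\pi(\bG_\ast)$. It therefore suffices to bound $d_\F(\pi(H_{\ast\ast}),\pi(H_\ast))$ in terms of $R$. Both points lie on the reparametrized quasi-geodesic $\pi(H_\tau)$ (Corollary~\ref{coarse Lipschitz}), so it is enough to compare them along this quasi-geodesic.

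To do this, I extract a simple class $z\in A$ from $A|\bG_\ast$: take a loop in the core $A|\bG_\ast$ containing the length-$3$ legal segment guaranteed by the definition of $\Lt_{\bG_t}(A)$. Its conjugacy class is simple since it lies in the proper free factor $A$, and $z|\bG_\ast$ contains a legal segment of length $3$. After moving $\bG_\ast$ a bounded $d_\X$-amount along $\bG_t$ (which changes $\pi(\bG_\ast)$ by a bounded amount in $\F$, by Corollary~\ref{proj X->F continuous}) and invoking Corollary~\ref{3}, I may assume $z|\bG_\ast$ has any desired large legal segment. Then I apply Lemma~\ref{criterion}(i) with folding path $H_\tau$, graph $\bG_\ast$, and class $z$, at a carefully chosen parameter $\tau$ on $H_\tau$. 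The main obstacle is to verify the hypothesis $\ell(z|H_\tau)\ge\ell(z|\bG_\ast)$ with $z|H_\tau$ legal at some $\tau$: for the length comparison I use that the terminal points of $\bG_t$ and $H_\tau$ are $R$-close in $\F$, so by Corollary~\ref{proj X->F continuous} the lengths of $z$ at the endpoints are comparable up to constants depending on $R$; walking back along $H_\tau$ produces the required $\tau$. If $z|H_\tau$ is illegal throughout the relevant range, one applies Lemma~\ref{unfolding} (which provides an $\F$-bound directly) or the right-projection version Lemma~\ref{criterion}(ii) combined with Corollary~\ref{left-right2} to return to the left projection.

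The conclusion of Lemma~\ref{criterion}(i), together with the symmetric application (exchanging the roles of the two paths), then places $H_\ast$ within bounded $\F$-distance of $H_{\ast\ast}$, completing the parallel case. For the anti-parallel case, Fellow Travel still gives Hausdorff-closeness of the two paths, but what was $\Lt$ on $\bG_t$ corresponds to $\Rt$ on the reverse-parametrized $H_\tau$; here we apply Corollary~\ref{left-right2} to convert right projections into left projections (at bounded $\F$-cost), and then proceed exactly as in the parallel case.
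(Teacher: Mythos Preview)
There is a genuine gap in the length-comparison step. You write: ``for the length comparison I use that the terminal points of $\bG_t$ and $H_\tau$ are $R$-close in $\F$, so by Corollary~\ref{proj X->F continuous} the lengths of $z$ at the endpoints are comparable up to constants depending on $R$.'' But Corollary~\ref{proj X->F continuous} goes in the opposite direction: bounded $d_\X$ implies bounded $d_\F$, not the converse. Two points of $\X$ can be at $\F$-distance zero (they share a proper subgraph representing the same free factor) while being at arbitrarily large $\X$-distance, and hence the ratio $\ell(z|\cdot)$ between them can be arbitrary. So the hypothesis on $\F$-distance at the endpoints gives you no length control whatsoever, and without it your application of Lemma~\ref{criterion} never gets off the ground. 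The subsequent ``walking back along $H_\tau$'' and the fallback to Lemma~\ref{unfolding} / Lemma~\ref{criterion}(ii) inherit the same missing ingredient.

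The paper's proof avoids any length estimate. It first reduces to $A=\langle a\rangle$ cyclic (so both projections are controlled by a single class), and then compares lengths not at the endpoints but among the four special points $\Lt_{\bG_t}(a)$, $\Rt_{\bG_t}(a)$, $\Lt_{H_\tau}(a)$, $\Rt_{H_\tau}(a)$. Crucially, it never proves a particular inequality: it simply observes that for two real numbers one is $\leq$ the other, and in \emph{each} case the appropriate part of Lemma~\ref{criterion} (part (i) for legal, part (ii) for illegal) places the relevant projection on the required side. Together with a WLOG about which projection is ``ahead'' (this is where Remark~\ref{FT2} enters, much as you used it), and Corollary~\ref{left-right2} to pass between left and right projections, the dichotomy traps both projections in a bounded $\F$-interval. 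The point you are missing is that Lemma~\ref{criterion} yields a usable conclusion under \emph{either} direction of the length inequality, so no actual length comparison is needed.
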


\begin{proof}
  For notational simplicity, we may assume that $A=\langle a\rangle$
  is cyclic.  First suppose $\bG_t$ and $H_\tau$ are parallel. Modulo
  interchanging the two paths, we can assume that the projection of
  $\Lt_{\bG_t}(a)$ to $H_\tau$ is ``ahead'' of $\Lt_{H_\tau}(a)$ 
  (i.e., when $\Lt_{\bG_t}(a)$ is projected to $H_\tau$, it is
  coarsely right of $\Lt_{H_\tau}(a)$ measured in $\F$) and the
  projection of $\Lt_{H_\tau}(a)$ to $\bG_t$ is ``behind'' (defined
  analogously) $\Lt_{\bG_t}(a)$. By Lemma~\ref{criterion} if
  $\ell(a|\Lt_{\bG_t}(a))\leq \ell(a|\Rt_{H_\tau}(a))$ then the
  projection of $\Lt_{\bG_t}(a)$ to $H_\tau$ is behind
  $\Rt_{H_\tau}(a)$, and the claim is proved.  If
  $\ell(a|\Lt_{\bG_t}(a))\geq \ell(a|\Rt_{H_\tau}(a))$ then the
  projection of $\Rt_{H_\tau}(a)$ to $\bG_t$ is ahead of
  $\Lt_{\bG_t}(a)$, and we are done again.

  Now suppose $\bG_t$ and $H_\tau$ are anti-parallel. There are two
  subcases to consider, see Figure~\ref{f:ahead}. In the first case
  $\Lt_{\bG_t}(A)$ is ahead the projection to $\bG_t$ of
  $\Lt_{H_\tau}(A)$. This is a symmetric condition with respect to
  interchanging $\bG_t$ and $H_\tau$ (by Remark \ref{FT2}). Say
  $\ell(a|\Lt_{\bG_t}(a))\leq \ell(a|\Lt_{H_\tau}(a))$. Then the
  projection of $\Lt_{\bG_t}(a)$ to $H_\tau$ is ahead of
  $\Lt_{H_\tau}(a)$ by Lemma \ref{criterion}, and the claim follows.

\begin{figure}
\begin{center}
\begin{picture}(0,0)%
\includegraphics{symmetry4.pstex}%
\end{picture}%
\setlength{\unitlength}{4144sp}%
\begingroup\makeatletter\ifx\SetFigFont\undefined%
\gdef\SetFigFont#1#2#3#4#5{%
  \reset@font\fontsize{#1}{#2pt}%
  \fontfamily{#3}\fontseries{#4}\fontshape{#5}%
  \selectfont}%
\fi\endgroup%
\begin{picture}(5975,1688)(2924,-4239)
\put(8359,-4060){\makebox(0,0)[lb]{\smash{{\SetFigFont{11}{13.2}{\rmdefault}{\mddefault}{\updefault}{\color[rgb]{0,0,0}$\Rt_{H_\tau}(a)$}%
}}}}
\put(4133,-4166){\makebox(0,0)[lb]{\smash{{\SetFigFont{11}{13.2}{\rmdefault}{\mddefault}{\updefault}{\color[rgb]{0,0,0}$H_\tau$}%
}}}}
\put(3992,-2722){\makebox(0,0)[lb]{\smash{{\SetFigFont{11}{13.2}{\rmdefault}{\mddefault}{\updefault}{\color[rgb]{0,0,0}$\bG_t$}%
}}}}
\put(7373,-2722){\makebox(0,0)[lb]{\smash{{\SetFigFont{11}{13.2}{\rmdefault}{\mddefault}{\updefault}{\color[rgb]{0,0,0}$\bG_t$}%
}}}}
\put(7444,-4166){\makebox(0,0)[lb]{\smash{{\SetFigFont{11}{13.2}{\rmdefault}{\mddefault}{\updefault}{\color[rgb]{0,0,0}$H_\tau$}%
}}}}
\put(4943,-2792){\makebox(0,0)[lb]{\smash{{\SetFigFont{11}{13.2}{\rmdefault}{\mddefault}{\updefault}{\color[rgb]{0,0,0}$\Lt_{\bG_t}(a)$}%
}}}}
\put(3570,-3778){\makebox(0,0)[lb]{\smash{{\SetFigFont{11}{13.2}{\rmdefault}{\mddefault}{\updefault}{\color[rgb]{0,0,0}$\Lt_{H_\tau}(a)$}%
}}}}
\put(6880,-3180){\makebox(0,0)[lb]{\smash{{\SetFigFont{11}{13.2}{\rmdefault}{\mddefault}{\updefault}{\color[rgb]{0,0,0}$\Rt_{\bG_t}(a)$}%
}}}}
\end{picture}%
\caption{The ``ahead'' and the ``behind'' cases.}
\label{f:ahead}
\end{center}
\end{figure}

The ``behind'' case is similar, but we consider right projections. Say
$\ell(a|\Rt_{\bG_t}(a))\leq \ell(a|\Rt_{H_\tau}(a))$. Then the
projection of $\Rt_{\bG_t}(a)$ to $H_\tau$ is behind
$\Rt_{H_\tau}(a)$, and the claim again follows.
\end{proof}

\section{Thin Triangles}\label{s:thin}
\begin{prop}\label{thin}
Triangles in $\F$ made of images of folding paths are uniformly
thin. More precisely, if $A,B,C$ are three free factors coarsely
joined by images of folding paths $AB$, $AC$, $BC$ and $\hat C$ is the
projection of $C$ to $AB$, then $A\hat C$ is in a bounded Hausdorff
neighborhood of $AC$ and $B\hat C$ is in a bounded Hausdorff
neighborhood of $BC$.
\end{prop}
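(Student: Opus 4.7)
The plan is to follow Bowditch's blueprint from the curve-complex setting, adapted to folding paths via the tools developed in Sections~\ref{s:projection} and \ref{s:FT}. The idea is to identify a coarse ``barycenter'' of the triangle: a single $\F$-point (up to bounded error) that is simultaneously close to the three natural projections of one vertex to the opposite side. Once this barycenter is identified, the thin-triangle conclusion reduces to a fellow-traveling statement between two reparametrized quasi-geodesics with matching endpoints.

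In addition to $\hat C = \Lt_{AB}(C)$, I would introduce $\hat B = \Lt_{AC}(B)$ and $\hat A = \Lt_{BC}(A)$. The main technical step is to prove that $\pi(\hat A)$, $\pi(\hat B)$, $\pi(\hat C)$ are pairwise at bounded $\F$-distance. Granted this, the subpaths $A\hat C \subseteq AB$ and $A\hat B \subseteq AC$ are reparametrized quasi-geodesics (by Corollary~\ref{coarse Lipschitz}) with common initial endpoint $A$ and terminal endpoints $\hat C$, $\hat B$ whose $\pi$-images coincide up to bounded error. Using the coarse Lipschitz retraction $\Lt_{AC}$ applied to points on $A\hat C$, and pulled back via the projection Fellow-Travel property (Proposition~\ref{FT for projections}) together with Proposition~\ref{FT}, one concludes that $A\hat C$ lies in a bounded $\F$-Hausdorff neighborhood of $A\hat B$, hence of $AC$. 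The symmetric argument starting from $B$ gives $B\hat C$ close to $BC$.

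The coarse-barycenter step I would prove using the contraction property (Proposition~\ref{contraction}). Assuming $C$ is not at bounded $\F$-distance from $AB$ (the degenerate case, where $\hat C \approx C$ and the triangle collapses trivially), the folding path $AB$ is strongly $\F$-contracting on a neighborhood of $C$, so the projection $\Lt_{AB}$ applied to a terminal subarc of $AC$ concentrates near $\hat C$ in $\pi(AB)$. Symmetrically, projecting $AB$ onto $AC$ concentrates the image of $B$ near $\hat B$. Comparing these two pictures via Lemma~\ref{criterion}, applied to a legal candidate at $\hat C$ (so a simple class that is $\F$-close to $\hat C$) and its length profile along the folding path $AC$, should pin down $\Lt_{AC}(\hat C)$ within bounded $\F$-distance of $\hat B$, yielding the desired coincidence of $\pi(\hat B)$ and $\pi(\hat C)$.

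The main obstacle is this coarse-barycenter step. Propositions~\ref{FT} and \ref{FT for projections} both require two folding paths with close endpoints at both ends, whereas the sides of our triangle share only a single common vertex, so a direct fellow-travel argument is unavailable. Instead one must leverage the contraction property, which forces a careful case-split according to whether the third vertex is coarsely close to the opposite side or not, and a careful length bookkeeping via Lemma~\ref{criterion} and Proposition~\ref{general}. I expect that quantifying the bound on $d_\F(\pi(\hat B), \pi(\hat C))$ in the generic case, together with handling the degenerate cases in which some vertex is already close to the opposite side in $\F$, will be the bulk of the technical work.
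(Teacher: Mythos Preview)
Your outline takes a different route from the paper's, and the barycenter step as you sketch it has a real gap.

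The paper does not compare the three projections $\hat A,\hat B,\hat C$ at all. Instead it lifts to Outer space: pick $U,V,W\in\X$ projecting near $A,C,B$ and take folding paths $\bG_t$ from $U$ to $W$ and $H_\tau$ from $V$ to $W$ that \emph{share the terminal point} $W$. Let $P\in\{\bG_t\}$ be the rightmost $\Rt_{\bG_t}(z)$ over candidates $z$ in $V$ (so $\pi(P)$ is coarsely $\hat C$). Two claims, both proved by length estimates in $\X$, do all the work. Claim~1 is near-additivity $d_\X(V,W)\geq d_\X(V,P)+d_\X(P,W)-C$: a candidate $v$ realizing $d_\X(V,P)$ has only short illegal segments in $P$ by the definition of $P$, so a definite fraction of $\ell(v|P)$ survives and grows exponentially along $PW$. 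Claim~2 takes $Q\in\{H_\tau\}$ with $d_\X(V,Q)=d_\X(V,P)$ and shows $d_\F(P,Q)$ is bounded: a candidate realizing $d_\X(V,W)$ is legal along $H_\tau$ and has $\ell(v|Q)\geq\ell(v|P)$, so Proposition~\ref{general} produces a simple class $p$ short in $P$ with a long legal segment slightly past $Q$, and Lemma~\ref{michael3} bounds $d_\F(P,Q)$. Now $PW$ and $QW$ are folding subpaths with $\F$-close endpoints, so Proposition~\ref{FT} gives $B\hat C$ in a bounded neighborhood of $BC$; the $A\hat C$ statement follows by the symmetric choice of orientations, using Proposition~\ref{FT for projections} to see $\hat C$ is coarsely independent of these choices.

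Your plan instead tries to pin down a barycenter via Proposition~\ref{contraction}. The problem is that Proposition~\ref{contraction} has an $\X$-distance hypothesis, $d_\X(H,\bG_t)\geq M$ for all $t$, not an $\F$-distance one; knowing ``$C$ is $\F$-far from $AB$'' does not place any $\X$-representative of $C$ far from the folding path in $\X$, so you cannot invoke contraction as stated. More seriously, your proposed use of Lemma~\ref{criterion} to locate $\Lt_{AC}(\hat C)$ near $\hat B$ needs a length comparison along $AC$ for a class manufactured at $\hat C$, and you have no such control without something playing the role of the paper's Claim~1. That near-additivity in $\X$ is precisely the mechanism that synchronizes positions on two sides of the triangle; without it (or a substitute), the barycenter argument does not close.
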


We will consider points $U,V,W\in\X$ and folding paths $H_t$ from $V$
to $W$ and $\bG_t$ from $U$ to $W$. Denote by $P$ the rightmost of
$\Rt_{\bG_t}(z)$ as $z$ ranges over candidates in $V$. By
Lemma~\ref{bounded crossing 2}, $P$ has bounded $\F$-distance from the
projection of $V$ to $\bG_t$. See Figure~\ref{f:thin}.

\begin{figure}[h]
\begin{center}
\begin{picture}(0,0)%
\includegraphics{thin.pstex}%
\end{picture}%
\setlength{\unitlength}{4144sp}%
\begingroup\makeatletter\ifx\SetFigFont\undefined%
\gdef\SetFigFont#1#2#3#4#5{%
  \reset@font\fontsize{#1}{#2pt}%
  \fontfamily{#3}\fontseries{#4}\fontshape{#5}%
  \selectfont}%
\fi\endgroup%
\begin{picture}(4842,3439)(3046,-3644)
\put(3061,-3526){\makebox(0,0)[lb]{\smash{{\SetFigFont{14}{16.8}{\rmdefault}{\mddefault}{\updefault}{\color[rgb]{0,0,0}$U$}%
}}}}
\put(7696,-3526){\makebox(0,0)[lb]{\smash{{\SetFigFont{14}{16.8}{\rmdefault}{\mddefault}{\updefault}{\color[rgb]{0,0,0}$W$}%
}}}}
\put(5581,-376){\makebox(0,0)[lb]{\smash{{\SetFigFont{14}{16.8}{\rmdefault}{\mddefault}{\updefault}{\color[rgb]{0,0,0}$V$}%
}}}}
\put(5941,-3571){\makebox(0,0)[lb]{\smash{{\SetFigFont{14}{16.8}{\rmdefault}{\mddefault}{\updefault}{\color[rgb]{0,0,0}$P$}%
}}}}
\put(6661,-2671){\makebox(0,0)[lb]{\smash{{\SetFigFont{14}{16.8}{\rmdefault}{\mddefault}{\updefault}{\color[rgb]{0,0,0}$Q$}%
}}}}
\end{picture}%
\caption{A thin triangle.}
\label{f:thin}
\end{center}
\end{figure}

We will prove that $\pi(VP)\cup\pi(PW)$ and $\pi(VW)$ are contained in
uniform Hausdorff neighborhoods of each other (by $VP$ we mean a
folding path from $V$ to $P$ etc). The basic idea is that $VP\cup PW$
behaves like a folding path and the claim is an instance of the Fellow
Traveler Property.

{\bf Claim~1.} $d_\X(V,W)\geq d_\X(V,P)+d_\X(P,W)-C$ for a universal
constant $C$.

To prove the claim, let $v|V$ be a candidate for $d_\X(V,P)$. By
definition of $P$, $v|P$ has only bounded length illegal subsegments.
It follows that after removing the 1-neighborhood of each illegal
turn, a definite percentage of the length of $v|P$ remains, and hence
$$\ell(v|W)\geq e^{d_\X(P,W)}
\epsilon\ell(v|P)=e^{d_\X(V,P)+d_\X(P,W)} \epsilon\ell(v|V)$$ for a
fixed $\epsilon>0$. Thus
$$d_\X(V,W)\geq\log\frac{\ell(v|W)}{\ell(v|V)}\geq d_\X(V,P)+d_\X(P,W)
+\log \epsilon$$

By $Q$ denote the point on $VW$ such that $d_\X(V,Q)=d_\X(V,P)$. From
Claim~1 we see that $d_\X(Q,W)\leq d_\X(P,W)\leq d_\X(Q,W)+C$.

{\bf~Claim 2.} $d_\F(P,Q)$ is bounded.

This time let $v|V$ be a candidate for $d_\X(V,W)$. Thus 
$$\ell(v|Q)=e^{d_\X(V,Q)}\ell(v|V)=e^{d_\X(V,P)}\ell(v|V)\geq
\ell(v|P)$$ Let $Q'$ be the point along $QW$ with
$d_\X(Q,Q')=\log(3(2n-1)C_n)$ (if such a point does not exist, both
$P$ and $Q$ are uniformly close to $W$ and we are done). Then
$\ell(v|Q')=3(2n-1)C_n\ell(v|Q)\geq 3(2n-1)C_n\ell(v|P)$, so
Proposition~\ref{general}(closing up to a simple class) implies that
there is a simple class $p$ such that $p|P$ is of length $<2$ and
$p|Q'$ has a legal segment of length $3(2n-1)$. Now $p|Q'$ cannot
contain many disjoint legal segments of length 3, for otherwise $p$
would grow to a much longer length along $Q'W$ than along $PW$.  Now
Claim~2 follows from Lemma~\ref{michael3}.

\begin{proof}[Proof of Proposition \ref{thin}]
  By Proposition~\ref{FT} we are free to replace a folding path by
  another whose endpoints project close to the endpoints of the
  original, and we are allowed to reverse orientations. By
  Proposition~\ref{FT for projections} these replacements affect the
  projections by a bounded amount, so that the projection $\hat C$ of
  $C$ to $AB$ is coarsely well-defined, independently of the choice of
  a folding path whose projection coarsely connects $A$ and $B$ with
  either orientation. In particular, we may assume that $U,V,W$
  project near $A,C,B$ respectively and we may consider folding paths
  $UW$ and $VW$ that end at the same graph $W$. The above discussion
  then shows that $B\hat C$ is contained in a bounded Hausdorff
  neighborhood of $BC$. Making analogous choices of the folding paths,
  the claim about $A\hat C$ follows similarly.
\end{proof}

The following fact shows that for the purposes of this paper
the collection of folding paths can be replaced by the larger
collection of geodesic paths.

\begin{prop}\label{p:any geodesic}
  Let $V,P,W\in\X$ so that $d_\X(V,P)+d_\X(P,W)=d_\X(V,W)$ and let
  $V'\in\X$ be such that $d_\X(V,V')+d_\X(V',W)=d_\X(V,W)$,
  $d_\F(V,V')$ is bounded, and there is a folding path $\bG_t$ from
  $V'$ to $W$ (see Proposition~\ref{rescaling}). Then the
  $\F$-distance between $P$ and some $\bG_t$ is uniformly bounded.
  Moreover, if $H_\tau$ is a geodesic in $\X$ from $V$ to $P$, then
  the induced correspondence $\tau\mapsto t$ can be taken to be
  monotonic with respect to $\tau$. Consequently, the set of
  projections to $\F$ of geodesics in $\X$ is a uniform collection of
  reparametrized quasi-geodesics in $\F$.
\end{prop}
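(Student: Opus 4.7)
The plan is to reduce Proposition~\ref{p:any geodesic} to Thin Triangles (Proposition~\ref{thin}) and Fellow Travelers (Proposition~\ref{FT}), using Proposition~\ref{rescaling} to convert geodesic segments into folding paths at bounded $\F$-cost.

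For the first assertion that $P$ is $\F$-close to some $\bG_{t_0}$, I would apply Proposition~\ref{rescaling} to the pairs $(V,P)$ and $(P,W)$. Each application produces a geodesic that is a linear path inside a single simplex of $\X$ followed by a folding path; the linear portion does not change the underlying topological graph and so keeps the $\F$-projection bounded (Lemma~\ref{subgraph distance}). This gives folding paths $F^1$ from $V^*$ to $P$ and $F^2$ from $P^*$ to $W$, with $d_\F(V, V^*)$ and $d_\F(P, P^*)$ both bounded. Together with the given folding path $\bG_t$ from $V'$ to $W$, the three folding paths $F^1$, $F^2$, $\bG_t$ form a triangle of folding paths whose vertices in $\F$ are coarsely $V$, $P$, $W$. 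By Proposition~\ref{thin}, this triangle is uniformly thin in $\F$, so the side $\pi(F^1)$ lies in a bounded Hausdorff neighborhood of a sub-path of $\pi(\bG_t)$. Taking the endpoint $P$ of $F^1$ yields that $d_\F(P, \bG_{t_0})$ is bounded for some $t_0$.

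For the monotonicity, given any geodesic $H_\tau$, $\tau \in [0,T]$, from $V$ to $P$, each intermediate point $H_\tau$ lies on a geodesic from $V$ to $W$ (concatenate $V \to H_\tau$ with any geodesic $H_\tau \to P$ and with the given geodesic $P \to W$), so the first part of the proof supplies a parameter $t(\tau)$ with $d_\F(H_\tau, \bG_{t(\tau)})$ bounded. To arrange that $t(\tau)$ can be chosen monotonic, I would imitate Claim~2 in the proof of Proposition~\ref{thin}: pick a candidate $v$ in $V'$ realizing $d_\X(V', W)$, so that $\ell(v\mid \bG_t) = e^{t}\ell(v\mid V')$ grows exponentially in $t$, and then define $t(\tau)$ by matching lengths, $\ell(v\mid \bG_{t(\tau)}) \approx \ell(v\mid H_\tau)$. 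Since $\ell(v\mid H_\tau)$ can be arranged to grow monotonically along $H_\tau$ after replacing its folding pieces via Proposition~\ref{rescaling}, the matching $\tau\mapsto t(\tau)$ is monotonic and still satisfies the $\F$-closeness $d_\F(H_\tau, \bG_{t(\tau)}) = O(1)$ by Lemma~\ref{criterion} and Proposition~\ref{general}.

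The consequence about reparametrized quasi-geodesics then follows directly: subdivide $\pi(H_\tau)$ into a minimal number of subsegments of $\F$-diameter at most $C_1$, transport the subdivision to $\pi(\bG_t)$ by the monotonic correspondence, and inherit the quasi-geodesic constants from those of $\pi(\bG_t)$ (Corollary~\ref{coarse Lipschitz}). The main obstacle I foresee is making the monotonicity argument precise, because an arbitrary $\X$-geodesic is not a folding path and the candidate $v$ does not behave as cleanly along it as along $\bG_t$. The fix is to use Proposition~\ref{rescaling} repeatedly to decompose $H_\tau$ into alternating linear-in-simplex and folding-path pieces, run the matching on each folding piece where the length of $v$ does evolve predictably, and absorb the $O(1)$ error from each linear piece into the reparametrization.
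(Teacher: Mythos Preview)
Your first step, reducing the existence of a nearby $\bG_{t_0}$ to Thin Triangles (Proposition~\ref{thin}), is correct and is a genuine alternative to the paper's route: the paper instead gives a direct argument, essentially repeating Claim~2 from the Thin Triangles discussion. Your packaging is cleaner; the paper's has the advantage that the very same computation simultaneously produces the monotone correspondence.

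The monotonicity part has a real gap. First, take the candidate $v$ in $V$ (not $V'$) realizing $d_\X(V,W)$. Because $d_\X(V,H_\tau)+d_\X(H_\tau,W)=d_\X(V,W)$ for every $H_\tau$ on any geodesic from $V$ to $W$, the two inequalities $\ell(v|H_\tau)\le e^{d_\X(V,H_\tau)}\ell(v|V)$ and $\ell(v|W)\le e^{d_\X(H_\tau,W)}\ell(v|H_\tau)$ must both be equalities; hence $\ell(v|H_\tau)=e^{\tau}\ell(v|V)$ is exactly monotone along \emph{every} geodesic $H_\tau$. So your perceived obstacle does not exist, and your proposed fix of decomposing $H_\tau$ into linear and folding pieces is both unnecessary and unavailable (Proposition~\ref{rescaling} produces one particular geodesic, it does not decompose an arbitrary one). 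With the correct choice of $v$, matching lengths is the same as matching $d_\X(V,\cdot)$: this is exactly the paper's definition of the point $P'=\bG_{t(\tau)}$.

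Second, once $P'$ is defined this way, you still have to prove $d_\F(P,P')$ is bounded, and Lemma~\ref{criterion} does not do this: it only shows the projection of $P$ to $\bG_t$ is coarsely \emph{left} of $P'$ (via part~(i) with $z=v$ legal), with no matching right bound since $v|P'$ is legal, not illegal. The paper closes this with Proposition~\ref{general} plus Lemma~\ref{michael3}: move to $Q'$ slightly right of $P'$, produce a short simple $p$ in $P$ whose realization in $Q'$ has a legal segment of length $3(2n-1)$, and observe $p|Q'$ cannot have many disjoint legal length-$3$ segments (compare growth along $Q'W$ and $PW$), so Lemma~\ref{michael3} bounds $d_\F(P,Q')$.
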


The proof is a variant of the discussion above.

\begin{proof}
  Let $P'$ be the point on $\bG_t$ with $d_\X(V,P')=d_\X(V,P)$ (if
  such a point does not exist we assign $V'$ to $P$). We need to argue
  that $d_\F(P,P')$ is bounded. Let $v|V$ be a candidate realizing
  $d_\F(V,W)$. Thus $v|\bG_t$ is legal for all $t$ and
  $\ell(v|P)=\ell(v|P')$. Let $Q'$ be a point on $\bG_t$ with
  $d_\X(P',Q')=\log (3(2n-1)C_n)$ (if such a point does not exist,
  both $P$ and $P'$ are close to $W$ and we are done). Then
  $\ell(v|Q')=3(2n-1)C_n\ell(v|P)$, so Proposition~\ref{general}(closing up
  to a simple class) implies that there is a simple class $p$ such
  that $\ell(p|P)<2$ and with $p|Q'$ containing a legal segment of
  length $3(2n-1)$. Now $p|Q'$ cannot contain many disjoint legal
  segments of length 3, for otherwise $p$ would grow to a much longer
  length along $Q'W$ than along $PW$. Lemma~\ref{michael3} implies
  that $d_\F(P,Q')$, and therefore $d_\F(P,P')$, is bounded.
\end{proof}

\begin{thm}
$\F$ is $\delta$-hyperbolic. Images of geodesic paths in $\X$ are in
  uniform Hausdorff neighborhoods of geodesics with the same
  endpoints.

  Furthermore, an element of $Out(\FF)$ has positive translation
  length in $\F$ if and only if it is fully irreducible. The action of
  $Out(\FF)$ on $\F$ satisfies Weak Proper Discontinuity (see \cite{BF})).
\end{thm}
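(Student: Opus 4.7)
The plan is to assemble the main hyperbolicity statement from the ingredients already in place, then deduce the quasi-geodesic statement, and finally address the two algebraic consequences.

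First I would invoke Proposition~\ref{hyperbolicity}: to conclude $\delta$-hyperbolicity of $\F$ it suffices to verify Fellow Travel, Symmetry, and Thin Triangles for projections of folding paths. Both Fellow Travel and Symmetry are covered by Proposition~\ref{FT} (in its parallel and anti-parallel versions), and Thin Triangles is exactly Proposition~\ref{thin}. So hyperbolicity follows immediately. For the statement about geodesic paths in $\X$, Proposition~\ref{p:any geodesic} shows that projections to $\F$ of $\X$-geodesics form a uniform collection of reparametrized quasi-geodesics. Since $\F$ is now known to be hyperbolic, the Morse lemma for quasi-geodesics in hyperbolic spaces gives that such images lie in uniform Hausdorff neighborhoods of any $\F$-geodesic with matching (coarse) endpoints.

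For the translation length statement, one direction is already on the record: fully irreducible automorphisms act on $\F$ with positive translation length by \cite{ilya-lustig,BF2}, which the introduction cites. For the converse I would argue contrapositively. If $\Phi\in Out(\FF)$ is not fully irreducible, then some positive power $\Phi^k$ fixes the conjugacy class of a proper free factor $A$. Hence $\Phi^k$ fixes the vertex $[A]\in\F$, so $\Phi^k$ has translation length $0$, which forces $\Phi$ itself to have translation length $0$. This gives the equivalence.

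The remaining and most delicate piece is Weak Proper Discontinuity in the sense of \cite{BF}. Fix a fully irreducible $\Phi$; we must produce a point $x\in\F$ and, for every $R>0$, a constant $N=N(R)$ so that at most finitely many $g\in Out(\FF)$ satisfy $d_\F(x,gx)\leq R$ and $d_\F(\Phi^Nx,g\Phi^Nx)\leq R$. The natural approach is to take for $x$ a vertex on a folding axis of $\Phi$ coming from a train track representative, so that (by the discussion around Corollary~\ref{c:contraction} and the positive translation length established above) the orbit $\{\Phi^i x\}$ is a bi-infinite quasi-geodesic in $\F$ on which folding paths make definite progress, hence strongly contracting in $\X$. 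The hard part, and the main obstacle, is to promote the geometric bounded-orbit condition in $\F$ to finiteness of the set of such $g$: one uses that any $g$ satisfying the two $R$-bounds must coarsely send a long sub-segment of this quasi-axis to itself, hence coarsely commute with a high power of $\Phi$; combined with standard finiteness facts for $Out(\FF)$-stabilizers of pairs of points in $\X$ (and the fact that $\Phi$ has only finitely many roots, by thinness of the axis in Outer space as in \cite{yael}), this yields the required finiteness. Writing this last step out carefully is the place where real work is needed, and it parallels the verification of WPD for pseudo-Anosov elements on the curve complex in \cite{BF}.
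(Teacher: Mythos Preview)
Your treatment of hyperbolicity and of the statement about $\X$-geodesics is correct and matches the paper's proof: invoke Proposition~\ref{hyperbolicity}, check (i)--(iii) via Propositions~\ref{FT} and~\ref{thin}, then use Proposition~\ref{p:any geodesic} together with the Morse lemma. Your argument for the ``only if'' direction of the translation length statement (a power of a reducible element fixes a vertex of $\F$) is also fine and is the natural elementary argument.

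Where you diverge from the paper is in the handling of positive translation length for fully irreducibles and, especially, WPD. The paper does not attempt a direct proof of WPD on $\F$. Instead it observes that there are coarsely well-defined, $Out(\FF)$-equivariant Lipschitz maps $\F\to\mathcal X$ to the hyperbolic complexes of \cite{BF2}, and that for any fully irreducible $f$ one can choose such an $\mathcal X$ on which $f$ has positive translation length and on which the WPD-type finiteness condition is already established in \cite{BF2}. Both conclusions then transfer back to $\F$ immediately: a Lipschitz equivariant map pushes the two $R$-ball conditions in $\F$ forward to $R'$-ball conditions in $\mathcal X$, so the finite set of $g$'s one gets for $\mathcal X$ contains the set for $\F$; and positive translation length in $\mathcal X$ forces positive translation length in $\F$ by the same Lipschitz inequality. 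This is a short transfer argument rather than a new proof.

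Your direct approach to WPD---using the strongly contracting axis from Corollary~\ref{c:contraction} and arguing that any offending $g$ must coarsely preserve a long segment of the axis---is in principle viable and is the standard template, but you correctly flag that the final finiteness step is where the content lies, and you do not actually carry it out. So as written there is a gap in your WPD argument. The paper's route avoids this gap entirely by outsourcing it to \cite{BF2}.
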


\begin{proof}
  Since we have checked Conditions~(i), (ii), and (iii) in
  Proposition~\ref{hyperbolicity}, $\F$ is hyperbolic. The
  second statement is a consequence of Proposition~\ref{p:any
    geodesic} and last the two statements follow from:
\begin{itemize}
\item There are coarsely well-defined, Lipschitz maps from $\F$ to
  the hyperbolic complexes $\mathcal X$ constructed in \cite[Sections~4.4.1 and 4.4.2]{BF2}.
\item Given a fully irreducible element $f$ of $Out(F_n)$, there is an
  $\mathcal X$ on which $f$ has positive translation length
  (\cite[Main Theorem]{BF2}).
\item Further, for every
  $x\in\mathcal X$ and every $C > 0$ there is $N>0$ such that $\{g\in Out(\FF)\mid d_{\mathcal
    X}(x, xg)\le C, d_\mathcal X(xf^N, xf^Ng)\le C\}$ is finite (\cite[Section~4.5]{BF2}).
\end{itemize}
\end{proof}

\bibliography{./ref}

\begin{thebibliography}{10}

\bibitem{yael}
Yael Algom-Kfir.
\newblock Strongly contracting geodesics in outer space.
\newblock {\em Geom. Topol.}, 15(4):2181--2233, 2011.

\bibitem{ab}
Yael Algom-Kfir and Mladen Bestvina.
\newblock Asymmetry of outer space.
\newblock {\em Geom. Dedicata}, 156:81--92, 2012.

\bibitem{gafa}
M.~Bestvina, M.~Feighn, and M.~Handel.
\newblock Laminations, trees, and irreducible automorphisms of free groups.
\newblock {\em Geom. Funct. Anal.}, 7(2):215--244, 1997.

\bibitem{b:bers}
Mladen Bestvina.
\newblock A {B}ers-like proof of the existence of train tracks for free group
  automorphisms.
\newblock {\em Fund. Math.}, 214(1):1--12, 2011.

\bibitem{BF2}
Mladen Bestvina and Mark Feighn.
\newblock A hyperbolic {${\rm Out}(F_n)$}-complex.
\newblock {\em Groups Geom. Dyn.}, 4(1):31--58, 2010.

\bibitem{BF}
Mladen Bestvina and Koji Fujiwara.
\newblock Bounded cohomology of subgroups of mapping class groups.
\newblock {\em Geom. Topol.}, 6:69--89 (electronic), 2002.

\bibitem{bo:hyp}
Brian~H. Bowditch.
\newblock Intersection numbers and the hyperbolicity of the curve complex.
\newblock {\em J. Reine Angew. Math.}, 598:105--129, 2006.

\bibitem{matt}
Matt Clay.
\newblock Contractibility of deformation spaces of {$G$}-trees.
\newblock {\em Algebr. Geom. Topol.}, 5:1481--1503 (electronic), 2005.

\bibitem{CV}
Marc Culler and Karen Vogtmann.
\newblock Moduli of graphs and automorphisms of free groups.
\newblock {\em Invent. Math.}, 84(1):91--119, 1986.

\bibitem{day-putman}
Matthew Day and Andrew Putman.
\newblock The complex of partial bases for {$F_n$} and finite generation of the
  {T}orelli subgroup of {${Aut}(F_n)$}.
\newblock {\em Geom. Dedicata}, 164:139--153, 2013.

\bibitem{francaviglia-martino}
Stefano Francaviglia and Armando Martino.
\newblock Metric properties of {O}uter space.
\newblock {\em Pub. Mat.}, 55:433--473, 2011.

\bibitem{2topologies}
Vincent Guirardel and Gilbert Levitt.
\newblock Deformation spaces of trees.
\newblock {\em Groups Geom. Dyn.}, 1(2):135--181, 2007.

\bibitem{michael-lee}
Michael Handel and Lee Mosher.
\newblock Subgroup classification in ${O}ut({F}_n)$.
\newblock arXiv:0908.1255.

\bibitem{hm:hyperbolicity}
Michael Handel and Lee Mosher.
\newblock The free splitting complex of a free group, {I}: hyperbolicity.
\newblock {\em Geom. Topol.}, 17(3):1581--1672, 2013.

\bibitem{harer1}
John~L. Harer.
\newblock Stability of the homology of the mapping class groups of orientable
  surfaces.
\newblock {\em Ann. of Math. (2)}, 121(2):215--249, 1985.

\bibitem{harer2}
John~L. Harer.
\newblock The virtual cohomological dimension of the mapping class group of an
  orientable surface.
\newblock {\em Invent. Math.}, 84(1):157--176, 1986.

\bibitem{harvey}
W.~J. Harvey.
\newblock Boundary structure of the modular group.
\newblock In {\em Riemann surfaces and related topics: {P}roceedings of the
  1978 {S}tony {B}rook {C}onference ({S}tate {U}niv. {N}ew {Y}ork, {S}tony
  {B}rook, {N}.{Y}., 1978)}, volume~97 of {\em Ann. of Math. Stud.}, pages
  245--251. Princeton Univ. Press, Princeton, N.J., 1981.

\bibitem{HV4}
Allan Hatcher, Karen Vogtmann, and Natalie Wahl.
\newblock Erratum to: ``{H}omology stability for outer automorphism groups of
  free groups [{A}lgebr. {G}eom. {T}opol. {\bf 4} (2004), 1253--1272
  (electronic) by {H}atcher and {V}ogtmann.
\newblock {\em Algebr. Geom. Topol.}, 6:573--579 (electronic), 2006.

\bibitem{HV2}
Allen Hatcher and Karen Vogtmann.
\newblock Cerf theory for graphs.
\newblock {\em J. London Math. Soc. (2)}, 58(3):633--655, 1998.

\bibitem{HV}
Allen Hatcher and Karen Vogtmann.
\newblock The complex of free factors of a free group.
\newblock {\em Quart. J. Math. Oxford Ser. (2)}, 49(196):459--468, 1998.

\bibitem{HV3}
Allen Hatcher and Karen Vogtmann.
\newblock Homology stability for outer automorphism groups of free groups.
\newblock {\em Algebr. Geom. Topol.}, 4:1253--1272, 2004.

\bibitem{hh:hyperbolicity}
Arnaud Hilion and Camille Horbez.
\newblock The hyperbolicity of the sphere complex via surgery paths.
\newblock arXiv:1210.6183.

\bibitem{ilya-lustig}
Ilya Kapovich and Martin Lustig.
\newblock Geometric intersection number and analogues of the curve complex for
  free groups.
\newblock {\em Geom. Topol.}, 13(3):1805--1833, 2009.

\bibitem{kr:hyperbolicity}
Ilya Kapovich and Kasra Rafi.
\newblock On hyperbolicity of free splitting and free factor complexes.
\newblock arXiv:1206.3626, to appear in {\it Groups, Geometry, and Dynamics}.

\bibitem{bm:cyclic}
Brian Mann.
\newblock Hyperbolicity of the cyclic splitting complex.
\newblock arXiv:1212.2986, to appear in {\it Geometria Dedicata}.

\bibitem{reiner}
Reiner Martin.
\newblock {\em Non-uniquely ergodic foliations of thin type, measured currents,
  and automorphisms of free groups}.
\newblock PhD thesis, UCLA, 1995.

\bibitem{MM}
Howard~A. Masur and Yair~N. Minsky.
\newblock Geometry of the complex of curves. {I}. {H}yperbolicity.
\newblock {\em Invent. Math.}, 138(1):103--149, 1999.

\bibitem{minsky}
Yair~N. Minsky.
\newblock Quasi-projections in {T}eichm\"uller space.
\newblock {\em J. Reine Angew. Math.}, 473:121--136, 1996.

\bibitem{skora:deformations}
Richard~K. Skora.
\newblock Deformations of length functions in groups.
\newblock preprint, 1989.

\bibitem{stallings}
John~R. Stallings.
\newblock Topology of finite graphs.
\newblock {\em Invent. Math.}, 71(3):551--565, 1983.

\bibitem{js:whitehead}
John~R. Stallings.
\newblock Whitehead graphs on handlebodies.
\newblock In {\em Geometric group theory down under ({C}anberra, 1996)}, pages
  317--330. de Gruyter, Berlin, 1999.

\bibitem{wh}
J.~H.~C. Whitehead.
\newblock On certain sets of elements in a free group.
\newblock {\em Proc. London Math. Soc.}, 41:48--56, 1936.

\end{thebibliography}
\end{document}